
\documentclass[a4paper,10pt]{amsart} 
 
\textwidth480pt 
\hoffset-60pt   
\voffset-60pt   
\headsep+30pt   
\textheight700pt

\usepackage{fancybox} 
\usepackage{pifont} 

\usepackage[symbol]{footmisc}

%\footnote[num]{text}

\usepackage{amsmath} 
\usepackage[applemac]{inputenc}
\usepackage{amsfonts}
\usepackage{amssymb}
\usepackage{amsthm}
\usepackage{stmaryrd}
\usepackage{color}
\usepackage{mathrsfs} 

\newcommand{\mysection}{\setcounter{equation}{0} \section}

\renewcommand{\d}{\mathbf{d}}
\newcommand{\F}{\mathcal{F}}  
\renewcommand{\P}{\mathbb{P}}

\newcommand{\E}{\mathbb{E}}

\newcommand{\N}{\mathbb{N}}  
  
\newcommand{\T}{\mathbb{T}}

\newcommand{\R}{\mathbb{R}}  
\newcommand{\I}{\mathbb{I}}

\newtheorem{THM}{Theorem}   
\newtheorem{REM}{Remark}

\newtheorem{PROP}[THM]{Proposition}
\newtheorem{lem}[THM]{Lemma}

\def\1{\mbox{1\hspace{-0.25em}l}}

\def\gH{{{\mathbf H}}}

\newcommand \A[1]{{\bf (#1)}}

\def\leftB{[\![}
\def\rightB{]\!]}
\def\btheta{{\boldsymbol{\theta}}}

\def\bxi{{\boldsymbol{\xi}}}
\def\bzeta{{\boldsymbol{\zeta}}}

\def\det{{{\rm det}}}
\def\X{{\mathbf{X}}}

\def\x{{\mathbf{x}}}

\def\y{{\mathbf{y}}}
\def\z{{\mathbf{z}}}

\def\m{{\mathbf{m}}}

\def\0{{\mathbf{0}}}

\def\gF{{\mathbf{F}}}

\def\K{{\mathbf{K}}}
\def\gR{{\mathbf{R}}}

\title[Schauder Estimates for Degenerate Kolmogorov Equations]{\textbf{Sharp Schauder Estimates for some Degenerate Kolmogorov Equations}}
\author{Paul-\'Eric Chaudru de Raynal,  Igor Honor\'e  and St\'ephane Menozzi}

\begin{document}
\maketitle

\begin{abstract}
We provide here some sharp Schauder estimates for degenerate PDEs of Kolmogorov type when the coefficients lie in some suitable anisotropic H\"older spaces and the first order term is  non-linear and  unbounded. We proceed through a perturbative approach based on forward parametrix expansions.

Due to the low regularizing properties of the degenerate variables, for the procedure to work, we heavily exploit duality results between appropriate Besov spaces.

Our method can be seen as \textit{constructive} and provides, even in the non-degenerate case, an alternative approach  to Schauder estimates.

%As a consequence of our main estimates, we  derive through suitable pointwise gradient bounds the well strong posedness of
%a chain of related degenerate SDEs. 
\end{abstract}

{\small{\textbf{Keywords:} Schauder estimates, Kolmogorov degenerate PDEs, parametrix, Besov spaces.}}

{\small{\textbf{MSC:} Primary: 34F05, 60H10 ; Secondary: 60H30.}}

\mysection{Introduction and Main Results}
\subsection{\textcolor{black}{Statement of the problem}}
\textcolor{black}{F}or a fixed time horizon $T>0$ and given integers $n,d\in \N$, \textcolor{black}{we aim at proving} Schauder estimates for degenerate scalar valued Kolmogorov PDEs of the form:
\begin{eqnarray}
\label{KOLMO}
\begin{cases}
\partial_t u(t,\x)+ \langle \gF(t,\x), {\mathbf D}u(t,\x)\rangle +\frac 12{\rm Tr}\big( D_{\x_1}^2u(t,\x)a(t,\x)\big)=-f(t,\x),\ (t,\x)\in [0,T)\times \R^{nd},\\
 u(T,\x)=g(\x),\ \x\in \R^{nd},
 \end{cases}
 \end{eqnarray}
\textcolor{black}{where} $\x=(\x_1,\cdots, \x_n)\in \R^{nd} $ \textcolor{black}{and} for each $i\in \leftB 1,n\rightB $, $\x_i\in \R^d $. \textcolor{black}{Above,} the source $f$ and the terminal condition $g$ are \textcolor{black}{bounded and scalar value\textcolor{black}{d} mappings} \textcolor{black}{and t}he \textcolor{black}{bounded} diffusion matrix $a$ is $\R^d\otimes \R^d $-valued.
Also,  $\gF(t,\x):=(\gF_1(t,\x),\cdots ,\gF_n(t,\x))$ is \textcolor{black}{a} vector of $\R^d $-valued \textcolor{black}{unbounded} mappings $\gF_i$ which have, for $i \in \leftB 2,n\rightB $, the following structure:
\begin{equation}
\label{STRUCT_F}
\forall  %i \in \leftB 2,n\rightB, 
(t,\x)\in[0,T]\times \R^{nd},\ \gF_i(t,\x):=\gF_i(t,\x^{i-1:n}),\ \x^{i-1:n}:=(\x_{i-1},\cdots,\x_n).
\end{equation}
The notation ${\mathbf D}=(D_{\x_1}, \cdots, D_{\x_n}) %\left(\begin{array}{c}D_{\x_1}\\ \vdots\\ D_{\x_n}\end{array} \right)
$  stands for the full spatial gradient and %for $i\in \leftB 1,n\rightB $, 
$D_{\x_i} $ denotes the partial gradient w.r.t. to $\x_i $. \textcolor{black}{For notational convenience we will denote the spatial operator in \eqref{KOLMO} by $(L_t)_{t\in [0,T]} $, i.e. for any $\varphi \in C_0^2(\R^{nd},\R) $ (space of twice continuously differentiable functions with compact support):
\begin{equation}
\label{SPATIAL_OP}
L_t \varphi(\x)=\langle \gF(t,\x), {\mathbf D}\varphi(\x)\rangle +\frac 12{\rm Tr}\big( D_{\x_1}^2\varphi(\x)a(t,\x)\big).
%L_t u(t,\x)=\langle \gF(t,\x), Du(t,\x)\rangle +\frac 12{\rm Tr}\big( D_{\x_1}^2u(t,\x)a(t,\x)\big).
\end{equation}
In this work, this operator is supposed to \textcolor{black}{be} of weak H\"ormander type, i.e. we suppose that \textcolor{black}{t}he diffusion matrix $a$ is uniformly elliptic and that the matrices $\big(D_{{\x}_{i-1}}\gF_i(t,\cdot)\big)_{i\in \leftB 2,n\rightB} $ have full rank.
}
\\

\textcolor{black}{\textbf{Acknowledgments:}
For the first Author, this work has been partially supported by the ANR project ANR-15-IDEX-02. For the third author, 
the study has been funded by the Grant RSF 17-11-01098 from the  Russian Science Foundation.}
\\

\subsection{\textcolor{black}{Preliminaries}}
\textcolor{black}{
Under suitable regularity assumptions on $a,\gF$ it can be shown that the martingale problem associated with \eqref{SPATIAL_OP} is well posed, see e.g.  \cite{meno:10}, \cite{meno:17}, \cite{chau:meno:17}. In that case, there exists a unique weak solution to the stochastic differential equation
\begin{equation}
\label{SYST}
\begin{array}{l}
\displaystyle d\X_t^1 = \gF_1(t,\X_t^1,\dots,\X_t^n) dt + \sigma(t,\X_{t}^1,\dots,\X_{t}^n) dW_t,
\\
\displaystyle d\X_t^2 = \gF_2(t,\X_t^1,\dots,\X_t^n) dt,
\\
\displaystyle d\X_t^3 = \gF_3(t,\X_t^2,\dots,\X_t^n) dt,
\\
\vdots
\\
\displaystyle d\X_t^n = \gF_n(t,\X_t^{n-1},\X_t^n) dt,
\end{array}
\quad t \geq 0
,
\end{equation}
where $(W_t)_{t\ge 0} $ is a Brownian motion on some filtered probability space  $(\Omega,(\F_t)_{t\ge 0},\P) $.
The operator $(L_t)_{t\ge 0}$ then corresponds to the generator of the process in \eqref{SYST} \textcolor{black}{ where $\sigma$ is a square root of $a$}. The well posedness of the martingale problem in particular implies that \eqref{KOLMO} admits a  solution in the mild sense on a suitable function space (see e.g. Kolokoltsov \cite{kolo:11}).\\% and that this solution enjoys some integrability properties. Namely, the Krylov estimates established in \cite{meno:17}, \cite{chau:meno:17} allow to derive that the integrability of the source $f$ transmits to the solution $u$. 
}

\textcolor{black}{Starting from this framework and assuming that the coefficients $a,\gF $, the source $f$ as well as the terminal  condition $g$ lie in appropriate  H\"older spaces, we here prove that the Cauchy problem \eqref{KOLMO} is well posed in the weak sense and that its unique weak solution satisfies some appropriate Schauder estimates.\\}
%Roughly speaking, the unique weak solution $u$ of \eqref{KOLMO} will be shown to be $[(2+\gamma)/(2i-1)]$- H\"older continuous in its $i^{\rm th}$ variable for some $0<\gamma<1$ (see Theorem \ref{THEO_SCHAU} below for a precise statement of the result).

\textcolor{black}{
These Schauder estimates allow in particular to quantify precisely the \textit{parabolic bootstrap} associated with $(L_t)_{t\ge 0}$, hence emphasizing its intrinsic regularization properties. In \cite{chau:meno:17}, the \textcolor{black}{a}uthors show how in a H\"older framework for the coefficients $ \gF,\sigma$, some minimal thresholds allow to guarantee the well posedness of the martingale problem associated with $(L_t)_{t\ge 0}$ and how such thresholds depend on the \textcolor{black}{level} $i\in \leftB 1,n\rightB$ of the chain and the variable $j\in \leftB (i-1)\vee 1,n\rightB$. Adding to these thresholds some suitable regularity (depending also on the \textcolor{black}{level} and on the variable) \textcolor{black}{allows to derive Schauder estimates. They  provide accurate quantitative bounds associated with the global smoothing effect of $(L_t)_{t\ge 0}$ and reflect how the additional regularity propagates  through the operator.} From another point of view, which relies on the associated stochastic system \eqref{SYST}, such estimates quantify the \textit{regularizing} properties of the Brownian motion when propagating through the aforementioned system. In any case, they \textcolor{black}{underline} the degenerate structure of the operator $(L_t)_{t\ge 0}$ (or equivalently of the system \eqref{SYST}) which lead\textcolor{black}{s} to more tricky smoothing effects than th\textcolor{black}{ose} exhibited in the non-degenerate setting.
}

It is indeed known from the seminal work of Lunardi \cite{lun:97} on the topic that Schauder estimates for degenerate Kolmogorov equations differ from those in the usual non-degenerate setting (see e.g. \cite{kryl:96} or \cite{frie:64}). They reflect in some sense the multiple scales in the systems \eqref{KOLMO} and \eqref{SYST} (see Section \ref{SEC_MC_INTRO} below) and are stated in terms of anisotropic H\"older spaces. In particular, those spaces emphasize that the higher is the index of the considered variable in $\leftB 1,n\rightB $, the weaker is the associated regularity gain.\\

\textbf{Mathematical background.} Let us shortly describe particular cases of dynamics of type \eqref{KOLMO} for which some Schauder estimates have already been proved. We again mention the \textcolor{black}{article} by Lunardi \cite{lun:97}, who considered the special case of a homogeneous linear drift $\gF(\x)={\mathbf A}\x $ satisfying the \textcolor{black}{structure} condition \eqref{STRUCT_F}.   Precisely, the matrix ${\mathbf A}$ writes in this case:
\begin{equation*}{\mathbf A}=\left (\begin{array}{ccccc}{\mathbf a}_{1,1} & \cdots & \cdots &\cdots  & {\mathbf a}_{1,n}\\
{\mathbf a}_{2,1} & \cdots &\cdots &\cdots &{\mathbf a}_{2,n}\\
\0_{d,d} & {\mathbf a}_{3,2}&\cdots & \cdots &{\mathbf a}_{3,n}\\
\vdots &  \0_{d,d}                     & \ddots & \vdots\\
\0_{d,d} &\cdots &  \0_{d,d}   &  {\mathbf a}_{n,n-1}& {\mathbf a}_{n,n}
\end{array}\right) ,
\end{equation*}
where the entries $( {\mathbf a}_{i,j})_{ij\in \leftB 1,n\rightB^2} $ are in $\R^d\otimes \R^d  $ s.t. $\big({\mathbf a}_{i,i-1}\big)_{i\in \leftB 2,n\rightB} $  are non-degenerate elements of $\R^d\otimes \R^d $ (which expresses the weak H\"ormander condition)\footnote{Actually the non-zero entries $({\mathbf a}_{i,j}) $ of ${\mathbf A}$ can be non-square and simply have full rank in \cite{lun:97}. We restrict here to square matrices for the sake of simplicity.}. Also, the homogeneous diffusion coefficient $a$ belongs to an appropriate anisotropic H\"older space and asymptotically converges when $|\x|\rightarrow \infty $ to a non-degenerate constant matrix of $\R^{d}\otimes \R^d $. 
The assumptions on the asymptotic behavior on the diffusion coefficient have then been relaxed by Lorenzi \cite{lore:05:schau}, in the kinetic framework, i.e. $n=2$ with the notations of \eqref{SYST}, up to additional regularity assumptions on $a$ which could also be unbounded.

Priola established later in \cite{prio:06} Schauder estimates, without dimensional constraints  for time homogeneous drifts of the form 
\begin{equation}\label{PERTURB_DR_1}
\gF(\x)={\mathbf A}\x+\left(\begin{array}{c}\tilde \gF_1(\x)\\ {\mathbf 0}_{(n-1)d,d} \end{array} \right),
\end{equation}
for a  non-linear drift $\tilde \gF_1 $ acting on the non-degenerate variable in the expected anisotropic H\"older space. The underlying technique consisted in establishing bounds on the derivatives of the semi-group of the perturbed degenerate Ornstein-Uhlenbeck process (i.e. with $\tilde \gF_1$)  from the usual unperturbed one (with $({\mathbf A}\x)_1$ only) through the Girsanov theorem assuming first that $\tilde \gF_1$ is smooth. This initial smoothness  of $\tilde \gF_1 $ is required in order to compute the associated tangent flows. Through the continuity approach, the author then managed to obtain the estimates for a bounded variable diffusion coefficient lying in the natural \textit{expected} H\"older space similar to the one of \cite{lun:97}  with the same asymptotic conditions. The smoothness of $\tilde \gF_1 $ is relaxed through an approximation procedure viewing the difference between the H\"older drift and its mollification as a source term and exploiting the estimates established for the \textit{smooth} drift.

\textcolor{black}{Finally,} we mention the work of Di Francesco and Polidoro \cite{difr:poli:06} who derived Schauder estimates for a linear drift of the previous type  using an alternative notion of continuity regarding the diffusion coefficient $a$, which somehow involves the unbounded transport associated with the drift.\\
%In the kinetic case, $n=2$, a Lipschitz perturbation $\gF_1 $ in \eqref{PERTURB_DR_1} had also been considered by \cite{}.
%\\

Hence, in the current framework of degenerate Kolmogorov equations, focusing on the drift,  the Schauder estimates hold, to the best of our knowledge, for either linear drifts or H\"older perturbations on the non-degenerate variable of a linear drift.\\

\textbf{Mathematical outline.} \textcolor{black}{There will be two main difficulties to overcome in order to prove} Schauder estimates in our framework: the degeneracy as well as the non-linearity and unboundedness of the drift. \textcolor{black}{Concerning this second issue let us also mention that, in the non-degenerate setting, Schauder estimates for unbounded non-linear drift coefficients  were obtained under mild smoothness assumptions by Krylov and Priola \cite{kryl:prio:10} \textcolor{black}{who heavily used} the flow associated with the first order vector field in $ L_t$, i.e. $\dot \btheta_{t}(\x)=\gF(t, \btheta_{t}(\x)) $, to precisely get rid of the unbounded terms.}

In this work we will prove, in the framework of H\"older spaces for the source $f$, the terminal condition $g$ and the coefficients $a,\gF $, Schauder estimates similar to those of the previously quoted works (\cite{lun:97}, \cite{lore:05:schau}, \cite{prio:06}). The diffusion coefficient $a$ and the source term $f$ will have, as in the non-degenerate case, the same regularity. We mention that, in contrast with the non-degenerate case, this will not be the case for the drift $\gF$ for which some additional smoothness on the degenerate entries $(\gF_i)_{i\in \leftB 2,n\rightB} $ is needed %on $\gF$ 
to guarantee the well posedness of \eqref{SYST}. In particular, the H\"older indexes of $\gF$ will be above the minimal thresholds appearing in \cite{chau:meno:17}. The flow associated with the drift term will again play a key role in our setting. \textcolor{black}{Eventually, we do not impose any particular spatial asymptotic condition on the diffusion $a$}.\\

\textcolor{black}{To prove our result,} we will here proceed through a perturbative approach. The idea is to perform a first order \textit{parametrix} expansion (or Duhamel expansion) of a solution of \eqref{KOLMO} with mollified coefficients around a suitable linearized Ornstein-Uhlenbeck type semi-group. \textcolor{black}{The main idea behind consists in \textcolor{black}{exploiting} this easier framework \textcolor{black}{in order to subsequently obtain a tractable} control on the error expansion. When applying such a strategy, the common way to proceed consists in adopting the so-called \textit{backward parametrix approach}, as successfully considered by Il'in \textit{et al.} \cite{ilin:kala:olei:62}, Friedman \cite{frie:64} or McKean and Singer \cite{mcke:sing:67} in the non degenerate setting. This techni\textcolor{black}{que} has been  extended \textcolor{black}{to} the current degenerate setting, which involves unbounded coefficients, and successfully exploited for handling the corresponding martingale problem or density estimates of the fundamental solution of \eqref{KOLMO} in \cite{chau:meno:17} and \cite{dela:meno:10}. }
%To apply such an approach, one needs to be able to control the distance between a 

Unfortunately, this approach \textcolor{black}{does not seem very adapted to} derive Schauder estimates since it does not allow to \textcolor{black}{easily deal}  with gradient and sensitivity estimates associated with the degenerate directions.
%which will, at least along the non-degenerate variable $\x_1$, be necessary to \textcolor{black}{establish} our result.

\textcolor{black}{We therefore adopt here an alternative  \textit{forward parametrix approach} which is better designed to deal with gradient estimates. It has indeed been successfully applied e.g. in \cite{chau:17}, \cite{chau:16} to derive strong uniqueness for degenerate kinetic SDEs of type \eqref{SYST} (i.e. $n=2$ with the previous notations). Especially, this approach is better \textcolor{black}{tailored} to exploit cancellation techniques which are crucial when derivatives come in, \textcolor{black}{as opposed} to the \textit{backward} one.\\
}%\textcolor{black}{S. PB de terminologie en fait Friedman et Il'in et al font aussi du BK... }

The perturbative approach is not usual to establish Schauder type estimates. The standard way is to proceed through \textit{a priori} estimates to establish for a given solution of the PDE in a suitable function space, the expected bound. Existence and uniqueness issues, in the considered function space, for the solution of the equation are addressed in a second time.
We can refer to \cite{kryl:96} for a clear presentation of this approach and to \cite{kryl:prio:10} for an extension of this method to non-degenerate operators with unbounded drift coefficients. We will here obtain that the solutions of \eqref{KOLMO} with mollified coefficients satisfy, uniformly w.r.t. the mollification parameter, a Schauder type estimate (see Sections \ref{SEC2} to \ref{HOLDER} below). From the well posedness of the martingale problem established in \cite{chau:meno:17} under our current assumptions,  we will then derive that the martingale solution to \eqref{KOLMO} actually itself satisfies the Schauder controls. Since we want to be in the \textit{sharpest} possible H\"older setting for the coefficients, source and terminal functions, we will need to establish some subtle controls (in particular we have no \textit{true} derivatives of the coefficients) which will heavily rely on duality results for Besov spaces (see Section \ref{SEC_BESOV_DUAL_FIRST} below and e.g. Chapter 3 in Lemari\'e-Rieusset \cite{lemar:02}). 

Let us emphasize that the perturbative approach developed here provides, even in the non-degenerate case, a new alternative to establish Schauder 
estimates. It can be seen as a constructive one in the sense that, from a sequence of smooth solutions, that uniformly satisfy the expected control, we will extract through convergence in law arguments a limit solution which also satisfies the bound. Uniqueness of the solution in the considered class then again follows from uniqueness in law of the underlying limit process.\\

%\textcolor{black}{A rebosser pour la partie small time a virer}
The drawback of our approach is that, for the parabolic problem \eqref{KOLMO}, we first have to establish our estimates in small time. This is intuitively clear since the perturbative methods (expansions along an \textit{ad hoc proxy}) are precisely designed for small times. To obtain the result for an arbitrary given time, we then have to iterate the estimate, \textcolor{black}{which is precisely natural since Schauder estimates provide a kind of stability in the considered function space}. We are therefore \textit{far} from the \textit{optimal} constants for the Schauder estimates established in the non-degenerate setting for time dependent coefficients by Krylov and Priola \cite{kryl:prio:17}. However, to the best of our knowledge, only two approaches  allow to derive Schauder estimates for   parabolic degenerate Kolmogorov equations with fully non-linear drift in H\"older space: ours and the one of Hao \textit{al.} \cite{hao:wu:zhan:19} developed in the kinetic non-local case and based on Littlewood-Paley decompositions. Also, we think the strategy developed in the current work should apply for the elliptic degenerate Kolmogorov equation with \textit{good} potential term, i.e.  the \textit{negative sign} of the potential would allow to integrate on an infinite time horizon (getting therefore rid of the small time constraint) or for unbounded sources $f$ that would need in that cases to be somehow controlled by an associated potential as in \cite{kryl:prio:10}. %\textcolor{black}{S: Faut-il ici temperer notre joie et citer Zhang et ses copains?}
\\

\textcolor{black}{To conclude with this outline, we come back to the stochastic counterpart of \eqref{KOLMO}: system \eqref{SYST}. \textcolor{black}{In connection with the \textit{strong} regularizing properties of Brownian motion, the perturbative approach we develop here allows as well to address the problem of strong well posedness for the SDE \eqref{SYST} in a H\"older framework for the coefficients. This was done by Chaudru de Raynal in \cite{chau:16} for $n=2 $ (let us mention as well in this \textit{kinetic case} the works of Fedrizzi \textit{et al.} \cite{fedr:flan:prio:vove:17} and Zhang \cite{zhan:16} who derived strong uniqueness for $L^p$ drifts on the non-degenerate component and a linear degenerate dynamics in \eqref{SYST}). Namely,  in the companion paper \cite{chau:hono:meno:18:STRONG}}, we establish through \textcolor{black}{a} similar approach strong uniqueness for the full chain for some suitable related H\"older thresholds for the drift. Indeed, from a PDE viewpoint, strong uniqueness for the associated SDE is heavily related to pointwise controls of the gradient of the solution of the PDE in all the directions (including the degenerate ones). These controls hence require some additional regularity and are then obtained under some slightly stronger H\"older regularity assumptions on the coefficients (which for strong uniqueness issues then in turn become the source term in the PDE with the Zvonkin approach). Some related issues were also considered under additional smoothness conditions by Lorenzi \cite{lore:05} in the case of a linear drift. 
}\\

Before stating our main results, we recall some properties associated with the system \eqref{KOLMO}. We first describe in Section \ref{SEC_MC_INTRO} how the intrinsic multi-scales of the degenerate Kolmogorov like equations appear. We then introduce the appropriate setting of H\"older spaces to consider in Section \ref{SEC_HOLDER_SPACE}. We eventually conclude the introduction stating in Section \ref{MAIN_RES_STAT} our main results concerning Schauder estimates associated with \eqref{KOLMO}.

\subsection{Intrinsic scales of the system and associated distance}
\label{SEC_MC_INTRO}
%[PE: Introduire les crochets de Lie (cf DelMeno10) ? I, S: OUI, pour introduire  la main l'hypothese de Hormander]. 
Let us now briefly expose how the system \textit{typically} behaves. To do so, consider the following operator:
\begin{equation}
\label{KOLM_TOY}
{\mathscr L}_0:= \partial_t+ \langle {\mathbf A}_0\x, {\mathbf D} \rangle +\frac 12 \Delta_{\x_1},\  {\mathbf A}_0= \left (\begin{array}{ccccc}{\mathbf 0}_{d,d} & \cdots & \cdots &\cdots  & {\mathbf 0}_{d,d}
\\
{\mathbf I}_{d,d} & \ddots &\cdots &\cdots &
\vdots%{\mathbf 0}_{d,d}
\\
{\mathbf 0}_{d,d} & {\mathbf I}_{d,d}&\ddots & \cdots & \vdots%{\mathbf 0}_{d,d}
\\
\vdots &   \ddots %{\mathbf 0}_{d,d}                      
& \ddots & \ddots & \vdots\\
{\mathbf 0}_{d,d} &\cdots &     {\mathbf 0}_{d,d}      &  {\mathbf I}_{d,d}& {\mathbf 0}_{d,d}
\end{array}\right),
 \end{equation}
 which can be viewed as a typical model for the operator in  \eqref{KOLMO}. Introducing now, for $\lambda>0 $, the dilation operator $\delta_\lambda:(t,\x)\in \R^+\times \R^{nd}\mapsto  \delta_\lambda(t,\x)= \big( \lambda^2 t, \lambda \x_1, \lambda^{3}\x_2,\cdots, \lambda^{2n-1}\x_n\big) \in \R^+\times \R^{nd} $, i.e. with a slight abuse of notation, $ \big(\delta_\lambda(t,\x) \big)_0:=\lambda^2 t$ and for each $i\in \leftB 1, n\rightB, \big(\delta_\lambda(t,\x)\big)_i :=\lambda^{2i-1}\x_i  $, we have that 
\begin{equation}\label{delta_lambda}
{\mathscr L}_0 v=0 \implies {\mathscr L}_0( v \circ \delta_\lambda)=0.
\end{equation}
This hence lead\textcolor{black}{s} us to introduce the homogeneous \textcolor{black}{quasi-distance}\footnote{\textcolor{black}{the triangle inequality holds up to some multiplicative constant.}} corresponding to the dilation operator $\delta_\lambda $. Precisely, setting for all $0\le t\le s<+\infty$, $ (\x,\y)\in (\R^{nd})^2$:
 \begin{equation}
\label{DIST_P}
%\d_P\big( (t,\x),(s,\y)\big)=(s-t)^{\frac 12}+\sum_{i=1}^d |\y_i-\x_i|^{\frac{1}{2i-1}}=:(s-t)^{\frac 12}+\d(\x,\y),
\d_P\big( (t,\x),(s,\y)\big)=(s-t)^{\frac 12}+\sum_{i=1}^d |\y_i-\x_i|^{\frac{1}{2i-1}},%=:(s-t)^{\frac 12}+\d(\x,\y),
 \end{equation}
 we indeed have $\d_P\big(\delta_\lambda \big((t,\x)\big),\delta_\lambda\big((s,\y)\big)\big)=\lambda \d_P\big((t,\x),(s,\y) \big) $.  In our current setting we will mainly use the spatial part deriving from the parabolic homogeneous quasi-distance $\d_P$ in \eqref{DIST_P}. We set accordingly, 
\begin{equation}
\label{DIST}
\d\big( \x,\y\big)=\sum_{i=1}^d |\y_i-\x_i|^{\frac{1}{2i-1}}.
 \end{equation}
From a technical point of view, these \textcolor{black}{(quasi-)}distances express the spatial homogeneity associated with the intrinsic time scales of the variances for the Gaussian  process with generator ${\mathscr L}_0 $. From a more probabilistic viewpoint the exponents in \eqref{DIST} can be related to the characteristic time-scales of the iterated integrals of the Brownian motion (see e.g. \cite{dela:meno:10}, \cite{chau:meno:17}). 

The non-degeneracy and boundedness assumption on $a$ as well as the H\"ormander condition on the $(\gF_i)_{i\in \leftB 2,n \rightB} $ that we assume for \eqref{KOLMO}-\eqref{SYST} (see assumptions \A{UE} and \A{H} below) will allow us to consider for our analysis the previous quasi-distances associated with the simplest yet typical equation in the class described by \eqref{KOLMO}-\eqref{SYST}.

%%%% De S. a PE. Dire que cela reste un modele pour le cas non-lineaire, i.e. on va garder cette distance. C'est un peu le "squelette" de l'analyse, qui donne la metrique

\subsection{Associated %inhomogeneous 
H\"older spaces}
\label{SEC_HOLDER_SPACE}
We first recall some useful notations and spaces. We denote for  $k \in \N, \beta\in (0,1) $ by $\|\cdot\|_{C^{k+\beta}(\R^m,\R^\ell)},\ m\in \{1,d,nd\} $, $\ell \in \{1,d,d^2,nd\} $ the usual homogeneous H\"older \textcolor{black}{semi-}norm, see e.g. Krylov \cite{kryl:96}. Precisely, for $\psi \in C^{k+\beta}(\R^m,\R^\ell) $, denoting by  $\vartheta=(\vartheta_1,\cdots,\vartheta_m)\in \N^m $ a generic multi-index and $|\vartheta|=\sum_{i=1}^{m} \vartheta_i $, we define the semi-norm:
\begin{eqnarray}
\|\psi\|_{C^{k+\beta}(\R^m,\R^\ell)}&:=& \sum_{i=1}^k \sup_{|\vartheta|=i} \|D^{\vartheta} \psi\|_{L^\infty(\R^m,\R^\ell)}+ \sup_{|\vartheta|=k} [D^\vartheta \psi]_\beta,\notag \\
\phantom{BOUHHH} [D^\vartheta \psi  ]_\beta &:=& \sup_{(x,y)\in (\R^m)^2,x\neq y} \frac{|D^\vartheta \psi(x)-D^\vartheta \psi(y)|}{|x-y|^\beta},
 \label{USUAL_HOLDER_SPACE}
\end{eqnarray} 
where $|\cdot| $ denotes the Euclidean norm on the considered space. 
% Deja dit au desus
%In other words, $C^{k+\beta}(\R^m,\R^\ell) $ is a \textbf{homogeneous} H\"older space.
We will also need to consider the associated subspace with bounded elements. Namely, we set:
$$C_b^{k+\beta}(\R^m,\R^\ell):=\{\psi \in C^{k+\beta}(\R^m,\R^\ell) : \|\psi\|_{L^{\infty}(\R^m,\R^{\ell})}<+\infty\}.$$  
We define correspondingly the H\"older norm:
\begin{equation}
\label{BD_HOLDER}
\|\psi\|_{C_b^{k+\beta}(\R^m,\R^\ell)}:=\|\psi\|_{C^{k+\beta}(\R^m,\R^\ell)}+\|\psi\|_{L^\infty(\R^m,\R^\ell)}.
\end{equation}

We are now in position to define our \textcolor{black}{anisotropic} H\"older spaces with multi-index of regularity. Let  $\psi: \R^{nd} \rightarrow \R^\ell$
%, 
%where in practice we will take $\ell \in \{1,d,d^2,nd\} $,
 be a smooth function. We first introduce, for $i\in \leftB 1,n\rightB$, $x\in \R^d$ the perturbation operator that writes:
\begin{equation}
\label{PERT_OP}
%\forall (t,\z)\in [0,T]\times \R^{nd},   \Pi_i^x (\varphi)(t,\z):= \varphi(t,\z_1,\cdots, \z_i+x,\cdots, \z_n).
\forall \z\in  \R^{nd},   \Pi_i^x (\psi)(\z):= \psi(\z_1,\cdots, \z_i+x,\cdots, \z_n).
\end{equation}
We then define for each $ i\in \leftB 1,n\rightB$, 
 the mapping  
\begin{equation}\label{DEF_PI_I}
%\varphi_i(t,\z,x):=\Pi_i^x (\varphi(t,\cdot)) (\z). 
(\z,x)\in \R^{nd}\times \R^d \longmapsto \psi_i(\z,x):=\Pi_i^x (\psi) (\z). 
\end{equation}
Let us introduce the following \textcolor{black}{anisotropic} H\"older space in $\d$-\textcolor{black}{quasi-}metric: given a parameter $\gamma\in (0,1) $, and $k\in \N$,
%multi-index $\Gamma = (\gamma_1,\ldots,\gamma_n)$ s.t. for all $i\in \leftB 1,n\rightB,\ \frac{\gamma_i}{2i-1}\not \in \N $ 
we say that $\psi$ is in $C_\d^{k+\gamma}(\R^{nd},\R^\ell)$,  if 
\begin{equation}\label{INHOM_NORM}
\|\psi\|_{C_\d^{k+\gamma}(\R^{nd},\R^\ell)}:=\sum_{i=1}^n \textcolor{black}{\sup_{\z\in \R^{nd}}}\|\psi_i(\z,\cdot)\|_{C^{\frac{k+\gamma}{2i-1}}(\R^d,\R^\ell)}<+\infty.
\end{equation}
%is finite. \textcolor{black}{}
\textcolor{black}{For the sake of simplicity, we will write:
\begin{equation*}
\|\psi\|_{L^\infty}:=\|\psi\|_{L^\infty(\R^{nd},\R^\ell)}, \text{ and }
\|\psi\|_{C_\d^{k+\gamma}}:=\|\psi\|_{C_\d^{k+\gamma}(\R^{nd},\R^\ell)}.
\end{equation*}
}
The subscript $\d$ stands here to indicate the dependence of the H\"older exponents appearing in the r.h.s. on the underlying quasi-distance $\d$ reflecting the scale invariance of the system (see equation \eqref{DIST} and the comments above for details). Note in particular that, for $k=0$, there exists $C:=C(n,d)\ge 1$ s.t.:
\begin{eqnarray}
\label{EQUIV_NORME_H_HD}
C^{-1} [\psi]_{\gamma,\d}\le \|\psi\|_{C_\d^{k+\gamma}(\R^{nd},\R^\ell)} \le C [\psi]_{\gamma,\d},\notag\\
\phantom{BOUH} [\psi]_{\gamma,\d}:=\sup_{\x \neq \x',(\x,\x')\in (\R^{nd})^2}\frac{|\psi(\x)-\psi(\x')|}{\d^\gamma(\x,\x')},
\end{eqnarray}
see also e.g. Lunardi \cite{lun:97}.

From \eqref{BD_HOLDER} and \eqref{INHOM_NORM}, we write that $\psi\in C_{b,\d}^{k+\gamma}(\R^{nd},\R^\ell) $ if
$$\|\psi\|_{C_{b,\d}^{k+\gamma}(\R^{nd},\R^\ell)}:=\sum_{i=1}^n \textcolor{black}{\sup_{\z\in \R^{nd}}}\|\psi_i(\z,\cdot)\|_{C_b^{\frac{k+\gamma}{2i-1}}(\R^d,\R^\ell)}<+\infty.
  $$ 
  \textcolor{black}{From now on, we will denote:
\begin{equation*}
\|\psi\|_{C_{b,\d}^{\gamma}}:= \|\psi\|_{C_{b,\d}^{\gamma}(\R^{nd},\R^\ell)}.
\end{equation*}
Finally, through the article, we use the following notation for all $\varphi_1 \in L^{\infty}\big ([0,T ],C_{b,\d}^{k+\gamma}(\R^{m},\R^\ell) \big) $ and $ \varphi_2 \in   L^{\infty}\big ([0,T ],C_{\d}^{k+\gamma}(\R^{m},\R^\ell) \big)$:
\begin{equation*}
\|\varphi_1\|_{L^\infty(C_{b,\d}^{k+\gamma})}:= \sup_{t \in [0,T]}\|\varphi_1(t, \cdot)\|_{C_{b,\d}^{k+\gamma}(\R^{m},\R^\ell)},
\text{ and }
\|\varphi_2\|_{L^\infty(C_{\d}^{k+\gamma})}:= \sup_{t \in [0,T]} \|\varphi_2(t, \cdot)\|_{C_{\d}^{k+\gamma}(\R^{m},\R^\ell)}
.
\end{equation*}
}

\subsection{Assumptions and main result}\label{MAIN_RES_STAT}
With these notations 	at hand we can now state our assumptions and main results. In the following, we will assume:
\begin{itemize}
  \item[ \A{UE} ] \textbf{Uniform Ellipticity of the diffusion Coefficient.}
  There exists  $\kappa\ge 1$ s.t. for all $(t,\x)\in \R_+\times \R^{nd}$, $z\in \R^d$,
$$ \kappa^{-1}|z|^2\le \langle a(t,\x) z,z\rangle \le \kappa |z|^2,$$ 
where  $|\cdot|$ again denotes the Euclidean norm and $\langle \cdot, \cdot \rangle $ %or $\cdot$
% De S. a P.E. : j'ai l'impression que l'on ne se sert jamais du \cdot pour le produit scalaire 
is the inner product.% and  ${}^* $ stands for the transpose.
\item[\A{H}] \textbf{Weak H\"ormander like condition.} For each $i\in \leftB 2,n\rightB $, there exists a closed convex subset ${\mathcal E}_{i-1} \subset GL_{d}(\R)$
(set of invertible $d \times d$  matrices)
 \textcolor{black}{such that}, for all
$t \geq 0$ and $(\x_{i-1},\dots,\x_n) \in \R^{(n-i+2)d}$, $D_{\x_{i-1}} F_i(t,\x_{i-1},\dots,\x_n)
\in {\mathcal E}_{i-1}$.
For example, ${\mathcal E}_{i-1}$ %, $ i \in \leftB 1, n-1\rightB$, 
may be a closed ball
included in $GL_{ d}(\R)$, which is an open set.
\item[\A{S}] \textbf{Smoothness of the Coefficients.} Fix $\gamma\in(0,1) $. We suppose the following conditions hold.
\begin{itemize}
%For a given $\bar \Gamma = (\bar \gamma_1,\ldots,\bar \gamma_n)$, where $\bar \gamma_1\in (0,1) $ and for all $i\in \leftB 2,n\rightB,\ \bar \gamma_i \in (0, 2)$,  we introduce the required setting for the smoothness of the coefficients in \eqref{KOLMO}: 
\item[(i)] \textbf{Smoothness of the diffusion coefficient.}  We assume \textcolor{black}{that $a$ is measurable in time and that} $a\in L^\infty\big([0,T],C_{b,\d}^{ \gamma}( \R^{nd},\R^d\otimes \R^d)\big)$. %For a given $\Gamma_a = ( \gamma_{a}^1,\ldots, \gamma_a^n)$, where $ \gamma_a^1\in (0,1) $ and for all $i\in \leftB 2,n\rightB, \gamma_a^i\in (0,2i-3) $.

\item[(ii)] \textbf{Smoothness of the drift in time.} We only assume here that the drift is measurable in time and bounded \textcolor{black}{at the origin}, i.e. the measurable mapping $t\mapsto  \gF(t,{\mathbf 0})$ is bounded.

\item[(iii)] \textbf{Smoothness of the drift in space.}
We now state, for each level $i\in \leftB 1,n\rightB$ the smoothness assumptions on the drift component $\gF_i$ (see the remark below for more explanations): 
\begin{eqnarray}
\label{DEF_NORME_I_GAMMA_SEUIL}
\gF_{i} \in L^\infty \textcolor{black}{\Big (}[0,T], C_{\d}^{(2i-3)\textcolor{black}{\vee 0}+\gamma}(\R^{((n-i+2) \wedge n)d},\R^d\textcolor{black}{\Big )}.
\end{eqnarray}
%\begin{equation}\label{eq:defGammabar}
%{\Gamma}_{\gF} = (\gamma_{\gF}^1,\ldots, \gamma_{\gF}^n),\quad \gamma_\gF ^1=0,\ \forall i \in \leftB 2,n\rightB,\ \gamma_{\gF}^i = 2i-3.
%\end{equation} 
%Then, 
%\begin{eqnarray}
%\label{DEF_NORME_I_GAMMA_SEUIL}
%\gF_{i} \in L^\infty([0,T], C_{\d}^{\gamma_{\gF}^i+\gamma}(\R^{((n-i+2) \wedge n)d},\R^d)=L^\infty([0,T], C_{\d}^{2i-3+\gamma}(\R^{((n-i+2) \wedge n)d},\R^d).
%\end{eqnarray}

\end{itemize}
\end{itemize}
For a fixed parameter  $ \gamma \in (0,1)$, we will say that \A{A} is in force as soon as \A{UE}, \A{H}, \A{S} hold.\\

\begin{REM} Let us come back to assumption \A{S}-(iii), which may seem difficult to understand at first sight. Namely, we here explain a little bit how the particular thresholds appearing in this assumption come from as well as the precise regularity \textcolor{black}{imposed} on each component of the drift $\gF$ w.r.t. any space variable. 

$\bullet$ Note first that for $i=1$ assumption \A{S}-(iii) readily says, with the previous notations for H\"older spaces, that $\gF_1\in L^\infty([0,T],C_{\d}^\gamma(\R^{nd},\R^d)) $.

$\bullet$ For each level $i\in \leftB 2,n\rightB$ of the chain in \eqref{STRUCT_F}, we shall consider different types of assumptions on $\gF_i$ depending on the variables $\x_{i-1} $ and $\x^{i:n} =(\x_i,\cdots,\x_n)$  respectively. Let us now fix  $i\in \leftB 2,n\rightB$.

The component $\x_{i-1} $ is hence the one which transmits the noise. Coherently with the \textit{usual} H\"ormander setting, we need some differentiability  of $\gF_i$ w.r.t. $\x_{i-1} $. In order to have a global coherence, in terms of time-space homogeneity, for all the considered variables, the specific smoothness to be considered for that variable is that $\gF_i(t,\cdot,\x^{i:n})$ is in $ C^{1+\frac{\gamma}{2(i-1)-1}}(\R^d,\R^d )$. Recalling now the previous \textcolor{black}{definition} of $\d$ and \textcolor{black}{of the} associated H\"older spaces, we have $C^{1+\frac{\gamma}{2(i-1)-1}}(\R^d,\R^d )= C^{\frac{2i-3+\gamma}{2(i-1)-1}}(\R^d,\R^d )$\textcolor{black}{.}% = C_\d^{2i-3+\gamma}(\R^d,\R^d )$. 

Now, at level $i$, the components $\x^{i:n} $ are \textit{above} the current characteristic time-scale, i.e. the vector of their associated time rescaling, which writes according to the homogenous quasi-metric $\d_P$ in \eqref{DIST_P} as $(t^{i-\frac 12}, t^{(i+1)-\frac 12},$\\$ \cdots ,t^{n-\frac 12}) $, has  in small time entries that are actually smaller or equal than the time rescaling of the current variable $\x_i$ in $t^{i- \frac 12} $.  We recall as well that, in order to have the well posedness of the martingale problem associated with the operator $(L_t)_{t\ge 0}$ in \eqref{SPATIAL_OP}, some natural minimal thresholds of H\"older continuity appear for these variables. Precisely, at level $i$, $\gF_i$ must be H\"older continuous in $\x_j,\ j\in \leftB i,n\rightB $, with index strictly greater than $\frac{2i-3}{2j-1} $  (see \cite{chau:meno:17} for details). Here, still to have a global coherence, in terms of time-space homogeneity, for all the considered variables, we assume that $\gF_i $ is $\frac{2i-3}{2j-1}+\frac{\gamma}{2j-1} $ H\"older continuous in its $j^{{\rm th}} $ variable. This precisely corresponds to the minimal threshold required to which we add the intrinsic $\gamma $-H\"older regularity w.r.t to the associated scale
  appearing in $\d$ for the considered entry. Thus, with a slight abuse of notations, $z \mapsto \gF_i(t,\x_{i-1},\x_{i}, \cdots, \x_{j-1},z,\x_{j+1},\cdots,\x_n)$ is supposed to be in $C^{\frac{2i-3}{2j-1}+\frac{\gamma}{2j-1}}(\R^d,\R^d )$\textcolor{black}{.}% = C_\d^{2i-3+\gamma}(\R^d,\R^d )$.
  
$\bullet$ \textcolor{black}{``Gathering'' the regularity conditions assumed on each variable for each component of $\gF$} hence gives assumption \A{S}-(iii).\\
\end{REM}

\begin{REM}
Concerning the time regularity in the previous assumptions we can refer to the earlier work of Kruzhkov \textit{et al.} \cite{kruz:cast:lope:75} who first consider this type of regularity to establish Schauder estimates in the classical non degenerate framework with bounded coefficients. We can also mention Lorenzi \cite{lore:11} for extensions to unbounded coefficients.
\end{REM}

We are now in position to state our main result.

\begin{THM}[Schauder Estimates for degenerate Kolmogorov Equations with general drifts.] \label{THEO_SCHAU}\quad
Let $\gamma\in (0,1)$ be given. Suppose that \A{A} is in force and that the terminal condition $g$ and source term $f$ of the Cauchy problem \eqref{KOLMO} satisfy: $g\in C_{b,\d}^{2+\gamma }(\R^{nd},\R)$ and $f\in L^\infty\big([0,T],C_{b,\d}^{\gamma}( \R^{nd},\R)\big)$. 

Then, there exists a unique mild and weak solution $u$ in $C^{2+\gamma}_{b,\d}(\R^{nd},\R)$ to \eqref{KOLMO}. Furthermore, there exists a constant $C_{\ref{THEO_SCHAU}}:=C_{\ref{THEO_SCHAU}}\big(\A{A},T\big)$ s.t. 
\begin{equation}\label{eq:GeneralSchauderEstimate}
%\sup_{t\in [0,T]}\|u(t,\cdot)\|_{C_{b,\d}^{2+\gamma}( \R^{nd},\R)}\le C_{\ref{THEO_SCHAU}} \big( \|g\|_{C_{b,\d}^{2+\gamma}(\R^{nd},\R)}+\sup_{t\in [0,T]}\|f(t,\cdot)\|_{C_{b,\d}^{\gamma}( %%%\R^{nd},\R)}\big).
\|u\|_{L^\infty(C_{b,\d}^{2+\gamma})}\le C_{\ref{THEO_SCHAU}} \big( \|g\|_{C_{b,\d}^{2+\gamma}}+\|f\|_{L^\infty(C_{b,\d}^{\gamma})}\big).
\end{equation}
%%%% Cela on en parlera un peu plus tard.
%In particular, if there exists $\overline{\Gamma} = (\overline{\gamma}_1,\ldots,\overline{\gamma}_n)$ s.t. $\Gamma = \underline{\Gamma}+ \overline{\Gamma}$ where $\underline{\Gamma}$ is defined by \eqref{eq:defGammabar} and $\bar Gamma $ has positive entries $(\gamma_i)_{i\in \leftB 2,n\rightB} $, then estimate \eqref{eq:GeneralSchauderEstimate} implies that $u$ is a classical solution of \eqref{KOLMO}.
\end{THM}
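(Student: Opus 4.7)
The plan is to proceed via the perturbative (forward parametrix) approach announced in the introduction. First, I would introduce a family of mollifications $(a^\varepsilon,\gF^\varepsilon,f^\varepsilon,g^\varepsilon)_{\varepsilon>0}$ of the data such that the regularised system still satisfies \A{A} uniformly in $\varepsilon$ and admits classical solutions $u^\varepsilon$ to the corresponding Cauchy problem (this follows from the well posedness of the martingale problem for \eqref{SYST} under \A{A} established in \cite{chau:meno:17} together with standard results for smooth drifts). The entire analysis is then devoted to proving the estimate \eqref{eq:GeneralSchauderEstimate} for $u^\varepsilon$ \emph{uniformly in} $\varepsilon$; the final statement then follows by a convergence-in-law / stability argument, while uniqueness in $C^{2+\gamma}_{b,\d}$ is reduced to uniqueness in law for \eqref{SYST}.

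The core estimate is obtained \emph{in small time}. Fixing a freezing point $(\tau,\bxi)\in[0,T]\times\R^{nd}$, I would introduce the linearised Ornstein--Uhlenbeck proxy obtained by freezing $a$ at $(\tau,\btheta_{t,\tau}(\bxi))$ (the flow of $\gF$ passing through $\bxi$ at time $\tau$) and linearising $\gF$ along this flow. This proxy is a degenerate Gaussian semigroup for which sharp anisotropic Schauder estimates are available directly, since its density satisfies Gaussian-type bounds in the homogeneous quasi-distance $\d_P$ of \eqref{DIST_P}. By Duhamel, one writes
\begin{equation*}
u^\varepsilon(t,\x)=\tilde P^{\tau,\bxi}_{t,T}g^\varepsilon(\x)+\int_t^T \tilde P^{\tau,\bxi}_{t,s}f^\varepsilon(s,\x)\,ds+\int_t^T \tilde P^{\tau,\bxi}_{t,s}\bigl((L_s-\tilde L^{\tau,\bxi}_s)u^\varepsilon\bigr)(s,\x)\,ds,
\end{equation*}
where $\tilde L^{\tau,\bxi}$ is the generator of the proxy. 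I would then evaluate the resulting identity at $(t,\x)=(\tau,\bxi)$ so that the remainder enjoys centering along the flow, which is crucial to absorb the unboundedness of $\gF$ and to exploit the natural cancellations of the forward parametrix (as in \cite{chau:17}).

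The delicate point is the control of the anisotropic $C^{2+\gamma}_{b,\d}$-seminorms of the remainder. For the non-degenerate component $\x_1$ one has genuine second derivatives of the proxy and the analysis follows classical lines using the $\gamma$-H\"older regularity of $a$. For the degenerate components $\x_2,\dots,\x_n$, however, no true differentiation is available in the proxy along those directions, so I would estimate the corresponding anisotropic H\"older seminorms of the difference $(L_s-\tilde L^{\tau,\bxi}_s)u^\varepsilon$ by pairing the derivatives of the proxy density against the coefficients viewed as elements of suitable Besov spaces, via Besov duality (Section \ref{SEC_BESOV_DUAL_FIRST} and Chapter~3 of \cite{lemar:02}). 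This is what makes the procedure work at the sharp threshold regularity \A{S}-(iii), where $\gF_i$ has only the minimal smoothness above the thresholds of \cite{chau:meno:17} plus $\gamma/(2j-1)$. The scaling of the proxy kernel in $\d_P$, combined with the H\"older regularity of $a$ and of the $\gF_i$, produces the exact time singularities needed to yield, after integration against $ds$, a bound of the form
\begin{equation*}
\|u^\varepsilon\|_{L^\infty([T-h,T],C^{2+\gamma}_{b,\d})}\le C\bigl(\|g^\varepsilon\|_{C^{2+\gamma}_{b,\d}}+\|f^\varepsilon\|_{L^\infty(C^\gamma_{b,\d})}\bigr)+C h^{\eta}\|u^\varepsilon\|_{L^\infty([T-h,T],C^{2+\gamma}_{b,\d})},
\end{equation*}
for some $\eta>0$ independent of $\varepsilon$. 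Choosing $h$ small enough that $Ch^\eta\le 1/2$ absorbs the last term and yields the estimate on $[T-h,T]$.

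Finally, to reach the full interval $[0,T]$, I would iterate the small-time bound, using the Schauder estimate itself to control the \emph{new terminal condition} $u^\varepsilon(T-h,\cdot)\in C^{2+\gamma}_{b,\d}$ at each step; this gives \eqref{eq:GeneralSchauderEstimate} with a constant depending on $T$ and \A{A} but uniform in $\varepsilon$. Passing to the limit $\varepsilon\to 0$ via compactness in $C^{2+\gamma'}_{b,\d}$ for some $\gamma'<\gamma$ and using the uniqueness of the martingale problem to identify the limit as the unique weak solution of \eqref{KOLMO} concludes the proof. I expect the main obstacle to be step three, namely producing sharp small-time anisotropic H\"older bounds on the parametrix remainder along the degenerate directions at the minimal regularity level, which is precisely where the Besov duality tools developed in Section \ref{SEC_BESOV_DUAL_FIRST} will play the decisive role.
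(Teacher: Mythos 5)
Your outline follows the same broad route as the paper (mollify, forward parametrix around the frozen OU proxy, Besov duality for the degenerate directions, iterate in time, pass to the limit), but the central absorption step does not close as stated, and the reason is not a technicality.

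You claim that the perturbative remainder contributes a term $C h^\eta \|u^\varepsilon\|_{L^\infty([T-h,T],C^{2+\gamma}_{b,\d})}$ and that choosing $h$ small enough absorbs it. That is true for the sup-norm pieces (Proposition \ref{PROP_SUP_CTRL} does carry a $(T-t)^{\gamma/2}$ factor), but it fails for the H\"older-modulus pieces, which are where the estimate is actually critical. To bound $[D_{\x_1}^2 u]_{\gamma,\d}$ and $\sup_z[u(t,z,\cdot)]_{2+\gamma,\d}$ one must split the time integral at a cutoff $t_0=t+c_0\d^2(\x,\x')$ and use \emph{different} freezing points in the resulting off-diagonal and diagonal regimes (this change of freezing point produces a genuinely new discontinuity term, see \eqref{INTEGRATED_DIFF_BXI}, \eqref{PREAL_DISC} and Lemma \ref{CTR_TERME_DISC}, which your plan does not account for). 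The outcome (Proposition \ref{PROP_HOLDER_CTRL}) is a bound in which the coefficient of $\|u\|_{L^\infty(C^{2+\gamma}_{b,\d})}$ looks like
\begin{equation*}
\Lambda\bigl(c_0^{-(n-\frac12)+\frac\gamma2}+c_0^{\frac\gamma2}\bigr)+Cc_0^{\frac{\gamma}{2n-1}},
\end{equation*}
and this has \emph{no} factor that tends to $0$ as the time horizon shrinks. As $c_0\downarrow 0$ the last term vanishes but $\Lambda c_0^{-(n-\frac12)+\frac\gamma2}$ blows up, so the estimate closes only if the H\"older modulus constant $\Lambda$ is already much smaller than $c_0^{(n-\frac12)-\frac\gamma2}$, which is false for generic coefficients satisfying \A{A}.

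The paper resolves this with the anisotropic dilation: rescaling $\x\mapsto\lambda^{-1/2}\T_\lambda\x$ so that $u^\lambda(t,\x)=u(t,\lambda^{-1/2}\T_\lambda\x)$ solves \eqref{KOLMO_LAMBDA}. Because the proxy kernel gains one factor of $\lambda$ per spatial derivative (\eqref{THE_GOOD_SCALE_DER_CTR}) while the rescaled coefficients only lose a negative power of $\lambda$ through their H\"older moduli (\eqref{CORRESP_COEFF_LAMBDA}), the net effect in the diagonal regime is a surplus factor $\lambda^{\gamma/2}$ in front of the $\Lambda c_0^{-(n-\frac12)}$ blow-up (\eqref{THE_CTR_RESCALED}, \eqref{control_voulu_2}). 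Taking first $c_0$ small so that $Cc_0^{1/(2n-1)}<1$ and then $\lambda\ll c_0$ so small that $\Lambda\lambda^{\gamma/2}(c_0^{-(n-1/2)}+c_0^{\gamma/2})$ is negligible gives the absorbing constant $\bar c_0<1$. Without this scaling step, your argument stalls at the absorption; with it, your plan becomes essentially the paper's proof.
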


\textcolor{black}{Note that some Schauder estimates results were recently established for a non-local operator associated with a stable process.  Let us mention \cite{chau:meno:prio:20} for the non-degenerate case  ($n=1$) with super-critical drift (stability index strictly less than 1), \cite{hao:wu:zhan:19} for a kinetic case ($n=2$) and \cite{mari:19} for the complete degenerate chain. If the second order term in \eqref{KOLMO} is replaced by an $\alpha$-stable operator, $\alpha \in (0,2)$, then the parabolic regularity gain associated with the parabolic distance $\d$, established therein, is $\alpha$ instead of $2$ in Theorem \ref{THEO_SCHAU}.}
\\

%\textcolor{black}{The proof is postponed in Section \ref{section_mild_weak_sol}.}
Section \ref{GUIDE_TO_PROOF} below is dedicated to the presentation and description of the various steps that we perform to obtain Theorem \ref{THEO_SCHAU}. From now on we will denote by $C$ a generic constant that may change from line to line but only depends on known parameters in \A{A} and the considered fixed final time $T$, i.e. $C:=C(\A{A},T) $. We reserve the notation $c$ for generic constants that  may also change from line to line, depend on \A{A} but are also independent of $T$, i.e. $c:=c(\A{A}) $.

%\mysection*{Acknowledgment}

\mysection{Detailed Guide to the proof}
\label{GUIDE_TO_PROOF}
\textcolor{black}{The various steps of our procedure could be roughly \textcolor{black}{summed up} as follows: we first mollify the \textcolor{black}{coefficients in equation} \eqref{KOLMO} in order to work with well defined objects. 
\textcolor{black}{In the following, we will call, with a slight terminology abuse, by \textit{regularized} or \textit{mollified} solution of \eqref{KOLMO} the solution of \eqref{KOLMO} associated with the \textit{regularized} or \textit{mollified} coefficients.}}

\textcolor{black}{We then  derive the estimate of Theorem \ref{THEO_SCHAU} in this framework but uniformly in the \textcolor{black}{mollification parameter}. To do so, we will expand the regularized solution of  \eqref{KOLMO} around a well chosen proxy. 
This expansion will allow us to obtain an explicit representation of the mollified solution of  \eqref{KOLMO} for which we will derive the desired estimates in small time. A key point is that such a representation is of implicit form, so that, when applying our strategy,  the \textcolor{black}{upper-bound} of the H\"older estimate will involve the H\"older norm of the smoothed solution itself. To overcome this problem, the main idea consists in using a circular argument, \textcolor{black}{bringing} together the H\"older norms of the solutions on the same side of the inequality. This strategy then requires to obtain constants in front of the bounds depending of the solution as small as needed. This property will be fulfilled when working with an appropriately rescaled version of the smoothed solution. We can then transfer estimates on the (regularized) rescaled version of the solution to the original (regularized) one and then extend it \textcolor{black}{to} arbitrary time length interval\textcolor{black}{s} by using a chaining argument. 
%\textcolor{black}{reprendre et lisser ici aussi l'argument de temps petit, plus necessaire}
We then conclude the proof of the estimates in Theorem \ref{THEO_SCHAU} through \textcolor{black}{a} compactness argument, allowing us to get rid of the regularization parameters, and \textcolor{black}{eventually} show that the mild solution of  \eqref{KOLMO} is a weak solution thanks to suitable controls deriving from our analysis.}

\textcolor{black}{The main objective of this section is to \textcolor{black}{introduce} the approach shortly described above. Especially the derivation of the estimates in Theorem \ref{THEO_SCHAU} for the regularized solution in small time, involving norms of the solution itself. This part \textcolor{black}{is actually} the core of \textcolor{black}{the} paper (we refer to Section \ref{scaling} for the last steps of our procedure: \textcolor{black}{scaling arguments, finite time issues and weak solution property}). The main  point is \textcolor{black}{thus} to emphasize the various difficulties arising when applying our \textcolor{black}{strategy} and to introduce the adapted tools that can be used to circumvent them.\\  
}

\subsection{The mollifying procedure.} The first step of our strategy is to mollify equation \eqref{KOLMO} in order to get a well-posed Cauchy problem in the classical sense.
Precisely, 
for $\varphi \in C_0^2(\R^{nd},\R) $, $m\in \N $  and $t\in [0,T] $ we define the operator:
\begin{equation}
\label{MOLL_OPERATOR}
L_t^m \varphi(x):=\langle \gF_m(t,\x), {\mathbf D}\varphi(\x)\rangle +\frac 12{\rm Tr}\big( D_{\x_1}^2\varphi(\x)a_m(t,\x)\big),
\end{equation}
where $\gF_m ,a_m $ are mollified versions in space of the initial coefficients $\gF, a $ in \eqref{SPATIAL_OP}, i.e. $\gF_m(t,\x)=\gF(t,\cdot)\star \phi_m(\x), a_m(t,\x)=a(t,\cdot)\star \phi_m(\x)  $, where for any $\z\in \R^{nd},\ \phi_m(\z):=m^{nd}\phi(\z m) $ for a smooth, i.e. $C^\infty $, non-negative function $\phi:\R^{nd}\rightarrow \R^+ $ s.t. $\int_{\R^{nd}} \phi(\z) d\z=1$ and the previous convolutions are to be understood componentwise. We introduce correspondingly the stochastic differential equation with generator $(L_t^m)_{t\ge 0}$. Namely, for fixed $ (t,\x)\in [0,T]\times \R^{nd}$ and $s\ge t $,
\begin{equation}
\label{MOLL_SDE}
\X_s^{m,t,\x}=\x+\int_t^s \gF_m(u,\X_u^{m,t,\x})du+\int_t^s B \sigma_m(u,\X_u^{m,t,\x})dW_u,
\end{equation}
where $\sigma_m $ is a square root of $a_m $ \textcolor{black}{and $B=({\rm \mathbf{I}}_{d,d},\mathbf{0}_{d,d},\ldots,\mathbf{0}_{d,d})^{\textcolor{black}{*}}$ is the embedding matrix from $\R^{nd}$ to $\R^d$}. The dynamics in \eqref{MOLL_SDE} is similar to the one in \eqref{SYST} up to the mollification of the coefficients. It can be deduced from the well-posedness of the martingale problem, which holds under our current assumptions from \cite{chau:meno:17}, that $(\X_t^m)_{t\in [0,T]} \Rightarrow_m  (\X_t)_{t\in [0,T]}  $ (\textcolor{black}{convergence in law on the path space}) where $(\X_t)_{t\in [0,T]}$ is the unique weak solution of \eqref{SYST} (see also \cite{stro:vara:79}).

Consider now mollified versions $f_m,g_m $ of the source $f$ and the final condition $g$ in \eqref{KOLMO}. It is then rather direct to derive through stochastic flows techniques, see e.g. Kunita \cite{kuni:97}, that 
\begin{equation}
\label{REP_FK}
u_m(t,\x):=\E[g_m(\X_T^{m,t,\x})]+\int_t^T \E[f_m(s,\X_s^{m,t,\x})] ds,
\end{equation}
belongs for any given $m$ to $C_b^\infty(\R^{nd},\R) $ (space of infinitely differentiable functions with bounded derivatives) and precisely solves:
\begin{equation}
\label{KOLMO_m}
\begin{cases}
\partial_t u_m(t,\x)+ \langle \gF_m(t,\x), {\mathbf D}u_m(t,\x)\rangle +\frac 12{\rm Tr}\big( D_{\x_1}^2u_m(t,\x)a_m(t,\x)\big)=-f_m(t,\x),\ (t,\x)\in [0,T)\times \R^{nd},\\
 u_m(T,\x)=g_m(\x),\ \x\in \R^{nd}.
 \end{cases}
 \end{equation}

\subsection{Proxy and explicit representation of $u_m$.} The idea is now to obtain controls of the norms $\|u_m\|_{L^\infty( C_{b,\d}^{2+\gamma})} $ which are uniform w.r.t. the mollifying parameter $m$. To this end, we will use a perturbative method by expanding $u_m$ around a suitable Ornstein-Uhlenbeck like Gaussian proxy corresponding to an appropriate linearization of the dynamics in \eqref{MOLL_SDE}. Consider first the deterministic dynamics deriving from \eqref{MOLL_SDE} obtained setting $\sigma_m $ to \textcolor{black}{$\mathbf{0}_{d,d}$}, i.e.
\begin{equation}
\label{DYN_DET_SMOOTH}
\dot \btheta_{v,\tau}^m(\bxi)=\gF_m(v,\btheta_{v,\tau}^m(\bxi)),\ v\in [0,T],\  \btheta_{\tau,\tau}^m(\bxi)=\bxi,
\end{equation}
where $(\tau,\bxi) \in [0,T]\times \R^{nd}$ are \textit{freezing parameters}, respectively in time and space to be specified.

Fix $0\le t <s\le T $ and $\x\in \R^{nd} $. The typical linearization of \eqref{MOLL_SDE}  on the time interval $[t,s] $ around $(\btheta_{v,\tau}^m(\bxi))_{v\in [t,s]}$ writes:
\begin{eqnarray}\label{FROZ} 
&&\tilde \X_v^{m,(\tau,\bxi)}=\x + \int_t^v [\gF_m(r,\btheta_{r,\tau}^m(\bxi))+ D\gF_m(r,\btheta_{r,\tau}(\bxi))(\tilde \X_r^{m,(\tau,\bxi)}-\btheta_{r,\tau}^m(\bxi))]dr +\int_t^vB\sigma_m(r,\btheta_{r,\tau}^m(\bxi)) dW_r,\nonumber\\
%&& \tilde \X_t^{m,(\tau,\bxi)}=\x, 
\label{FROZ_MOL_FOR}
 \end{eqnarray}  
where for any $\z\in \R^{nd}$,\\ 
\begin{equation}\label{DEF_PARTIAL_GRADIENTS}
D\gF_m(v,\z):=\left (\begin{array}{ccccc}\0_{d,d} & \cdots & \cdots &\cdots  & \0_{d,d}\\
D_{\z_1}\gF_{m,2}(v,\z) & \0_{d,d} &\cdots &\cdots &\0_{d,d}\\
\0_{d,d} & D_{\z_2} \gF_{m,3}(v,\z_{2:n})& \0_{d,d}& \0_{d,d} &\vdots\\
\vdots &  \0_{d,d}                     & \ddots & \vdots & \textcolor{black}{\vdots}\\
\0_{d,d} &\cdots &     \0_{d,d}      & D_{\z_{n-1}}\gF_{m,n}(v,\z_{n-1},\z_n) & \0_{d,d}
\end{array}\right) 
\end{equation}
denotes the subdiagonal of the Jacobian matrix ${D_{\mathbf \z} \gF_m(v,\cdot)} $ at point $\z$. From our previous assumptions (non-degeneracy of $ \sigma$ and H\"ormander like condition), the Gaussian process with dynamics \eqref{FROZ_MOL_FOR} admits a well controlled multi-scale density $\tilde p^{m,(\tau,\bxi)}(t,s,\x,\cdot) $ (see e.g. Section \ref{SEC_GAUSS_DENS} below and for instance \cite{dela:meno:10}, \cite{chau:meno:17}). 
Namely, there exists $C:=C(\A{A},T)\ge 1$ s.t. for \textcolor{black}{$j\in \{0,1,2\} $, \textcolor{black}{for} all $k\in \leftB 1,n\rightB^2$, $\ell \in \{0,1\} $}, \textcolor{black}{and}  for all $0\le t<s\le T$, $(\x,\y)\in (\R^{nd})^2 $:
\begin{eqnarray}\label{FIRST_CTR_DENS}
|D_{\x_k}^\ell D_{\x_1}^j \tilde p^{m,(\tau,\bxi)}(t,s,\x,\y)| &\le& \frac{C}{(s-t)^{\ell(k-\frac{1}{2})+\frac{j}{2}+\frac{n^2 d}{2}} } \exp\left (-C^{-1}  (s-t)|\T_{s-t}^{-1}(\m^{m,(\tau,\bxi)}_{s,t}(\x)-\y)|^2\right)\notag\\
&=:& \textcolor{black}{\frac{C}{(s-t)^{\ell(k-\frac{1}{2})+\frac{j}{2}}}\bar p_{C^{-1}}^{(\tau,\bxi)}(t,s,\x,\y)},
\end{eqnarray}
where $\m^{m,(\tau,\bxi)}_{s,t}(\x) $ stands for the mean of $\tilde \X_s^{m,(\tau,\bxi)} $  and  for any $u>0 $, $\T_{u} $ is the \emph{intrinsic} scale matrix:
\begin{equation}
\label{DEF_T_ALPHA}
\T_u=\left( \begin{array}{cccc}
u\mathbf I_{d, d}& \0_{d, d}& \cdots& \0_{d, d}\\
\0_{d, d}   &u^2 \mathbf I_{d, d}&\0_{d,d}& \vdots\\
\vdots & \ddots&\ddots & \vdots\\
\0_{d, d}& \cdots & \0_{d,d}& u^{n}\mathbf I_{d , d}
\end{array}\right),
\end{equation}
that is, the $i^{\rm th} $ diagonal entry of $u^{-\frac 12}\T_u $ reflects the time order of the variances of the $(i-1)^{{\rm th}} $ iterated integral of the standard Brownian motion at time $u$. Observe as well that the time singularities in \eqref{FIRST_CTR_DENS} precisely reflect the typical scale of the associated variable, i.e. differentiating in $\x_k$ yields an additional time singularity in $(s-t)^{-k+\frac 12} $ where $(s-t)^{k-\frac 12} $ is exactly the order of  the standard deviation of the $(k-1)^{\rm th} $ iterated integral of the Brownian motion.

Denoting by 
$(\tilde L_v^{m,(\tau,\bxi)})_{v\in [t,T]} $ the generator of \eqref{FROZ_MOL_FOR}, it also holds that:
\begin{eqnarray*}
&&\big(\partial_s-(\tilde L_s^{m,(\tau,\bxi)})^* \big) \tilde p^{m,(\tau,\bxi)}(t,s,\x,\y)=0, \ \tilde p^{m,(\tau,\bxi)}(t,s,\textcolor{black}{\x,\cdot}) \rightarrow_{s\downarrow t}  \delta_{\textcolor{black}{\x}}(\cdot),\\
&&\big(\partial_t+\tilde L_t^{m,(\tau,\bxi)} \big) \tilde p^{m,(\tau,\bxi)}(t,s,\x,\y)=0, \ \tilde p^{m,(\tau,\bxi)}(t,s,\textcolor{black}{\cdot,\y}) \rightarrow_{t\uparrow s}  \delta_{\textcolor{black}{\y}}(\cdot).
\end{eqnarray*}
The above equations are respectively the \textit{forward} and \textit{backward} Kolmogorov equations. In the first one, the operator $(\tilde L_s^{m,(\tau,\bxi)})^* $ acts on the \textit{forward} variable $\y$ whereas in the  second one, $\tilde L_t^{m,(\tau,\bxi)}$ acts in the \textit{backward} variable $\x$. 
We will use the notation $\tilde P_{T,t}^{m,(\tau,\bxi)} $ for the corresponding semi-group, i.e. 
$$\tilde P_{T,t}^{m,\textcolor{black}{(\tau,\bxi)}} g_m(\x):=\int_{\R^{nd}}\tilde p^{m,\textcolor{black}{(\tau,\bxi)}}(t,T,\x,\y) g_m(\y) d\y,$$
as well as 
\begin{eqnarray}\label{FROZE_SG_GK_FORWARD}
\tilde G^{m,\textcolor{black}{(\tau,\bxi)}} f_m(t,\x)&:=&\int_t^T ds \int_{\R^{nd}}  \tilde p^{m,\textcolor{black}{(\tau,\bxi)}}(t,s,\x,\y)f_m(s,\y)d\y,
\end{eqnarray}
for the associated Green kernel (with fixed final time $T>0$).

%Two typical strategies are considered in the literature. Both consists in expanding 
%around the deterministic dynamics associated with \eqref{MOLL_SDE}, i.e. 

%\textcolor{black}{Note that our proxy will be set similarly to the one in \cite{dela:meno:10}, \cite{meno:10}, \cite{chau:meno:17} but in a forward framework like in \cite{chau:16}, \cite{chau:17} in order to have a true semi-group associated with this proxy ($\tilde P_{T,t}^{m,(\tau,\bxi)}$ below) which allows us to use cancellation techniques.}
 For fixed $(t,\x) \in [0,T]\times \R^{nd}$ and the above Gaussian proxy, for which $(\tau,\bxi) $ still remain to be specified, we recall that  Duhamel's formula (first order parametrix expansion) 
 %writes $u_m(t,\x)-\tilde P_{T,t}^{m, (\tau,\bxi)}g(\x)=\int_t^T ds \partial_s \tilde P_{T,s}^{m,(\tau,\bxi)}(u_m(s,\x)) $. %It 
 yields that:
\begin{equation}
\label{DUHAMEL_PERTURB}
u_m(t,\x)=\tilde P_{T,t}^{m,(\tau,\bxi)} g_m(\x)+ \tilde G^{m,(\tau,\bxi)} f_m(t,\x)+\int_t^T ds \int_{\R^{nd}} \tilde p^{m,(\tau,\bxi)}(t,s,\x,\y)(L_s^m-\tilde L_s^{m,(\tau,\bxi)})u_m(s,\y) d\y.
%\nonumber \\
\end{equation}
\textcolor{black}{Note for instance that the superscript $(\tau,\bxi)$, which stands for the freezing parameters, does not appear in the regularized solution $u_m$. This is because the smoothed solution does not depend on the freezing parameters. \textcolor{black}{Hence}, the above representation is valid for \textbf{any} choice of $(\tau,\bxi)$.\\}

\subsection{Estimates of the supremum norm of the second order derivative w.r.t. the non degenerate variables: introduction of the Besov duality argument.} Recall from the statement of our main Theorem \ref{THEO_SCHAU} that we have to give bounds on $\|u_m\|_{L^\infty(C_{b,\d}^{2+\gamma})}$. For this introduction to the proof, we will focus on the contribution $D_{\x_1}^2 u_m $, that already exhibits \textcolor{black}{almost} all the difficulties and for which we want to establish a control in time-space supremum norm and for the $\gamma$-H\"older modulus associated with the distance $\d$. 

Differentiating in $D_{\x_1}^2$ equation \eqref{DUHAMEL_PERTURB} gives:
\begin{eqnarray}
\label{DUHAMEL_PERTURB_DER_NON_DEG}
D_{\x_1}^2 u_m(t,\x)&=&D_{\x_1}^2\tilde P_{T,t}^{m,(\tau,\bxi)} g_m(\x)+ D_{\x_1}^2 \tilde G^{m,(\tau,\bxi)} f_m(t,\x)\notag\\
&&+\int_t^T ds \int_{\R^{nd}} D_{\x_1}^2 \tilde p^{m,(\tau,\bxi)}(t,s,\x,\y)(L_s^m-\tilde L_s^{m,(\tau,\bxi)})u_m(s,\y) d\y.
%\nonumber \\
\end{eqnarray}
Concentrating on the last term, which turns out to be the most delicate, we see that the choice of $(\tau,\bxi) $ must be made in order to \textcolor{black}{balance} the time singularities coming from $D_{\x_1}^2 \tilde p^{m,(\tau,\bxi)}(t,s,\x,\y)  $. Let us first consider the non-degenerate part coming from the difference $(L_s^m-\tilde L_s^{m,(\tau,\bxi)})u_m(s,\y) $ which explicitly writes from \eqref{MOLL_OPERATOR} and \eqref{FROZ_MOL_FOR}:
\begin{eqnarray}\label{def_Delta_a_F1}
&&\langle \gF_{m,1}(s,\y)-\gF_{m,1}(s,\btheta_{s,\tau}^m(\bxi)) , D_{\y_1}u_m(s,\y)\rangle+\frac 12{\rm Tr} \Big( (a_m(s,\y)-a_m(s,\btheta_{s,\tau}^m(\bxi))   )D_{\y_1}^2 u_m(s,\y)\Big)\notag\\
&=:& \Delta_{1,\gF_m,\sigma_m}(\tau,s,\y,\btheta_{s,\tau}^m(\bxi),u_m),\label{DEF_DIFF_NON_DEG}
\end{eqnarray}
%   De S a I et P.E. : j'ai modifie cette notation qui ne faisait pas explcitement apparaÃ¯Â¿Åtre la fonction $u$ elle meme et faisait apparaitre des termes d'ordre un et deux. Je souhaitais aussi l'isoler par rapport aux composantes degenerees. Ce n'est peut-etre pas genial comme notation. Les propositions sont ouvertes.
and can be upper-bounded %, uniformly in $m$, 
from the H\"older continuity assumption (w.r.t. the underlying homogeneous metric $\d $) on $\gF_1 $ and $a$  as:
\begin{eqnarray}\label{ineq_Holder_a_F1}%\Big|\frac 12 \Big( (a(s,\y)-a_m(s,\btheta_{s,\tau}^m(\bxi))   )D_{\x_1}^2 u_m(s,\y)\Big)\Big| 
&&|\Delta_{1,\gF_m,\sigma_m}(\tau,s,\y,\btheta_{s,\tau}^m(\bxi),u_m)|\nonumber\\
&\le& \Big( [\gF_1(s,\cdot)]_{\d,\gamma}\|D_{\y_1}u_m(s,\cdot)\|_{L^\infty}+\frac 12 [a(s,\cdot)]_{\d,\gamma}\|D_{\y_1}^2u_m(s,\cdot)\|_{L^\infty}\Big) \d^\gamma(\y,\btheta_{s,\tau}^m(\bxi)).
\end{eqnarray}
The contribution $\d^\gamma(\y,\btheta_{s,\tau}^m(\bxi)) $ in the above r.h.s. must then equilibrate the time singularity in $(s-t)^{-1} $ coming from $D_{\x_1}^2 \tilde p^{m,(\tau,\bxi)}(t,s,\x,\y) $ (see \eqref{FIRST_CTR_DENS} and Proposition  \ref{THE_PROP} below). This is possible if $\d^\gamma(\y,\btheta_{s,\tau}^m(\bxi))$ is \textit{compatible} with the off-diagonal bound $(s-t)|\T_{s-t}^{-1}(\m^{m,(\tau,\bxi)}_{s,t}(\x)-\y)|^2 $ in \eqref{FIRST_CTR_DENS}. This is precisely the case considering $(\tau,\bxi)=(t,\x) $ which gives 
%\textcolor{black}{
\begin{equation}\label{eq_m_theta_x}
\m^{m,(\tau,\bxi)}_{s,t}(\x)|_{(\tau,\bxi)=(t,\x)}=\btheta_{s,t}^m(\x),
\end{equation}
%}
as it can readily be checked from \eqref{DYN_DET_SMOOTH}, \eqref{FROZ_MOL_FOR} (taking the expectation) and the Gr\"onwall's lemma. Therefore,  observing precisely from the metric homogeneity (see equations \eqref{DIST_P} and \eqref{DIST}) that:
\begin{eqnarray*}
\d^\gamma(\y,\btheta_{s,t}^m(\x))&=& (s-t)^{\frac \gamma 2} \d^\gamma((s-t)^{\frac 12} \T_{s-t}^{-1}\y,(s-t)^{\frac 12} \T_{s-t}^{-1} \btheta_{s,t}^m(\x)) \\
&\le& C(s-t)^{\frac \gamma 2}\Big(\sum_{i=1}^n |(s-t)^{\textcolor{black}{-}\frac{2i-1}2} (\y-\btheta_{s,t}^m(\x))_i|^{\frac{\gamma}{2i-1}}\Big),
\end{eqnarray*}
we get that the terms of last contribution in the above r.h.s. can precisely be absorbed by the exponential off-diagonal bound in \eqref{FIRST_CTR_DENS}.

We therefore eventually derive for the non-degenerate contribution \textcolor{black}{with the notation of \eqref{FIRST_CTR_DENS}}:
\begin{eqnarray}\label{ineq_D2x1_Delta1}
&&\Big|\int_t^Tds \int_{\R^d} D_{\x_1}^2 \tilde p^{m,(\tau,\bxi)}(t,s,\x,\y) \Delta_{1,\gF_m,\sigma_m}(\tau,s,\btheta_{s,t}^m(\bxi) ,\y,u_m)d\y% \frac 12 \Big( (a_m(s,\y)-a_m(s,\btheta_{s,\tau}^m(\bxi))   )D_{\x_1}^2 u_m(s,\y)\Big)
\Big| \Bigg|_{(\tau,\bxi)=(t,\x)}
\notag\\
&\le& \int_{t}^{T}\frac{ds}{(s-t)^{1-\frac \gamma2}}\int_{\R^{nd}}  \textcolor{black}{C\big(\|a\|_{L^\infty(C_\d^\gamma)}+\|\gF_1\|_{L^\infty(C_\d^\gamma)} \big)}\textcolor{black}{ \bar p_{C^{-1}}^{(\tau,\bxi)}(t,s,\x,\y)}\notag\\
&&\times %\exp(-c(s-t)^{-1}|\T_{s-t}^{-1}(\btheta_{s,t}^m(\x)-\y|^{2})  
\Big(\|D_{\y_1}u_m(s,\cdot)\|_{L^\infty}+ \|D_{\y_1}^2u_m(s,\cdot)\|_{L^\infty}\Big)d\y \notag\\
&\le& \frac {2\textcolor{black}{\Lambda}}\gamma (T-t)^{\frac \gamma 2} \big(\|D_{\y_1}u_m\|_{L^{\infty}}+ \|D_{\y_1}^2 u_m\|_{L^{\infty}}\big)
\nonumber \\
&\le& \frac {2\textcolor{black}{\Lambda}}\gamma (T-t)^{\frac \gamma 2} \|u_m\|_{L^{\infty}(C_{b,\d}^{2+\gamma})} \label{FIRST_SMOOTHING_NON_DEG}.
\end{eqnarray}
\begin{REM}[Constants depending on the H\"older moduli of the coefficients]\label{REM_LAMBDA}
\textcolor{black}{In equation \eqref{FIRST_SMOOTHING_NON_DEG},  we denoted $\Lambda :=C\big(\|a\|_{L^\infty(C_\d^\gamma)}+\|\gF_1\|_{L^\infty(C_\d^\gamma)} \big)$, i.e.  $\Lambda $ explicitly depends on the H\"older moduli of the coefficients $a$ and $\gF_1$ on the time interval $[0,T] $. Importantly, in the following, we will keep the generic notation $\Lambda $ for any constant depending on the H\"older moduli of  $a,\gF $, but \textbf{not} on the supremum norms of $a$ and $(D_{}\gF_i)_{i\in \leftB 2,n\rightB} $ and such that $\Lambda \rightarrow 0 $ when the H\"older moduli of $a$, $\gF_1 $ and for any  $i\in \leftB 2,n\rightB $, $\gF_i $ w.r.t. to the variables $i$ to $n$,  themselves tend to 0. In other words, $\Lambda $ is meant to tend to 0  when the coefficients do not vary much. In the computations below $\Lambda $ may change from line to line but will always enjoy the previous property.}
\end{REM}

Equation \eqref{FIRST_SMOOTHING_NON_DEG} thus precisely yields a time smoothing effect corresponding exactly to the H\"older continuity exponent $\gamma $ of the coefficients.
The previous choice of $(\tau,\bxi) $ is known as the \textit{forward} parametrix and seems adapted as soon as one is led to estimate derivatives of the solution. %This was for instance already the choice performed in the non-degenerate case by Il'in \textit{et al} \cite{ilin:kala:olei:62} or Friedman \cite{frie:64} in relation with Schauder estimates or by Di Francesco and Polidoro \cite{difr:poli:06} in the current degenerate Kolmogorov setting with a linear drift. \textcolor{black}{Attention ici aussi sur le forward!!!!!}

Let us mention that, as far as one is concerned with density estimates, which formally amounts to replace $u_m(s,\x),\ u_m(s,\y) $ in \eqref{DUHAMEL_PERTURB} with $p^m(t,T,\x,\z),\ p^m(s,T,\y,\z) $ (density at some fixed point  $\z\in \R^{nd} $ of $\X_T^m $ starting from $\x$ at time $t$), or with the well-posedness of the martingale problem, another choice, consisting in freezing in $(\tau,\bxi)=(s,\y) $ in the above equation, could also be considered. Note that the freezing parameters would here depend on the time and spatial integration variables. This \textit{backward} approach was first introduced by Il'in \textit{et al} \cite{ilin:kala:olei:62} (see also Friedman \cite{frie:64}  or McKean and Singer \cite{mcke:sing:67}) and  led successfully to density estimates and well-posedness of the martingale problem for the current model \eqref{SYST} in the respective works \cite{dela:meno:10}, \cite{meno:17}, \cite{chau:meno:17}. 

However, when dealing with derivatives, the forward perturbative approach appears more flexible since it allows to exploit cancellation techniques whereas this is much trickier in the backward case for which $\tilde p^{m,(s,\y)}(t,s,\x,\y) $ is not a density w.r.t. $\y$. Some associated errors associated with this approach are thoroughly discussed in \cite{chau:meno:17}. 

\textcolor{black}{Let us emphasize that, in view of the above calculations, from now on, the choice of the freezing parameters $(\tau,\bxi)$ will \textcolor{black}{\textit{by default}} be $(\tau,\bxi)=(t,\x)$. We will  hence sometimes forget the superscript on quantities that depend on these parameters for \textcolor{black}{the} sake of clarity and assume implicitly this choice. It may \textcolor{black}{happen} in the following that another choice for the freezing space point $\bxi$ will be done. If so, we will \textcolor{black}{specify} it. In any case, when the regularized solution $u_m$ will be evaluated at point $t$ in $[0,T]$, we will choose $\tau=t$ so that this choice shall be assumed in the following.}\\

Let us now turn to the contributions associated with the degenerate variables in the difference $(L_s^m-\tilde L_s^{m,(\tau,\bxi)})u_m(s,\y)$. %also contains contributions . %For the previous choice of the freezing parameter $(\tau,\bxi)=(t,\x) $, ambigu a ce stade
They precisely write:
\begin{eqnarray}
%&&\sum_{i=2}^n\Big(\gF_i(s,\y)-\gF_i(s,\btheta_{s,t}(\x))-D_{\x_{i-1}} \gF_i(s,\btheta_{s,t}(\x)) (\y-\btheta_{s,t}(\x))_{i-1} \Big) \partial_{\y_i} u_m(s,\y)\notag\\
&&\sum_{i=2}^n\Big\langle \Big(\gF_{m,i}(s,\y)-\gF_{m,i}(s,\btheta_{s,\tau}^m(\bxi))-D_{\x_{i-1}} \gF_{m,i}(s,\btheta_{s,\tau}^m(\bxi)) (\y-\btheta_{s,\tau}^m(\bxi))_{i-1} \Big), D_{\y_i} u_m(s,\y)\Big \rangle\notag\\
&=:&\sum_{i=2}^n \big \langle  \Delta_{i,\gF_m}(\tau,s,\btheta_{s,t}^m(\bxi),\y) , D_{\y_i} u_m(s,\y) \big \rangle. \label{DEF_TERMES_DEG}
\end{eqnarray}
Under the current assumptions, we do not expect to have uniform controls w.r.t. to the smoothing parameter $m$ for the derivatives $(D_{\y_i} u_m)_{i\in \leftB 2,n\rightB} $ in the degenerate directions. Our strategy will first consist for those terms in performing an integration by parts leading to:
\begin{eqnarray}
&&\Big|\sum_{i=2 }^n\int_t^T ds \int_{\R^{nd}} d\y D_{\x_1}^2 \tilde p^{m,(\tau,\bxi)} (t,s,\x,\y) \big \langle \Delta_{i,\gF_m}(\tau,s,\btheta_{s,t}^m(\bxi),\y),D_{\y_i} u_m(s,\y) \big \rangle \Big|\Bigg|_{(\tau,\bxi)=(t,\x)}\notag\\
&\le& \sum_{i=2 }^n\int_t^T ds \Big|\int_{\R^{nd}} d\y  D_{\y_i}\cdot  \Big( \big (D_{\x_1}^2 \tilde p^{m,(\tau,\bxi)}(t,s,\x,\y)  \otimes \Delta_{i,\gF_m}(\tau,s,\btheta_{s,t}^m(\bxi),\y) \big )\Big) u_m(s,\y) \Big|\Bigg|_{(\tau,\bxi)=(t,\x)},\notag\\
%&=:& \sum_{i=2 }^n\int_t^T ds \Big|\int_{\R^{nd}} d\y D_{\y_i} \cdot \big (\Theta_{i,(t,\x)}^{m,\vartheta}(s,\y) \big )  u_m(s,\y)  \Big| \Bigg|_{(\tau,\bxi)=(t,\x)},
\label{DUALITE_PREAL_BESOV}
\end{eqnarray}
\textcolor{black}{where the notation ``$\otimes$" stands for the usual tensor product. In particular, the term $\big (D_{\x_1}^2 \tilde p^{m,(\tau,\bxi)}(t,s,\x,\y)  \otimes \Delta_{i,\gF_m}(\tau,s,\btheta_{s,t}^m(\bxi),\y) \big )$ is a tensor lying in $(\R^d)^{\otimes 3}$. Furthermore, $D_{\y_i}\cdot$ refers to an extended form of the divergence over the $i^{\rm {th}}$ variable ($\y_i \in \R^d$). Precisely, from \eqref{DUALITE_PREAL_BESOV}, we rewrite for all $i \in \leftB 2,n \rightB$, $(s,\y) \in [t,T] \times \R^{nd}$: 
\begin{eqnarray*}
% D_{\y_i} \cdot \big (\Theta_{i,(t,\x)}^{m,\vartheta}(s,\y) \big )  
&&D_{\y_i}\cdot  \Big( \big (D_{\x_1}^2 \tilde p^{m,(\tau,\bxi)}(t,s,\x,\y)  \otimes \Delta_{i,\gF_m}(\tau,s,\btheta_{s,t}^m(\bxi),\y) \big )\Big)\\
&=& \sum_{j=1}^d  \partial_{\y_i^j} \Big (D_{\x_1}^2 \tilde p^{m,(\tau,\bxi)}(t,s,\x,\y)   \big ( \Delta_{i,\gF_m}(\tau,s,\btheta_{s,t}^m(\bxi),\y) \big )_j \Big ),%\big (\Theta_{i,(t,\x)}^{m,\vartheta}(s,\y) \big )_j ,
\end{eqnarray*}
with $\y_i= (\y_i^1, \cdots, \y_i^n)$.
% and for all $j \in \leftB 1, d \rightB$$$ \big (D_{\x_1}^2 \tilde p^{m,(\tau,\bxi)}(t,s,\x,\y)  \otimes \Delta_{i,\gF_m}(\tau,s,\btheta_{s,t}^m(\bxi),\y) \big )_j = D_{\x_1}^2 \tilde p^{m,(\tau,\bxi)}(t,s,\x,\y)   \big ( \Delta_{i,\gF_m}(\tau,s,\btheta_{s,t}^m(\bxi),\y) \big )_j.$$ 
In other words, this ``enhanced"  divergence form decreases by one the order of the input tensor.
As a particular case, if $d=1$, $ \Delta_{i,\gF_m}(\tau,s,\btheta_{s,t}^m(\bxi),\y) $ is a scalar and the divergence form corresponds to the standard differentiation, i.e. $D_{\y_i} \cdot=\partial_{\y_i} $.}

Introduce now for notational convenience, for a multi-index $\vartheta \in \N^{\textcolor{black}{ n}}$ the quantity:
$$ \Theta_{i,(t,\x)}^{m,\vartheta}(s,\y):=D_{\x}^\vartheta \tilde p^{m,(\tau,\bxi)}(t,s,\x,\y)  \otimes \Delta_{i,\gF_m}(\tau,s,\btheta_{s,t}^m(\bxi),\y).$$
With this notation at hand equation \eqref{DUALITE_PREAL_BESOV} rewrites:
\begin{eqnarray}
&&\Big|\sum_{i=2 }^n\int_t^T ds \int_{\R^{nd}} d\y D_{\x_1}^2 \tilde p^{m,(\tau,\bxi)} (t,s,\x,\y) \big \langle \Delta_{i,\gF_m}(\tau,s,\btheta_{s,t}^m(\bxi),\y),D_{\y_i} u_m(s,\y) \big \rangle \Big|\Bigg|_{(\tau,\bxi)=(t,\x)}\notag\\
&=&\sum_{i=2 }^n\int_t^T ds \Big|\int_{\R^{nd}} d\y D_{\y_i} \cdot \big (\Theta_{i,(t,\x)}^{m,\vartheta}(s,\y) \big )  u_m(s,\y)  \Big| \Bigg|_{(\tau,\bxi)=(t,\x)},\label{DUALITE_PREAL_BESOV_WITH_THETA}
\end{eqnarray}
with $\vartheta=(2,0,\cdots,0) $, i.e. the multi-index here involves the second order derivatives of the frozen heat-kernel
w.r.t. to its non-degenerate components.
\\

In view of our main estimates in Theorem \ref{THEO_SCHAU}, we will use the duality between suitable Besov spaces to derive  bounds for the spatial integrals in \eqref{DUALITE_PREAL_BESOV_WITH_THETA}. 
Introduce, for each fixed $i\in \leftB 2, n\rightB  $ and any spatial point $(\y_1,\cdots, \y_{i-1},\y_{i+1}, \cdots,\y_n)=:(\y_{1:i-1},\y_{i+1:n})\in \R^{(n-1)d} $ the mappings 
\begin{eqnarray}
u_m^{i, (s,\y_{1:i-1},\y_{i+1:n})}:\y_i &\mapsto& u_m(s,\y_{1:i-1},\y_i,\y_{i+1:n}),\notag\\
\Psi_{i,(t,\x),(s,\y_{1:i-1},\y_{i+1:n})}^{m,\vartheta}: \y_i&\mapsto & D_{\y_i} \cdot \big (\Theta_{i,(t,\x)}^{m,\vartheta}(s,\y) \big ). 
\label{DEF_PARTIAL_FUNC_TO_LEMMA_BESOV}
\end{eqnarray}
The underlying idea is that we actually want, for the $i^{\rm th}  $ variable, to control uniformly in $m$ the H\"older modulus $[u_m^{i, (s,\y_{1:i-1},\y_{i+1:n})}]_{\frac{2+\gamma}{2i-1}} $ uniformly in $(s,\y_{1:i-1},\y_{i+1:n})\in [t,T]\times \R^{(n-1)d} $.
To prove this property we recall that, setting $\tilde \alpha_i:=\frac{2+\gamma}{2i-1} $, $C_{b}^{\tilde \alpha_i}(\R^d,\R)=B_{\infty,\infty}^{\tilde \alpha_i}(\R^d,\R) $ with the usual notations for Besov spaces (see e.g. Triebel \cite{trie:83}). 
% Deja dit plus haut, pas la peine de repeter.
%We aim at controlling the integrals in \eqref{DEF_PARTIAL_FUNC_TO_LEMMA_BESOV} thanks to some duality results between Besov spaces.

Let us now recall some definitions/characterizations from Section 2.6.4 of Triebel \cite{trie:83}.
For $\tilde \alpha \in \R, q\in (0,+\infty] ,p \in (0,\infty] $, $B_{p,q}^{\tilde \alpha}(\R^d):=\{f\in {\mathcal S}'(\R^d): \|f\|_{{\mathcal H}_{p,q}^{\tilde \alpha}}<+\infty \} $ where ${\mathcal S}(\R^d) $ stands for the Schwarz class and 
\begin{equation}
\label{THERMIC_CAR_DEF}
\|f\|_{{\mathcal H}_{p,q}^{\tilde \alpha}}:=\|\varphi(D) f\|_{L^p(\R^d)}+ \Big(\int_0^1 \frac {dv}{v} v^{(m-\frac{\tilde \alpha} 2)q}    \|\partial_v^m h_v\star f\|_{L^p(\R^d)}^q \Big)^{\frac 1q},
\end{equation}
with $\varphi \in C_0^\infty(\R^d)$ (smooth function with compact support) is s.t. $\varphi(0)\neq 0 $, $\varphi(D)f := (\varphi \hat f)^{\vee} $ where $\hat f$ and $(\varphi \hat f)^\vee $ respectively denote the Fourier transform of $f$ and the inverse  Fourier transform of $\varphi \hat f $. \textcolor{black}{When $\tilde \alpha > d  (1/p-1) \vee 0 $ then  $\|\varphi(D) f\|_{L^p(\R^d)}$ in \eqref{THERMIC_CAR_DEF} can be replaced by $\|f\|_{L^p(\R^d)}$}. 
The parameter $m$ is an integer s.t. $m>\frac{\tilde \alpha} 2 $ and for $v>0$, $z\in \R^d $, $h_v(z):=\frac{1}{(2\pi v)^{\frac d2}}\exp \big(-\frac{|z|^2}{2v} \big)$ is the usual heat kernel of $\R^d$. We point out that the quantities in \eqref{THERMIC_CAR_DEF} are well defined for $q<\infty $. The modifications for $q=+\infty $ are obvious and can be written passing to the limit.

Observe that the quantity $\|f\|_{{\mathcal H}_{p,q}^{\tilde \alpha}} $, where the subscript ${\mathcal H} $ stands to indicate the dependence on the heat-kernel, depends on the considered function $\varphi $ and the chosen $m\in \N$. It also defines a quasi-norm on $B_{p,q}^s(\R^d) $. The previous definition of $B_{p,q}^{\tilde \alpha}(\R^d) $ is known as the thermic characterization of Besov spaces and is particularly well adapted to our current framework.
By abuse of notation we will write as soon as this quantity is finite $\|f\|_{{\mathcal H}_{p,q}^{\tilde \alpha}}=:\|f\|_{B_{p,q}^{\tilde \alpha}} $.

%\textcolor{black}{Minimal definitions of Besov spaces needed}.

As indicated above, it is easily seen from \eqref{THERMIC_CAR_DEF} that $C_{b}^{\tilde \alpha_i}(\R^d,\R)=B_{\infty,\infty}^{\tilde \alpha_i}(\R^d,\R) $.
It is also well known that $B_{\infty,\infty}^{\tilde \alpha_i} (\R^d,\R)$ and $B_{1,1}^{-\tilde \alpha_i} (\R^d,\R)$ are in duality (see e.g. Proposition 3.6 in \cite{lemar:02}). \textcolor{black}{Namely $B_{\infty,\infty}^{\tilde \alpha_i}$ is the dual of the closure of the Schwartz class $\mathcal S$ in $B_{1,1}^{-\tilde \alpha_i}$. But $\mathcal S$ is dense in  $B_{1,1}^{-\tilde \alpha_i}$ (see for instance Theorem 4.\textcolor{black}{1}.3 in \cite{adam:hedb:96})}.
We will therefore write from \eqref{DUALITE_PREAL_BESOV} and with the notations of \eqref{DEF_PARTIAL_FUNC_TO_LEMMA_BESOV} 
\begin{eqnarray}
&&\sum_{i=2 }^n\int_t^T ds \Big|\int_{\R^{nd}} d\y D_{\y_i} \cdot \big (\Theta_{i,(t,\x)}^{m,\vartheta}(s,\y) \big )  u_m(s,\y)  \Big| \Bigg|_{(\tau,\bxi)=(t,\x)}\notag\\
&\le& \sum_{i=2 }^n\int_t^T ds \int_{\R^{(n-1)d}} d(\y_{1:i-1},\y_{i+1:n}) \|\Psi_{i,(t,\x),(s,\y_{1:i-1},\y_{i+1:n})}^{m,\vartheta}\|_{B_{1,1}^{-\tilde \alpha_i}} \|u_m^{i, (s,\y_{1:i-1},\y_{i+1:n})}\|_{B_{\infty,\infty}^{\tilde \alpha_i}}\notag\\
&\le & \textcolor{black}{C}\|u_m\|_{L^\infty(C_{b,\d}^{2+\gamma})} \sum_{i=2 }^n\int_t^T ds \int_{\R^{(n-1)d}} d(\y_{1:i-1},\y_{i+1:n}) \|\Psi_{i,(t,\x),(s,\y_{1:i-1},\y_{i+1:n})}^{m,\vartheta}\|_{B_{1,1}^{-\tilde \alpha_i}}.\label{PREAL_CTR_BESOV_DEG_UG}\end{eqnarray}
Exploiting the thermic characterization of Besov spaces (see again  \eqref{THERMIC_CAR_DEF} and Section \ref{CTR_DER_SUP}%\ref{DUHAMEL_PERTURB_DER_NON_DEG} 
),  it will be shown in Lemma \ref{LEMME_BESOV_DEG} below that there exists a constant $\Lambda:=\Lambda(\A{A},T)$ as in Remark \ref{REM_LAMBDA} s.t. for all $i\in \leftB 2,n\rightB $ and  $m\in \N $:
\begin{equation}
\label{CTR_BESOV_TO_INTEGRATE_IN_TIME}
\int_{\R^{(n-1)d}} d(\y_{1:i-1},\y_{i+1:n}) \|\Psi_{i,(t,\x),(s,\y_{1:i-1},\y_{i+1:n})}^{m,\vartheta}\|_{B_{1,1}^{-\tilde \alpha_i}}\le \frac{\Lambda}{(s-t)^{1-\frac \gamma 2}}.
\end{equation}
Therefore:
\begin{equation}
\sum_{i=2 }^n\int_t^T ds \int_{\R^{(n-1)d}} d(\y_{1:i-1},\y_{i+1:n}) \|\Psi_{i,(t,\x),(s,\y_{1:i-1},\y_{i+1:n})}^{m,\vartheta}\|_{B_{1,1}^{-\tilde \alpha_i}}\le \Lambda(T-t)^{\frac \gamma2},
\end{equation}
which plugged into \eqref{PREAL_CTR_BESOV_DEG_UG} eventually gives the following  global smoothing effect for the degenerate contributions. Namely,
\begin{eqnarray}
&&\Big|\sum_{i=2 }^n\int_t^T ds \int_{\R^{nd}} d\y D_{\x_1}^2 \tilde p^{m,(\tau,\bxi)}  (t,s,\x,\y)\big \langle \Delta_{i,\gF_m}(\tau,s,\btheta_{s,t}^m(\bxi),\y)D_{\y_i} u_m(s,\y) \big \rangle  \Big|\Bigg|_{(\tau,\bxi)=(t,\x)}\notag\\
&\le& \Lambda(T-t)^{\frac{\gamma}2} \textcolor{black}{\|u_m\|_{L^\infty(C_{b,\d}^{2+\gamma})}}, \label{THE_DEG_POINT_DX1_2}
\end{eqnarray}
which is precisely homogeneous to the bound obtained for the non-degenerate variables in \eqref{FIRST_SMOOTHING_NON_DEG}. In both cases, the contribution $(T-t)^{\frac{\gamma}2} $ derives from the assumed smoothness of the coefficients $a,\gF $ w.r.t. $\d $ which exactly leads to the same global control for the \textit{a priori} most singular part of expansion \eqref{DUHAMEL_PERTURB_DER_NON_DEG}.

From the previous bounds  and  \eqref{DUHAMEL_PERTURB_DER_NON_DEG} we thus obtain:
\begin{equation}
|D_{\x_1}^2 u_m(t,\x)|\le \Big(|D_{\x_1}^2\tilde P_{T,t}^{m,(\tau,\bxi)}g_m (\x)|+|D_{\x_1}^2 \tilde G^{m,(\tau,\bxi)}f_m(t,\x)|\Big)\Big|_{(\tau,\bxi)=(t,\x)}+\Lambda(T-t)^{\frac{\gamma}2}\|u_m\|_{L^\infty(C_{b,\d}^{2+\gamma})}. %\notag\\
 \label{CTR_BORNE_SUP_VAR_DEG}
\end{equation}
Since $\tilde P_{T,t}^{m,(\tau,\xi)}$ is a \textit{true} semi-group, and $\tilde G^{m,(\tau,\bxi)} $ the associated Green kernel (precisely because we used a \textit{forward} perturbative expansion), it will be derived in Lemma \ref{CTR_SEMI_GROUP_GREEN_FROZEN} (thanks to cancellation techniques) that there exists $C:=C(\A{A})$ s.t. for any $(t,\x)\in [0,T] \times \R^{nd} $:
\begin{eqnarray}
\label{CTR_TRUE_D2X1_SG_AND_G}
\Big(|D_{\x_1}^2\tilde P_{T,t}^{m,(\tau,\bxi)}g_m (\x)|+|D_{\x_1}^2 \tilde G^{m,(\tau,\bxi)}f_m(t,\x)|\Big)\Big|_{(\tau,\bxi)=(t,\x)}&\le& C(\| g_m\|_{C_{b,\d}^{2+\gamma}}+(T-t)^{\frac \gamma 2} \|f_m\|_{L^\infty(C_{b,\d}^\gamma)})\notag\\
&\le & C (\| g\|_{C_{b,\d}^{2+\gamma}}+ (T-t)^{\frac \gamma 2}\|f\|_{L^\infty(C_{b,\d}^\gamma)}).\notag\\
\end{eqnarray}
Equation \eqref{CTR_TRUE_D2X1_SG_AND_G}
eventually leads to the following estimate on $|D_{\x_1}^2 u_m(t,\x)| $:
\begin{equation}
\label{CTR_D2_PROV_UM}
|D_{\x_1}^2 u_m(t,\x)|\le C\Big(\textcolor{black}{\| g\|_{C_{b,\d}^{2+\gamma}}}+ (T-t)^{\frac \gamma 2}( \|f\|_{L^\infty(C_{b,\d}^\gamma)}\Big)+\Lambda (T-t)^{\frac \gamma2}\|u_m\|_{L^\infty(C_{b,\d}^{2+\gamma})}.
\end{equation}
For $T$ small enough, this equation would be compatible with the estimates of Theorem \ref{THEO_SCHAU}. Equation \eqref{CTR_D2_PROV_UM} might even seem \textit{too strong} since it also exhibits, additionally to the control of the term associated with the perturbation, a small contribution (in $(T-t)^{\frac \gamma 2} $ for a small enough $T$) w.r.t. to the source $ f_m$. %and final condition $g_m$.
This is precisely because $\|D_{\x_1}^2u_m(t,\cdot)\|_{L^\infty} $ is not one of the \textit{critical} terms in the H\"older norm $\|u_m(t,\cdot)\|_{C_{b,\d}^{2+\gamma}} $, i.e. the regularity of the coefficients still gives that it can be viewed as a remainder at first sight.\\

\subsection{Estimates on the H\"older modulus of the second order derivative w.r.t. the non degenerate variables: introduction of the various change of regime of the system.}\label{subsec_detailedguide_regime}
Now, a typical critical term  of the H\"older norm, for which we precisely exploit totally the spatial regularity of the coefficients, is $[D_{\x_1}^2 u_m(t,\cdot)]_{\d}^\gamma $ \textcolor{black}{(see assumption \A{S} and \eqref{EQUIV_NORME_H_HD})}. Let us now detail how we can handle it and in which sense it can be viewed as critical.

%%%%% Faut-il deja jouer sur les constantes a ce niveau la?
\textcolor{black}{Of course, if $g=0$, for $t\in [0,T]  $ and  given spatial points $(\x,\x')\in (\R^{nd})^2 $ we can assume w.l.o.g. that, \textcolor{black}{for some constant $c_0$ to be specified later on and meant to be \textit{small}},  $c_0^{\frac 12}\d(\x,\x')\le (T-t)^{\frac 12}  $, i.e. the spatial points are \textit{close} w.r.t. the characteristic time scale $(T-t)^{\frac 12} $ for the homogeneous metric $\d $. Indeed, if $c_0^{\frac 12 }\d(\x,\x')>(T-t)^{\frac 12}   $, equation \eqref{CTR_D2_PROV_UM} readily gives:
\begin{eqnarray}\label{ineq_D2u_T_sup_dxx}
|D_{\x_1}^2 u_m(t,\x)-D_{\x_1}^2 u_m(t,\x')|&\le& |D_{\x_1}^2 u_m(t,\x)|+|D_{\x_1}^2 u_m(t,\x')|\nonumber \\
&\le& 2 (T-t)^{\frac \gamma 2}( C\|f\|_{L^\infty(C_{b,\d}^\gamma)}+\textcolor{black}{\Lambda} \|u_m\|_{L^\infty(C_{b,\d}^{2+\gamma})}) \nonumber \\
&\le& 2 \textcolor{black}{c_0^{\frac \gamma 2}}\d^\gamma(\x,\x')( C\|f\|_{L^\infty(C_{b,\d}^\gamma)}+\textcolor{black}{\Lambda} \|u_m\|_{L^\infty(C_{b,\d}^{2+\gamma})}).
\end{eqnarray}}
\textcolor{black}{Note that the above bound is now critical in the sense discussed above.} Let us now focus, as before, on the H\"older control associated with the perturbative contribution in \eqref{DUHAMEL_PERTURB_DER_NON_DEG} when $c_0^{\frac 12}\d(\x,\x')\le (T-t)^{\frac 12} $. Namely,
\begin{eqnarray}
\label{DECOU_PREAL_HOLDER_PERTURB}
D_{\x_1}^2  \Delta_m^{\tau,\bxi,\bxi'}(t,T,\x,\x') &:=&
\int_t^T ds \int_{\R^{nd}} D_{\x_1}^2 \tilde p^{m,(\tau,\bxi)}(t,s,\x,\y)(L_s^m-\tilde L_s^{m,(\tau,\bxi)})u_m(s,\y) d\y \notag \\
&&-\int_t^T ds \int_{\R^{nd}} D_{\x_1}^2 \tilde p^{m,(\tau,\bxi')}(t,s,\x',\y)(L_s^m-\tilde L_s^{m,(\tau,\bxi')})u_m(s,\y) d\y ,
%\nonumber \\
\end{eqnarray}
where we recall that \textit{a priori} the spatial freezing points $(\bxi,\bxi') $ in \eqref{DECOU_PREAL_HOLDER_PERTURB} (see also \eqref{DUHAMEL_PERTURB_DER_NON_DEG}) should be different for $ \x$ and $\x'$ and 
 depend on the position of $\d(\x,\x') $ w.r.t. the current characteristic time scale in the time integral. Following the terminology of heat kernels, we will say that at time $s\in [t,T] $ the points $\x,\x' $ are in the \textit{diagonal regime} if $c_0\d^2(\x,\x')\le s-t $, i.e. their homogeneous distance is small w.r.t. the characteristic time for a parameter $c_0 $ to be specified later on. 

We insist again that we have the usual equivalence between time and space, i.e. time has to be compared with the square of the spatial metric \textcolor{black}{$\d$}. Similarly, we will say that  the \textit{off-diagonal} regime holds when $c_0\d^2(\x,\x')>(s-t) $. Observing that in the diagonal case $s\ge t+c_0\d^2(\x,\x') $ (and in the off-diagonal one $ s< t+c_0\d^2(\x,\x')$) we split the time integral in \eqref{DECOU_PREAL_HOLDER_PERTURB} as:
\begin{equation*}
\Delta_m^{\tau,\bxi,\bxi'}(t,T,\x,\x'):=\Delta_{m,{\rm \textbf{diag}}}^{\tau,\bxi,\bxi'}(t,T,\x,\x')+\Delta_{m,{\rm \textbf{off-diag}}}^{\tau,\bxi,\bxi'}(t,\x,\x'),%\notag \\
\end{equation*}
with
\begin{eqnarray}
\Delta_{m,{\rm \textbf{off-diag}}}^{\tau,\bxi,\bxi'}(t,\x,\x')&:=&\int_t^{t+c_0\d^2(\x,\x')}\!\!\!\!\!\!\!\!\!\! ds \int_{\R^{nd}}  \tilde p^{m,(\tau,\bxi)}(t,s,\x,\y)(L_s^m-\tilde L_s^{m,(\tau,\bxi)})u_m(s,\y) d\y\notag\\
&&-\int_t^{t+c_0\d^2(\x,\x')} \!\!\!\!\!\!\!\!\!\! ds \int_{\R^{nd}} \tilde p^{m,(\tau,\bxi')}(t,s,\x',\y)(L_s^m-\tilde L_s^{m,(\tau,\bxi')})u_m(s,\y) d\y,\notag\\
\Delta_{m,{\rm \textbf{diag}}}^{\tau,\bxi,\bxi'}(t,T,\x,\x')&:=&\int_{t+c_0\d^2(\x,\x')}^T\!\!\!\!\!\!\!\! ds \int_{\R^{nd}}  \tilde p^{m,(\tau,\bxi)}(t,s,\x,\y)(L_s^m-\tilde L_s^{m,(\tau,\bxi)})u_m(s,\y) d\y\notag\\
&&-\int_{t+c_0\d^2(\x,\x')}^T \!\!\!\!\!\!\!\!ds \int_{\R^{nd}} \tilde p^{m,(\tau,\bxi')}(t,s,\x',\y)(L_s^m-\tilde L_s^{m,(\tau,\bxi')})u_m(s,\y) d\y.\notag\\
\label{DECOUP_MOD_HOLDER}
\end{eqnarray}
Intuitively, for the term $ D_{\x_1}^2 \Delta_{m,{\rm \textbf{off-diag}}}^{\tau,\bxi,\bxi'}(t,\x,\x')$, since $\x,\x' $ are \textit{far} at the characteristic 
time scale $(s-t)^{\frac12}$, there is no expectable gain in expanding $D_{\x_1}^2 \tilde p^{m,(\tau,\bxi')}(t,s,\x',\y) -D_{\x_1}^2 \tilde p^{m,(\tau,\bxi)}(t,s,\x,\y)$. One therefore writes:
\begin{eqnarray}
&&| D_{\x_1}^2 \Delta_{m,{\rm \textbf{off-diag}}}^{\tau,\bxi,\bxi'}(t,\x,\x')|\notag \\
&\le& |\int_t^{t+c_0\d^2(\x,\x')} ds \int_{\R^{nd}}D_{\x_1}^2  \tilde p^{m,(\tau,\bxi)}(t,s,\x,\y)(L_s^m-\tilde L_s^{m,(\tau,\bxi)})u_m(s,\y) d\y|\notag \\
&&+|\int_t^{t+c_0\d^2(\x,\x')} ds \int_{\R^{nd}}D_{\x_1}^2   \tilde p^{m,(\tau,\bxi')}(t,s,\x',\y)(L_s^m-\tilde L_s^{m,(\tau,\bxi')})u_m(s,\y) d\y|.\notag
\end{eqnarray}
Now, provided $\bxi=\x$, $\bxi'=\x' $ one derives from the previous equation, similarly to \eqref{PREAL_CTR_BESOV_DEG_UG}, \eqref{CTR_BESOV_TO_INTEGRATE_IN_TIME}, that 
\begin{equation}
| D_{\x_1}^2 \Delta_{m,{\rm \textbf{off-diag}}}^{\tau,\bxi,\bxi'}(t,\x,\x')|\le \Lambda \|u_m\|_{L^\infty(C_{b,\d}^{2+\gamma})}  \int_{t}^{t+c_0 \d^2(\x,\x')}\frac{ds}{(s-t)^{1-\frac \gamma 2}}\le \Lambda \|u_m\|_{L^\infty(C_{b,\d}^{2+\gamma})} c_0^\frac\gamma 2{}\d^\gamma(\x,\x').%\notag\\
\label{PREAL_CTR_OFF_DIAG}
\end{equation}
For $c_0$ small enough, we obtain again an estimate that would be compatible with the global bound on $\|u_m\|_{L^\infty(C_{b,\d}^{2+\gamma})} $ stated in Theorem \ref{THEO_SCHAU}.

Turning now to $ D_{\x_1}^2 \Delta_{m,{\rm \textbf{diag}}}^{\tau,\bxi,\bxi'}(t,T,\x,\x')$ one would therefore be tempted to carry on the analysis with the previous freezing points $\bxi=\x$, $\bxi'=\x' $. Intuitively, in the diagonal regime this should not have too much impact. This is only partly true, since if we proceed so we will be led to investigate the difference of operators at different freezing spatial points and this leads to compare quantities like $\btheta_{s,t}^m(\x)-\btheta_{s,t}^m(\x') $ for which we want a uniform  control w.r.t. $m$. Since the initial (unmollified) coefficients $a,\gF$ are only H\"older continuous in space,  this quantity is typically controlled (see \textcolor{black}{Lemma \ref{lem_theta_theta}}) as:
\begin{equation}
\label{CTR_DIFF_SYST_MOL}
\d\big(\btheta_{s,t}^m(\x),\btheta_{s,t}^m(\x')\big)\le C\big( \d(\x,\x')+(s-t)^{\frac 12}\big),
\end{equation}
\textcolor{black}{where the time contribution is precisely due to the quasi-distance $\d$ (see the proof of Lemma \ref{lem_theta_theta} in Appendix \ref{sec:ProofLem_d_theta}).}
%precisely reflects the \textit{roughness} of the coefficients.

Unfortunately, this approach would lead to a final control of order $ \big( \d(\x,\x')+(s-t)^{\frac 12}\big)^\gamma\le\textcolor{black}{C}\big ( \d^\gamma(\x,\x')+(s-t)^{\frac \gamma 2} \big )$  which is not enough on the considered integration set. Recall indeed that, in the diagonal regime $c_0\d^2(\x,\x')\le (s-t) $ and the term $(s-t)^{\frac \gamma 2}$ in the previous r.h.s. is \textit{too big}. This means that for $\Delta_{m,{\rm \textbf{diag}}}^{\tau,\bxi,\bxi'}(t,T,\x,\x') $, it would be more appropriate to consider the same spatial freezing point. In that case, taking $\bxi=\bxi'=\x $ and expanding the difference of the derivatives of the frozen Gaussian densities yields:
\begin{eqnarray}
&&\Delta_{m,{\rm \textbf{diag}}}^{\tau,\bxi,\bxi'}(t,T,\x,\x')\notag\\
&=&-\sum_{j=1}^n \int_{t+c_0 \d^2(\x,\x')}^{T} ds \int_{\R^{nd}} d\y \int_0^1 d\mu  D_{\x_j} D_{\x_1}^2 \tilde p^{m,(\tau,\bxi)}(t,s,\x+\mu (\x'-\x),\y)\cdot  (\x'-\x)_j \notag\\
&& (L_s^m-\tilde L_s^{m,(\tau,\bxi)} )u_m(s,\y) \notag\\
&=&-\sum_{j=1}^n \int_{t+c_0 \d^2(\x,\x')}^{T} ds \int_{\R^{nd}} d\y \int_0^1 d\mu  D_{\x_j} D_{\x_1}^2 \tilde p^{m,(\tau,\bxi)}(t,s,\x+\mu (\x'-\x),\y)\cdot  (\x'-\x)_j \notag\\
&& \Bigg(\Delta_{1,\gF_m,\sigma_m}(t,s,\btheta_{s,t}^m(\x),\y,u_m) %D_{\y_1}^2u_m(s,\y)
+\sum_{i=2}^n  \big \langle \Delta_{i,\gF_m,\sigma_m}(t,s,\btheta_{s,t}^m(\x),\y) D_{\y_i}u_m(s,\y) \big \rangle \Bigg), \label{ANA_PREAL_HD_UG}
\nonumber  \\
\end{eqnarray}
using the notations introduced in \eqref{DEF_DIFF_NON_DEG} and \eqref{DEF_TERMES_DEG} for the last equality. 
\textcolor{black}{In the previous identities \eqref{ANA_PREAL_HD_UG} and from now on, the symbol ``$\cdot$" between two tensors means the usual tensor contraction. In particular $D_{\x_j} D_{\x_1}^2 \tilde p^{m,(\tau,\bxi)}(t,s,\x+\mu (\x'-\x),\y)\cdot  (\x'-\x)_j$ is a $d \times d$ matrix.}

In the current diagonal regime, 
%$c_0 \d^2(\x,\x')\le s-t $, 
it can be shown from \eqref{FIRST_CTR_DENS} and the homogeneity of the distance $\d $ that \textcolor{black}{there is $C>1$ such that} for \textcolor{black}{$(\tau,\bxi)=(t,\x)$}:
\begin{eqnarray}
&&|D_{\x_j}D_{\x_1}^2 \tilde p^{m,(\tau,\bxi)}(t,s,\x+\mu (\x'-\x),\y)|\Big|_{\textcolor{black}{(\tau,\bxi)=(t,\x)}}\notag\\
&\le& \frac{C}{(s-t)^{j-\frac 12+ 1+\frac{n^2d}2}}\exp( -C^{-1}(s-t)|\T_{s-t}^{-1}(\m_{s,t}^{m,(\textcolor{black}{\tau},\x)} (\x+\mu(\x'-\x))-\y|^2)\Big|_{\textcolor{black}{(\tau,\bxi)=(t,\x)}}\notag\\
&\le& \frac{C}{(s-t)^{j-\frac 12+ 1+\frac{n^2d}2}}\exp(C^{-1}(s-t)|\T_{s-t}^{-1} (\m_{s,t}^{m,(\textcolor{black}{\tau},\bxi)}(\x-\x'))|^2 )
\nonumber \\
&&\times \exp( -\frac{C^{-1}}2(s-t)|\T_{s-t}^{-1}(\m_{s,t}^{m,(\textcolor{black}{\tau},\bxi)} (\x))-\y|^2)\Big|_{\textcolor{black}{(\tau,\bxi)=(t,\x)}}\notag\\
&\le &\frac{C}{(s-t)^{j-\frac 12+ 1+\frac{n^2d}2}}\exp( -\frac{C^{-1}}2(s-t)|\T_{s-t}^{-1}(\btheta_{s,t}^m (\x))-\y|^2),% \exp(C^{-1}c_0^{-1}),
\label{PREAL_BD_OF_DIAG_M}
\end{eqnarray}
using for the last inequality that $\m_{s,t}^{m,(\textcolor{black}{\tau},\bxi)}(\x)|_{\textcolor{black}{(\tau,\bxi)=(t,\x)}}=\btheta_{s,t}^m(\x) $ and the fact that, from the linear structure of ODE satisfied by $\m_{s,t}^{m,(\textcolor{black}{\tau},\bxi)}(\x) $ (which can be read from system \eqref{FROZ_MOL_FOR} taking the expectation), $(s-t)^{\frac 12}|\T_{s-t}^{-1} (\m_{s,t}^{m,(\textcolor{black}{\tau},\bxi)})(\x-\x')|^2 \le C (s-t)^{\frac 12}|\T_{s-t}^{-1}(\x-\x')|$. Since $c_0 \d^2(\x,\x')\le s-t \Leftrightarrow c_0 \d^{2}\big((s-t)^{\frac 12}\T_{s-t}^{-1}\x,(s-t)^{\frac 12}\T_{s-t}^{-1}\x'\big)\le 1 $, we readily derive from the definition of $\d$ in \eqref{DIST} that $(s-t)|\T_{s-t}^{-1} (\m_{s,t}^{m,(\textcolor{black}{\tau},\bxi)})(\x-\x')|^2\le C %n c_0^{-(2n-1)} 
$. These points are thoroughly discussed in Sections \ref{SEC2}.% and  \ref{SEC_HORS_DIAG}.

From \eqref{ANA_PREAL_HD_UG},  \eqref{PREAL_BD_OF_DIAG_M} reproducing the previously described analysis, we finally derive:
\begin{eqnarray}
 &&D_{\x_1}^2 \Delta_{m,{\rm \textbf{diag}}}^{\tau,\bxi,\bxi'}(t,T,\x,\x')
 \nonumber \\
 &\le& \Lambda\|u_m\|_{L^\infty(C_{b,\d}^{2+\gamma})} \sum_{j=1}^n\int_{t+c_0\d^2(\x,\x')}^T \frac{ds}{(s-t)^{1+(j-\frac 12)- \frac \gamma 2}} |(\x-\x')_j|\notag\\
&\le & \Lambda\|u_m\|_{L^\infty(C_{b,\d}^{2+\gamma})} \sum_{j=1}^n \frac{|(\x-\x')_j|}{(c_0\d^2(\x,\x'))^{j-\frac 12-\frac \gamma 2}}\notag\\
&\le&  \frac{\Lambda}{c_0^{n-\frac 12-\frac \gamma 2}}\|u_m\|_{L^\infty(C_{b,\d}^{2+\gamma})} \d^\gamma (\x,\x'), %\notag\\
\label{CTR_DIAG_PREAL}
\end{eqnarray}
using again the definition of $\d$ in \eqref{DIST} for the last inequality and where $\Lambda $ is an in Remark \ref{REM_LAMBDA}. We have again globally gained, thanks to the smoothness of the coefficients, a power $\frac \gamma 2 $ in the time singularities of equation \eqref{PREAL_BD_OF_DIAG_M}.\\

%\textcolor{black}{Gathering \eqref{PREAL_CTR_OFF_DIAG} and \eqref{CTR_DIAG_PREAL}, we finally deduce from, the choice of the freezing point $\bxi,\bxi'$, the representation of the Besov norm (see e.g. \eqref{norm_Besov_Holder_tildePg}) and the definition \eqref{DECOU_PREAL_HOLDER_PERTURB}:
% \begin{eqnarray}\label{control_norm_Besov_Holder_D2_R_u}
%&&v^{2-\frac{\gamma}{2i-1}}\Big |\partial_v^2 h_v\star \int_t^T ds \int_{\R^{nd}} D_{\x_1}^2 \tilde p^{m,(\tau,\bxi)} \big (t,s,(\x_{1:i-1},\cdot,\x_{i+1:n}),\y \big )(L_m-\tilde L_m^{\tau,\bxi})u_m(s,\y) d\y  (z) \Big |
%\nonumber \\
%&\leq& C \|u_m\|_{L^\infty(C_{b,\d}^{2+\gamma})} 
%v^{-\frac{\gamma}{2i-1}}\int_{\R^d}  \big | \partial_v^2 h_{Cv}(z-\x_i) \big | \d^\gamma (\x,\x')d\x_i
%\leq  C \|u_m\|_{L^\infty(C_{b,\d}^{2+\gamma})} .
%\end{eqnarray}
%}

From the previous discussion we now have to specify how to modify the freezing parameter depending on the position of the current time variable w.r.t. to the homogeneous spatial distance between the considered points. This can actually been done from the Duhamel formulation up to an additional discontinuity term. Restarting from \eqref{DUHAMEL_PERTURB} we can indeed rewrite for given $(t,\x')\in [0,T]\times \R^{nd} $ and all $ r\in (t,T]$, $\bxi'\in \R^{nd}$:
\begin{eqnarray}\label{GREEN_SUR_SEGMENT_EN_TEMPS}
u_m(t,\x')&=&\tilde P_{r,t}^{m,(\tau,\bxi')} u_m(r, \x')+ \tilde G_{r,t}^{m,(\tau,\bxi')} f_m(t,\x')\notag\\
&&+\int_t^{r} ds \int_{\R^{nd}} d\y\tilde p^{m,(\tau,\bxi')}(t,s,\x',\y)(L_s^m-\tilde L_s^{m,(\tau,\bxi')})u_m(s,\y) , \\
\forall 0\le v<r\le T,\ \tilde G_{r,v}^{m,(\tau,\bxi')} f_m(t,\x)&=&\int_v^{r} ds\int_{\R^{nd}} d\y \tilde p^{m,(\tau,\bxi')}(t,s,\x',\y)f_m(s,\y).
\notag %
\end{eqnarray}
Differentiating the above expression in $r\in (t,T]$ yields for any $\bxi'\in \R^{nd} $:
\begin{eqnarray}
\label{DER_DUHAMEL}
0=\partial_r \tilde P_{r,t}^{m,(\tau,\bxi')} u_m(r, \x')+\int_{\R^{nd} }d\y\tilde p^{m,(\tau,\bxi')}(t,r,\x',\y)f_m(r,\y)\notag\\
+\int_{\R^{nd}} d\y \tilde p^{m,(\tau,\bxi')}(t,r,\x',\y)(L_r^m-\tilde L_r^{m,(\tau,\bxi')})u_m(r,\y) d\y.
\end{eqnarray}
Integrating \eqref{DER_DUHAMEL} between $t $ and $t_0\in (t,T]$ for a first given $\bxi' $ and between $t_0 $ and $T$ with a possibly different $\tilde \bxi'  $ yields:
\begin{eqnarray}
0&=&\tilde P_{t_0,t}^{m,(\tau,\bxi')} u_m(t_0, \x')-u_m(t,\x')+\int_{t}^{t_0}ds \int_{\R^{nd}}d\y \tilde p^{m,(\tau,\bxi')}(t,s,\x',\y)f_m(s,\y)\notag\\
&&+\int_t^{t_0} ds \int_{\R^{nd}}d\y \tilde p^{m,(\tau,\bxi')}(t,s,\x',\y)(L_s^m-\tilde L_s^{m,(\tau,\bxi')})u_m(s,\y)\notag\\
&&+\tilde P_{T,t}^{m,(\tau,\tilde \bxi')} u_m(T, \x')-\tilde P_{t_0,t}^{m,(\tau,\tilde \bxi')}u_m(t_0,\x')+\int_{t_0}^Tds \int_{\R^{nd}}d\y \tilde p^{m,(\tau,\tilde \bxi')}(t,s,\x',\y)f_m(s,\y)\notag\\
&&+\int_{t_0}^T ds \int_{\R^{nd}}d\y \tilde p^{m,(\tau,\tilde\bxi')}(t,s,\x',\y)(L_s^m-\tilde L_s^{m,(\tau,\tilde \bxi')})u_m(s,\y)\notag.
\end{eqnarray}
Recalling that $u_m(T,\x')=g_m(\x') $ (terminal condition), and with the notations of \eqref{GREEN_SUR_SEGMENT_EN_TEMPS} the above equation \textcolor{black}{becomes}:
\begin{eqnarray}
u_m(t,\x')&=&\tilde P_{T,t}^{m,(\tau,\tilde \bxi')}g_m(\x')+\tilde G_{t_0,t}^{m,(\tau,\bxi')} f_m(t,\x')+\tilde G_{T,t_0}^{m,(\tau,\tilde \bxi')} f_m(t,\x')\notag\\
&&+\tilde P_{t_0,t}^{m,(\tau,\bxi')} u_m(t_0, \x')-\tilde P_{t_0,t}^{m,(\tau,\tilde \bxi')} u_m(t_0, \x')\notag\\
&&+\int_t^T ds \int_{\R^{nd}}d\y\Big(\I_{s\le t_0}\tilde p^{m,(\tau,\bxi')}(t,s,\x',\y)(L_s^m-\tilde L_s^{m,(\tau, \bxi')})\notag\\
&&\quad +\I_{s>t_0} \tilde p^{m,(\tau,\tilde\bxi')}(t,s,\x',\y)(L_s^m-\tilde L_s^{m,(\tau,\tilde \bxi')})\Big)  u_m(s,\y).
\label{INTEGRATED_DIFF_BXI}
\end{eqnarray}
We see that for $\bxi ' \neq\tilde  \bxi ' $ we have an additional discontinuity term deriving from the change of freezing point along the time variable. Of course expression \eqref{INTEGRATED_DIFF_BXI} can be differentiated in space and taking  \textbf{then}
%\textcolor{black}{ the diagonal time}
\begin{equation}\label{def_t0}
t_0=t+c_0 \d^2(\x,\x'),
\end{equation}
i.e. \textcolor{black}{$t_0$ precisely corresponds to the critical time at which a change of regime occurs},
 and $\bxi'=\x'$, $\tilde \bxi'=\x $ precisely allows, when expanding $D_{\x_1}^2u_m(t,\x)-D_{\x_1}^2 u_m(t,\x')$ using \eqref{DUHAMEL_PERTURB_DER_NON_DEG} for the first term and \eqref{INTEGRATED_DIFF_BXI} for the second one, to exploit the previous analysis that led to \eqref{PREAL_CTR_OFF_DIAG} and \eqref{CTR_DIAG_PREAL} and which relied on the suitable choice of freezing point. We again insist on the fact that, in the analysis, $t_0$ is an additional freezing parameter, which is \textit{a posteriori} chosen according to \eqref{def_t0} as a function of $(t,\x,\x') $. In particular the parameter $t_0$ does not intervene in the various possible differentiations of the considered perturbative expansions.
 
 This approach eventually leads to:
\begin{eqnarray}
&&|D_{\x_1}^2u_m(t,\x)-D_{\x_1}^2u_m(t,\x')|\notag\\
&\le&  \d^{\gamma}(\x,\x')\Big[ C\big(\|g\|_{C_{b,\d}^{2+\gamma}}+\|f\|_{L^\infty(C_{b,\d}^\gamma)}\big) +\Lambda \big( c_0^{-n+\frac 12+\frac \gamma 2}+c_0^{\frac\gamma 2}\big) \|u_m\|_{L^\infty(C_{b,\d}^{2+\gamma})}  \Big]\notag\\
&&+ \Big|\big(D_{\x_1}^2\tilde P_{t_0,t}^{m,(\tau,\bxi')} u_m(t_0, \x')-D_{\x_1}^2\tilde P_{t_0,t}^{m,(\tau,\tilde \bxi')} u_m(t_0, \x')\big)\big|_{t_0=t+c_0\d^2(\x,\x')}\Big|\label{PREAL_DISC}.
\end{eqnarray}
The last contribution can be controlled through cancellation techniques and the key estimate \eqref{CTR_DIFF_SYST_MOL} on the difference of the flows. The specific choice of $t_0=t+c_0\d^2(\x,\x') $ then precisely provides the required order leading to:
\begin{eqnarray*}
&&|D_{\x_1}^2u_m(t,\x)-D_{\x_1}^2u_m(t,\x')|\\
&\le&  \d^{\gamma}(\x,\x')\Big[ C\big(\|g\|_{C_{b,\d}^{2+\gamma}}+\|f\|_{L^\infty(C_{b,\d}^\gamma)}\big) +\Big(\Lambda\big( c_0^{-n+\frac 12+\frac \gamma 2}+c_0^{\frac\gamma 2}\big)+ C c_0^{\textcolor{black}{\frac \gamma{2n-1}}}\Big) \|u_m\|_{L^\infty(C_{b,\d}^{2+\gamma})}  \Big].
\end{eqnarray*}
We refer to Lemma \ref{CTR_TERME_DISC} for results associated with the discontinuity term in \eqref{PREAL_DISC}.\\

\subsection{Conclusion and outline of the derivation of estimate in Theorem \ref{THEO_SCHAU}.} We have detailed up to now what happens with the second order derivatives w.r.t. the non-degenerate variables. The previous procedure can be applied as well to control the H\"older moduli w.r.t. the degenerate ones. We therefore end up with the following kind of estimate:
\begin{eqnarray}
\|u_m\|_{L^\infty(C_{b,\d}^{2+\gamma})}&\le&  C\big(\|g\|_{C_{b,\d}^{2+\gamma}}+\|f\|_{L^\infty(C_{b,\d}^\gamma)}\big)+\|u_m\|_{L^\infty(C_{b,\d}^{2+\gamma})}\Big[\Lambda\big( c_0^{-n+\frac 12+\frac \gamma 2}+c_0^{\frac\gamma 2}+T^{\frac \gamma 2}\big) +Cc_0^{\frac \gamma{2n-1}}  \Big].\notag \\
\label{PREAL_FINAL}
\end{eqnarray}
Equation \eqref{PREAL_FINAL} would provide exactly the expected control if $\Lambda$ and $c_0$ are small enough. On the one hand, the final parameter $c_0$ can always be chosen small enough (cutting threshold).
%Indeed, the further extension of the Schauder estimates to an arbitrary fixed final time horizon can then be obtained by iterative applications (chaining) of the main estimates  on a sufficiently small time interval. 
On the other hand, it will appear from the proofs that the constant $\Lambda$ in \eqref{PREAL_FINAL} actually depends on the H\"older norms of the considered coefficients (see again Remark \ref{REM_LAMBDA}). If these quantities are small, i.e. the coefficients do not vary much and the components that transmit the noise are almost linear, then $\Lambda$ will be small. For $\Big[\Lambda\big( c_0^{-n+\frac 12+\frac \gamma 2}+c_0^{\frac\gamma 2} +T^{\frac \gamma 2}\big)+Cc_0^{\frac \gamma{2n-1}}  \Big]\le k_0<1 $, we eventually derive:
\begin{equation}
\label{SCHAU_UG}
\|u_m\|_{L^\infty(C_{b,\d}^{2+\gamma})}\le \frac{C}{1-k_0}\big(\|g\|_{C_{b,\d}^{2+\gamma}}+\|f\|_{L^\infty(C_{b,\d}^\gamma)}\big),
\end{equation}
which is precisely the expected control. The general case, is proved through a scaling argument which also allows to balance the opposite effects of $c_0 $ (meant to be small, in particular $c_0\le 1$) in the above bounds. This last point will be discussed in Section \ref{scaling}. 
\\
%\textcolor{black}{QUEL EST L'INTERET DE $c_0$ ci dessus ?}

\subsection{Organization of this paper.} 
The remaining part of this article is organized as follows. We prove in Section \ref{SEC2} various properties for the density of the linearized Gaussian proxy:  precise pointwise estimates for the density itself and its  derivatives (see equation  \eqref{FIRST_CTR_DENS}) and some useful controls allowing cancellation arguments in our perturbative analysis. %We also give therein some properties associated with the rescaled proxy. 
Section \ref{Sectio_Sup_D2} is then devoted to the control of the supremum norms of the non-degenerate derivatives, corresponding to the previous equation \eqref{CTR_D2_PROV_UM}. Section \ref{HOLDER} addresses the issues of H\"older controls. 
 Section \ref{scaling} is concerned with the above mentioned scaling issues and we also conclude there the final proof of Theorem \ref{THEO_SCHAU}. Eventually, some auxiliary, but crucial, technical results are proved in Appendix \ref{APP} \textcolor{black}{ for the regularity of the flow and the mean, in Appendix \ref{sec_resolv_cov} for the regularity of the resolvent and the covariance,  and in Appendix \ref{SEC_BESOV_DUAL_FIRST_Scalling} for the technical points related to the scaling analysis}. 

\mysection{Gaussian proxy and associated controls}\label{SEC2}

We first aim here at proving the control of equation \eqref{FIRST_CTR_DENS}. We recall that our point is to control the density of $(\tilde \X_{s}^{m,(\tau,\bxi)})_{s\in (t,T]} $ satisfying \eqref{FROZ_MOL_FOR}.\\

\textbf{WARNING:} for notational simplicity, for the rest of the document we drop the sub and superscripts in $m$ associated with the regularizations. We rewrite, with some notational abuse, for fixed $(\tau,\bxi) \in [0,T]\times \R^{nd} $,  the dynamics in \eqref{FROZ_MOL_FOR} as:
\begin{eqnarray} 
&&\hspace*{-.5cm}d\tilde \X_v^{(\tau,\bxi)}=[\gF(v,\btheta_{v,\tau}(\bxi))+ D\gF(v,\btheta_{v,\tau}(\bxi))(\tilde \X_v^{(\tau,\bxi)}-\btheta_{v,\tau}(\bxi))]dv +B\sigma(v,\btheta_{v,\tau}(\bxi)) dW_v,\nonumber\\
&& \hspace*{.75cm}\forall v\in  [t,s],\
 \tilde \X_t^{(\tau,\bxi)}=\x, \label{FROZ_MOL_FOR_NO_SUPER_SCRIPTS}
\end{eqnarray}
keeping in mind that $ \gF,\btheta,\sigma$ in \eqref{FROZ_MOL_FOR_NO_SUPER_SCRIPTS} are \textit{smooth} coefficients.
We will give in the next subsection some key-controls to investigate the terms appearing the perturbative expansions \eqref{DUHAMEL_PERTURB} and \eqref{DUHAMEL_PERTURB_DER_NON_DEG}.

%We assume throughout this Section that $\gamma\in (0,1) $ is fixed and the the related Assumption  \A{A} is in force.% for some $\Gamma\in ?? $.\\

\subsection{Controls for the frozen density} \label{SEC_GAUSS_DENS}
 We explicitly integrate \eqref{FROZ_MOL_FOR_NO_SUPER_SCRIPTS} to obtain for any $v\in [t,s] $:
 \begin{equation}
 \label{INTEGRATED}
\begin{split}
\tilde  \X_v^{(\tau,\bxi)}&=\textcolor{black}{\tilde\gR}^{(\tau,\bxi)}(v,t)\x+\int_t^v  \textcolor{black}{\tilde \gR}^{(\tau,\bxi)}(v,u)\Big( \gF(u,\btheta_{u,\tau}(\bxi))-D\gF(u,\btheta_{u,\tau}(\bxi))\btheta_{u,\tau}(\bxi)\Big )du\\
 & +\int_t^v \textcolor{black}{\tilde \gR}^{(\tau,\bxi)}(v,u)B\sigma(u,\btheta_{u,\tau}(\bxi)) dW_u\notag\\
&=:\m_{v,t}^{(\tau,\bxi)}(\x)+\int_t^v \textcolor{black}{\tilde \gR}^{(\tau,\bxi)}(v,u)B\sigma(u,\btheta_{u,\tau}(\bxi)) dW_u,
 \end{split}
 \end{equation}
where $(\tilde \gR^{(\tau,\bxi)}(v,u))_{t\le u,v \le s} $ stands for the resolvent associated with the collection of partial gradients in $(D {\mathbf F}(v,\btheta_{v,\tau}(\bxi)))_{ v\in [t,s]} $, introduced in \eqref{DEF_PARTIAL_GRADIENTS}, which satisfies for $v\in [t,s]$:
\begin{equation}
\begin{split}
\partial_{v}\tilde \gR^{(\tau,\bxi)}(v,t)&={ D \gF}(v,\btheta_{v,\tau}(\bxi))\tilde \gR^{(\tau,\bxi)}(v,t) , %t\in [s,T],
\ \tilde \gR^{(\tau,\bxi)}(t,t)=\mathbf I_{nd, nd}.
\end{split}
\label{DYN_RES_FORWARD}
\end{equation}
Note in particular that since the partial gradients are subdiagonal $ {\rm det}(\tilde \gR^{(\tau,\bxi)}(v,t))=1$.\

Also, for $v\in [t,s]$, we recall that  $\m_{v,t}^{(\tau,\bxi)}(\x) $ stands for the mean of $\tilde  \X_v^{(\tau,\bxi)} $ and  corresponds as well  to the solution of \eqref{FROZ_MOL_FOR_NO_SUPER_SCRIPTS} when $\sigma =0 $ and the  starting point is $\x $. We write:
 \begin{equation}
 \label{INTEGRATED_FLOW}
\tilde  \X_v^{(\tau,\bxi)}=\m_{v,t}^{(\tau,\bxi)}(\x) +\int_t^v \textcolor{black}{\tilde \gR}^{(\tau,\bxi)}(v,u)B\sigma(u,\btheta_{u,\tau}(\bxi)) dW_u, \ v\in [t,s].
\end{equation}
Importantly, we point out that $\x \in \R^{nd}\mapsto \m_{v,t}^{(\tau,\bxi)}(\x)$ is \textbf{affine}  w.r.t. the starting point $\x$. Precisely, for $\x,\x'\in \R^{nd} $:
\begin{equation}
\label{AFFINE_FLOW}
\m_{v,t}^{(\tau,\bxi)}(\x+\x')=\tilde \gR^{(\tau,\bxi)}(v,t)\x'+ \m_{v,t}^{(\tau,\bxi)}(\x).
\end{equation}

We first give in the next proposition a key estimate on the covariance matrix associated with \eqref{INTEGRATED_FLOW} and its properties w.r.t. a suitable scaling of the system. 
\begin{PROP}[Good Scaling Properties of the Covariance Matrix]
\label{PROP_SCALE_COV}
The covariance matrix of $\tilde  \X_v^{(\tau,\bxi)}$ in \eqref{INTEGRATED_FLOW} writes:
$$\tilde \K_{v,t}^{(\tau,\bxi)}:=\int_t^v \textcolor{black}{\tilde  \gR}^{(\tau,\bxi)}(v,u) Ba(u,\btheta_{u,\tau}(\bxi))B^*\textcolor{black}{\tilde \gR}^{(\tau,\bxi)}(v,u)^* du. %,\ a(s,\btheta_{s,T}(\y)):=\sigma\sigma^*(s,\btheta_{s,T}(\y)) .
$$
Uniformly in $(\tau,\bxi)\in [0,T]\times \R^{nd} $ and $s\in [0,T] $, it satisfies a \textit{good scaling property} in the sense of Definition 3.2 in \cite{dela:meno:10} (see also Proposition 3.4 of that reference). That is, for any fixed $T>0$, there exists $C_{\ref{GSP}}:=C_{\ref{GSP}}(\A{A},T)\ge 1$ s.t. for all $0\le t<v \le s\le T $, $(\tau,\bxi) \in [0,T]\times \R^{nd} $:
\begin{equation}
\label{GSP}
\forall \bzeta \in \R^{nd},\   C_{\ref{GSP}}^{-1} (v-t)^{-1}|\T_{v-t} \bzeta|^2\le \langle  \tilde \K_{v,t}^{(\tau,\bxi)}\bzeta,\bzeta\rangle \le C_{\ref{GSP}} (v-t)^{-1}|\T_{v-t} \bzeta|^2,
\end{equation} 
where for any $u>0$, we denote by $\T_u $ the \emph{intrinsic} scale matrix introduced in \eqref{DEF_T_ALPHA}. Namely:
\begin{equation*}
%\label{DEF_T_ALPHA}
\T_u=\left( \begin{array}{cccc}
u\mathbf I_{d, d}& \0_{d, d}& \cdots& \0_{d, d}\\
\0_{d, d}   &u^2 \mathbf I_{d, d}&\0_{d,d}& \vdots\\
\vdots & \ddots&\ddots & \vdots\\
\0_{d, d}& \cdots & \0_{d,d}& u^{n}\mathbf I_{d , d}
\end{array}\right).
\end{equation*}
%Importantly, for a given parameter $\lambda>0 $, considering the associated rescaled linearized model $\tilde   \X_{v,\lambda}^{(\tau,\bxi)}:=\lambda^{1/2}\T_\lambda^{-1} \tilde \X_{v}^{(\tau,\bxi)}  $ it holds that 
%$$
%\forall \bzeta \in \R^{nd},\   C_{\ref{GSP}}^{-1} \lambda \big( v-t \big)^{-1}\left|\T_{\frac{(v-t)}{\lambda}} \bzeta \right|^2\le \langle  \tilde \K_{v,t,\lambda }^{(\tau,\bxi)}\bzeta,\bzeta\rangle \le C_{\ref{GSP}} \lambda\big(v-t\big)^{-1}\left|\T_%{\lambda^{-1}
%{\frac{(v-t)}{\lambda}} \bzeta \right|^2,
% $$
%where $ \tilde \K_{v,t,\lambda }^{(\tau,\bxi)} = \lambda \T_\lambda^{-1} \tilde \K_{v,t}^{(\tau,\bxi)} \T_\lambda^{-1}$.
\end{PROP}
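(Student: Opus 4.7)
The plan is to verify the hypotheses of Proposition 3.4 in \cite{dela:meno:10}, of which \eqref{GSP} is the conclusion. Using the time change $u=t+r(v-t)$, $r\in[0,1]$, and conjugating by the scaling matrix $\T_{v-t}$, the estimate \eqref{GSP} is equivalent to the uniform two-sided equivalence with the identity of the rescaled symmetric matrix
\begin{equation*}
(v-t)\,\T_{v-t}^{-1}\tilde\K_{v,t}^{(\tau,\bxi)}\T_{v-t}^{-1} = \int_0^1 \bar M_r^{(\tau,\bxi)}\, a\bigl(t+r(v-t),\btheta_{t+r(v-t),\tau}(\bxi)\bigr)\,\bigl(\bar M_r^{(\tau,\bxi)}\bigr)^{*} dr,
\end{equation*}
with the dimensionless object $\bar M_r^{(\tau,\bxi)}:=(v-t)\,\T_{v-t}^{-1}\tilde\gR^{(\tau,\bxi)}\bigl(v,t+r(v-t)\bigr)B$. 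The task then reduces to showing that this integral is bounded above and below by positive constants depending only on \A{A} and $T$, uniformly in $0\le t<v\le T$ and $(\tau,\bxi)\in[0,T]\times\R^{nd}$.

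For the upper bound, I would exploit the strictly subdiagonal structure of $D\gF$ in \eqref{DEF_PARTIAL_GRADIENTS}, which makes it nilpotent of order $n$, so that the Peano/Duhamel series expressing $\tilde\gR^{(\tau,\bxi)}(v,u)$ truncates at level $n-1$. An elementary induction based on the sup-norm bounds on the subdiagonal gradients (finite under \A{S}-(iii)) would yield that the $(i,j)$-block of $\tilde\gR^{(\tau,\bxi)}(v,u)$ vanishes for $i<j$ and is of order $(v-u)^{i-j}$ for $i\ge j$. This is precisely the homogeneity matched by the scaling matrix $\T_{v-t}$: the $(i,1)$-block of $\bar M_r^{(\tau,\bxi)}$ would then be of order $(1-r)^{i-1}$, uniformly in all parameters. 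Combined with the sup-norm bound on $a$ from \A{UE}, this would directly yield the upper bound in \eqref{GSP}.

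The lower bound is the main obstacle, since $BaB^{*}$ has rank only $d$ out of $nd$ and non-degeneracy along the remaining $n-1$ blocks must be recovered via the weak H\"ormander condition \A{H}. Following the compactness argument in the proof of Proposition 3.4 in \cite{dela:meno:10}, I would argue by contradiction: assuming the lower bound fails, one would obtain sequences $\bigl((v_k-t_k),\tau_k,\bxi_k\bigr)$ and unit vectors $\bar\bzeta_k$ along which the quadratic form of the rescaled covariance tends to $0$. The uniform blockwise bounds on $\bar M_r^{(\tau,\bxi)}$ and on the coefficients, together with an Ascoli--Arzel\`a argument in $r\in[0,1]$, would allow to extract a limiting integrand along a subsequence. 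Since $\mathcal E_{i-1}$ is closed in $GL_d(\R)$ under \A{H}, the limiting subdiagonal gradients $D_{\x_{i-1}}\gF_i$ remain invertible, so the limiting rescaled resolvent would retain the full chained H\"ormander structure. The vanishing of the limiting quadratic form would then propagate, via a block-by-block induction using this invertibility and the ellipticity of $a$ on the first block, to force the limit of $\bar\bzeta_k$ to vanish componentwise, contradicting $|\bar\bzeta_k|=1$. Undoing the rescaling by $\T_{v-t}$ would finally give \eqref{GSP}.
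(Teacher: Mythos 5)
Your proposal is correct and reproduces, in outline, the argument from Delarue--Menozzi \cite{dela:meno:10} (Propositions 3.3--3.4 and Lemma 3.6) to which the paper simply defers rather than proving anything itself. The reduction to a normalized covariance via conjugation by $\T_{v-t}$, the upper bound from the nilpotent subdiagonal structure of $D\gF$ giving polynomial control on the resolvent blocks, and the lower bound by a compactness/contradiction argument exploiting the closedness of $\mathcal E_{i-1}$ under \A{H} and ellipticity of $a$ under \A{UE}, are exactly the ingredients of that cited proof, so your reconstruction takes essentially the same route spelled out explicitly.
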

% De S. a I. et P.E.: j'ai enelve de cette proposition la partie sur le scaling car je trouvais que cela pertrbait plus qu'autre chose la lecture a ce stade. Cela 	arrive en fait trente pages apres... N'hesitez pas si cela ne vous semble pas une bonne initiative.
The proof of the above proposition readily follows from Proposition 3.3 and Lemma 3.6 in \cite{dela:meno:10}.
We now state some important density bounds for the linearized model. 
\begin{PROP}[Density of the linearized dynamics]\label{THE_PROP}
Under \A{A}, we have that, for any $s\in (t,T]$ the random variable $\tilde  \X_s^{(\tau,\bxi)} $ in \eqref{INTEGRATED_FLOW} admits a Gaussian density $\tilde p^{(\tau,\bxi)}(t,s,\x,\cdot) $ which writes for any $ \y \in \R^{nd}$:
\begin{equation}\label{CORRESP}
%\forall \y \in \R^{nd}, \
 \tilde p^{(\tau,\bxi)}(t,s,\x,\y):=\frac{1}{(2\pi)^{\frac{nd}2}\det(\tilde \K_{s,t}^{(\tau,\bxi)})^{\frac 12}}\exp\left( -\frac 12 \left\langle (\tilde \K_{s,t}^{(\tau,\bxi)})^{-1} (\m_{s,t}^{(\tau,\bxi)}(\x)-\y),\m_{s,t}^{(\tau,\bxi)}(\x)-\y\right\rangle\right),
\end{equation}
with $\tilde \K_{s,t}^{(\tau,\bxi)} $ as in Proposition \ref{PROP_SCALE_COV}.
%$$\tilde \K_{v,t}^{s,\y}:=\int_t^v \textcolor{black}{\tilde  \gR}^{s,\y}(v,u) Ba(u,\btheta_{u,t}(\y))B^*\textcolor{black}{\tilde \gR}^{s,\y}(v,u)^* du. %,\ a(s,\btheta_{s,T}(\y)):=\sigma\sigma^*(s,\btheta_{s,T}(\y)) .
%$$
Also, there exists $C:=C(\A{A},T)>0$ s.t. for each multi-index $\vartheta=(\vartheta_1,\cdots,\vartheta_n)\in \N^n,\ |\vartheta|\le 3$ and denoting by $D_\x^\vartheta:=D_{\x_1}^{\vartheta_1}\cdots D_{\x_n}^{\vartheta_n}  $, we have: 
\begin{eqnarray}
|D_{\x}^\vartheta\tilde p^{(\tau,\bxi)}(t,s,\x,\y)|&\le& \frac{C}{(s-t)^{\sum_{i=1}^n \vartheta_i(i-\frac{1}{2}) +\frac{n^2d}2}}\exp\left(-C^{-1}(s-t) \big|\T_{s-t}^{-1}\big(\m_{s,t}^{(\tau,\bxi)}(\x)-\y\big)\big|^2\right)\notag\\
&=:&\frac{C}{(s-t)^{\sum_{i=1}^n \vartheta_i(i-\frac{1}{2})}}\bar p_{C^{-1}}^{(\tau,\bxi)}(t,s,\x,\y),\label{CTR_GRAD}
\end{eqnarray}
with $\int_{\R^{nd}} d\y \bar p_{C^{-1}}^{(\tau,\bxi)}(t,s,\x,\y)$,  up to a modification of the constants in \eqref{CTR_GRAD}.
\end{PROP}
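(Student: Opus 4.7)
\textbf{Proof proposal for Proposition \ref{THE_PROP}.}

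The plan is to first identify the law of $\tilde{\X}_s^{(\tau,\bxi)}$ and then to differentiate its density directly, using the good scaling property of the covariance (Proposition \ref{PROP_SCALE_COV}) combined with the affine structure of the mean (equation \eqref{AFFINE_FLOW}) to quantify each derivative in the anisotropic parabolic scale $\T_{s-t}$.

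First, starting from the explicit representation \eqref{INTEGRATED_FLOW}, $\tilde{\X}_s^{(\tau,\bxi)}$ is an affine transformation of the Wiener integral $\int_t^s \tilde{\gR}^{(\tau,\bxi)}(s,u)B\sigma(u,\btheta_{u,\tau}(\bxi))dW_u$ plus the deterministic term $\m_{s,t}^{(\tau,\bxi)}(\x)$; hence it is a Gaussian vector in $\R^{nd}$ with mean $\m_{s,t}^{(\tau,\bxi)}(\x)$ and covariance $\tilde{\K}_{s,t}^{(\tau,\bxi)}$. By Proposition \ref{PROP_SCALE_COV} (in particular the lower bound in \eqref{GSP}), $\tilde{\K}_{s,t}^{(\tau,\bxi)}$ is strictly positive definite for $s>t$, so it is invertible and the Gaussian density \eqref{CORRESP} is well defined. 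The determinant is obtained via the scaling bounds \eqref{GSP}: following Proposition~3.4 of \cite{dela:meno:10}, one gets $\det(\tilde{\K}_{s,t}^{(\tau,\bxi)}) \asymp (s-t)^{-nd}\det(\T_{s-t})^{2}=(s-t)^{n^2 d}$, which yields the leading $(s-t)^{n^2 d/2}$ in the denominator of \eqref{CTR_GRAD} for $\vartheta=0$.

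Next, to bound the derivatives, I use \eqref{AFFINE_FLOW}, which gives $D_\x \m_{s,t}^{(\tau,\bxi)}(\x)=\tilde{\gR}^{(\tau,\bxi)}(s,t)$, a matrix independent of $\x$. Writing $u:=\m_{s,t}^{(\tau,\bxi)}(\x)-\y$, a direct computation gives
\begin{equation*}
D_{\x_i}\tilde{p}^{(\tau,\bxi)}(t,s,\x,\y)=-\tilde{p}^{(\tau,\bxi)}(t,s,\x,\y)\,\bigl\langle (\tilde{\K}_{s,t}^{(\tau,\bxi)})^{-1} u,\, \tilde{\gR}^{(\tau,\bxi)}(s,t) e_i\bigr\rangle,
\end{equation*}
and more generally $D_\x^\vartheta \tilde{p}$ equals $\tilde{p}$ times a polynomial (of degree $|\vartheta|$) in the entries of $(\tilde{\K}_{s,t}^{(\tau,\bxi)})^{-1} u$ weighted by products of entries of $\tilde{\gR}^{(\tau,\bxi)}(s,t)$. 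By Cauchy–Schwarz and \eqref{GSP} applied twice,
\begin{equation*}
\bigl|\bigl\langle (\tilde{\K}_{s,t}^{(\tau,\bxi)})^{-1}u,\,\tilde{\gR}^{(\tau,\bxi)}(s,t) e_i\bigr\rangle\bigr|\le C\bigl((s-t)|\T_{s-t}^{-1}u|^2\bigr)^{1/2}\bigl((s-t)|\T_{s-t}^{-1}\tilde{\gR}^{(\tau,\bxi)}(s,t) e_i|^2\bigr)^{1/2},
\end{equation*}
and the sub-diagonal structure of $D\gF$ in \eqref{DEF_PARTIAL_GRADIENTS} yields, through the resolvent equation \eqref{DYN_RES_FORWARD} and a Gronwall argument in the rescaled variables, the estimate $|\T_{s-t}^{-1}\tilde{\gR}^{(\tau,\bxi)}(s,t)e_i|\le C(s-t)^{-i}$. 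This gives the announced scaling $(s-t)^{-(i-1/2)}$ per derivative $D_{\x_i}$, multiplied by a factor of $\bigl((s-t)|\T_{s-t}^{-1}u|^2\bigr)^{1/2}$, which is absorbed by the Gaussian using the standard bound $|z|^k e^{-c|z|^2}\le C_k e^{-c'|z|^2}$ for any $c'<c$.

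The scaling $|\T_{s-t}^{-1}\tilde{\gR}^{(\tau,\bxi)}(s,t)e_i|\le C(s-t)^{-i}$ will be the main technical obstacle; it amounts to showing that the matrix $\T_{s-t}^{-1}\tilde{\gR}^{(\tau,\bxi)}(s,t)\T_{s-t}$ stays bounded uniformly in $(\tau,\bxi)\in[0,T]\times\R^{nd}$ and $t\le s\le T$. This is handled exactly as in \cite{dela:meno:10} (see also Proposition~\ref{PROP_SCALE_COV} and the Appendix~\ref{sec_resolv_cov} devoted to the regularity of the resolvent and the covariance): the sub-diagonal structure of $D\gF$ means that solving \eqref{DYN_RES_FORWARD} block by block introduces precisely one extra power of $(s-t)$ each time one goes down one level of the chain, which matches the powers in $\T_{s-t}$. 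Summing up, iterating the pointwise bound for $|\vartheta|\le 3$ and collecting the singular prefactor $(s-t)^{-\sum_i\vartheta_i(i-1/2)}$ together with $(s-t)^{-n^2 d/2}$ from the density's normalization yields \eqref{CTR_GRAD}, where the constant $C^{-1}$ in the Gaussian exponent is reduced to account for the polynomial absorption step. The fact that $\bar p_{C^{-1}}^{(\tau,\bxi)}(t,s,\x,\cdot)$ integrates (up to adjusting constants) to a quantity independent of $(t,s,\x,\tau,\bxi)$ follows from the change of variables $\y\mapsto (s-t)^{1/2}\T_{s-t}^{-1}(\m_{s,t}^{(\tau,\bxi)}(\x)-\y)$, whose Jacobian is exactly $(s-t)^{n^2 d/2}$ (up to sign), cancelling the prefactor.
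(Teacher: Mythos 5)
Your proposal is correct and follows essentially the same route as the paper's proof: identify the Gaussian law via \eqref{INTEGRATED_FLOW}, exploit the affine dependence \eqref{AFFINE_FLOW} of the mean on $\x$, rescale the covariance through the good-scaling property \eqref{GSP} and the resolvent through the identity $\tilde\gR^{(\tau,\bxi)}(s,t)=\T_{s-t}\widehat{\tilde\gR}^{(\tau,\bxi),s,t}(1,0)\T_{s-t}^{-1}$ (which is exactly the content of \eqref{RESCALED_RES} and is where the paper invokes Lemma 5.1 of \cite{huan:meno:15}), and absorb the polynomial factor of $(s-t)^{1/2}|\T_{s-t}^{-1}u|$ into the Gaussian. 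The only cosmetic differences are that you make the Cauchy--Schwarz factorization through $\tilde\K^{-1/2}$ explicit where the paper manipulates \eqref{FIRST_SCALING} directly, and that you spell out the determinant computation $\det(\tilde\K_{s,t})\asymp (s-t)^{n^2d}$ which the paper leaves implicit; neither changes the substance of the argument.
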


\begin{REM}[A slight abuse of notation] \label{NOTA_ABUSE} To ease the reading we denote, when there is no possible ambiguity, $\bar p_{C^{-1}}(t,s,\x,\y):=\bar p_{C^{-1}}^{(t,\x)}(t,s,\x,\y)$.
\end{REM}
\begin{REM}[Regularizing effect of the quasi-distance]\label{THE_REM_REG}
From equation \eqref{CTR_GRAD}, we derive from the definition of $\d $ in \eqref{DIST} that for any given $\beta>0 $, there exists $C_\beta$ s.t.
\begin{equation}
\d^{\textcolor{black}{\beta}}\big ((s-t)^{\frac 12}\T_{s-t}^{-1} \m_{s,t}^{(\tau,\bxi)}(\x),(s-t)^{\frac 12}\T_{s-t}^{-1} \y \big ) |D_{\x}^\vartheta\tilde p^{(\tau,\bxi)}(t,s,\x,\y)|%\notag\\
\le \frac{C_\beta}{(s-t)^{\sum_{i=1}^n \vartheta_i(i-\frac{1}{2}) -\frac \beta 2} }
\bar p_{C_\beta^{-1}}^{(\tau,\bxi)}(t,s,\x,\y),\label{CTR_GRAD_ET_DIST}
\end{equation}
i.e. equation \eqref{CTR_GRAD_ET_DIST} quantifies the regularizing effect of the scaled arguments in the quasi-distance.
\end{REM}
\begin{proof} 
Expression \eqref{CORRESP} readily follows from \eqref{INTEGRATED} and  \textcolor{black}{\eqref{INTEGRATED_FLOW}}. % Proposition \ref{PROP_SCALE_COV}.
Differentiating w.r.t. $\x$ recalling from \eqref{AFFINE_FLOW} that $\x\mapsto %\tilde 
\m_{s,t}^{(\tau,\bxi)}(\x) $ is affine yields:
\begin{eqnarray}
\label{EXPLICIT_DERIVATIVES}
D_{\x_j} \tilde p^{(\tau,\bxi)}(t,s,\x,\y)=- \Big[ \big[{\tilde  \gR}^{(\tau,\bxi)}(s,t)\big]^* (\tilde \K_{s,t}^{(\tau,\bxi)})^{-1} (\m_{s,t}^{(\tau,\bxi)}(\x)-\y) \Big]_j \tilde p^{(\tau,\bxi)}(t,s,\x,\y).
\end{eqnarray}
The point is now to use scaling arguments.
We can first rewrite 
\begin{equation}\label{FIRST_SCALING}
\big[{\tilde  \gR}^{(\tau,\bxi)}(s,t)\big]^* (\tilde \K_{s,t}^{(\tau,\bxi)})^{-1}=(s-t) \big[{\tilde  \gR}^{(\tau,\bxi)}(s,t)\big]^* \T_{s-t}^{-1}(\widehat { \tilde\K_1^{s,t}})^{-1}\T_{s-t}^{-1},
\end{equation}
 where $\widehat {\tilde \K_1^{s,t}} $ is the covariance matrix of the rescaled process $\big((s-t)^{\frac 12}\T_{s-t}^{-1}\tilde \X_{t+v(s-t)}^{t,\x}\big)_{v\in [0,1]} $ at time $1$. From the good-scaling property of Proposition \ref{PROP_SCALE_COV}, it is plain to derive that
$\widehat {\tilde \K_1^{s,t}} $
 a non-degenerate bounded matrix, i.e. there exists $\hat C\ge 1 $ s.t. for any $\bzeta \in \R^{nd} $, $(\hat C)^{-1} |\bzeta|^2 \le \langle \widehat {\tilde \K_1^{s,t}}\bzeta,\bzeta\rangle \le \hat C |\bzeta|^2$. A similar rescaling argument yields on the deterministic system \eqref{DYN_RES_FORWARD} of the resolvent yields that ${\tilde  \gR}^{(\tau,\bxi)}(s,t) $ can also be written as:
 \begin{equation}\label{RESCALED_RES}
 [{\tilde  \gR}^{(\tau,\bxi)}(s,t)]^*=\T_{s-t}^{-1}\Big[ \widehat{\tilde  \gR}^{(\tau,\bxi),s,t}(1,0)\Big]^*\T_{s-t},
 \end{equation}
where again $\widehat{\tilde  \gR}^{(\tau,\bxi),s,t}(1,0)$ is the resolvent at time 1 of  the rescaled system $$ \Big(\T_{s-t}[{\tilde  \gR}^{(\tau,\bxi)}(t+v(s-t),t)]^*\T_{s-t}^{-1}\Big)_{v\in [0,1]}=\Big(\big[ \widehat{\tilde  \gR}^{(\tau,\bxi),s,t}(v,0)\big]^*\Big)_{v\in [0,1]}$$ associated with \eqref{DYN_RES_FORWARD}. 
 From the analysis performed in Lemma 5.1 in \cite{huan:meno:15} (see also the proof of Proposition 3.7 in \cite{dela:meno:10}) one derives that there exists $\hat C_1$ s.t.  for any $\bzeta \in \R^{nd}$,
 \begin{equation}\label{borne_tilde_R}
  |\big[\widehat{\tilde  \gR}^{(\tau,\bxi),s,t}(1,0)\big]^* \bzeta|\le\hat C_1 |\bzeta| .
 \end{equation} Equations \eqref{EXPLICIT_DERIVATIVES}, \eqref{FIRST_SCALING} and \eqref{RESCALED_RES} therefore yield:
 \begin{eqnarray*}
 &&|D_{\x_j} \tilde p^{(\tau,\bxi)}(t,s,\x,\y)|\\
  &\le&  (s-t)^{-j+\frac 12} \Bigg|\bigg( \Big[ \widehat{\tilde  \gR}^{(\tau,\bxi),s,t}(1,0)\Big]^* \big(\widehat { \tilde\K_1^{s,t}}\big)^{-1}\big((s-t)^{\frac 12} \T_{s-t}^{-1}(\m_{s,t}^{(\tau,\bxi)}(\x)-\y)\big) \bigg)_j\Bigg| \tilde p^{(\tau,\bxi)}(t,s,\x,\y)  \\
  &\le& C (s-t)^{-j+\frac 12}   (s-t)^{\frac 12} | \T_{s-t}^{-1}(\m_{s,t}^{(\tau,\bxi)}(\x)-\y)| \tilde p^{(\tau,\bxi)}(t,s,\x,\y).
 \end{eqnarray*}
 From the explicit expression \eqref{CORRESP}, Proposition \ref{PROP_SCALE_COV} and the above equation, we eventually derive:
 \begin{eqnarray*}
&&  |D_{\x_j} \tilde p^{(\tau,\bxi)}(t,s,\x,\y)|
\nonumber \\
&\le& \frac{C}{(s-t)^{j-\frac 12 }}\Big((s-t)^{\frac 12} | \T_{s-t}^{-1}(\m_{s,t}^{(\tau,\bxi)}(\x)-\y)| \Big)%\\
%  &&\times 
\frac{1}{(s-t)^{\frac{n^2d} 2}}\exp \left(-C^{-1} (s-t) | \T_{s-t}^{-1}(\m_{s,t}^{(\tau,\bxi)}(\x)-\y)|^2\right)\\
  &\le & \frac{C}{(s-t)^{j-\frac 12}} \bar p_{C^{-1}}(t,s,\x,\y),
%  corresponding to the covariance of the rescaled process. In other words,  is at \textit{macro scale}, i.e.
\end{eqnarray*}
up to a modification of $C$, 
which gives the statement for one partial derivative. The controls on the higher order derivatives are obtained similarly (see e.g. the proof of Lemma 5.5 of \cite{dela:meno:10} for the bounds on $D_{\x_1}^2 \tilde p^{(\tau,\bxi)}(t,s,\x,\y) $).

%Eventually, the bound \eqref{CTR_GRAD_RESCALED_LAMBDA} readily follows from \eqref{CTR_GRAD} and the correspondence 
%$$\tilde p_\lambda^{(\tau,\bxi)}(t,v,\x,\y)=\lambda^{\frac{n^2d}2}\tilde p^{(\tau,\bxi)}(t,v,\lambda^{-1/2}\T_\lambda \x,\lambda^{-1/2}\T_\lambda \y ).$$

\end{proof}

As a direct consequence of Proposition \ref{THE_PROP} we have the following result for the semi-group $\tilde P^{(\tau,\bxi)} $ associated with \eqref{INTEGRATED_FLOW}:
\begin{lem}\label{lemme_nabla_Pg} 
For $\gamma\in (0,1) $, \textcolor{black}{under \A{A}, there exists  $C:=C(\A{A},T)$, s.t.  for any function $\psi\in C_{b,\d}^{\gamma}(\R^{nd},\R) $, and any given multi-index $\vartheta,\ |\vartheta|\in \leftB 1,3\rightB $, for all $0\le t<s\le T,\ \x\in \R^{nd} $:}
\begin{equation}
|  D_{\x}^\vartheta \tilde P_{s,t}^{(\tau,\bxi)} \psi(\x)| \big |_{\bxi=\x}
 \leq 
  C\|\psi\|_{C_{\d}^\gamma}(s-t)^{-\sum_{i=1}^n |\vartheta_i|(i-\frac 12)+\frac \gamma 2}.
% C\|\psi\|_{C_{b,\d}^\gamma(\R^{nd},\R)}(s-t)^{-\sum_{i=1}^n |\vartheta_i|(i-\frac 12)+\frac \gamma 2}.
\end{equation}
\end{lem}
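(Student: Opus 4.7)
The plan is to combine a straightforward cancellation identity with the good scaling estimates for the frozen density just established in Proposition \ref{THE_PROP} and Remark \ref{THE_REM_REG}. Since $\tilde p^{(\tau,\bxi)}(t,s,\x,\cdot)$ is a probability density and $|\vartheta| \geq 1$, differentiating the identity $\int_{\R^{nd}} \tilde p^{(\tau,\bxi)}(t,s,\x,\y)\,d\y = 1$ under the integral sign (legitimate thanks to the smoothness provided by the mollification) yields $\int_{\R^{nd}} D_{\x}^\vartheta \tilde p^{(\tau,\bxi)}(t,s,\x,\y)\,d\y = 0$. Consequently, for any function $c$ depending only on $\x$,
$$D_{\x}^\vartheta \tilde P_{s,t}^{(\tau,\bxi)} \psi(\x) = \int_{\R^{nd}} D_{\x}^\vartheta \tilde p^{(\tau,\bxi)}(t,s,\x,\y)\, \bigl[\psi(\y) - c(\x)\bigr]\,d\y.$$

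I would then specialize $\bxi = \x$ and choose $c(\x) = \psi(\btheta_{s,t}^{(\tau)}(\x))$. The point of this choice is that, by \eqref{eq_m_theta_x}, the mean $\m_{s,t}^{(\tau,\x)}(\x)$ of the frozen Gaussian coincides with the deterministic flow $\btheta_{s,t}^{(\tau)}(\x)$, which is precisely the centering point of the exponential bounds of Proposition \ref{THE_PROP}. Invoking the Hölder assumption on $\psi$ gives
$$\bigl|\psi(\y) - \psi(\btheta_{s,t}^{(\tau)}(\x))\bigr| \leq \|\psi\|_{C_\d^\gamma}\, \d^\gamma\bigl(\y, \btheta_{s,t}^{(\tau)}(\x)\bigr).$$

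The final step is to absorb this $\d^\gamma$-factor into the Gaussian upper bound. Using the homogeneity of the quasi-distance recalled around \eqref{DIST}, I rewrite
$$\d^\gamma\bigl(\y, \btheta_{s,t}^{(\tau)}(\x)\bigr) = (s-t)^{\gamma/2}\, \d^\gamma\bigl((s-t)^{1/2}\T_{s-t}^{-1}\y,\ (s-t)^{1/2}\T_{s-t}^{-1}\btheta_{s,t}^{(\tau)}(\x)\bigr),$$
and apply Remark \ref{THE_REM_REG} with $\beta = \gamma$ to estimate
$$\bigl|D_{\x}^\vartheta \tilde p^{(\tau,\x)}(t,s,\x,\y)\bigr|\, \d^\gamma\bigl(\y,\btheta_{s,t}^{(\tau)}(\x)\bigr) \leq \frac{C}{(s-t)^{\sum_{i=1}^n |\vartheta_i|(i-\frac 12)\,-\,\gamma/2}}\, \bar p_{C^{-1}}^{(\tau,\x)}(t,s,\x,\y).$$
Integrating in $\y$ and using that $\bar p_{C^{-1}}^{(\tau,\x)}(t,s,\x,\cdot)$ has mass bounded uniformly in the parameters (Proposition \ref{THE_PROP}) delivers the announced bound. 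The only real bookkeeping point is the scaling accounting, namely verifying that a factor of $|(\y-\btheta_{s,t}^{(\tau)}(\x))_i|^{1/(2i-1)}$ contributes a $(s-t)^{1/2}$ under Gaussian integration, so that the full $\d^\gamma$ produces the advertised $(s-t)^{\gamma/2}$ gain; this is guaranteed by the good scaling property of the covariance matrix recorded in Proposition \ref{PROP_SCALE_COV}.
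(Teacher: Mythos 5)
Your proof is correct and follows essentially the same route as the paper: exploit the vanishing of $\int D_{\x}^\vartheta \tilde p^{(\tau,\bxi)}(t,s,\x,\y)\,d\y$ to subtract the value of $\psi$ at the Gaussian's center, invoke the H\"older bound on $\psi$, and absorb the resulting $\d^\gamma$ factor into the exponential off-diagonal decay via Remark~\ref{THE_REM_REG} and the homogeneity of $\d$. The paper writes the centering point as $\m_{s,t}^{(\tau,\bxi)}(\x)$ while you specialize immediately to $\bxi=\x$ and use $\btheta_{s,t}(\x)$; these coincide by~\eqref{eq_m_theta_x}, so the two arguments are the same.
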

\begin{proof}
It suffices to write:
\begin{eqnarray*}
D_{\x}^\vartheta \tilde P_{s,t}^{(\tau,\bxi)} \psi(\x)&=&\int_{\R^{nd}} d\y D_\x^\vartheta \tilde p^{(\tau,\bxi)} (t,s,\x,\y) \psi(\y)
=\int_{\R^{nd}} d\y D_\x^\vartheta \tilde p^{(\tau,\bxi)} (t,s,\x,\y) [\psi(\y)-\psi(\m_{s,t}^{(\tau,\bxi)}(\x))],
\end{eqnarray*}
so that from Proposition \ref{THE_PROP} and the smoothness of $\psi $:
\begin{eqnarray*}
|D_{\x}^\vartheta \tilde P_{s,t}^{(\tau,\bxi)} \psi(\x)|&\le &\frac{C}{(s-t)^{\sum_{i=1}^n \vartheta_i (i-\frac 12)}} \|\psi\|_{C_\d^\gamma}\int_{\R^{nd}} \bar p_{C^{-1}}^{(\tau,\bxi)}(t,s,\x,\y) \d^\gamma(\y,\m_{s,t}^{(\tau,\bxi)}(\x)),
\end{eqnarray*}
which yields the result thanks to Remark \ref{THE_REM_REG} recalling from the homogeneity of $\d$ that $ \d^\gamma(\y,\m_{s,t}^{(\tau,\bxi)}(\x))=\big[(s-t)^{\frac 12}\d( (s-t)^{\frac 12}\T_{s-t}^{-1} \y,(s-t)^{\frac 12}\T_{s-t}^{-1} \m_{s,t}^{(\tau,\bxi)}(\x))\big]^\gamma$.
\end{proof}

Note carefully that in the above lemma $|\vartheta| \geq 1$. Indeed, if $|\vartheta|=0$ we cannot benefit from any regularizing effects which are precisely due to cancellation techniques.

%\textcolor{black}{
We now give some useful controls involving the previous Gaussian kernel which will be \textcolor{black}{used} in our perturbative analysis. The main interest of the estimates below is that they precisely allow to exploit cancellation techniques.
\begin{PROP} \label{Prop_moment_D2_tilde_p}
For all $0 \leq t \leq s \leq T$, $(\x,\bxi) \in \R^{nd} \times \R^{nd}$, the following identities hold:
\begin{eqnarray} \label{eq_D2moment2}
\int_{\R^{nd}}  \tilde p^{(\tau,\bxi)}(t,s,\x,\y) (\y-\m^{(\tau,\bxi)}_{s,t}(\x))_1^{\otimes 2} d \y&=& [\tilde \K_{s,t}^{(\tau,\bxi)}]_{1,1},\label{COV_SYNDICALE}\\
\int_{\R^{nd}} D^2_{\x_1} \tilde p^{(\tau,\bxi)}(t,s,\x,\y) (\y-\m^{(\tau,\bxi)}_{s,t}(\x))_1 d \y&=&\0_{d},
\label{center_odd} \\
\int_{\R^{nd}} D_{\x_k}D^2_{\x_1} \tilde p^{(\tau,\bxi)}(t,s,\x,\y) \cdot(\y-\m^{(\tau,\bxi)}_{s,t}(\x))_1 d \y&=&\0_{d,d}, \ k\in \leftB 1,n \rightB
\label{GROS_CENTER},\\
\textcolor {black}{\int_{\R^{nd}} } D^2_{\x_1} \tilde p^{(\tau,\bxi)}(t,s,\x,\y)   %\!\!\!\!\!
{\rm Tr}\Big (\mathbf{M} (\y-\m^{(\tau,\bxi)}_{s,t}(\x))_1^{\otimes 2} \Big)d \y&=&2 \mathbf M, \ %k\in \leftB 1,n \rightB, 
\mathbf{M} \in \R^d\otimes \R^d,%\nonumber\\
\label{GROS_CENTER_TER}
\\
\int_{\R^{nd}} D_{\x_k}D^2_{\x_1} \tilde p^{(\tau,\bxi)}(t,s,\x,\y)  %\!\!\!\!\!
{\rm Tr}\Big (\mathbf{M} (\y-\m^{(\tau,\bxi)}_{s,t}(\x))_1^{\otimes 2} \Big) d \y&=&\0_{d,d,d}, \ k\in \leftB 1,n \rightB, \mathbf{M} \in \R^d\otimes \R^d.\nonumber\\
\label{GROS_CENTER_BIS} %\\
%\\
%\int_{\R^{nd}} D^2_{\x_1} \tilde p^{(\tau,\bxi)}(t,s,\x,\y) (\y-\m^{(\tau,\bxi)}_{s,t}(\x))_1^{\otimes 2} d \y&=& (\tilde \gR ^{(\tau,\bxi)}(s,t)_{\cdot,1})^{\otimes 2}. %\tilde \K^{(\tau,\bxi)}(T,t)_{1,1}.%( {\tilde  \gR}^{(\tau,\bxi)}(s,t)_{1,1})^2.
\end{eqnarray}
\end{PROP}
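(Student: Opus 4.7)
The plan is to leverage two structural facts. First, from \eqref{CORRESP}, $\tilde p^{(\tau,\bxi)}(t,s,\x,\y)=q(\m_{s,t}^{(\tau,\bxi)}(\x)-\y)$, where $q$ denotes the centered $nd$-dimensional Gaussian density with covariance $\tilde \K_{s,t}^{(\tau,\bxi)}$; by \eqref{AFFINE_FLOW} the map $\x\mapsto\m_{s,t}^{(\tau,\bxi)}(\x)$ is affine with Jacobian $\tilde \gR^{(\tau,\bxi)}(s,t)$. Second, since the first block-row of $D\gF$ in \eqref{DEF_PARTIAL_GRADIENTS} vanishes identically, the Cauchy problem \eqref{DYN_RES_FORWARD} for $\tilde \gR^{(\tau,\bxi)}$ with initial datum $\mathbf{I}_{nd,nd}$ forces
\begin{equation*}
[\tilde \gR^{(\tau,\bxi)}(s,t)]_{1,1}=\mathbf{I}_{d,d},\quad [\tilde \gR^{(\tau,\bxi)}(s,t)]_{1,j}=\0_{d,d}\ \text{for all } j\ge 2.
\end{equation*}
Identity \eqref{COV_SYNDICALE} is then the very definition of the $(1,1)$-block of the Gaussian covariance of $\tilde \X_{s}^{(\tau,\bxi)}-\m_{s,t}^{(\tau,\bxi)}(\x)$.

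For the remaining identities, the chain rule applied to $\tilde p^{(\tau,\bxi)}=q(\m_{s,t}^{(\tau,\bxi)}(\x)-\y)$ yields $D_{\x_1}\tilde p^{(\tau,\bxi)}=-[\tilde \gR^{(\tau,\bxi)}(s,t)]_{\bullet,1}^{*}\, D_\y \tilde p^{(\tau,\bxi)}$ and hence the key decomposition
\begin{equation*}
D_{\x_1}^{2}\tilde p^{(\tau,\bxi)}(t,s,\x,\y)=\sum_{i,j=1}^{n}[\tilde \gR^{(\tau,\bxi)}(s,t)]_{i,1}^{*}\, D_{\y_i}D_{\y_j}\tilde p^{(\tau,\bxi)}(t,s,\x,\y)\, [\tilde \gR^{(\tau,\bxi)}(s,t)]_{j,1}.
\end{equation*}
Integrating against a test function $\phi$ and performing two integrations by parts in $\y$ transfers the derivatives onto $\phi$. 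For $\phi(\y)=(\y-\m_{s,t}^{(\tau,\bxi)}(\x))_1$ (linear in $\y_1$ only), all second $\y$-derivatives of $\phi$ vanish, proving \eqref{center_odd}. For $\phi(\y)={\rm Tr}(\mathbf M(\y-\m_{s,t}^{(\tau,\bxi)}(\x))_1^{\otimes 2})$, which again depends solely on $\y_1$, only the $(i,j)=(1,1)$ term survives in the sum above; a direct computation yields $D_{\y_1}^{2}\phi=\mathbf M+\mathbf M^{*}$, and combining with $[\tilde \gR^{(\tau,\bxi)}(s,t)]_{1,1}=\mathbf{I}_{d,d}$ together with the symmetry of $(\y-\m)_1^{\otimes 2}$ (which allows one to replace $\mathbf M$ by its symmetric part without changing the trace) delivers the value $2\mathbf M$ announced in \eqref{GROS_CENTER_TER}.

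Finally, \eqref{GROS_CENTER} and \eqref{GROS_CENTER_BIS} are obtained by differentiating \eqref{center_odd} and \eqref{GROS_CENTER_TER} in $\x_k$. Since their right-hand sides are constant in $\x$, the $\x_k$-derivatives vanish; on the left the product rule produces a boundary term with factor $D_{\x_k}(\y-\m_{s,t}^{(\tau,\bxi)}(\x))_1=-[\tilde \gR^{(\tau,\bxi)}(s,t)]_{1,k}$. By the structural constraint on the resolvent, this factor vanishes for $k\ge 2$, directly killing the boundary term; for $k=1$ the factor equals $-\mathbf{I}_{d,d}$, and the residual integral reduces to $\int_{\R^{nd}}D_{\x_1}^{2}\tilde p^{(\tau,\bxi)}d\y=D_{\x_1}^{2}\int_{\R^{nd}}\tilde p^{(\tau,\bxi)}d\y=0$ in the case of \eqref{GROS_CENTER}, and to a multiple of the (vanishing) integral already handled in \eqref{center_odd} in the case of \eqref{GROS_CENTER_BIS}. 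The only subtlety in executing this plan is the tensorial bookkeeping in the above chain-rule decomposition; once organized, every identity reduces to an elementary Gaussian integration by parts.
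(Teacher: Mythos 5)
Your proof is correct. The route differs from the paper's for the two base identities \eqref{center_odd} and \eqref{GROS_CENTER_TER}: you first expand
$D_{\x_1}^{2}\tilde p^{(\tau,\bxi)}=\sum_{i,j}[\tilde\gR^{(\tau,\bxi)}(s,t)]_{i,1}^{*}\,D_{\y_i}D_{\y_j}\tilde p^{(\tau,\bxi)}\,[\tilde\gR^{(\tau,\bxi)}(s,t)]_{j,1}$
via the chain rule (legitimate, since by \eqref{CORRESP} the density is a fixed Gaussian evaluated at $\m_{s,t}^{(\tau,\bxi)}(\x)-\y$ and the mean is affine with Jacobian $\tilde\gR$), and then integrate by parts in $\y$ to land derivatives on the polynomial test function, whose second $\y$-derivatives either vanish (linear case) or survive only in the $(1,1)$ block. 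The paper instead never writes the $\y$-representation of $D_{\x_1}^2\tilde p$: it starts from the elementary moment identities $\int\tilde p\,d\y=1$, $\int\tilde p\,(\y-\m)_1\,d\y=\0_d$, $\int\tilde p\,{\rm Tr}(\mathbf M(\y-\m)_1^{\otimes 2})\,d\y={\rm Tr}(\mathbf M[\tilde\K_{s,t}^{(\tau,\bxi)}]_{1,1})$ (the last being $\x$-independent), differentiates them in $\x_1$ with Leibniz, and feeds in $D_{\x_1}[\m_{s,t}^{(\tau,\bxi)}(\x)]_1=\mathbf I_{d,d}$. These are two sides of the same coin — your chain-rule/IBP step is the local justification for what the paper's Leibniz differentiation achieves globally — and your version is arguably more explicit about where the tensorial structure enters. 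For \eqref{GROS_CENTER} and \eqref{GROS_CENTER_BIS} your argument (differentiate the established identities in $\x_k$, noting that the product-rule term carries the factor $D_{\x_k}(\y-\m)_1=-[\tilde\gR]_{1,k}$, which is $\0_{d,d}$ for $k\geq 2$ and $-\mathbf I_{d,d}$ for $k=1$, the residual then being the already-proved or trivially-vanishing integral) is essentially the paper's; the paper also offers a one-line parity alternative for \eqref{GROS_CENTER_BIS} (odd total degree of derivation plus monomial power forces the Gaussian integral to vanish) that you did not invoke. Finally, you correctly flagged that the raw computation for \eqref{GROS_CENTER_TER} gives $\mathbf M+\mathbf M^{*}$ and that the stated value $2\mathbf M$ tacitly reads $\mathbf M$ as symmetric, which indeed it is in every application (there $\mathbf M$ is a Hessian); the paper leaves this implicit.
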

\textcolor{black}{Where, in \eqref{COV_SYNDICALE}, we define for all $(i,j) \in \leftB 1,n \rightB^2$ and $M \in (\R^{nd})^{\otimes 2}$ , $[M]_{i,j}$ is the $d \times d$ block matrix  corresponding to the entry of $M$ on the $i^{\rm th}$ line and the $j^{\rm th}$ column.}
\begin{proof}
First of all, remark that equation \eqref{COV_SYNDICALE} simply follows from a direct covariance computation. 

Observe now that from Proposition \ref{THE_PROP}, we have $ \int_{\R^{nd}} \tilde p^{(\tau,\bxi)}(t,s,\x,\y) (\y-\m^{(\tau,\bxi)}_{s,t}(\x))_1 d \y=\0_d$.
Differentiating twice this expression w.r.t. $\x_1$ and using the Leibniz formula (recalling as well the identity  \eqref{AFFINE_FLOW} which yields $D_{\x_1}[\m_{s,t}^{(\tau,\bxi)}(\x)]_1=[\tilde \gR^{(\tau,\bxi)}(s,t)]_{1,1}=\textcolor{black}{{\mathbf I}}_{d,d} $) gives \eqref{center_odd}. Iterating the differentiation w.r.t. $D_{\x_k}$ then yields \eqref{GROS_CENTER} (observing again that $D_{\x_k}[\m_{s,t}^{(\tau,\bxi)}(\x)]_1=[\tilde \gR^{(\tau,\bxi)}(s,t)]_{1,k}$, i.e. $D_{\x_k}[\m_{s,t}^{(\tau,\bxi)}(\x)]_1)=\textcolor{black}{{\mathbf I}}_{d, d}$ if $k=1$ and $\0_{d, d} $ for $k>1 $). 
\textcolor{black}{Observe that $D_{\x_1} \int_{\R^{nd}} {\rm Tr}\Big (\mathbf{M} (\y-\m^{(\tau,\bxi)}_{s,t}(\x))_1^{\otimes 2} \Big) \tilde p^{(\tau,\bxi)}(t,s,\x,\y)  d \y= D_{\x_1}  {\rm Tr}\Big (\mathbf{M}   [\tilde \K_{s,t}^{(\tau,\bxi)}]_{1,1} \Big) =\0_d$. Differentiating again w.r.t. $D_{\x_1}$, the Leibniz formula and identity  $D_{\x_1}[\m_{s,t}^{(\tau,\bxi)}(\x)]_1=[\tilde \gR^{(\tau,\bxi)}(s,t)]_{1,1}=\mathbf I_{d , d}$ yield \eqref{GROS_CENTER_TER}.}
Eventually, \eqref{GROS_CENTER_BIS} can be derived again from derivation or observing that the sum of the length of the multi-derivation index, here 3, and the power integrated, here 2, is an odd number.
\end{proof}

\subsection{Additional sensitivity controls : covariance, (mollified) flow, mean}
We now state three important estimates associated with our proxy. The first one concerns the sensitivity of the covariance w.r.t. the frozen point, the second and third one concern the linearization or sensitivity w.r.t. the initial point for the frozen (mollified) differential system \eqref{DYN_DET_SMOOTH}. For the sake of simplicity, their proof\textcolor{black}{s} are postponed to appendixes \ref{APP} and \ref{sec_resolv_cov}. We have:

\begin{lem}[Sensitivities of the covariance] \label{SENS_COV}
There exists  \textcolor{black}{$\Lambda:=\Lambda(\A{A},T)$} as in Remark \ref{REM_LAMBDA} s.t. for given  $(\bxi,\bxi')\in (\R^{nd})^2 $  and $0\le t<s \le T$, $(\x,\x')\in (\R^{nd})^2 $:%, $j\in \{-1,1\} $:
\begin{equation}
\label{CTR_SENSI_COV}
|[\tilde \K_{s,t}^{(\tau,\bxi)}]_{1,1}-[\tilde \K_{s,t}^{(\tau,\bxi')}]_{1,1}|\le \Lambda(s-t)\big( \d^\gamma(\bxi,\bxi')+ (s-t)^{\frac \gamma 2}\big).
\end{equation} 
\end{lem}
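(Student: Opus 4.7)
The plan is to observe that a strong structural simplification occurs for the $(1,1)$ block of $\tilde \K_{s,t}^{(\tau,\bxi)}$, reducing it to a simple time-integral of $a$ along the frozen flow, after which the estimate follows by combining the H\"older regularity of $a$ with the flow stability bound \eqref{CTR_DIFF_SYST_MOL}.

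First, I would notice that the subdiagonal form of $D\gF$ in \eqref{DEF_PARTIAL_GRADIENTS} forces its first block-row to vanish identically. Plugging this into the linear resolvent ODE \eqref{DYN_RES_FORWARD} with initial condition $\tilde \gR^{(\tau,\bxi)}(u,u)=\mathbf I_{nd,nd}$ shows that the first block-row of $v\mapsto \tilde \gR^{(\tau,\bxi)}(v,u)$ is preserved and remains equal to $(\mathbf I_{d,d},\0_{d,d},\dots,\0_{d,d})$ for every $0\le u\le v\le T$ and every $(\tau,\bxi)$. Since $B=(\mathbf I_{d,d},\0_{d,d},\dots,\0_{d,d})^{*}$, the $nd\times nd$ matrix $Ba(u,\btheta_{u,\tau}(\bxi))B^{*}$ is block-zero except for $a$ in position $(1,1)$, so that a direct computation yields
\begin{equation*}
\bigl[\tilde \gR^{(\tau,\bxi)}(s,u)\,B a(u,\btheta_{u,\tau}(\bxi))\,B^{*}\,\tilde \gR^{(\tau,\bxi)}(s,u)^{*}\bigr]_{1,1}=a(u,\btheta_{u,\tau}(\bxi)),
\end{equation*}
and hence the clean identity
\begin{equation*}
[\tilde \K_{s,t}^{(\tau,\bxi)}]_{1,1}=\int_t^s a(u,\btheta_{u,\tau}(\bxi))\,du.
\end{equation*}

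With this identity in hand, the difference to be estimated becomes
\begin{equation*}
[\tilde \K_{s,t}^{(\tau,\bxi)}]_{1,1}-[\tilde \K_{s,t}^{(\tau,\bxi')}]_{1,1}=\int_t^s \bigl(a(u,\btheta_{u,\tau}(\bxi))-a(u,\btheta_{u,\tau}(\bxi'))\bigr)\,du.
\end{equation*}
Using assumption \A{S}-(i) together with the norm equivalence \eqref{EQUIV_NORME_H_HD}, the integrand is bounded by $C\|a\|_{L^\infty(C_{b,\d}^\gamma)}\d^\gamma(\btheta_{u,\tau}(\bxi),\btheta_{u,\tau}(\bxi'))$. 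The flow stability estimate \eqref{CTR_DIFF_SYST_MOL}, applied with the natural convention $\tau=t$ used throughout the paper (so that $u-\tau\le s-t$ on the integration domain), yields $\d(\btheta_{u,\tau}(\bxi),\btheta_{u,\tau}(\bxi'))\le C(\d(\bxi,\bxi')+(s-t)^{1/2})$, which transfers for $\gamma\in(0,1)$ into
\begin{equation*}
\d^\gamma(\btheta_{u,\tau}(\bxi),\btheta_{u,\tau}(\bxi'))\le C\bigl(\d^\gamma(\bxi,\bxi')+(s-t)^{\gamma/2}\bigr).
\end{equation*}
Integrating over $u\in[t,s]$ then delivers the claimed bound with $\Lambda:=C\|a\|_{L^\infty(C_{b,\d}^\gamma)}$, a quantity that indeed vanishes with the H\"older modulus of $a$ and so fits the framework of Remark \ref{REM_LAMBDA}.

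The only subtle point in this plan is the structural simplification in the first paragraph: it is crucial to observe that the weak H\"ormander structure embodied in \eqref{DEF_PARTIAL_GRADIENTS} (specifically, the vanishing of the first block-row of $D\gF$, reflecting the fact that the linearized degenerate components do not feed back into the non-degenerate one) is exactly what removes the resolvent from the $(1,1)$ block of the covariance. Once this is noticed the remaining argument is essentially routine, and in particular no fine cancellation or Gaussian manipulation is required beyond invoking \eqref{CTR_DIFF_SYST_MOL}.
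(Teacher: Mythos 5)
Your proposal is correct and cleaner than the paper's own argument for this specific statement. Your key observation --- that the first block-row of $D\gF$ in \eqref{DEF_PARTIAL_GRADIENTS} vanishes, so that the first block-row of $\tilde\gR^{(\tau,\bxi)}(s,u)$ is constantly equal to $(\mathbf I_{d,d},\0_{d,d},\dots,\0_{d,d})$ and hence $[\tilde\K_{s,t}^{(\tau,\bxi)}]_{1,1}=\int_t^s a(u,\btheta_{u,\tau}(\bxi))\,du$ --- is a genuine structural simplification that the paper does not exploit. The paper instead decomposes $\tilde\K^{\bxi}_{s,t}-\tilde\K^{\bxi'}_{s,t}$ into a term $\Delta_1$ capturing the variation of $a$ and a term $\Delta_2$ capturing the variation of the resolvent, controls both uniformly over all $(i,j)$ block entries (yielding the stronger bound \eqref{FINAL_CTR_COV}), and only then specializes to $(1,1)$; this requires the full resolvent sensitivity Lemma \ref{LEM_SENSI_TO_CONCLUDE} and, behind it, the reverse Taylor Lemma \ref{Lemme_Taylor_reverse}. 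Your route sidesteps all of that machinery because, restricted to the $(1,1)$ block, the resolvent contribution $\Delta_2$ vanishes identically --- exactly the content of your structural remark. The cost is that you only get the $(1,1)$ entry, but that is precisely what Lemma \ref{SENS_COV} states, and a check of the paper shows the general $(i,j)$ control is never invoked elsewhere (both \eqref{tildeP_g2_Holder} and \eqref{tildeP_u2_Holder_Bis} use only $[\tilde\K]_{1,1}$). Your constant $\Lambda=C\|a\|_{L^\infty(C_\d^\gamma)}$ also matches the framework of Remark \ref{REM_LAMBDA}, and is in fact tighter than the paper's \eqref{EXPL_DEP_COEFF}, which also involves the H\"older moduli of $\gF_2,\dots,\gF_n$ because of the resolvent term. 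One small point of hygiene: the conclusion rests on the convention $\tau=t$ (so that $u-\tau\le s-t$ on the integration range), which the paper adopts by default, as you correctly note; under a different freezing time the bound would carry $(s-\tau)^{\gamma/2}$ instead.
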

The proof is given in Appendix \ref{proof_lemma_SENCOV}. We importantly point out that, in Lemma \ref{SENS_COV}, the constant $\Lambda$ mainly depends on the H\"older norms of the coefficients and is \textit{small} provided the coefficients do not \textit{vary much}. Precisely, it can be shown that $\Lambda$ writes:
\begin{equation}\label{EXPL_DEP_COEFF}
\Lambda:=\tilde C\big (  \|a\|_{L^\infty(C_\d^\gamma)}+\sum_{i=2}^n  \|\gF_i\|_{L^\infty( C_{\d,\gH}^{2i-3+\gamma})}  \big )
\end{equation}
for some \textit{universal} constant $\tilde C$, where, in the above equation, we write  
with the notation of Section \ref{SEC_HOLDER_SPACE}:

\begin{eqnarray}
\label{DEF_NORME_HOLDER_HOMO}
\|\gF_i\|_{L^\infty( C_{\d,\gH}^{2i-3+\gamma})}&:=&\sup_{(t,\z) \in [0,T]\times \R^{(n-i+2)d}}\|(D_{\x_{i-1}}\gF_i)_{i-1}(t, \z,\cdot)\|_{C^{\frac{\gamma}{2(i-1)-1}}(\R^d,\R^d\otimes \R^d)}\notag\\
&&+\sum_{j=i}^n \sup_{(t,\z) \in [0,T]\times \R^{(n-i+2)d}} 
\|(\gF_i)_j(t, \z,\cdot)\|_{C^{\frac{2i-3+\gamma}{2j-1}}(\R^d,\R^d)}.
\end{eqnarray}
Namely, the quantity $\|\gF_i\|_{L^\infty( C_{\d,\gH}^{2i-3+\gamma})}$ gathers the H\"older moduli of $\gF_i $ at the intrinsic associated scales according to the distance $\d$ in the variables $j\in \leftB i, n\rightB$ as well as the H\"older norm of the gradient w.r.t. the component which transmits the noise (but importantly not its supremum norm). Said differently, the $L^\infty( C_{\d,\gH}^{2i-3+\gamma}) $ semi-norm of $\gF_i $ gathers the H\"older norms of the fractional parts of $\big(D_{\x_j}^{\lfloor \frac{2i-3+\gamma}{2j-1} \rfloor}\gF_i\big)_{j\in \leftB i-1,n\rightB} $ in the $j^{th} $ variable with corresponding H\"older index $\frac{2i-3+\gamma}{2j-1}- \lfloor \frac{2i-3+\gamma}{2j-1} \rfloor$, where $\lfloor\cdot \rfloor $ stands for the integer part.
\textcolor{black}{Eventually, observe as well that, with respect to the notation \eqref{INHOM_NORM} of Section \ref{SEC_HOLDER_SPACE},  $\|\gF_i\|_{L^\infty( C_{\d,\gH}^{2i-3+\gamma})}=\|\gF_i\|_{L^\infty( C_{\d}^{2i-3+\gamma})}-\|D_{\x_{i-1}}\gF_i\|_{L^\infty(L^\infty)} $}.

We again refer to Appendix \ref{APP} for a precise  statement  and proof of this  assertion \eqref{EXPL_DEP_COEFF} (See also Lemma \ref{LEM_SENSI_TO_CONCLUDE} below and its proof for similar properties).\\

The second result is the following.
\begin{lem}\label{lem_theta_theta}
\label{lem_d_theta}
There exists $C:=C(\A{A})$ s.t. for all  $0 \leq t \leq s \leq T$, $(\x,\x') \in \R^{nd} \times \R^{nd}$: % and $(\x,\x') \in (\R^{nd})^2$:
\begin{equation*}
\d(\btheta_{s,t}(\x),\btheta_{s,t}(\x'))
\leq  C\Big( \d(\x,\x') +(s-t)^{\frac 12} \Big).
\end{equation*}
\end{lem}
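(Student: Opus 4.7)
The plan is to establish, componentwise, the estimate
$$E^i(v):=|\btheta^i_{v,t}(\x)-\btheta^i_{v,t}(\x')|\le C\bigl[\d(\x,\x')+(v-t)^{1/2}\bigr]^{2i-1}=: CH(v)^{2i-1}$$
for each $i\in\leftB 1,n\rightB$ and $v\in[t,s]$, from which the claim follows by taking $(2i-1)$-th roots and summing. The exponent $2i-1$ is the intrinsic weight carried by the $i$-th variable in $\d$, so the time contribution $(v-t)^{1/2}$ is exactly the one dictated by the parabolic time-space homogeneity of the system.

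Integrating \eqref{DYN_DET_SMOOTH} componentwise and using \A{S}-(iii) (which in the mollified setting still yields uniform-in-$m$ bounds on $\|D_{\x_{i-1}}\gF_i\|_\infty$ thanks to \A{H}, and on the $\tfrac{2i-3+\gamma}{2j-1}$-H\"older modulus of $\gF_i$ in $\x_j$ for $j\ge i$), we get for $i\ge 2$
$$E^i(v) \le |\x_i-\x'_i|+C\int_t^v\Bigl[E^{i-1}(u)+\sum_{j=i}^n E^j(u)^{\frac{2i-3+\gamma}{2j-1}}\Bigr]\,du,$$
while for $i=1$ we use directly $|\gF_1(u,\btheta_{u,t}(\x))-\gF_1(u,\btheta_{u,t}(\x'))|\le \|\gF_1\|_{C_\d^\gamma}\d^\gamma(\btheta_{u,t}(\x),\btheta_{u,t}(\x'))$. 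Note that this procedure avoids any appeal to the Lipschitz constant of the mollified $\gF_1$, which blows up with $m$.

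Setting up the bootstrap $E^k(u)\le CH(u)^{2k-1}$ simultaneously for all $k$, plugging it into the right-hand sides, and using the elementary computation $\int_t^v H(u)^\beta du \le \tfrac{2}{\beta+2}H(v)^{\beta+2}$ (via the change of variable $w=(u-t)^{1/2}$), the Lipschitz-in-$\x_{i-1}$ contribution produces the critical term of order $H(v)^{2i-1}$, whereas each H\"older contribution produces the strictly better $H(v)^{2i-1+\gamma}$. On a short interval $[t,t+\tau]$ with $\tau=\tau(\A{A})$, the excess factor $H(v)^\gamma$ in the H\"older part is controlled (bounded by a constant depending only on $\A{A}$), and the bootstrap ansatz closes by choosing $C$ large enough. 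The flow property $\btheta_{s,t}=\btheta_{s,r}\circ\btheta_{r,t}$ then allows to propagate the local estimate up to the fixed horizon $T$ through finitely many iterations of length $\tau$, each of which multiplies the constant by a fixed factor; this yields the constant $C=C(\A{A},T)$ announced in the statement.

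The main obstacle is the fully coupled triangular structure: $E^i$ depends on $E^{i-1},E^i,\ldots,E^n$ (and $E^1$ depends on every component), so the bootstrap ansatz cannot be decoupled and must be closed simultaneously across all levels. The scheme works because the parabolic scaling of $\d$ is tuned exactly so that the Lipschitz-in-$\x_{i-1}$ terms land on the critical homogeneity $H^{2i-1}$ (with no spare power), while the H\"older contributions land on the strictly subcritical $H^{2i-1+\gamma}$, allowing the extra factor $H(v)^\gamma$ to be absorbed at the price of restricting initially to a short time interval.
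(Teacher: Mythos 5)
Your simultaneous bootstrap does not close at levels $i\ge 2$, and the culprit is precisely the Lipschitz-in-$\x_{i-1}$ contribution that you describe as landing on the critical homogeneity $H(v)^{2i-1}$ ``with no spare power'' — that is the obstruction, not the reason the scheme works. Inserting the ansatz $E^{i-1}(u)\le C H(u)^{2i-3}$ into your integral inequality and using $\int_t^v H(u)^{2i-3}\,du\le \frac{2}{2i-1}H(v)^{2i-1}$ produces the term $\|D_{\x_{i-1}}\gF_i\|_{L^\infty}\,\frac{2C}{2i-1}\,H(v)^{2i-1}$ on the right-hand side; re-deriving $E^i(v)\le C H(v)^{2i-1}$ would then force $\|D_{\x_{i-1}}\gF_i\|_{L^\infty}\le (2i-1)/2$, a smallness condition that \A{A} certainly does not guarantee. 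Choosing $C$ large cures only the \emph{sublinear} H\"older terms; it makes the \emph{linear} Lipschitz term worse. Nor does restricting to $[t,t+\tau]$ help: as soon as $\d(\x,\x')\lesssim (v-t)^{1/2}$, the bound $\int_t^v H(u)^{2i-3}\,du\simeq\frac{2}{2i-1}H(v)^{2i-1}$ is essentially an equality (it is exact when $\d(\x,\x')=0$), so the Lipschitz contribution remains a fixed, non-small multiple of $C H(v)^{2i-1}$ uniformly in $\tau$.

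The paper never tries to close a single-constant ansatz across all levels. The linear coupling $E^i\to E^{i-1}$ is deliberately left as an integral and cascaded downward $i-1$ times through the triangular structure of the chain (see \eqref{CTR_N}--\eqref{CTR_I}), yielding nested integrals $\int_t^{s}dv_{i-1}\cdots\int_t^{v_2}dv_1\,E^1(v_1)\lesssim (s-t)^{i-1}\sup_{[t,s]} E^1$; only $E^1$ is estimated directly, via the H\"older modulus of $\gF_1$ and Young inequalities, and the accumulated factor $(s-t)^{i-1}$ then combines with further Young inequalities to restore the weight $(s-t)^{i-1/2}+\d^{2i-1}(\x,\x')$. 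The genuine self-dependence of each $E^i$ — coming from the sub-Lipschitz (H\"older) dependence of $\gF_i$ on $\x_j$, $j\ge i$ — is what Gr\"onwall absorbs, after mollifying $\gF_i$ at a time-dependent scale $\delta_i$ tuned so that the mollified Lipschitz constant times $(s-t)$ stays bounded (equations \eqref{approxrescaleddrfit}--\eqref{choice_deltaij}). You are right that $\gF_1$ can and must be treated only through its H\"older modulus, and the paper agrees; but the linear coupling to the lower index for $i\ge 2$ requires the cascading treatment, not a closed bootstrap.
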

The proof of Lemma \ref{lem_theta_theta} is postponed to Appendix \ref{sec:ProofLem_d_theta}.\\

Eventually, this last lemma concerns the impact of the freezing point in the linearization procedure. Namely,
\begin{lem}[Sensitivity of the linearized flow w.r.t. the freezing parameter ]
There exists $C:=C(\A{A})$ s.t. for all  $\tau= t $,  $(\x,\x') \in (\R^{nd})^2$ at the change of regime time  $t_0$ defined in \eqref{def_t0} (i.e. $t_0 = (t + c_0\d^2(\x,\x'))\wedge T$):
\label{Lemme_d_theta_theta_x_x}
\begin{equation*}
\d\big ( \m_{t_0,t}^{(\tau,\x)}(\x'),\m_{t_0,t}^{(\tau,\x')}(\x') \big )=\d\big ( \m_{t_0,t}^{(\tau,\x)}(\x'),  \btheta_{t_0,t}(\x') \big ) \leq  Cc_0^{\frac 1{%2(
2n-1%)
}} \d(\x,\x').
\end{equation*}
\end{lem}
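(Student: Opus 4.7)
The equality $\m_{t_0, t}^{(\tau, \x')}(\x') = \btheta_{t_0, t}(\x')$ is immediate: when the spatial freezing parameter $\bxi$ equals the starting point $\x'$, the linearization correction $D\gF(v, \btheta_{v,\tau}(\bxi))(\m_v - \btheta_{v,\tau}(\bxi))$ in the ODE satisfied by $v \mapsto \m_{v,t}^{(\tau,\x')}(\x')$ vanishes along $\m_v = \btheta_{v,\tau}(\x')$, so both $\m_{\cdot, t}^{(\tau, \x')}(\x')$ and $\btheta_{\cdot, t}(\x')$ solve the same Cauchy problem $\dot z_v = \gF(v, z_v)$, $z_t = \x'$. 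For the quantitative part, set $P_v := \btheta_{v, t}(\x)$, $Q_v := \btheta_{v, t}(\x')$, $\m_v := \m_{v, t}^{(t, \x)}(\x')$, and $\eta_v := \m_v - Q_v$. Using $\dot\m_v = \gF(v, P_v) + D\gF(v, P_v)(\m_v - P_v)$, $\dot Q_v = \gF(v, Q_v)$ and the identity $\m_v - P_v = \eta_v + (Q_v - P_v)$, one obtains $\dot \eta_v = D\gF(v, P_v) \eta_v - R_v$ with $R_v := \gF(v, Q_v) - \gF(v, P_v) - D\gF(v, P_v)(Q_v - P_v)$ and $\eta_t = 0$, so that variation of constants yields $\eta_{t_0} = -\int_t^{t_0} \tilde \gR^{(t, \x)}(t_0, u) R_u\, du$.

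The Taylor-type remainder $R_u$ is controlled component-wise. Thanks to the vanishing first row of $D\gF$ (see \eqref{DEF_PARTIAL_GRADIENTS}), $R_u^1 = \gF_1(u, Q_u) - \gF_1(u, P_u)$ is bounded by $\Lambda \d^\gamma(\x, \x')$ via the $C_\d^\gamma$-regularity of $\gF_1$ and Lemma \ref{lem_theta_theta} (valid in the regime $u - t \leq c_0 \d^2(\x, \x')$ with $c_0 \leq 1$). For $i \geq 2$, the structure condition \eqref{STRUCT_F} permits a telescoping decomposition of $\gF_i(u, Q_u) - \gF_i(u, P_u)$ one coordinate at a time in the order $\x_{i-1}, \x_i, \dots, \x_n$: the $\x_{i-1}$-block combines with $-D_{\x_{i-1}}\gF_i(u, P_u)(Q_u - P_u)_{i-1}$ into a genuine Taylor remainder controlled by the $\gamma/(2i-3)$-H\"older regularity of $D_{\x_{i-1}} \gF_i$ in $\x_{i-1}$ (inherited from $\gF_i \in C_\d^{2i-3+\gamma}$), while each subsequent $\x_k$-block ($k \geq i$) uses the $(2i-3+\gamma)/(2k-1)$-H\"older regularity of $\gF_i$ in $\x_k$ alone. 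Combined with $|(Q_u - P_u)_k|^{1/(2k-1)} \leq C \d(\x, \x')$ from Lemma \ref{lem_theta_theta}, this gives $|R_u^i| \leq \Lambda \d^{2i-3+\gamma}(\x, \x')$.

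The subdiagonal form of $D\gF$ also makes the resolvent $\tilde\gR^{(t, \x)}$ block lower-triangular with $[\tilde \gR]_{i, i} = \mathbf I_{d, d}$; combined with the rescaling \eqref{RESCALED_RES} and the bound \eqref{borne_tilde_R} on $\widehat{\tilde \gR}(1, 0)$, this yields $|[\tilde \gR^{(t, \x)}(v, u)]_{i, j}| \leq C (v - u)^{i - j}$ for $j \leq i$ and zero otherwise. Plugging in and using $t_0 - t \leq c_0 \d^2(\x, \x')$, one obtains $|[\eta_{t_0}]_i| \leq C \Lambda \sum_{j=1}^i c_0^{i-j+1} \d^{2(i-j+1) + (2j-3)_+ + \gamma}(\x, \x')$, whose dominant contribution is of order $c_0 \d^{2i-1+\gamma}(\x, \x')$ (extra positive powers of $\d$ are absorbed into $T$-dependent constants via $c_0 \d^2(\x,\x') \leq T$). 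Taking the $(2i-1)$-th root and using $c_0 \leq 1$ together with $1/(2i-1) \geq 1/(2n-1)$ gives $|[\eta_{t_0}]_i|^{1/(2i-1)} \leq C c_0^{1/(2n-1)} \d(\x, \x')$; summing over $i = 1, \dots, n$ via \eqref{DIST} concludes.

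The main technical subtlety lies in the telescoping order for $R_u^i$: the anisotropic space $C_\d^{2i-3+\gamma}$ delivers H\"older regularity of $\gF_i$ in each coordinate separately but \emph{not} mixed regularity of $D_{\x_{i-1}} \gF_i$ in $\x_{j \geq i}$, so a naive Taylor expansion of $\gF_i(u, Q_u) - \gF_i(u, P_u)$ along $Q_u - P_u$ would leave an unwanted $O(|(Q_u - P_u)_{i-1}|) = O(\d^{2i-3}(\x, \x'))$ term from $[D_{\x_{i-1}}\gF_i(u, (P_u)_{i-1}, (Q_u)_{i:n}) - D_{\x_{i-1}}\gF_i(u, P_u)](Q_u - P_u)_{i-1}$, losing the critical $\gamma$-gain. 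The telescoping order chosen above confines the $\x_{i-1}$-variation to a pure Taylor expansion in which both $D_{\x_{i-1}} \gF_i$ arguments share the same $\x_{i:n}$ slot, and isolates the $\x_{k \geq i}$-variation to pure H\"older increments of $\gF_i$ itself, thereby preserving the needed $\gamma$-order.
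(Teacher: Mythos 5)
Your proof is correct, and it takes a genuinely different route from the paper's. The paper works with the decomposition
\begin{equation*}
\m_{t_0,t}^{\x}(\x')-\btheta_{t_0,t}(\x')=\big[\m_{t_0,t}^{\x}(\x')-\btheta_{t_0,t}(\x)\big]+\big[\btheta_{t_0,t}(\x)-\btheta_{t_0,t}(\x')\big],
\end{equation*}
iterates the affine ODE satisfied by the first bracket to express $(\m_{s,t}^{\x}(\x')-\btheta_{s,t}(\x))_i$ as $\x'_i-\x_i$ plus iterated integrals of the subdiagonal blocks $D_{j-1}\gF_j$ acting on $\x'_{k-1}-\x_{k-1}$, lets the $\x'_i-\x_i$ term cancel against the second bracket, and then bounds the leftover $\gF$-increment $\gF_i(v,\btheta_{v,t}(\x))-\gF_i(v,\btheta_{v,t}(\x'))$ using only the \emph{Lipschitz} bound in the $\x_{i-1}$ variable together with Lemma \ref{lem_theta_theta_bis}. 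Your approach instead identifies that $\eta_v=\m_{v,t}^{(t,\x)}(\x')-\btheta_{v,t}(\x')$ solves a linear inhomogeneous ODE with zero initial data and forcing $-R_v$ equal to the exact Taylor remainder of $\gF$ at $\btheta_{v,t}(\x)$, and applies Duhamel with the resolvent $\tilde\gR^{(t,\x)}$. This automatically produces the $\x'_i-\x_i$ cancellation (through $\eta_t=0$), isolates a cleaner object to estimate, and exploits the full $C^{1+\gamma/(2i-3)}$ regularity of $\gF_i$ in $\x_{i-1}$ via your telescoping order (which the paper's own estimate for this lemma does \emph{not} use, as the authors remark in Appendix \ref{SCALED_FLOWS_SEC}). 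The extra $\gamma$-gain you obtain in $R_u^i$ does not improve the final exponent $c_0^{1/(2n-1)}$, since the bottleneck term $j=i$ in your Duhamel sum already gives $c_0^{1}\d^{2i-1+\gamma}$, but the proof is tighter and arguably more transparent. One small inaccuracy: your parenthetical claim that the extra $\d^{\gamma}$-powers are ``absorbed into $T$-dependent constants via $c_0\d^2(\x,\x')\leq T$'' does not quite close, since $\d\leq (T/c_0)^{1/2}$ injects a negative power of $c_0$ that eats into the target $c_0^{1/(2n-1)}$; the clean way (which the paper uses) is to assume w.l.o.g.\ $\d(\x,\x')\leq 1$, after which $\d^{2i-1+\gamma}\leq\d^{2i-1}$ and the argument concludes directly.
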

Again, the proof of Lemma \ref{Lemme_d_theta_theta_x_x} is postponed to Appendix \ref{sec:ProofLemma_flow}.

%\textcolor{black}{Enonce du Lemme \ref{LEM_SENSI_TO_CONCLUDE} a mettre en appendice ? On va voir!}
%\begin{lem}[Controls of the Sensitivities for the Resolvents]
%\label{LEM_SENSI_TO_CONCLUDE}
%There exists $\tilde C$
%%:=C(\A{A},T)
%%$ 
%s.t. for all $0 \leq t \leq s \leq T$, $(\x,\x') \in (\R^{nd})^2$, 
%%with $s-t \leq c_0\d^2(\x,\x')$,  de S. a I. a priori cette precision n'est pas ici necessaire.
%the following control holds. For all $1\le j<i\le n $, with the notation of \eqref{DEF_NORME_HOLDER_HOMO}:
%\begin{equation*}%\label{ASSOCIATED_BD_SENS}
%\big|\big(\tilde \gR^{(\tau,\x)}(t,s)-\tilde \gR^{(\tau,\x')}(t,s)\big)_{i,j} \big| \le  \tilde C (s-t)^{i-j}%\big %(  \|D\gF\|_{L^\infty(C_\d^\gamma)}+\sum_{i=2}^n  \|(\gF_i)_{i:n}\|_{L^\infty( C_{\d}^{2i-3+\gamma})}  \big )  
%\Big(\sum_{k=2}^n  
%\|\gF_k\|_{L^\infty( C_{\d}^{2k-3+\gamma})}\Big)
%\big( (s-t)^{\frac \gamma 2}+ \d^{\gamma} (\x,\x') \big) .
%%C    \d^\gamma(\x,\x').%\notag\\
%\end{equation*}
%\end{lem}
%Lemma \ref{LEM_SENSI_TO_CONCLUDE} is proved in Appendix \ref{proof_diff_R_K}.

\mysection{Control of the supremum of the derivatives w.r.t. the non-degene\-ra\-te variables}
\label{Sectio_Sup_D2}
\textbf{WARNING:} for notational simplicity,  we drop from now on the sub and superscripts $ \tau$ associated with the linearization since, \textcolor{black}{as soon as the function $u$ in  \eqref{DUHAMEL_PERTURB} is evaluated at time $t$ in $[0,T]$, we choose this parameter to be equal to $t$.}
For example, $\tilde p^{(\tau,\bxi)}, \m_{s,t}^{(\tau,\bxi)}, \btheta_{u,\tau} $ become respectively $\tilde p^{\bxi}, \m_{s,t}^{\bxi}, \btheta_{u,t}$. \textcolor{black}{Recall as well that we decided to \textcolor{black}{omit} the dependence of such a function $u$ in the regularization parameter $m$. This dependence is implicitly assumed and we will derive the desired control uniformly in \textcolor{black}{$m$}.
%this regularization parameter.
}\\

The result we aim at proving in this \textcolor{black}{section concerns} the supremum norm of the derivatives w.r.t. the non degenerate variables. Namely, we here prove the following \textcolor{black}{result}.%Proposition.
\begin{PROP}[] \label{PROP_SUP_CTRL}\quad
Let $\gamma\in (0,1)$ be given. Suppose that \A{A} is in force and that the terminal condition $g$ and source term $f$ of the Cauchy problem \eqref{KOLMO} satisfy: $g\in C_{b,\d}^{2+\gamma }(\R^{nd},\R)$ and $f\in L^\infty\big([0,T],C_{b,\d}^{\gamma}( \R^{nd},\R)\big)$. Then, there exist $C:=C(\A{A},T)$ and $\Lambda $ as in Remark \ref{REM_LAMBDA} such that for any $(t,\x) \in [0,T] \times \R^{nd}$,
\begin{equation}\label{ESTI_SUP_NORMS}
\textcolor{black}{|u(t,\x)|}+ |D_{\x_1} u(t,\x) | + |D_{\x_1}^2 u(t,\x) |\leq C \Big\{\| g\|_{C^{2+\gamma}_{b,\d}}  +  (T-t)^{\frac \gamma 2} \|f\|_{L^\infty(C_{b,\d}^{\gamma})} \Big\} + {\Lambda}(T-t)^{\frac \gamma 2}\|u\|_{L^\infty(C^{2+\gamma}_{b,\d}) }.
\end{equation}
\end{PROP}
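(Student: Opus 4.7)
The plan is to start from the forward parametrix identity \eqref{DUHAMEL_PERTURB} with the default freezing choice $(\tau,\bxi)=(t,\x)$, and to derive the three estimates for $|u(t,\x)|$, $|D_{\x_1}u(t,\x)|$, $|D_{\x_1}^2 u(t,\x)|$ by differentiating this identity zero, one, and two times in $\x_1$. Each of the three resulting expressions produces three summands: the frozen semigroup $\tilde P_{T,t}^{\x}g(\x)$, the Green kernel $\tilde G^{\x}f(t,\x)$, and the perturbative error integral $\int_t^T\!\!ds\int d\y\,\tilde p^{\x}(t,s,\x,\y)(L_s-\tilde L_s^{\x})u(s,\y)$.

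For the semigroup and Green kernel contributions, I would apply the cancellation techniques announced as Lemma \ref{CTR_SEMI_GROUP_GREEN_FROZEN} (in the spirit of Lemma \ref{lemme_nabla_Pg}): subtract from $g$ (resp.\ from $f(s,\cdot)$) its Taylor polynomial of the appropriate order around $\m_{T,t}^{\x}(\x)=\btheta_{T,t}(\x)$ (resp.\ $\btheta_{s,t}(\x)$) before integrating against $\tilde p^{\x}$ or its $\x_1$-derivatives, and combine this with the pointwise bounds of Proposition \ref{THE_PROP} and the regularizing property of Remark \ref{THE_REM_REG}. This gives $\sum_{j=0}^{2}|D_{\x_1}^{j}\tilde P_{T,t}^{\x}g(\x)|\le C\|g\|_{C^{2+\gamma}_{b,\d}}$ and analogously $\sum_{j=0}^{2}|D_{\x_1}^{j}\tilde G^{\x}f(t,\x)|\le C(T-t)^{\gamma/2}\|f\|_{L^\infty(C^{\gamma}_{b,\d})}$. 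For the perturbative error, I would decompose $(L_s-\tilde L_s^{\x})u(s,\y)$ according to \eqref{DEF_DIFF_NON_DEG}--\eqref{DEF_TERMES_DEG} into a non-degenerate part $\Delta_{1,\gF,\sigma}(t,s,\btheta_{s,t}(\x),\y,u)$ (which is linear in $D_{\y_1}u$ and $D_{\y_1}^2 u$) and degenerate pieces $\langle\Delta_{i,\gF}(t,s,\btheta_{s,t}(\x),\y),D_{\y_i}u(s,\y)\rangle$ for $i\in \leftB 2,n\rightB$. For the non-degenerate piece, assumption \A{S} and Remark \ref{THE_REM_REG} allow the factor $\d^\gamma(\y,\btheta_{s,t}(\x))$ to be absorbed by the off-diagonal Gaussian bound, producing a smoothing factor $(s-t)^{\gamma/2}$ that compensates the time singularity $(s-t)^{-j/2}$ coming from $D_{\x_1}^{j}\tilde p^{\x}$. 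For the degenerate pieces, I would integrate by parts as in \eqref{DUALITE_PREAL_BESOV}--\eqref{DUALITE_PREAL_BESOV_WITH_THETA} to transfer $D_{\y_i}$ onto $\tilde p^{\x}\otimes\Delta_{i,\gF}$, then pair the resulting kernel with $u(s,\cdot)$ in its $i^{\rm th}$ slot using the Besov duality $B^{\tilde\alpha_i}_{\infty,\infty}(\R^d)=C^{\tilde\alpha_i}(\R^d)$ versus $B^{-\tilde\alpha_i}_{1,1}(\R^d)$ with $\tilde\alpha_i=(2+\gamma)/(2i-1)$; the key time estimate \eqref{CTR_BESOV_TO_INTEGRATE_IN_TIME} of Lemma \ref{LEMME_BESOV_DEG} then yields in each case a bound of the form $\Lambda(T-t)^{\gamma/2}\|u\|_{L^\infty(C^{2+\gamma}_{b,\d})}$ after the $\x_1$-differentiations.

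Summing the three cases gives exactly \eqref{ESTI_SUP_NORMS}: the residual factor $(T-t)^{\gamma/2}$ in front of $\|u\|_{L^\infty(C^{2+\gamma}_{b,\d})}$ is precisely the extra smoothing obtained from the H\"older regularity of $a,\gF$, and thanks to the precise scaling of the Gaussian bounds in Proposition \ref{THE_PROP} it persists uniformly in $j\in\{0,1,2\}$. The main technical obstacle is the treatment of the degenerate pieces: the derivatives $D_{\y_i}u$ for $i\ge 2$ are not controlled in supremum norm uniformly in the mollification parameter, so the Besov-duality detour — made operational via the thermic characterization \eqref{THERMIC_CAR_DEF} of $B^{-\tilde\alpha_i}_{1,1}$, which is especially well adapted to the Gaussian kernel $\tilde p^{\x}$ — is unavoidable and constitutes the technical core of the argument, to be established in Lemma \ref{LEMME_BESOV_DEG}. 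A secondary point of care is to verify that after the $D_{\x_1}^{j}$ differentiations, $j\in\{0,1,2\}$, the freezing-point dependence $\bxi=\x$ is correctly handled, so that the scaling identity $\m_{s,t}^{\x}(\x)=\btheta_{s,t}(\x)$ and the bounds \eqref{eq_m_theta_x}--\eqref{FIRST_CTR_DENS} can be applied as if $\bxi$ and $\x$ were independent variables.
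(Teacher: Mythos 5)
Your plan coincides with the paper's proof: the paper likewise starts from \eqref{DUHAMEL_PERTURB} with $(\tau,\bxi)=(t,\x)$, differentiates in $\x_1$, and controls the three summands via Lemma \ref{CTR_SEMI_GROUP_GREEN_FROZEN} (frozen semigroup and Green kernel, with the Taylor-polynomial/cancellation argument), Lemma \ref{CTR_DER_SUP_non_deg} (non-degenerate perturbation via the H\"older bound absorbed by the Gaussian), and Lemma \ref{LEMME_BESOV_DEG} (degenerate perturbation via integration by parts and $B^{\tilde\alpha_i}_{\infty,\infty}$--$B^{-\tilde\alpha_i}_{1,1}$ duality through the thermic characterization). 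The only cosmetic difference is that for $|u(t,\x)|$ the paper invokes the Feynman--Kac representation \eqref{REP_FK} directly rather than the $0^{\rm th}$-order Duhamel identity, but both give the needed trivial bound since no cancellation (hence no derivative) is required at that order.
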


\textcolor{black}{Note first that the control for the function itself readily follows from \A{A} and the Feynman-Kac representation of the solution of \eqref{KOLMO_m} under \A{A}, i.e. recall from \eqref{REP_FK} that $u(t,\x)=\E[g(\X_T^{t,\x})]+\int_t^T ds \E[f(s,\X_s^{t,\x})] $ and that $T\le 1$.}

For the derivatives, let us now start 
from  \eqref{DUHAMEL_PERTURB} to control pointwise the second order \textcolor{black}{derivative of $u$ in the non-degenerate variable}, i.e. $\|D_{\x_1}^2u\|_{L^\infty} $. The first \textcolor{black}{one} can be controlled similarly and more directly. Write for any  $(t,\x)\in [0,T]\times \R^{nd} $:
\begin{eqnarray}
&&|D_{\x_1}^2u(t,\x)| \le  \|D_{\x_1}^2\tilde P_{T,t}^\bxi g\|_{\textcolor{black}{L^\infty}}+\|D_{\x_1}^2\tilde G^{\bxi}f\|_{\textcolor{black}{L^\infty}} \nonumber\\
&&+  \Big |\int_{t}^T ds \int_{\R^{nd}} D_{\x_1}^2\tilde p^{\bxi}(t,s,\x,\y)\Delta_{1,\gF,\sigma}(t,s,\btheta_{s,t}(\bxi) ,\y,u) d\y\Big | 
 \notag\\
&&+\Big | \sum_{i=2}^n\int_t^T ds \int_{\R^{nd}}
D_{\y_i} \cdot \Big( \big (D_{\x_1}^2\tilde p^{\bxi}(t,s,\x,\y) \otimes \Delta_{i,\gF}(t,s,\btheta_{s,t}(\bxi),\y) \big )\Big )  u(s,\y) d\y \Big | ,\label{def_R1_R2}
%+ |R_{1}^\bxi u(t,\x)|+|R_{2:n}^\bxi u(t,\x)|, 
\end{eqnarray}
where we recall from \eqref{def_Delta_a_F1} and \eqref{DEF_TERMES_DEG} that:
\begin{eqnarray}
\Delta_{1,\gF,\sigma}(t,s,\btheta_{s,t}(\bxi) ,\y,u)\! \!&\! \!=\! \!&\! \! \langle \big (\gF_{1}(s,\y)-\gF_{1}(s,\btheta_{s,t}(\bxi))\big), D_{\y_1}u(s,\y)\rangle%\notag\\
\notag\\
&&+\frac 12{\rm Tr}  \Big(\big ( a(s,\y)-a(s,\btheta_{s,t}(\bxi))\big) D_{\y_1}^2 u(s,\y)\Big),
\nonumber \\
\Delta_{i,\gF}(t,s,\btheta_{s,t}(\bxi),\y)\! \!&\! \!=\! \!& \! \!\gF_{i}(s,\y)-\gF_{i}(s,\btheta_{s,t}(\bxi))-D_{\x_{i-1}}\gF_{i}(s,\btheta_{s,t}(\bxi))(\y-\btheta_{s,t}(\bxi))_{i-1}.\label{LES_DELTA_REG_LOC}
\end{eqnarray}

This section is then organized as follows: we first estimate the non degenerate part of \eqref{def_R1_R2} (first term in the r.h.s. of the equation) thanks to Lemma \ref{CTR_DER_SUP_non_deg} in Section \ref{SUBSEC_CTRL_DERIV_NONDEG}, we then estimate the degenerate part  of \eqref{def_R1_R2} (second term in the r.h.s. of the equation) thanks to Lemma \ref{LEMME_BESOV_DEG} in Section \ref{SEC_BESOV_DUAL_FIRST} and eventually estimate the remainder of \eqref{def_R1_R2} (third and fourth terms in the r.h.s. of the equation) thanks to Lemma \ref{CTR_SEMI_GROUP_GREEN_FROZEN} in Section \ref{pragestiderivsemietgreen}. Proposition \ref{PROP_HOLDER_CTRL} then follows from the previous Lemmas.\qed

\subsection{Control of the non-degenerate part of the perturbative term }\label{SUBSEC_CTRL_DERIV_NONDEG}
The aim of this section is to prove identity \eqref{ineq_D2x1_Delta1} appearing in the \textcolor{black}{detailed guide to the proof}. To this end, we provide a general differentiation result,  which will be useful as well in Section \ref{HOLDER} to deal with the H\"older norms.
Under the current assumptions on $a,\gF$,  the following lemma holds.
\begin{lem}%[First Derivative Control Lemma for the non-degenerate variable] 
\label{CTR_DER_SUP_non_deg}
\textcolor{black}{There exists $\Lambda:=\Lambda(\A{A},T) $ as in Remark \ref{REM_LAMBDA} s.t.}  for each multi-index $\vartheta=(\vartheta_1, \hdots, \vartheta_n) \in \N^{\textcolor{black}{n}}$, $\textcolor{black}{|\vartheta|\le 3}$:
\begin{equation}
\Big| \int_{\R^{nd}} D_\x^\vartheta \tilde p^{\bxi} (t,s,\x,\y)\Delta_{1,\gF,\sigma}(t,s,\btheta_{s,t}(\bxi) ,\y,u)   d\y\Big| \bigg|_{\bxi=\x}
\leq \Lambda\|u\|_{L^\infty(C_{b,\d}^{2+\gamma})}(s-t)^{-\sum_{j=1}^n \vartheta_j (j-\frac 12)+\frac{\gamma}2}.
\end{equation}
\end{lem}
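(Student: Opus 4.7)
The plan is to combine three ingredients: the H\"older regularity of $\gF_1$ and $a$, the pointwise Gaussian bounds of Proposition \ref{THE_PROP}, and the regularization effect of the anisotropic quasi-distance recorded in Remark \ref{THE_REM_REG}. The whole argument is really an exercise in carefully tracking time singularities, and the three steps below essentially reproduce the heuristic computation \eqref{ineq_D2x1_Delta1}--\eqref{FIRST_SMOOTHING_NON_DEG} in a slightly more general form (arbitrary multi-index $\vartheta$ of length up to $3$).

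The first step will be to control the perturbation $\Delta_{1,\gF,\sigma}$ pointwise. Using assumption \A{S} on the H\"older regularity of $\gF_1(s,\cdot)$ and $a(s,\cdot)$ in the quasi-distance $\d$, and bounding $\|D_{\y_1}u(s,\cdot)\|_{L^\infty}$ and $\|D_{\y_1}^2u(s,\cdot)\|_{L^\infty}$ by $\|u\|_{L^\infty(C_{b,\d}^{2+\gamma})}$, one readily obtains
\begin{equation*}
|\Delta_{1,\gF,\sigma}(t,s,\btheta_{s,t}(\bxi),\y,u)| \le \Lambda\,\|u\|_{L^\infty(C_{b,\d}^{2+\gamma})}\,\d^\gamma(\y,\btheta_{s,t}(\bxi)),
\end{equation*}
with $\Lambda := C\big(\|a\|_{L^\infty(C_\d^\gamma)}+\|\gF_1\|_{L^\infty(C_\d^\gamma)}\big)$ enjoying the properties of Remark \ref{REM_LAMBDA}. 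The prefactor $\d^\gamma(\y,\btheta_{s,t}(\bxi))$ is the \emph{gain} coming from the H\"older continuity of the coefficients and must eventually cancel part of the singularity produced by $D_\x^\vartheta \tilde p^\bxi$.

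The second step will invoke Proposition \ref{THE_PROP} to bound the derivatives of the frozen density by $C(s-t)^{-\sum_j \vartheta_j(j-\tfrac12)}\,\bar p_{C^{-1}}^\bxi(t,s,\x,\y)$. I would then specialize $\bxi=\x$, which by \eqref{eq_m_theta_x} gives $\btheta_{s,t}(\bxi)\big|_{\bxi=\x}=\m_{s,t}^\bxi(\x)\big|_{\bxi=\x}$. This is the decisive algebraic matching: the distance $\d^\gamma(\y,\btheta_{s,t}(\bxi))$ now coincides exactly with the centering of the Gaussian density $\bar p_{C^{-1}}^\bxi$, which is the whole point of using the \emph{forward} parametrix with this choice of freezing parameter.

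The last step, which is really the heart of the argument, is the regularization provided by the anisotropic Gaussian decay, in the spirit of Remark \ref{THE_REM_REG}. By the homogeneity of $\d$ we have $\d^\gamma(\y,\m_{s,t}^\bxi(\x))=(s-t)^{\gamma/2}\,\d^\gamma\big((s-t)^{1/2}\T_{s-t}^{-1}\y,\,(s-t)^{1/2}\T_{s-t}^{-1}\m_{s,t}^\bxi(\x)\big)$, and the resulting polynomial prefactor in the rescaled variables is absorbed by the exponential off-diagonal decay of $\bar p_{C^{-1}}^\bxi$, at the cost of a (possibly enlarged) Gaussian constant:
\begin{equation*}
\d^\gamma(\y,\m_{s,t}^\bxi(\x))\,\bar p_{C^{-1}}^\bxi(t,s,\x,\y) \le C\,(s-t)^{\gamma/2}\,\bar p_{C^{-1}}^\bxi(t,s,\x,\y).
\end{equation*}
Integrating in $\y$ and using $\int_{\R^{nd}} \bar p_{C^{-1}}^\bxi(t,s,\x,\y)\,d\y \le C$ from Proposition \ref{THE_PROP} then produces the announced bound, with the time singularity $(s-t)^{-\sum_j \vartheta_j(j-\tfrac12)+\gamma/2}$ and the constant $\Lambda$ of Remark \ref{REM_LAMBDA}. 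The only conceptual point to watch is the order of operations: one must \emph{first} specialize $\bxi=\x$ and \emph{only then} apply the regularization argument, since otherwise $\d^\gamma(\y,\btheta_{s,t}(\bxi))$ and the Gaussian centering would not match and the gain $(s-t)^{\gamma/2}$ would be lost; everything else is a routine book-keeping of exponents.
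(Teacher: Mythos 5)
Your proposal is correct and follows essentially the same route as the paper's proof: you recover the H\"older bound \eqref{ineq_Holder_a_F1} on $\Delta_{1,\gF,\sigma}$, apply Proposition \ref{THE_PROP} for the derivative of the frozen density, and absorb $\d^\gamma(\y,\btheta_{s,t}(\x))$ into the Gaussian decay via Remark \ref{THE_REM_REG} after specializing $\bxi=\x$. Your remark about the order of operations (setting $\bxi=\x$ before invoking the regularization so that $\btheta_{s,t}(\bxi)$ matches the Gaussian centering $\m_{s,t}^{\bxi}(\x)$ via \eqref{eq_m_theta_x}) is exactly the point the paper makes implicitly through its notational choice $\bar p_{C^{-1}}(s,t,\x,\y)=\bar p_{C^{-1}}^{(t,\x)}(s,t,\x,\y)$.
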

\begin{proof}[Proof of Lemma \ref{CTR_DER_SUP_non_deg}]
We first recall the control \eqref{ineq_Holder_a_F1}
\begin{equation*}%\label{ineq_Holder_a_F1}%\Big|\frac 12 \Big( (a(s,\y)-a(s,\btheta_{s,t}(\bxi))   )D_{\x_1}^2 u(s,\y)\Big)\Big| 
|\Delta_{1,\gF,\sigma}(t,s,\y,\btheta_{s,t}(\bxi),u)|\le \Big( [\gF_1]_{\d,\gamma}\|D_{\x_1}u(s,\cdot)\|_{L^\infty}+\frac 12 [a(s,\cdot)]_{\d,\gamma} \|D_{\x_1}^2u(s,\cdot)\|_{L^\infty}\Big) \d^\gamma(\y,\btheta_{s,t}(\bxi)).
\end{equation*}
From this control and Proposition \ref{THE_PROP}, we  directly obtain:
\begin{eqnarray*}
&&\Big| \int_{\R^{nd}} D_{\x}^\vartheta \tilde p^{\bxi} (t,s,\x,\y)\Delta_{1,\gF,\sigma}(t,s,\btheta_{s,t}(\bxi) ,\y,u)   d\y\Big|\bigg|_{\bxi=\x} \notag\\
&\le& \Big( \int_{\R^{nd}} |D_{\x}^\vartheta \tilde p^{\bxi}(t,s,\x,\y)|\ \d^\gamma\big(\y,\btheta_{s,t}(\bxi)\big)\big(\|\gF_1\|_{L^\infty(C_\d^\gamma%(\R^{nd})
)}\|D_{\x_1}u\|_{L^\infty}
%\notag\\
%&&
+\|a\|_{L^\infty(C_{\d}^\gamma %(\R^{nd})
)} \|D_{\x_1}^2 u\|_{L^\infty}\big)d\y\Big) \Big|_{\bxi=\x}\notag\\
&\le& \Lambda\|u\|_{L^\infty(C_{b,\d}^{2+\gamma})} (s-t)^{-\sum_{j=1}^n \vartheta_j (j-\frac 12)} \int_{\R^{nd}} \bar p_{C^{-1}}(s,t,\x,\y) \d^\gamma( \btheta_{s,t}(\x), \y) d\y
%\le  C\|u\|_{L^\infty(C_{b,\d}^{2+\gamma})}\int_{t}^T \frac{ds}{(s-t)^{1-\frac \gamma 2}}
\notag\\
&\le& \Lambda\|u\|_{L^\infty(C_{b,\d}^{2+\gamma})}(s-t)^{-\sum_{j=1}^n \vartheta_j (j-\frac 12)+\frac{\gamma}2}\label{CTR_RM1},
\end{eqnarray*} 
with the notations of Remark \ref{NOTA_ABUSE} for the last but one inequality.
%The last but one inequality is a consequence of Proposition \ref{THE_PROP}.
\end{proof}
Equation \eqref{ineq_D2x1_Delta1} readily follows from Lemma \ref{CTR_DER_SUP_non_deg} taking $\vartheta=(2,0,\hdots,0)$. Namely:
\begin{eqnarray}\label{ineq_sup_D2_a_F1}
&&\Big|\int_t^T ds \int_{\R^{nd}} D_\x^\vartheta \tilde p^{\bxi} (t,s,\x,\y)\Delta_{1,\gF,\sigma}(t,s,\btheta_{s,t}(\bxi) ,\y,u)   d\y\Big|\ \Bigg|_{\bxi=\x}\notag\\
& \leq &\Lambda\|u\|_{L^\infty(C_{b,\d}^{2+\gamma})}\int_{t}^T \frac{ds}{(s-t)^{1-\frac \gamma 2}}\notag\\
&\leq &\Lambda\|u\|_{L^\infty(C_{b,\d}^{2+\gamma})}(T-t)^{\frac{\gamma}2}.%\label{CTR_RM1}
\end{eqnarray}

\subsection{Control of the degenerate part of the perturbative term}
\label{SEC_BESOV_DUAL_FIRST}
The point is here to control the terms 
$ \sum_{i=2}^n\int_t^T ds \int_{\R^{nd}} 
D_{\x_1}^2\tilde p^{\bxi}(t,s,\x,\y) \big \langle \Delta_{i,\gF}(t,s,\btheta_{s,t}(\bxi),\y),  D_{\y_i} u(s,\y)  \big \rangle d\y $ \- appearing in equation \eqref{DUALITE_PREAL_BESOV} of the \textcolor{black}{detailed guide to the proof}. We precisely want to derive equation \eqref{THE_DEG_POINT_DX1_2}.

The bound will actually follow from the more general following result, which will again be useful for the H\"older norm in Section \ref{HOLDER}.
\label{CTR_DER_SUP}
\begin{lem}[First Besov Control Lemma] \label{LEMME_BESOV_DEG}
\textcolor{black}{There exists $\Lambda:=\Lambda(\A{A},T) $ as in Remark \ref{REM_LAMBDA} s.t.}  for each multi-index $\vartheta=(\vartheta_1, \hdots, \vartheta_n) \in \N^{\textcolor{black}{n}}$, $\textcolor{black}{|\vartheta|\le 3}$:
\begin{equation}\label{ineq_lemme_Besov}
\sum_{i=2}^n\Big|\int_{\R^{nd}} D_\x^\vartheta \tilde p^{\bxi}(t,s,\x,\y) \big \langle  \Delta_{i,\gF}(t,s,\btheta_{s,t}(\bxi),\y),  D_{\y_i}  u(s,\y) \big \rangle 
   d\y\Big| \bigg|_{\bxi=\x} \!\!\! \leq\! \Lambda \|u\|_{L^\infty(C_{b,\d}^{2+\gamma})}(s-t)^{-\sum_{j=1}^n \vartheta_j (j-\frac 12)+\frac{\gamma}2} \!.
\end{equation}
\end{lem}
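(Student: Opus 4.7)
The plan is as follows. Fix $i\in\leftB 2,n\rightB$ and focus on the $i$-th summand in the left-hand side of \eqref{ineq_lemme_Besov}. Since the coefficients are mollified, all integrations by parts below are licit. An integration by parts in $\y_i$ yields
\begin{equation*}
\int_{\R^{nd}} D_\x^\vartheta \tilde p^{\bxi}(t,s,\x,\y)\langle \Delta_{i,\gF}(t,s,\btheta_{s,t}(\bxi),\y), D_{\y_i} u(s,\y)\rangle d\y
=-\int_{\R^{nd}} \Psi_{i,(t,\x),(s,\y_{1:i-1},\y_{i+1:n})}^{\vartheta}(\y_i)\,u(s,\y)\,d\y,
\end{equation*}
with $\Psi^{\vartheta}_{i,\cdots}$ defined in \eqref{DEF_PARTIAL_FUNC_TO_LEMMA_BESOV}. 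Freezing $(\y_{1:i-1},\y_{i+1:n})$ and applying the duality between $B^{-\tilde\alpha_i}_{1,1}(\R^d)$ and $B^{\tilde\alpha_i}_{\infty,\infty}(\R^d)=C^{\tilde\alpha_i}_b(\R^d)$ with $\tilde\alpha_i:=(2+\gamma)/(2i-1)\in(0,1)$, together with the very definition \eqref{INHOM_NORM} of the anisotropic H\"older norm, I get
\begin{equation*}
\Big|\int_{\R^d}\Psi^{\vartheta}_{i,\cdots}(\y_i)\,u^{i,(s,\y_{1:i-1},\y_{i+1:n})}(\y_i)\,d\y_i\Big|\le C\|\Psi^{\vartheta}_{i,\cdots}\|_{B^{-\tilde\alpha_i}_{1,1}}\,\|u\|_{L^\infty(C^{2+\gamma}_{b,\d})}.
\end{equation*}
After integration in the complementary variables and summation over $i$, the proof of \eqref{ineq_lemme_Besov} thus reduces to establishing
\begin{equation*}
\int_{\R^{(n-1)d}}\|\Psi^{\vartheta}_{i,\cdots}\|_{B^{-\tilde\alpha_i}_{1,1}}\,d(\y_{1:i-1},\y_{i+1:n})\le\Lambda\,(s-t)^{-\sum_{j=1}^n\vartheta_j(j-\frac12)+\frac{\gamma}{2}},
\end{equation*}
which is the heart of the argument.

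For this crucial estimate I will use the thermic characterization \eqref{THERMIC_CAR_DEF} with $p=q=1$ and $m=1$:
\begin{equation*}
\|\Psi^{\vartheta}_{i,\cdots}\|_{B^{-\tilde\alpha_i}_{1,1}}\le C\|\varphi(D)\Psi^{\vartheta}_{i,\cdots}\|_{L^1(\R^d)}+C\int_0^1 v^{\frac{\tilde\alpha_i}{2}}\,\|\partial_v h_v\star\Psi^{\vartheta}_{i,\cdots}\|_{L^1(\R^d)}\,dv.
\end{equation*}
The low-frequency contribution is controlled using Proposition \ref{THE_PROP} and the H\"older-type bound
\begin{equation*}
|\Delta_{i,\gF}(t,s,\btheta_{s,t}(\bxi),\y)|\le C\Lambda\,\d^{2i-3+\gamma}(\y,\btheta_{s,t}(\bxi)),
\end{equation*}
which follows from the Taylor-remainder structure of $\Delta_{i,\gF}$ in $\y_{i-1}$ and the anisotropic H\"older regularity of $\gF_i$ in $\y_{i:n}$ imposed by \A{S}-(iii). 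For the main integral, I split $v\in(0,1)$ at the intrinsic scale $v\asymp (s-t)^{2i-1}$ of the variable $\y_i$. In the small-$v$ regime, I use $\partial_v h_v=\tfrac12\Delta_{\y_i} h_v$ together with the zero-mean cancellation $\int \partial_v h_v=0$ to transfer a part of the derivatives onto $\Delta_{i,\gF}$, whose H\"older continuity in $\y_i$ of anisotropic exponent $(2i-3+\gamma)/(2i-1)$ provides the needed absorption, and I bound $D_\x^\vartheta\tilde p^{\bxi}$ via Proposition \ref{THE_PROP} and Remark \ref{THE_REM_REG}. In the large-$v$ regime, the bound $\|\partial_v h_v\|_{L^1}\le C/v$ combined with the direct pointwise Gaussian estimates will be enough. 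Integration in the $(n-1)d$ complementary variables relies on Gaussian integrability at the associated intrinsic scales.

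The hard part is this thermic estimate. The weight $v^{\tilde\alpha_i/2}$ in the Besov integral is exactly tuned so that the extra singularity $(s-t)^{-(i-\frac12)}$ produced by putting $D_{\y_i}$ on the Gaussian density gets compensated by the gain $(s-t)^{(2i-3+\gamma)/2}$ coming from $\Delta_{i,\gF}$ measured in the quasi-distance $\d$, leaving the benign excess $(s-t)^{\gamma/2}$ claimed in \eqref{ineq_lemme_Besov}. A naive $L^1$ bound on $\Psi^{\vartheta}_{i,\cdots}$ would destroy this delicate balance and yield a non-integrable time singularity: the negative Besov exponent $-\tilde\alpha_i$ is precisely what makes the whole argument work. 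The constant $\Lambda$ inherits its dependence on the H\"older moduli of $a$ and $\gF$ only through the bound on $|\Delta_{i,\gF}|$ above, hence is of the type described in Remark \ref{REM_LAMBDA}.
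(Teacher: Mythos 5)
Your structure mirrors the paper's proof of Lemma \ref{LEMME_BESOV_DEG} exactly: integration by parts in $\y_i$, duality between $B^{-\tilde\alpha_i}_{1,1}$ and $B^{\tilde\alpha_i}_{\infty,\infty}$, reduction to a thermic estimate, and a split of the $v$-integral into a small-$v$ cancellation regime and a large-$v$ regime, with only superficial differences: you take the parameter $m=1$ in the thermic characterization (hence $\partial_v h_v$) where the paper takes $m=0$ (hence $h_v$ directly), and you split at $v\asymp(s-t)^{2i-1}$ while the paper splits at the slightly larger $v=(s-t)^{\beta_i}$, $\beta_i=\frac{(2i-3)(2i-1)}{2i-3-\gamma}$. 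Both choices are admissible; one can check your split satisfies all the constraints the paper derives.

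There is, however, a concrete flaw in your description of the large-$v$ regime. Recall that $\Psi^{\vartheta}_{i,\cdots}=D_{\y_i}\cdot\Theta^{\vartheta}_{i,\cdots}$ is a divergence. If you try to bound $\|\partial_v h_v\star\Psi^{\vartheta}_{i,\cdots}\|_{L^1}$ via Young's inequality with $\|\partial_v h_v\|_{L^1}\lesssim v^{-1}$ together with a pointwise bound on $\Psi^{\vartheta}_{i,\cdots}$, you would need a uniform control on $D_{\y_i}\Delta_{i,\gF}$, and no such control exists: it degenerates as the mollification parameter tends to infinity, since $\gF_i$ is only H\"older in $\y_i$. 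The entire point of the Besov duality is precisely to sidestep that derivative. What is required, as in equation \eqref{NORME_L1_CONV_HK} of the paper, is a second integration by parts pushing the divergence onto the heat kernel,
\begin{equation*}
\partial_v h_v\star\Psi^{\vartheta}_{i,\cdots}=-D(\partial_v h_v)\star\Theta^{\vartheta}_{i,\cdots},
\end{equation*}
followed by the estimate $\|D\partial_v h_v\|_{L^1}\lesssim v^{-3/2}$, the pointwise Gaussian bound of Proposition \ref{THE_PROP}, and the absorption of the H\"older factor $\d^{2i-3+\gamma}(\y,\btheta_{s,t}(\bxi))$ into the Gaussian via Remark \ref{THE_REM_REG}. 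This recovers the integrand exponent $v^{\tilde\alpha_i/2-3/2}$ and the claimed time smoothing. As written, $\|\partial_v h_v\|_{L^1}\lesssim v^{-1}$ is the wrong ingredient and your large-$v$ step does not close; once the extra integration by parts is inserted your argument becomes equivalent to the paper's.
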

\begin{proof}[Proof of Lemma \ref{LEMME_BESOV_DEG}] \textcolor{black}{Let us first emphasize that
from the H\"older continuity assumption \A{S}-(iii) (w.r.t. the underlying homogeneous metric $\d $) on $\gF_i $ :
\begin{eqnarray}\label{ineq_Holder_Fi}%\Big|\frac 12 \Big( (a(s,\y)-a_m(s,\btheta_{s,\tau}^m(\bxi))   )D_{\x_1}^2 u_m(s,\y)\Big)\Big| 
&&|\Delta_{i,\gF}(t,s,\btheta_{s,t}(\bxi),\y)| \le [\gF_i(s,\cdot)]_{\d,2i-3+\gamma} \d^{2i-3+\gamma}(\y,\btheta_{s,t}(\bxi)).
\end{eqnarray}
}
Similarly to \textcolor{black}{\eqref{DUALITE_PREAL_BESOV_WITH_THETA}},%{DUALITE_PREAL_BESOV}, 
 we \textcolor{black}{take} for each $i\in \leftB 2, n\rightB $, % \eqref{DEF_PARTIAL_FUNC_TO_LEMMA_BESOV},
\begin{equation}
\label{DEF_GI_TO_BELONG_TO_BESOV_SPACE}
D_{\y_i} \cdot  \big (\Theta_{i,(t,\x)}^\vartheta(s,\y) \big )=D_{\y_i}\cdot \big(D_\x^\vartheta\tilde p^{\bxi}(t,s,\x,\y) \otimes  \Delta_{i,\gF}(t,s,\btheta_{s,t}(\bxi),\y)
%[\gF_{i}(s,\y)-\gF_{i}(s,\btheta_{s,t}(\bxi))-D_{\x_{i-1}}\gF_{i}(s,\btheta_{s,t}(\bxi))(\y-\btheta_{s,t}(\bxi))_{i-1}]
\big).%\notag\\
\end{equation}
The contribution of the l.h.s. in \eqref{ineq_lemme_Besov} then rewrites:
\begin{equation}
\label{FROM_WHERE_TO_USE_BESOV_DUALITY}
\sum_{i=2}^n\Big|\int_{\R^{nd}} D_\x^\vartheta \tilde p^{(t,\bxi)}(t,s,\x,\y) \big \langle \Delta_{i,\gF}(t,s,\btheta_{s,t}(\bxi),\y),  D_{\y_i}  u(s,\y) \big \rangle 
   d\y\Big| \bigg|_{\bxi=\x} %\leq C\|u\|_{L^\infty(C_{b,\d}^{2+\gamma})}(s-t)^{-\sum_{j=1}^n \vartheta_j (j-\frac 12)+\frac{\gamma}2}.
=\sum_{i=2}^n\Big| \int_{\R^{nd}} \big[D_{\y_i} \cdot  \big (\Theta_{i,(t,\x)}^\vartheta(s,\y) \big ) \big]u(s,\y) d\y\Big|.
\end{equation}
%\textcolor{black}{Let us develop the analysis presented in the guide to the proof for a general differentiation $D^\vartheta$.}
The point now is to observe that for any fixed $i\in \leftB 2,n\rightB $ and $\z=(\z_1, \cdots, \z_{i-1},\z_{i+1},\cdots, \z_n)\in \R^{(n-1)d} $ the mapping $\y_i \mapsto u(s,\z_{1:i-1},\y_i,\z_{i+1:n}) $ is in $C_{b}^{\frac{2+\gamma}{2i-1}}(\R^{d})=B_{\infty,\infty}^{\frac{2+\gamma}{2i-1}}(\R^{d})$ using the Besov space terminology, see e.g. Triebel \cite{trie:83}, uniformly in $s\in [0,T] $. \textcolor{black}{We can hence} put in duality the mappings $\y_i \mapsto \textcolor{black}{u}(s,\y_{1:i-1},\y_i,\y_{i+1:n}) $ and 
\begin{equation}
\label{DEF_PSI_M}
\Psi_{i,(t,\x),(s,\y_{1:i-1},\y_{i+1:n})}^\vartheta: \y_i\mapsto  D_{\y_i} \cdot  \big (\Theta_{i,(t,\x)}^\vartheta(s,\y) \big ),
\end{equation}
see e.g. Proposition 3.6  in \cite{lemar:02}.
\textcolor{black}{To do so, we thus } have to prove that \textcolor{black}{ $\Psi_{i,(t,\x),(s,\y_{1:i-1},\y_{i+1:n})}^\vartheta$ lies in the suitable Besov space, namely}
 $\Psi_{i,(t,\x),(s,\y_{1:i-1},\y_{i+1:n})}^\vartheta\in B_{1,1}^{-\frac{2+\gamma}{2i-1}}(\R^d) $ and to control the associated norm. 
 %\textcolor{black}{Pour I, PE, S: trouver le bon THM chez Triebel}. 
 We will actually prove that those norms  provide an integrable quantity w.r.t. $\y_{1:i-1}, \y_{i+1:n} $ as well as an integrable time singularity. This will be done through the thermic characterization of Besov spaces, see e.g. Section 2.6.4 in \cite{trie:83} as well as \eqref{THERMIC_CAR_DEF} above. Precisely, we recall that for a function $\psi:\R^d\rightarrow \R$  in $B_{1,1}^{-\tilde \alpha_i}(\R^d)$, $\tilde \alpha_i:=\frac{2+\gamma}{2i-1}$ a quasi-norm is given by:
 \begin{equation}\label{THERMIC_CAR}
 \|\psi\|_{B_{1,1}^{-\tilde \alpha_i}(\R^d)}:=\|\varphi(D) \psi\|_{L^1(\R^{d},\R)}+\int_0^1 \frac{dv}{v}v^{\frac{\tilde \alpha_i}2} \|h_v\star \psi\|_{L^1(\R^d,\R)}%<+\infty
 ,\ \forall z\in \R^d,\ h_v(z):=\frac{1}{(2\pi v)^{\frac d2}}\exp\big(-\frac{|z|^2}{v}\big), 
 \end{equation}
\textcolor{black}{$h$} being the usual \textit{heat kernel} of $\R^d$, ``$\star$'' standing for the usual convolution on $\R^d $ for $\varphi \in C_0^\infty(\R^{d},\R)$ s.t. $ \varphi(0)\neq 0$.

Taking $\psi=\Psi_{i,(t,\x),(s,\y_{1:i-1},\y_{i+1:n})}^\vartheta $ in the above characterization
and from definition \textcolor{black}{\eqref{DEF_PSI_M}}, the main advantage of using \eqref{THERMIC_CAR} consists in rebalancing the derivative appearing in the definition \textcolor{black}{\eqref{DEF_PSI_M}} to the heat kernel or to the smooth compactly supported function $\varphi $. \textcolor{black}{Namely, focusing on the $L^1$ norm of the convolution product} in \eqref{THERMIC_CAR}, \textcolor{black}{ we write}:
\begin{eqnarray}\label{IPP_theta}
%&&
\|h_v\star \Psi_{i,(t,\x),(s,\y_{1:i-1},\y_{i+1:n})}^\vartheta\|_{L^1(\R^d,\R)}
%\nonumber \\
%&=&\int_{\R^d}\Big |\int_{\R^d} h_v(z-\y_i) \vartheta_{i,(t,\x)}^\vartheta(s,\y) d\y_i \Big| dz\notag \\
&=&\int_{\R^d}\Big |\int_{\R^d} h_v(z-\y_i)D_{\y_i} \cdot \big ( \Theta_{i,(t,\x)}^\vartheta(s,\y) \big )d\y_i \Big| dz\notag \\
&=&\int_{\R^d}\Big |\int_{\R^d} 
 D_\x^\vartheta \tilde p^{\bxi}(t,s,\x,\y)   
 \big \langle \Delta_{i,\gF}(t,s,\btheta_{s,t}(\bxi),\y), D_z h_v(z-\y_i) \big \rangle d\y_i\Big | dz
%D_z h_v(z-\y_i) \cdot \Theta_{i,(t,\x)}^\vartheta(s,\y) d\y_i \Big| dz
\label{NORME_L1_CONV_HK}.
\end{eqnarray}
%\textcolor{black}{We have precisely by tensor contraction property $$ D_z h_v(z-\y_i) \cdot \Theta_{i,(t,\x)}^\vartheta(s,\y) = \sum_{j=1}^d \partial_{z_j} h_v(z-\y_i) D_\x^\vartheta \tilde p^{\bxi}(t,s,\x,\y)   
 %\big (\Delta_{i,\gF}(t,s,\btheta_{s,t}(\bxi),\y) \big )_j \in (\R^{d})^{\otimes |\vartheta|}.$$}
\textcolor{black}{To estimate $\|\Psi_{i,(t,\x),(s,\y_{1:i-1},\y_{i+1:n})}^\vartheta\|_{B_{1,1}^{-\tilde \alpha_i}} $}, we split the time integral in \eqref{THERMIC_CAR} into two parts writing:
\begin{eqnarray}
 &&\int_0^1 \! \frac{dv}{v}v^{\frac{\tilde \alpha_i}2 }\|h_v\star \Psi_{i,(t,\x),(s,\y_{1:i-1},\y_{i+1:n})}^\vartheta\|_{L^1(\R^d,\R)}\notag\\
 &=& 
\int_0^{(s-t)^{\beta_i}}  \frac{dv}{v}v^{\frac{\tilde \alpha_i}2 }\|h_v\star \Psi_{i,(t,\x),(s,\y_{1:i-1},\y_{i+1:n})}^\vartheta\|_{L^1(\R^d,\R)} \! +\! \int_{(s-t)^{\beta_i}}^1 \!\frac{dv}{v}v^{\frac{\tilde \alpha_i}2 }\|h_v\star \Psi_{i,(t,\x),(s,\y_{1:i-1},\y_{i+1:n})}^\vartheta\|_{L^1(\R^d,\R)}\notag\\ \label{DECOUP_HK_INT}
\end{eqnarray}
for a parameter $\beta_i>0 $ to be specified. Precisely, in order to have a similar smoothing effect in time than for the terms appearing in 
%\eqref{CTR_FOR_D2_X1_TILDE_PT}, \eqref{CTR_FOR_D2_X1_TILDE_GREEN}, 
\eqref{CTR_RM1}, we now want to calibrate $\beta_i $ to obtain:
\begin{equation}
\label{RES_CALIBRATION_BETA_I}
\int_{(s-t)^{\beta_i}}^1 \frac{dv}{v}v^{\frac{\tilde \alpha_i}2 }\|h_v\star \Psi_{i,(t,\x),(s,\y_{1:i-1},\y_{i+1:n})}^\vartheta\|_{L^1(\R^d,\R)}\le \frac{\Lambda}{(s-t)^{\sum_{j=1} \vartheta_j(j-\frac 12)-\frac \gamma 2}}\hat q_{c\setminus i}(t,s,\x,(\y_{1:i-1},\y_{i+1:n})),
\end{equation}
where introducing: 
\begin{equation*}
\bar p_{c^{-1}}(t,s,\x,\y)=:\prod_{j=1}^n {\mathcal N}_{c(s-t)^{2j-1}}\big( (\btheta_{s,t}(\x)-\y)_j\big),
\end{equation*}
where for $\textcolor{black}{\varsigma}>0,\ z\in \R^d, {\mathcal N}_{\varsigma}(z)=\frac{1}{(2\pi \varsigma)^{\frac d2}}\exp\big(-\frac{|z|^2}{2\varsigma}\big) $ stands for the standard Gaussian density of $\R^d $ with covariance matrix $\varsigma\textcolor{black}{{\mathbf I}_{d,d}}$, we introduce:
\begin{eqnarray}
\label{DEF_HAT_SETMINUS}
\hat q_{c\setminus i}(t,s,\x,(\y_{1:i-1},\y_{i+1:n}))=\prod_{j\in \leftB 1, n\rightB, j\neq i} {\mathcal N}_{c(s-t)^{2j-1}}\big( (\btheta_{s,t}(\x)-\y)_j\big).
\end{eqnarray}
To choose properly the parameter $\beta_i $ leading to \eqref{RES_CALIBRATION_BETA_I}, we now write \textcolor{black}{from \eqref{IPP_theta}}:
\begin{eqnarray*}
&&\int_{(s-t)^{\beta_i}}^1 \frac{dv}{v}v^{\frac{\tilde \alpha_i}2 }\|h_v\star \Psi_{i,(t,\x),(s,\y_{1:i-1},\y_{i+1:n})}^\vartheta\|_{L^1(\R^d,\R)}\\
&\le& \int_{(s-t)^{\beta_i}}^1 \frac{dv}{v}v^{\frac{\tilde \alpha_i}2 } \int_{\R^d} dz 
 \Big|\int_{\R^d}
 D_\x^\vartheta \tilde p^{\bxi}(t,s,\x,\y)   
 \big \langle \Delta_{i,\gF}(t,s,\btheta_{s,t}(\bxi),\y), D_z h_v(z-\y_i) \big \rangle
 %D_z h_v(z-\y_i)\cdot \big ( D_\x^\vartheta \tilde p^{\bxi}(t,s,\x,\y) \otimes 
%  \\
%&&\times 
% \Delta_{i,\gF}(t,s,\btheta_{s,t}(\bxi),\y) \big )% \Big(\gF_{i}(s,\y)-\gF_{i}(s,\btheta_{s,t}(\bxi)) -D_{\x_{i-1}}\gF_{i}(s,\btheta_{s,t}(\bxi))(\y-\btheta_{s,t}(\bxi))_{i-1}\Big)
d\y_i\Big| \bigg |_{\bxi=\x}\\
&\le& \Lambda\int_{(s-t)^{\beta_i}}^1 \frac{dv}{v}v^{\frac{\tilde \alpha_i}2 } \int_{\R^d} dz 
 \int_{\R^d} d\y_i \frac{h_{cv}(z-\y_i)}{v^{\frac 12}}\frac{\bar p_{c^{-1}}(t,s,\x,\y)}{(s-t)^{\sum_{j=1}^n \vartheta_j(j-\frac 12)}} \d^{2i-3+\gamma}(\btheta_{s,t}(\x),\y)\\
&\le& \Lambda\int_{(s-t)^{\beta_i}}^1 \frac{dv}{v}v^{\frac{\tilde \alpha_i}2 } \int_{\R^d} dz 
 \int_{\R^d} d\y_i \frac{h_{cv}(z-\y_i)}{v^{\frac 12}}\frac{\bar p_{c^{-1}}(t,s,\x,\y)}{(s-t)^{\sum_{j=1}^n \vartheta_j(j-\frac 12)}} (s-t)^{\frac{2i-3+\gamma}2}\\
&\le& \Lambda\hat q_{c\setminus i}(t,s,\x,(\y_{1:i-1},\y_{i+1:n}))\int_{(s-t)^{\beta_i}}^1 dvv^{-\frac 32+\frac{\tilde \alpha_i} 2 }(s-t)^{-\sum_{j=1}^n \vartheta_j(j-\frac 12)+\frac{2i-3+\gamma}2}\\
&\le& \Lambda\hat q_{c\setminus i}(t,s,\x,(\y_{1:i-1},\y_{i+1:n})) (s-t)^{[-\frac 12+\frac{\tilde \alpha_i} 2]\beta_i -\sum_{j=1}^n \vartheta_j(j-\frac 12)+\frac{2i-3+\gamma}2}
\end{eqnarray*}
using \eqref{ineq_Holder_Fi}, Proposition \ref{THE_PROP}
%\eqref{ABSORB_LAMBDA} 
for the third inequality (see also Remark \ref{THE_REM_REG}, equation \eqref{CTR_GRAD_ET_DIST})
%(\textcolor{black}{Faire un controle plus explicite avec la distance parabolique peut-etre}) 
recalling as well that $0\le t<s\le T $ is small. 

To obtain \eqref{RES_CALIBRATION_BETA_I}, we then take:
\begin{equation}
\label{DEF_BETA_I} [-\frac 12+\frac{\tilde \alpha_i} 2]\beta_i -\sum_{j=1}^n \vartheta_j(j-\frac 12)+\frac{2i-3+\gamma}2=-\sum_{j=1}^n \vartheta_j(j-\frac 12)+\frac \gamma 2 \iff \beta_i=\frac{(2i-3)(2i-1)}{2i-3-\gamma}.
\end{equation}

The key point is now to check that the previous choice of $\beta_i $ also yields a bound similar to \eqref{RES_CALIBRATION_BETA_I} for the contribution in \eqref{DECOUP_HK_INT} associated with $v\in [0,(s-t)^{\beta_i}] $.
To this end, we restart from identity \eqref{NORME_L1_CONV_HK}, which allows to exploit partial cancellations w.r.t. the integration variable $\y_i $. Namely, write:
\begin{eqnarray}
&&\int_{\R^d}  h_v(z-\y_i) D_{\y_i} \cdot \Theta_{i,(t,\x)}^\vartheta(s,\y) d\y_i
\nonumber \\
&=&\int_{\R^d}  h_v(z-\y_i)  D_{\y_i} \cdot \Big(\Theta_{i,(t,\x)}^\vartheta (s,\y)- \Theta_{i,(t,\x)}^\vartheta(s,\y_{1:i-1},z,\y_{i+1:n}) \Big) d\y_i\notag \\
&=& \int_{\R^d}   D_\x^\vartheta \tilde p^{\bxi}(t,s,\x,\y)  \big\langle \gF_{i}(s,\y)-\gF_{i}(s,\y_{1:i-1},z,\y_{i+1:n}),  D_z h_v(z-\y_i)  \big \rangle d\y_i \notag \\
&&+\int_{\R^d}  \big(D_\x^\vartheta \tilde p^{\bxi}(t,s,\x,\y)-D_\x^\vartheta\tilde p^{\bxi}(t,s,\x,\y_{1:i-1},z,\y_{i+1:n})\big)\notag \\
&& \big\langle \gF_{i}(s,\y_{1:i-1},z,\y_{i+1:n})-\gF_{i}(s,\btheta_{s,t}(\bxi)) -D_{\x_{i-1}}\gF_{i}(s,\btheta_{s,t}(\bxi))(\y-\btheta_{s,t}(\bxi))_{i-1},  D_z h_v(z-\y_i) \big\rangle d\y_i\notag\\
&=:& \Big({\mathscr T}_1+{\mathscr T}_2\Big) \big(v,t,s,\x,(\y_{1:i-1},z,\y_{i+1:n})\big), \label{DECOUP_CAR_THERMIC}
\end{eqnarray}
using the definition in \eqref{DEF_GI_TO_BELONG_TO_BESOV_SPACE} and \eqref{IPP_theta} for the last decomposition. Write now from \textcolor{black}{Proposition \ref{THE_PROP} and the H\"older regularity assumed on $\gF_i$ from \A{S}-(iii)}:
\begin{eqnarray}
|{\mathscr T}_1\big(v,t,s,\x,(\y_{1:i-1},z,\y_{i+1:n})\big)|&\le& \Lambda \int_{\R^d} \frac{h_{cv}(z-\y_i)}{v^{\frac12}} \frac{\bar p_{c^{-1}}(t,s,\x,\y)}{(s-t)^{\sum_{j=1}^n \vartheta_j(j-\frac 12)}}|z-\y_i|^{\frac{2i-3+\gamma}{2i-1}} d\y_i\notag \\
&\le & \Lambda\int_{\R^d} \frac{h_{cv}(z-\y_i)}{v^{\frac{2-\gamma}{4i-2}}} \frac{\bar p_{c^{-1}}(t,s,\x,\y)}{(s-t)^{\sum_{j=1}^n \vartheta_j(j-\frac 12)}} d\y_{i}.\label{PREAL_T1}
\end{eqnarray}
We thus derive from \eqref{PREAL_T1}:
\begin{eqnarray}
\label{CTR_T1}
&& |{\mathscr T}_1\big(v,t,s,\x,(\y_{1:i-1},z,\y_{i+1:n})\big)|\notag\\
&\le& \frac{\Lambda}{v^{\frac{2-\gamma}{4i-2}} (s-t)^{\sum_{j=1}^n \vartheta_j(j-\frac 12)}} \hat q_{c\setminus i}(t,s,\x,(\y_{1:i-1},\y_{i+1:n})) {\mathcal N}_{cv+(s-t)^{2i-1}}\big ( z-\btheta_{s,t}(\x)_i\big).
\end{eqnarray}
\textcolor{black}{We now deal with the term  $|{\mathscr T}_2\big(v,t,s,\x,(\y_{1:i-1},z,\y_{i+1:n})\big)|$ in \eqref{DECOUP_CAR_THERMIC}}. From the Taylor formula applied to the $i^{\rm th}$ variable for the difference of the derivatives of the densities \textcolor{black}{we obtain}:
\begin{eqnarray*}
&&|{\mathscr T}_2\big(v,t,s,\x,(\y_{1:i-1},z,\y_{i+1:n})\big)|
\nonumber \\
&\le& C\int_{\R^d} d\y_i\frac{h_{cv}(z-\y_i)}{v^{\frac 12}}\int_0^1 d\mu  \frac{\bar p_{c^{-1}}(t,s,\x,\y_{1:i-1},z+\mu (\y_i-z), \y_{i+1:n})}{(s-t)^{\sum_{j=1}^n \vartheta_j(j-\frac 12)+\frac{2i-1}{2}}}\\
&&\times |\y_i-z|\bigg (\Big|\gF_{i}(s,\y_{1:i-1},z,\y_{i+1:n})-\gF_{i}(s,\y_{1:i-1},\btheta_{s,t}(\x)_{i:n})\Big|\\
&&+\Big|\gF_{i}(s,\y_{1:i-1},\btheta_{s,t}(\x)_{i:n})-\gF_{i}(s,\btheta_{s,t}(\bxi)) -D_{\x_{i-1}}\gF_{i}(s,\btheta_{s,t}(\x))(\y-\btheta_{s,t}(\x))_{i-1} \Big|\bigg)\\
&\le& \Lambda\int_{\R^d} d\y_ih_{cv}(z-\y_i)\int_0^1 d\mu  \frac{\bar p_{c^{-1}}(t,s,\x,\y_{1:i-1},z+\mu (\y_i-z), \y_{i+1:n})}{(s-t)^{\sum_{j=1}^n \vartheta_j(j-\frac 12)+\frac{2i-1}{2}}}\\
&&\times \Bigg(%|z-\y_i|^{\frac{2i-3+\gamma}{2i-1}} 
|z-\btheta_{s,t}(\x)_i|^{\frac{2i-3+\gamma}{2i-1}} +|(\btheta_{s,t}(\x)-\y)_{i-1}|^{1+\frac{\gamma}{2(i-1)-1}}+\sum_{k=i+1}^n |(\btheta_{s,t}(\x)-\y)_k|^{\frac{2i-3+\gamma}{2k-1}}\Bigg).
%\d_{i+1:n}^{2i-3+\gamma}(\y,\btheta_{s,t}(\x))\Big),
\end{eqnarray*}
Writing, for any  $\mu \in [0,1]  $,
\begin{equation*}
|z-\btheta_{s,t}(\x)_i|\le \mu  |z-\y_i|+|z +\mu (\y_i-z)-(\btheta_{s,t}(\x))_i|, 
\end{equation*}
we thus derive
\begin{eqnarray}
&& |{\mathscr T}_2\big(v,t,s,\x,(\y_{1:i-1},z,\y_{i+1:n})\big)|
\nonumber \\
&\le& \Lambda\int_{\R^d} d\y_ih_{cv}(z-\y_i)\int_0^1 d\mu  \frac{\bar p_{c^{-1}}(t,s,\x,\y_{1:i-1},z+\mu (\y_i-z), \y_{i+1:n})}{(s-t)^{\sum_{j=1}^n \vartheta_j(j-\frac 12)+\frac{2i-1}{2}}}\notag \\
&&\times \Big(|\y_i-z|^{\frac{2i-3+\gamma}{2i-1}}+\d^{2i-3+\gamma}\Big(\btheta_{s,t}(\x), (\y_{1:i-1}, z +\mu (\y_i-z)%-(\btheta_{s,t}(\x))_i
,\y_{i+1:n})\Big) \Big) \notag \\
&\le&  \Lambda\int_{\R^d} d\y_ih_{cv}(z-\y_i)\int_0^1 d\mu  \bar p_{c^{-1}}(t,s,\x,\y_{1:i-1},z+\mu (\y_i-z), \y_{i+1:n})
\nonumber \\
&&\times \Bigg(\frac{v^{\frac{2i-3+\gamma}{2(2i-1)}}}{(s-t)^{\sum_{j=1}^n \vartheta_j(j-\frac 12)+\frac{2i-1}{2}}}    +\frac{1}{(s-t)^{\sum_{j=1}^n \vartheta_j(j-\frac 12)+1-\frac \gamma 2}}\Bigg)\notag\\
&\le&  \Lambda\hat q_{c\setminus i}(t,s,\x,\y_{1:i-1}, \y_{i+1:n})\int_0^1 d\mu \int_{\R^d} h_{cv}(z-\y_i) {\mathcal N}_{c(s-t)^{2i-1}}(z+\mu (\y_i-z)-(\btheta_{s,t}(\x))_i) d\y_i\notag\\
&&\times \Bigg(\frac{v^{\frac{2i-3+\gamma}{2(2i-1)}}}{(s-t)^{\sum_{j=1}^n \vartheta_j(j-\frac 12)+\frac{2i-1}{2}}}    +\frac{1}{(s-t)^{\sum_{j=1}^n \vartheta_j(j-\frac 12)+1-\frac \gamma 2}}\Bigg),
\label{CTR_T2}
\end{eqnarray}
using again \eqref{CTR_GRAD_ET_DIST} for the second inequality.
From \eqref{DECOUP_CAR_THERMIC}, \eqref{CTR_T1} and \eqref{CTR_T2} we derive, with the notation introduced in \eqref{DEF_HAT_SETMINUS}:
\begin{eqnarray*}
&&\|h_v\star \Psi_{i,(t,\x),(s,\y_{1:i-1},\y_{i+1})}^\vartheta\|_{L^1(\R^d,\R)} 
\\
&\le& \Bigg(\frac{1}{v^{\frac{2-\gamma}{4i-2}} (s-t)^{\sum_{j=1}^n \vartheta_j(j-\frac 12)}}%\\
%+\bigg(
+\frac{v^{\frac{2i-3+\gamma}{2(2i-1)}}}{(s-t)^{\sum_{j=1}^n \vartheta_j(j-\frac 12)+\frac{2i-1}{2}}}    +\frac{1}{(s-t)^{\sum_{j=1}^n \vartheta_j(j-\frac 12)+1-\frac \gamma 2}} \Bigg)
\\
&&\times \Lambda\hat q_{c\setminus i}(t,s,\x,(\y_{1:i-1},\y_{i+1:n})) \int_0^1 d\mu \int_{\R^d} dz \int_{\R^d} d\y_i h_{cv}(z-\y_i) {\mathcal N}_{c(s-t)^{2i-1}}(z+\mu (\y_i-z)-(\btheta_{s,t}(\x))_i)\\
&\le& \Lambda\hat q_{c\setminus i}(t,s,\x,(\y_{1:i-1},\y_{i+1:n}))\\
&&\times \Bigg(\frac{1}{v^{\frac{2-\gamma}{4i-2}} (s-t)^{\sum_{j=1}^n \vartheta_j(j-\frac 12)}}+\frac{v^{\frac{2i-3+\gamma}{2(2i-1)}}}{(s-t)^{\sum_{j=1}^n \vartheta_j(j-\frac 12)+\frac{2i-1}{2}}}    +\frac{1}{(s-t)^{\sum_{j=1}^n \vartheta_j(j-\frac 12)+1-\frac \gamma 2}}\Big),
\end{eqnarray*}
using the change of variable $(w_1, w_2)=(z-\y_i,z+\mu (\y_i-z)-(\btheta_{s,t}(\x))_i) $ for the last inequality.
%
%To continue the analysis, we are going to split the integral over $v$ in \eqref{THERMIC_CAR}. Actually, the previous centering operations are needed when $v$ is small. Introduce for a parameter $\beta_i>0$ to be specified (remembering as well that $s-t$ is small):
From the above computations and with the notations of \eqref{DECOUP_HK_INT}, we derive:
\begin{eqnarray*}
%{\mathcal B}_1
&&\int_0^{(s-t)^{\beta_i}} dv v^{\frac{\tilde \alpha_i}2-1}\|h_v\star \Psi_{i,(t,\x),(s,\y_{1:i-1},\y_{i+1})}^\vartheta\|_{L^1(\R^d,\R)}\\
&\le& \Lambda\hat q_{c\setminus i}(t,s,\x,(\y_{1:i-1},\y_{i+1:n}))\int_0^{(s-t)^{\beta_i}} \frac{dv}{v}v^{\frac{\tilde \alpha_i}{2}}
\\
&&\times \Bigg(\frac{1}{v^{\frac{2-\gamma}{4i-2}} (s-t)^{\sum_{j=1}^n \vartheta_j(j-\frac 12)}}+\frac{v^{\frac{2i-3+\gamma}{2(2i-1)}}}{(s-t)^{\sum_{j=1}^n \vartheta_j(j-\frac 12)+\frac{2i-1}{2}}}    +\frac{1}{(s-t)^{\sum_{j=1}^n \vartheta_j(j-\frac 12)+1-\frac \gamma 2}}\Bigg)\\
&=:&\Lambda\hat q_{c\setminus i}(t,s,\x,(\y_{1:i-1},\y_{i+1:n}))  {\mathcal B}_{\vartheta,\beta_i}(t,s).
\end{eqnarray*}
Let us now prove that for $\beta_i=\frac{(2i-3)(2i-1)}{2i-3-\gamma}
$ defined in \eqref{DEF_BETA_I}, we have:
\begin{equation}
\label{CTR_EXP_TEMPS_PETIT}
{\mathcal B}_{\vartheta,\beta_i}(t,s) \le \frac{C}{(s-t)^{\sum_{j=1}^n \vartheta_j(j-\frac 12)-\frac \gamma 2}}.
\end{equation}
To prove \eqref{CTR_EXP_TEMPS_PETIT}, we now write: %have to check that:
\begin{eqnarray*}
&&{\mathcal B}_{\vartheta,\beta_i}(t,s)
\nonumber \\
&\le& C %\int_0^{(s-t)^{\beta_i}} 
\Big[\frac{v^{\frac{\tilde \alpha_i}{2}-\frac{2-\gamma}{4i-2}}}{(s-t)^{\sum_{j=1}^n \vartheta_j(j-\frac 12)}}+\frac{v^{\frac{\tilde \alpha_i}2+\frac{2i-3+\gamma}{2(2i-1)}}}{(s-t)^{\sum_{j=1}^n \vartheta_j(j-\frac 12)+\frac{2i-1}{2}}}    +\frac{v^{\frac{\tilde \alpha_i}2}}{(s-t)^{\sum_{j=1}^n \vartheta_j(j-\frac 12)+1-\frac \gamma 2}}\Big]_{v=0}^{v=(s-t)^{\beta_i}}\\
&\le&  C \Big[(s-t)^{\beta_i(\frac{\tilde \alpha_i}{2}-\frac{2-\gamma}{4i-2})-\sum_{j=1}^n \vartheta_j(j-\frac 12)}+(s-t)^{\beta_i(\frac{\tilde \alpha_i}2+\frac{2i-3+\gamma}{2(2i-1)}) -(\sum_{j=1}^n \vartheta_j(j-\frac 12)+\frac{2i-1}{2})}  \\
   &&
   +(s-t)^{\beta_i \frac{\tilde \alpha_i}2-\sum_{j=1}^n \vartheta_j(j-\frac 12)-1+\frac \gamma 2}\Big].
\end{eqnarray*}
From the above equation, \eqref{CTR_EXP_TEMPS_PETIT} holds as soon as $\beta_i $ can be chosen so that the three following conditions hold:
\begin{eqnarray*}
\beta_i\bigg(\frac{\tilde \alpha_i}{2}-\frac{2-\gamma}{4i-2}\bigg)-\frac{\gamma}2\ge 0,\quad \beta_i\bigg(\frac{\tilde \alpha_i}{2}+\frac{2i-3+\gamma}{2(2i-1)}\bigg) -\frac{2i-1}{2}-\frac \gamma 2\ge 0,\quad \beta_i \frac{\tilde \alpha_i}2-1\ge 0.
\end{eqnarray*}
Recalling that $\frac{\tilde \alpha_i}2=\frac{1+\frac{\gamma}2}{2i-1} $ and for the previous choice of $\beta_i $, the above conditions rewrite:
\begin{eqnarray*}
\bigg(\frac{(2i-3)(2i-1)}{2i-3-\gamma}\bigg)\bigg(\frac{1+\frac{\gamma}2}{2i-1}-\frac{1-\frac \gamma 2}{2i-1}\bigg)-\frac{\gamma}2\ge 0 \iff \frac{(2i-3)}{2i-3-\gamma} \gamma-\frac \gamma 2\ge 0,\\
\bigg(\frac{(2i-3)(2i-1)}{2i-3-\gamma}\bigg)\bigg(\frac{2+\gamma}{2(2i-1)}+\frac{2i-3+\gamma}{2(2i-1)}\bigg) -\frac{2i-1}{2}-\frac \gamma 2\ge 0 \\
\iff \bigg(\frac{2i-3}{2i-3-\gamma}\bigg)(2i-1+2\gamma)-(2i-1+\gamma)\ge 0,
 \\
  \bigg(\frac{(2i-3)(2i-1)}{2i-3-\gamma}\bigg)\frac{1+\frac{\gamma}2}{2i-1} -1\ge 0\iff  \bigg(\frac{2i-3}{2i-3-\gamma}\bigg)(1+\frac \gamma 2)-1 \ge 0.
\end{eqnarray*}
All the above conditions are true for $i\in \leftB 2,n\rightB,\ \gamma\in (0,1] $. Note that the chosen $\beta_i $  seems to be rather sharp in the sense that letting $\gamma $ go to 0 the above constraints become equalities. This proves \eqref{CTR_EXP_TEMPS_PETIT}. We finally get:
\begin{equation}
\label{CTR_IN_TIME_BESOV_NORM}
\int_0^1 \frac{dv}{v}v^{\frac{\tilde \alpha_i}2 }\|h_v\star \Psi_{i,(t,\x),(s,\y_{1:i-1},\y_{i+1:n})}^{\vartheta}\|_{L^1(\R^d,\R)}\le \frac{\Lambda}{(s-t)^{\sum_{j=1}^n \vartheta_j(j-\frac 12)-\frac \gamma 2}}\hat q_{c\setminus i}(t,s,\x,(\y_{1:i-1},\y_{i+1:n})).%\notag\\
\end{equation}
Reproducing the previous computations we also write for a $C^\infty $ compactly supported function $\varphi $:
\begin{eqnarray*}
&&\|\varphi (D) \Psi_{i,(t,\x),(s,\y_{1:i-1},\y_{i+1:n})}^\vartheta\|_{L^1(\R^d,\R)}\\
&\le& \int_{\R^d } \Big|\int_{\R^d} D_{\y_i} \textcolor{black}{\varphi^\vee} (z-\y_i) \cdot \big ( D_\x^\vartheta \tilde p^{\bxi}(t,s,\x,\y)
 %\\ \times 
\otimes \Delta_{i,\gF}(t,s,\btheta_{s,t}(\bxi),\y)\rangle d\y_i\Big|dz  \bigg |_{\bxi=\x} \\
&\le& \frac{\Lambda}{(s-t)^{\sum_{j=1}^n \vartheta_j(j-\frac 12)}}\int_{\R^d} \bar p_{c^{-1}}(t,s,\x,\y) \d^{2i-3+\gamma }(\btheta_{s,t}(\x),\y)d\y_i\\
&\le & \frac{\Lambda}{(s-t)^{\sum_{j=1}^n \vartheta_j(j-\frac 12)-(i-\frac 32+\frac \gamma 2)}} \hat q_{c\setminus i}(t,s,\x,(\y_{1:i-1},\y_{i+1:n})).
\end{eqnarray*}
From  \eqref{THERMIC_CAR} and \eqref{CTR_IN_TIME_BESOV_NORM}, we finally obtain:
\begin{equation}
\|\Psi_{i,(t,\x),(s,\y_{1:i-1},\y_{i+1:n})}^\vartheta\|_{B_{1,1}^{-\tilde \alpha_i}}\le \frac{\Lambda}{(s-t)^{\sum_{j=1}^n \vartheta_j(j-\frac 12)-\frac \gamma 2}}\hat q_{c\setminus i}(t,s,\x,(\y_{1:i-1},\y_{i+1:n})),
\end{equation}
which together with  \eqref{FROM_WHERE_TO_USE_BESOV_DUALITY} and \eqref{DEF_PSI_M}  gives the result.
\end{proof}

Equation \eqref{CTR_BORNE_SUP_VAR_DEG} now follows from Lemma \ref{LEMME_BESOV_DEG} taking $\vartheta=(2,0,\hdots,0)$. Namely, 
\begin{equation}\label{ineq_Rm_2_n}
\Big|\sum_{i=2}^n\int_t^T ds \int_{\R^{nd}}
D_{\x_1}^2\tilde p^{\bxi}(t,s,\x,\y) \big \langle \Delta_{i,\gF}(t,s,\btheta_{s,t}(\bxi),\y),  D_{\y_i} u(s,\y)  \big \rangle \Big )\Big| \bigg |_{\bxi=\x}
\le  \Lambda (T-t)^{\frac \gamma 2} \|u\|_{L^\infty(C_{b,\d}^{2+\gamma})}.
\end{equation}

\subsection{Non-degenerate derivatives  for the frozen semi-group : terminal condition and source
}\label{pragestiderivsemietgreen}
The main result of this section is the following lemma.
\begin{lem}[Derivatives of frozen semi-group and Green kernel] \label{CTR_SEMI_GROUP_GREEN_FROZEN}
There exists a constant $C:=C(\A{A})$ s.t. for any $(t,\x)\in [0,T]\times \R^{nd}$,
\begin{eqnarray*}
|D_{\x_1}^2\tilde P_{T,t}^\bxi g(\x)| \Big|_{\bxi=\x}&\le& C \|D_{\x_1}^2g\|_{L^\infty} \le C \|g\|_{C_{b,\d}^{2+\gamma}},\\
|D_{\x_1}^2\tilde G^{\bxi} f(t,\x)|\Big|_{\bxi=\x}
&\leq& C (T-t)^{\frac \gamma 2} \|f\|_{L^\infty(C_{b,\d}^\gamma)}.
\end{eqnarray*}
\end{lem}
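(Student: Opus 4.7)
The plan is to exploit the explicit cancellation identities of Proposition \ref{Prop_moment_D2_tilde_p} together with an anisotropic Taylor expansion of $g$ (resp.\ $f$) around the center $\m_{s,t}^{\bxi}(\x)|_{\bxi=\x}=\btheta_{s,t}(\x)$, and then absorb the remainders using the Gaussian bounds of Proposition \ref{THE_PROP} and the regularizing effect of the quasi-distance recorded in Remark \ref{THE_REM_REG}.

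For the semi-group contribution, I differentiate first and expand next, writing $D_{\x_1}^2 \tilde P_{T,t}^{\bxi}g(\x)=\int D_{\x_1}^2\tilde p^{\bxi}(t,T,\x,\y)g(\y)d\y$. Because $g\in C_{b,\d}^{2+\gamma}$ has genuine second-order regularity only in the non-degenerate direction (the index $(2+\gamma)/(2i-1)<1$ for $i\ge 2$), the only available Taylor expansion around $\m:=\m_{T,t}^{\bxi}(\x)$ is
\begin{equation*}
g(\y)=g(\m)+D_{\y_1}g(\m)(\y-\m)_1+\tfrac12\,\mathrm{Tr}\!\big(D_{\y_1}^2 g(\m)(\y-\m)_1^{\otimes 2}\big)+R(\y,\m),
\end{equation*}
with $|R(\y,\m)|\le C\|g\|_{C_{b,\d}^{2+\gamma}}\,\d^{2+\gamma}(\y,\m)$ (this standard anisotropic remainder is obtained by combining the one-dimensional Taylor formula in $\y_1$ with the H\"older bounds in each variable $\y_i$, $i\ge 2$). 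Viewing $g(\m),D_{\y_1}g(\m),D_{\y_1}^2g(\m)$ as constants with respect to the integration variable $\y$, the three polynomial contributions are annihilated or reconstructed thanks to $\int D_{\x_1}^2\tilde p^{\bxi}\,d\y=0$ (derivative of the mass), identity \eqref{center_odd} for the linear term, and identity \eqref{GROS_CENTER_TER} applied with $M=D_{\y_1}^2g(\m)$, yielding
\begin{equation*}
D_{\x_1}^2\tilde P_{T,t}^{\bxi}g(\x)\Big|_{\bxi=\x}=D_{\y_1}^2 g(\btheta_{T,t}(\x))+\int D_{\x_1}^2\tilde p^{\bxi}(t,T,\x,\y)R(\y,\m)\,d\y\Big|_{\bxi=\x}.
\end{equation*}
The first term is bounded by $\|g\|_{C_{b,\d}^{2+\gamma}}$ by definition of the norm; for the remainder, the bound $|D_{\x_1}^2\tilde p^{\bxi}|\le C(T-t)^{-1}\bar p_{C^{-1}}^{\bxi}$ from Proposition \ref{THE_PROP} combined with the scaling $\int \bar p_{C^{-1}}^{\bxi}(t,T,\x,\y)\d^{2+\gamma}(\y,\m)\,d\y\le C(T-t)^{(2+\gamma)/2}$ (Remark \ref{THE_REM_REG} together with the homogeneity of $\d$) delivers an additional factor $(T-t)^{\gamma/2}$, uniformly bounded on $[0,T]$ and harmless.

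For the Green kernel, the same mechanism is simpler since $f\in L^\infty(C_{b,\d}^{\gamma})$ only allows the zeroth-order expansion $f(s,\y)=f(s,\m_{s,t}^{\bxi}(\x))+[f(s,\y)-f(s,\m_{s,t}^{\bxi}(\x))]$. The constant piece is killed by $\int D_{\x_1}^2\tilde p^{\bxi}(t,s,\x,\y)\,d\y=0$, and the H\"older remainder satisfies $|f(s,\y)-f(s,\m)|\le \|f\|_{L^\infty(C_{b,\d}^{\gamma})}\d^{\gamma}(\y,\m)$, so that
\begin{equation*}
\Big|\int D_{\x_1}^2\tilde p^{\bxi}(t,s,\x,\y)f(s,\y)\,d\y\Big|_{\bxi=\x}\le \frac{C\|f\|_{L^\infty(C_{b,\d}^{\gamma})}}{(s-t)^{1-\gamma/2}}
\end{equation*}
by the same pointwise Gaussian bound and scaling argument. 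Integrating the integrable time singularity over $s\in[t,T]$ produces exactly the factor $(T-t)^{\gamma/2}$ claimed in the statement. The only substantive obstacle is the anisotropic Taylor inequality $|R(\y,\m)|\le C\|g\|_{C_{b,\d}^{2+\gamma}}\d^{2+\gamma}(\y,\m)$: one must carefully split the remainder into a purely one-dimensional Taylor piece in $\y_1$ and cross-variations in the degenerate variables and estimate each using the corresponding directional H\"older seminorms composing $\|\cdot\|_{C_{b,\d}^{2+\gamma}}$; once this is in hand, the cancellations from Proposition \ref{Prop_moment_D2_tilde_p} and the Gaussian estimates conclude both bounds directly.
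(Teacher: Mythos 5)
Your argument is correct and essentially the paper's own: both proofs rest on centering at $\m_{T,t}^{\bxi}(\x)|_{\bxi=\x}=\btheta_{T,t}(\x)$, using the cancellation identities of Proposition \ref{Prop_moment_D2_tilde_p} to kill the constant and linear contributions, controlling the remainder via Proposition \ref{THE_PROP} and Remark \ref{THE_REM_REG}, and, for the Green kernel, integrating the $(s-t)^{-1+\gamma/2}$ singularity. The one organizational difference is that you perform a single anisotropic Taylor expansion around the center and invoke \eqref{GROS_CENTER_TER} to isolate the exact constant $D_{\y_1}^2 g(\btheta_{T,t}(\x))$, whereas the paper first splits $g(\y)-g(\m)$ into a degenerate-variable variation and a one-dimensional Taylor expansion in $\y_1$ written with the integral remainder, then bounds the quadratic term crudely by $\|D_{\x_1}^2 g\|_{L^\infty}$ without needing \eqref{GROS_CENTER_TER} --- both routes land on $C\|g\|_{C_{b,\d}^{2+\gamma}}$.
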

\begin{proof}[Proof of Lemma \ref{CTR_SEMI_GROUP_GREEN_FROZEN}]

\textcolor{black}{Note first that,} 
%since $\tilde p^{\bxi} $ is a density one has the following cancellation identity:
\begin{eqnarray}\label{D2_tildeP_g}
\big|D_{\x_1}^2\tilde P_{T,t}^\bxi g(\x)\big|\Big|_{\bxi=\x}&=&\Big|\int_{\R^{nd}} D_{\x_1}^2 \tilde p^{\bxi}(t,T,\x,\y) [g(\y)-g(\m_{T,t}^\bxi(\x))] d\y\Big| \bigg|_{\bxi=\x}\notag\\
&\le& \Big|\int_{\R^{nd}} D_{\x_1}^2 \tilde p^{\bxi}(t,T,\x,\y) [g(\y)-g(\y_1,\m_{T,t}^\bxi(\x)_{2:n})] d\y\Big| \bigg|_{\bxi=\x}
\nonumber \\
&&+ \Big|\int_{\R^{nd}} D_{\x_1}^2 \tilde p^{\bxi}(t,T,\x,\y) [g(\y_1,(\m_{T,t}^\bxi(\x))_{2:n})-g(\m_{T,t}^\bxi(\x))] d\y\Big| \bigg|_{\bxi=\x} .
\end{eqnarray}
The first term in the r.h.s. of the previous identity is readily controlled thanks to Proposition \ref{THE_PROP}
\begin{eqnarray}\label{D2_tildeP_g1}
&&\Big|\int_{\R^{nd}} D_{\x_1}^2 \tilde p^{\bxi}(t,T,\x,\y) [g(\y)-g(\y_1,(\m_{T,t}^\bxi(\x))_{2:n})] d\y\Big| \bigg|_{\bxi=\x}\nonumber \\
&\leq& C \|g\|_{C_{b,\d}^{2+\gamma}} \int_{\R^{nd}}(T-t)^{-1} \bar p_{C^{-1}}(t,T,\x,\y) \d^{2+\gamma}(\m_{T,t}^\bxi(\x),\y)  d\y \Big|_{\bxi=\x} 
\nonumber \\
 &\leq&  C (T-t)^{\frac \gamma 2}\|g\|_{C_{b,\d}^{2+\gamma}}  .
\end{eqnarray}
The second term of \eqref{D2_tildeP_g} is more subtle. We need to expand $g(\y_1,(\m_{T,t}^\bxi(\x))_{2:n})$ in its non-degenerate variable to take advantage of the corresponding regularity of $g$.
Namely, recalling from Proposition \ref{Prop_moment_D2_tilde_p} that 
\begin{equation*}
\int_{\R^d} d\y D_{\x_1}^2 \tilde p^{m,\bxi}(t,T,\x,\y) \big \langle  D_{\x_1} g(\m_{T,t}^\bxi(\x)), (\y-\m_{T,t}^\bxi(\x))_1 \big \rangle
 \Big|_{\bxi=\x}=\0_{d,d},
\end{equation*}
we obtain
\begin{eqnarray}
&&\Big|\int_{\R^{nd}} D_{\x_1}^2 \tilde p^{\bxi}(t,T,\x,\y) [g(\y_1,(\m_{T,t}^\bxi(\x))_{2:n})-g(\m_{T,t}^\bxi(\x))] d\y\Big| \bigg|_{\bxi=\x}
\nonumber \\
&=&  \bigg|\int_{\R^{nd}} D_{\x_1}^2 \tilde p^{\bxi}(t,T,\x,\y) \bigg( \big  \langle D_{\x_1} g(\m_{T,t}^\bxi(\x)),(\y-\m_{T,t}^\bxi(\x))_1\big \rangle 
\nonumber \\
&&+ \int_0^1 d\mu  (1-\mu ) \rm{Tr} \Big ( D_{\x_1}^2 g\big(\m_{T,t}^\bxi(\x)_1
+\mu (\y-\m_{T,t}^\bxi(\x))_1,(\m_{T,t}^\bxi(\x))_{2:n} \big) 
\big (\y-\m_{T,t}^\bxi(\x) \big)_1^{\otimes 2} \Big )
%\notag\\&&%(\y-\m_{T,t}^\bxi(\x))_1 \big  \rangle 
 \bigg) d\y\bigg| \Bigg|_{\bxi=\x} \nonumber \\
&\leq& C \|D_{\x_1}^2g\|_{L^\infty} \int_{\R^{nd}}  \frac{\bar p_{C^{-1}}(t,T,\x,\y)}{T-t}|(\y-\m_{T,t}^\bxi(\x))_1|^2 d\y\bigg|_{\bxi=x} 
\nonumber \\
&\le& C\|g\|_{C_{b,\d}^{2+\gamma}}\label{D2_tildeP_g2}.
\end{eqnarray}
%%%%% Il y a donc une proposition sur les moments a vraiment ecrire en Proposition 3.
%and 
%\begin{equation*}
%\frac 12 \int_{\R^{nd}} d\y D_{\x_1}^2 \tilde p^{m,\bxi}(t,T,\x,\y) \frac 12 |(\y-\m_{T,t}^\bxi(\x))_1|^2  
%=(\tilde \gR ^{\bxi}(s,t)_{1,1})^2.%\mathbf {\tilde {R}}^\bxi(s,t)_{\cdot,1}
%\end{equation*}
%Moreover the previous identity yields that the last term in \eqref{D2_tildeP_g2} is handled as following:
%\begin{equation}\label{D2_tildeP_g3}
% \frac 12  \Big | D_{\x_1}^2 g(\m_{T,t}^\bxi(\x)) \int_{\R^{nd}}  D_{\x_1}^2 \tilde p^{\bxi}(t,T,\x,\y)  |(\y-\m_{T,t}^\bxi(\x))_1|^2 d\y\Big| \Big|_{\bxi=\x} 
%\leq C \|g\|_{C_{b,\d}^{2+\gamma}} (\tilde \gR ^{\bxi}(s,t)_{1,1})^2 \leq  C \|g\|_{C_{b,\d}^{2+\gamma}},
%\end{equation}
%the last inequality is a consequence of \eqref{RESCALED_RES} and \eqref{borne_tilde_R}. %Proposition \ref{THE_PROP}.
%As well for the first term in the r.h.s. of \eqref{D2_tildeP_g2}:
%\begin{eqnarray}\label{D2_tildeP_g4}
%\Big |\int_{\R^{nd}} D_{\x_1}^2 \tilde p^{\bxi}(t,T,\x,\y)  \int_0^1 d\mu  (1-\mu )
%\nonumber \\
%\big \langle [D_{\x_1}^2 g(\m_{T,t}^\bxi(\x)_1
%+\mu (\y-\m_{T,t}^\bxi(\x)_1),\y_{2:n} )-D_{\x_1}^2 g(\m_{T,t}^\bxi(\x))](\y-\m_{T,t}^\bxi(\x))_1 ,(\y-\m_{T,t}^\bxi(\x))_1 \big \rangle  \Big) d\y\Big| \Big|_{\bxi=\x} \nonumber \\
%\leq C
%\Big |\int_{\R^{nd}}   \hat q(t,T,\x,\y) \d^\gamma(\y,\m^{\bxi}(\x)) d \y
%\leq  C(T-t)^{\frac \gamma 2}  \|g\|_{C_{b,\d}^{2+\gamma}},
%\nonumber\\
%\end{eqnarray}
%the penultimate is a consequence of Proposition \ref{THE_PROP}.
Gathering identities \eqref{D2_tildeP_g1}, \eqref{D2_tildeP_g2} %, \eqref{D2_tildeP_g3} and \eqref{D2_tildeP_g4} 
into \eqref{D2_tildeP_g}, we obtain the stated control for $|D_{\x_1}^2\tilde P_{T,t}^\bxi g(\x)|\Big|_{\bxi=\x}$.

Let us now turn to the Green kernel. We  directly get from Proposition \ref{THE_PROP}:
\begin{eqnarray*}
|D_{\x_1}^2\tilde G^{\bxi} f(t,\x)|\Big|_{\bxi=\x}&\le& \Big|\int_t^T ds \int_{\R^{nd}} D_{\x_1}^2\tilde p^{\bxi}(t,s,\x,\y) [f(s,\y)-f(s,\btheta_{s,t}(\bxi))] d\y\Big| \Big|_{\bxi=\x} \notag \\
&\le& C\|f\|_{L^\infty(C_\d^\gamma)}\int_t^T ds \int_{\R^{nd}} \frac{1}{s-t}\bar p_{C^{-1}}(s,t,\x,\y) \d( \btheta_{s,t}(\x), \y)^\gamma d\y\\
&\le& C\|f\|_{L^\infty(C_\d^\gamma)}(T-t)^{\frac \gamma 2}, %\label{CTR_FOR_D2_X1_TILDE_GREEN}
\end{eqnarray*}
which gives the result.

\end{proof}

%%% Inutile par rapport a la presentation actuelle.
%\textcolor{black}{The bound of Proposition \ref{PROP_SUP_CTRL} for $D_{\x_1}^2 u(t,\x) $ now follows from \eqref{def_R1_R2} using Lemma \ref{CTR_SEMI_GROUP_GREEN_FROZEN} and  equations \eqref{ineq_sup_D2_a_F1},  \eqref{ineq_Rm_2_n}. Reproducing the previous analysis for $D_{\x_1} u(t,\x)  $ yields more directly a similar upper-bound. The proof of Proposition \ref{PROP_SUP_CTRL} is thus complete.}

\mysection{H\"older controls} \label{HOLDER}

In this section, we aim at giving suitable controls on the H\"older moduli $ [D_{\x_1}^2 u(t,\cdot)]_{\gamma,\d}$ and $\sup_{z \in \R^d} [u(t,z,\cdot)]_{2+\gamma,\d}$ in order to derive our main Schauder estimate of Theorem \ref{THEO_SCHAU}. Namely, we want to establish the following result.

\begin{PROP}\label{PROP_HOLDER_CTRL}\quad
Let $\gamma\in (0,1)$ be given. Suppose that \A{A} is in force and that the terminal condition $g$ and source term $f$ of the Cauchy problem \eqref{KOLMO} satisfy: $g\in C_{b,\d}^{2+\gamma }(\R^{nd},\R)$ and $f\in L^\infty\big([0,T],C_{b,\d}^{\gamma}( \R^{nd},\R)\big)$. Then, there exists $C:=C(\A{A},T)$ and $ \Lambda:=\Lambda (\A{A})$ as in Remark \ref{REM_LAMBDA} such that \textcolor{black}{for any $c_0\in (0,1]$}:
\begin{equation}\label{ESTI_HOLDER_MODULI}
[D_{\x_1}^2 u(t,\cdot)]_{\gamma,\d} + \sup_{z \in \R^d} [u(t,z,\cdot)]_{2+\gamma,\d} \leq C \Big\{\| g\|_{C^{2+\gamma}_{b,\d}}  +   \|f\|_{L^\infty(C_{b,\d}^{\gamma})} \Big\}+\Big(\Lambda( c_0^{-(n-\frac 12)+\frac \gamma 2}+c_0^{\frac \gamma 2})+C c_0^{\frac{\gamma}{2n-1}} \Big)\|u\|_{L^\infty(C^{2+\gamma}_{b,\d}) }.
\end{equation}
%%%%%% Attention le \Lambda:=\Lambda(\A{A},T) via les normes L^\infty en temps.
\end{PROP}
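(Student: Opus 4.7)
The strategy follows the framework sketched in Section \ref{subsec_detailedguide_regime} of the guide to the proof. I will focus on $[D_{\x_1}^2 u(t,\cdot)]_{\gamma,\d}$; the control on $\sup_{z}[u(t,z,\cdot)]_{2+\gamma,\d}$ (involving H\"older moduli in the degenerate variables) is obtained analogously, up to adapting the multi-index $\vartheta$ in the invocation of Lemmas \ref{CTR_DER_SUP_non_deg}--\ref{LEMME_BESOV_DEG} and the intrinsic scaling that appears in the time singularity. Fix $(t,\x,\x')\in [0,T]\times(\R^{nd})^2$ and distinguish the two regimes $c_0^{1/2}\d(\x,\x')>(T-t)^{1/2}$ and $c_0^{1/2}\d(\x,\x')\le (T-t)^{1/2}$. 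In the first (\emph{global off-diagonal}) case, the triangle inequality together with the supremum bound \eqref{ESTI_SUP_NORMS} of Proposition \ref{PROP_SUP_CTRL} directly gives
\[
|D_{\x_1}^2 u(t,\x)-D_{\x_1}^2 u(t,\x')|\le C c_0^{\gamma/2}\d^\gamma(\x,\x')\bigl(\|g\|_{C_{b,\d}^{2+\gamma}}+\|f\|_{L^\infty(C_{b,\d}^\gamma)}\bigr)+\Lambda c_0^{\gamma/2}\d^\gamma(\x,\x')\|u\|_{L^\infty(C_{b,\d}^{2+\gamma})},
\]
which is of the form required in \eqref{ESTI_HOLDER_MODULI}.

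The heart of the proof is the second (\emph{close points}) case. Set $t_0:=t+c_0\d^2(\x,\x')$ and write $D_{\x_1}^2 u(t,\x)$ via \eqref{DUHAMEL_PERTURB_DER_NON_DEG} with $(\tau,\bxi)=(t,\x)$, and $D_{\x_1}^2 u(t,\x')$ via the two-step Duhamel identity \eqref{INTEGRATED_DIFF_BXI} with $\bxi'=\x'$ on $[t,t_0]$ and $\tilde\bxi'=\x$ on $[t_0,T]$. The difference then splits into a terminal/source part, a diagonal part $D_{\x_1}^2\Delta_{\mathrm{diag}}$, an off-diagonal part $D_{\x_1}^2\Delta_{\mathrm{off-diag}}$, and a discontinuity term. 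For $D_{\x_1}^2\Delta_{\mathrm{off-diag}}$ (where the two contributions are treated separately since the points are far at the current time scale), we apply Lemmas \ref{CTR_DER_SUP_non_deg} and \ref{LEMME_BESOV_DEG} with $\vartheta=(2,0,\dots,0)$ and integrate on $[t,t_0]$, yielding an $\Lambda c_0^{\gamma/2}\d^\gamma(\x,\x')\|u\|_{L^\infty(C_{b,\d}^{2+\gamma})}$ contribution, as in \eqref{PREAL_CTR_OFF_DIAG}. For $D_{\x_1}^2\Delta_{\mathrm{diag}}$ (where both contributions have the same freezing point $\bxi=\x$), we Taylor-expand the difference of kernels as in \eqref{ANA_PREAL_HD_UG}, producing an extra $D_{\x_j}$ derivative; then Lemmas \ref{CTR_DER_SUP_non_deg}--\ref{LEMME_BESOV_DEG} applied with $\vartheta$ of length $3$ give a time singularity of order $(s-t)^{-1-(j-1/2)+\gamma/2}$, and integration on $[t_0,T]$ reproduces the bound $\Lambda c_0^{-(n-1/2)+\gamma/2}\d^\gamma(\x,\x')\|u\|_{L^\infty(C_{b,\d}^{2+\gamma})}$ of \eqref{CTR_DIAG_PREAL}.

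The genuinely delicate step is the discontinuity term
\[
\bigl|\bigl(D_{\x_1}^2\tilde P_{t_0,t}^{\bxi'}u(t_0,\x')-D_{\x_1}^2\tilde P_{t_0,t}^{\tilde\bxi'}u(t_0,\x')\bigr)\bigr|_{\bxi'=\x',\tilde\bxi'=\x}\bigr|,
\]
which arises only because the two representations use different freezing points on $[t,t_0]$. Here I will exploit cancellation techniques: writing $u(t_0,\y)=u(t_0,\m_{t_0,t}^{\bxi'}(\x'))+[u(t_0,\y)-u(t_0,\m_{t_0,t}^{\bxi'}(\x'))]$ (and similarly for $\tilde\bxi'$) and using Proposition \ref{Prop_moment_D2_tilde_p} to kill the leading constant terms, one reduces the analysis to differences of the form $u(t_0,\m_{t_0,t}^{\x}(\x'))-u(t_0,\m_{t_0,t}^{\x'}(\x'))=u(t_0,\m_{t_0,t}^{\x}(\x'))-u(t_0,\btheta_{t_0,t}(\x'))$. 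Since $u\in C_{b,\d}^{2+\gamma}$ and, by Lemma \ref{Lemme_d_theta_theta_x_x}, $\d(\m_{t_0,t}^{\x}(\x'),\btheta_{t_0,t}(\x'))\le Cc_0^{1/(2n-1)}\d(\x,\x')$, this produces the announced $Cc_0^{\gamma/(2n-1)}\d^\gamma(\x,\x')\|u\|_{L^\infty(C_{b,\d}^{2+\gamma})}$ contribution. The terminal and source contributions $D_{\x_1}^2[\tilde P_{T,t}^{(t,\x)}g(\x)-\tilde P_{T,t}^{(t,\x')}g(\x')]$ and the analogous Green-kernel difference are handled by the same two-regime, diagonal/off-diagonal splitting, using Lemma \ref{CTR_SEMI_GROUP_GREEN_FROZEN} type cancellations and the regularity of $g,f$; they yield the clean contribution $C(\|g\|_{C_{b,\d}^{2+\gamma}}+\|f\|_{L^\infty(C_{b,\d}^\gamma)})\d^\gamma(\x,\x')$.

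Gathering the four contributions gives \eqref{ESTI_HOLDER_MODULI} for the non-degenerate H\"older modulus. The argument for $\sup_z[u(t,z,\cdot)]_{2+\gamma,\d}$ follows the same blueprint, the only substantive modifications being: (i) since we cannot differentiate $u$ in the degenerate variables, H\"older increments in $\x_i$ for $i\ge 2$ are tested via the Besov duality of Lemma \ref{LEMME_BESOV_DEG}; (ii) the scales $(s-t)^{i-1/2}$ associated with the $i$-th variable enter in the definition of $t_0$ (the critical parameter becomes $t_0=t+c_0\d^2(\x,\x')$ irrespective of the direction, since $\d$ already encodes the anisotropic scaling), and (iii) the sensitivity exponent in Lemma \ref{Lemme_d_theta_theta_x_x} remains $1/(2n-1)$, which is precisely the worst-case scaling and explains the exponent $c_0^{\gamma/(2n-1)}$ in \eqref{ESTI_HOLDER_MODULI}. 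The main obstacle throughout is ensuring that the change of freezing point does not deteriorate the final exponent: this is controlled by Lemma \ref{Lemme_d_theta_theta_x_x} and is precisely the reason for the particular choice $t_0=t+c_0\d^2(\x,\x')$ and for the polynomial dependence of the bound on $c_0$.
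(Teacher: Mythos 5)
Your overall strategy — the two-regime splitting, the diagonal/off-diagonal decomposition of the perturbative term, the Taylor expansion in the diagonal regime, and the treatment of the discontinuity term through Lemma \ref{Lemme_d_theta_theta_x_x} — matches the paper's proof. There is, however, a genuine gap in the \emph{global off-diagonal} case. You claim that the triangle inequality together with the supremum bound \eqref{ESTI_SUP_NORMS} of Proposition \ref{PROP_SUP_CTRL} directly gives
\[
|D_{\x_1}^2 u(t,\x)-D_{\x_1}^2 u(t,\x')|\le C c_0^{\gamma/2}\d^\gamma(\x,\x')\bigl(\|g\|_{C_{b,\d}^{2+\gamma}}+\|f\|_{L^\infty(C_{b,\d}^\gamma)}\bigr)+\Lambda c_0^{\gamma/2}\d^\gamma(\x,\x')\|u\|_{L^\infty(C_{b,\d}^{2+\gamma})},
\]
but \eqref{ESTI_SUP_NORMS} reads $|D_{\x_1}^2 u(t,\x)|\le C\|g\|_{C_{b,\d}^{2+\gamma}}+C(T-t)^{\gamma/2}\|f\|_{L^\infty(C_{b,\d}^\gamma)}+\Lambda(T-t)^{\gamma/2}\|u\|_{L^\infty(C_{b,\d}^{2+\gamma})}$: only the $f$- and $u$-terms carry the prefactor $(T-t)^{\gamma/2}\le c_0^{\gamma/2}\d^\gamma(\x,\x')$, while the $g$-term does not. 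The triangle inequality therefore leaves a bare $2C\|g\|_{C_{b,\d}^{2+\gamma}}$ with no small factor, and since $\d(\x,\x')$ can be arbitrarily small in this regime (with $T-t$ even smaller), the ratio $\|g\|_{C_{b,\d}^{2+\gamma}}/\d^\gamma(\x,\x')$ is unbounded. The paper explicitly flags this in the discussion around \eqref{ineq_D2u_T_sup_dxx}: the triangle-inequality shortcut only works when $g=0$. To close the gap you must keep the terminal contribution as the difference $D_{\x_1}^2\tilde P_{T,t}^{(t,\x)}g(\x)-D_{\x_1}^2\tilde P_{T,t}^{(t,\x')}g(\x')$ in the Duhamel expansion \eqref{EXP_HD_GLOBAL_DER_NON_DEG} and control it through Lemma \ref{lem_Holder_tildePg}, whose proof uses the cancellation identities of Proposition \ref{Prop_moment_D2_tilde_p} and the flow-sensitivity estimate of Lemma \ref{lem_theta_theta}; this is what produces the required $C\|g\|_{C_{b,\d}^{2+\gamma}}\d^\gamma(\x,\x')$.

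A secondary, more cosmetic point: in your treatment of the discontinuity term you state that the cancellations reduce matters to differences of the form $u(t_0,\m_{t_0,t}^{\x}(\x'))-u(t_0,\btheta_{t_0,t}(\x'))$. The surviving critical term actually involves the \emph{second derivative}, namely $D_{\x_1}^2 u(t_0,\m_{t_0,t}^{\x}(\x'))-D_{\x_1}^2 u(t_0,\btheta_{t_0,t}(\x'))$, obtained after a second-order Taylor expansion of $u$ in $\x_1$ and the cancellations \eqref{center_odd}, \eqref{GROS_CENTER_TER}; it is then controlled by $[D_{\x_1}^2 u(t_0,\cdot)]_{\gamma,\d}$ together with Lemma \ref{Lemme_d_theta_theta_x_x}. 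Your exponent $c_0^{\gamma/(2n-1)}$ and the role of the lemma are correct, but the reduction step should be stated at the level of $D_{\x_1}^2 u$ rather than $u$ itself.
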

%\textcolor{black}{QUELQUE CHOSE A FAIRE AVEC LES CONSTANTES ? }

To prove this result the point is to consider for a fixed time $t\in [0,T] $ and for fixed $\x\in \R^{nd} $ the perturbative expansion \eqref{DUHAMEL_PERTURB} and for another spatial point $\x'\in {\R^{nd}}$, the possibly more refined version provided by equation \eqref{INTEGRATED_DIFF_BXI} which precisely allows to take into account the various regimes depending on $\d(\x,\x') $ and $(s-t)^{1/2}, s\in [t,T] $ detailed in the previous detailed guide to the proof (see Section \ref{GUIDE_TO_PROOF} paragraph \ref{subsec_detailedguide_regime}).

Hence, we will address separately two cases. For a constant $c_0$ to be specified later on (but formally meant to be \textit{small}), we consider:
\begin{trivlist}
\item[$\bullet$] The globally \textit{off-diagonal} regime $T-t< c_0 \d^2(\x,\x')$. In that case, the spatial points $\x,\x'$ are globally \textit{far} for the corresponding homogeneous distance $\d$ over   the time horizon $s\in [t,T]$. Hence, there is no specific need to exploit \eqref{INTEGRATED_DIFF_BXI}. Expanding the quantities  $D_{\x_1}^2 u(t,\x)-D_{\x_1}^2 u(t,\x') $ and $u(t,z,\x_{2:n})-u(t,z,\x_{2:n}') $ with \eqref{DUHAMEL_PERTURB} is enough to get the result.

Indeed, writing from \eqref{DUHAMEL_PERTURB} the for any $(\x,\x') \in (\R^{nd})^2$ s.t. $(T-t)<c_0\d^2(\x,\x') $,
\begin{eqnarray}\label{EXP_HD_GLOBAL_DER_NON_DEG}
&&D_{\x_1}^2 u(t,\x)- D_{\x_1}^2 u(t,\x')\notag\\
&=&\Bigg\{D_{\x_1}^2 \tilde P_{T,t}^{\bxi}g(\x)-D_{\x_1'}^2 \tilde P_{T,t}^{\bxi'}g(\x')+D_{\x_1}^2\tilde G^\bxi f(t,\x)-D_{\x_1}^2\tilde G^{\bxi'} f(t,\x')\nonumber \\
&&+ \int_t^T ds \int_{\R^{nd}} \Big(D_{\x_1}^2 \tilde p^\bxi(t,s,\x,\y) (L_s-\tilde L_s^\bxi)u(s,\y) - D_{\x_1}^2 \tilde p^{\bxi'}(t,s,\x',\y) (L_s-\tilde L_s^{\bxi'})u(s,\y) \Big)d\y\Bigg\}\Bigg|_{(\bxi,\bxi')=(\x,\x')},%\notag
\end{eqnarray} 
and similarly, for any $z\in \R^d$,
\begin{eqnarray}\label{EXP_GLOBAL_HOLDER_DEG}
&& u\big(t,(z,\x_{2:n})\big)-  u\big(t,(z,\x'_{2:n})\big)\notag\\
&=&\Bigg\{\tilde P_{T,t}^{\bxi}g(z,\x_{2:n})- \tilde P_{T,t}^{\bxi'}g(z,\x'_{2:n})+\tilde G^\bxi f\big(t,(z,\x_{2:n})\big)-\tilde G^{\bxi'} f\big(t,(z,\x'_{2:n})\big)\notag\\
&&+ \int_t^{T} ds \int_{\R^{nd}} \Big( \tilde p^\bxi(t,s,(z,\x_{2:n}),\y) (L_s-\tilde L_s^\bxi)u(s,\y)\notag\\
&&\hspace*{2cm}-  \tilde p^{\bxi'}(t,s,(z,\x_{2:n}'),\y) (L_s-\tilde L_s^{\bxi'})u(s,\y) \Big)d\y\Bigg\}\Bigg|_{(\bxi,\bxi')=\big((z,\x_{2:n}),(z,\x_{2:n}')\big)}.%\notag\\
\end{eqnarray} 
In this case, we derive from Lemmas \ref{CTR_DER_SUP_non_deg}, \ref{LEMME_BESOV_DEG}, \ref{CTR_SEMI_GROUP_GREEN_FROZEN} and equation \eqref{def_R1_R2} that:
\begin{eqnarray*}
&&|D_{\x_1}^2 u(t,\x)- D_{\x_1}^2 u(t,\x')|\\
&\le& C\Big[ |D_{\x_1}^2 \tilde P_{T,t}^{\bxi}g(\x)-D_{\x_1'}^2 \tilde P_{T,t}^{\bxi'}g(\x')|\big|_{(\bxi,\bxi')=(\x,\x')}+\big(\|f\|_{L^\infty(C_{b,\d}^\gamma)}+ \Lambda \|u\|_{L^\infty(C_{b,\d}^{2+\gamma})}\big)(T-t)^{\frac \gamma 2}\Big]\\
&\le& C\Big[ |D_{\x_1}^2 \tilde P_{T,t}^{\bxi}g(\x)-D_{\x_1'}^2 \tilde P_{T,t}^{\bxi'}g(\x')|\big|_{(\bxi,\bxi')=(\x,\x')}+\big(\|f\|_{L^\infty(C_{b,\d}^\gamma)}+ \Lambda \|u\|_{L^\infty(C_{b,\d}^{2+\gamma})}\big)c_0^{\frac\gamma 2}\d^{\gamma }(\x,\x')\Big],
\end{eqnarray*}
and 
\begin{eqnarray*}
&&|u\big(t,(z,\x_{2:n})\big)-  u\big(t,(z,\x'_{2:n})\big)|\\
&\le& C \bigg(|\tilde P_{T,t}^{\bxi}g(z,\x_{2:n})- \tilde P_{T,t}^{\bxi'}g(z,\x'_{2:n})|\big|_{(\bxi,\bxi')=\big((z,\x_{2:n}),(z,\x_{2:n}')\big)}\\&&+|\tilde G^\bxi f\big(t,(z,\x_{2:n})\big)-\tilde G^{\bxi'} f\big(t,(z,\x'_{2:n})\big)|\big|_{(\bxi,\bxi')=\big((z,\x_{2:n}),(z,\x_{2:n}')\big)} +\|u\|_{L^\infty(C_{b,\d}^{2+\gamma})} c_0^{\frac \gamma 2} \d^\gamma(\x,\x')\bigg).
\end{eqnarray*}
Therefore, these equations give the expected controls up to appropriate estimates for the H\"older moduli of the frozen semigroup and Green kernel (which are obtained below, for the so-called \textit{mixed} regime).\\

\item[$\bullet$] The  \textit{mixed} regime $T-t\ge c_0 \d^2(\x,\x')$. In that case, up to the \textit{transition time} $t_0$ defined in \eqref{def_t0}, $s-t<c_0 \d^2(\x,\x') $, i.e. the \textit{off-diagonal} regime holds for the times $s$ \textit{close} to $t$. Things are hence more involved. In particular, it is then crucial to exploit the more refined perturbative expansion \eqref{INTEGRATED_DIFF_BXI} to derive suitable bounds for $D_{\x_1}^2 u(t,\x)-D_{\x_1}^2 u(t,\x') $ and $u\big(t,(z,\x_{2:n})\big)-u\big(t,(z,\x_{2:n}')\big) $. In particular, this leads to handle carefully the additional terms appearing from the change of freezing parameter. 

To handle our controls in such a case, we will start from identity \eqref{INTEGRATED_DIFF_BXI} for the expansion of $u(t,\x') $, where we have chosen $\bxi'=\x'$ and $\tilde \bxi'=\x$. Namely,
\begin{eqnarray*}
u(t,\x')&=&\tilde P_{T,t}^{\tilde \bxi'}g(\x')+\tilde G_{t_0,t}^{ \bxi'} f(t,\x')+\tilde G_{T,t_0}^{\tilde \bxi'} f(t,\x')%\notag\\
+\tilde P_{t_0,t}^{ \bxi'} u(t_0, \x')-\tilde P_{t_0,t}^{\tilde \bxi'} u(t_0, \x')\notag\\
&&+\int_t^T ds \int_{\R^{nd}}d\y\Big(\I_{s\le t_0} \tilde p^{ \bxi'}(t,s,\x',\y)(L_s-\tilde L_s^{\bxi'})+ \I_{s>t_0} p^{\tilde \bxi'}(t,s,\x',\y)(L_s-\tilde L_s^{\tilde \bxi'})  \Big)u(s,\y).\notag
\end{eqnarray*}
Again, $t_0$ must be here seen as a frozen parameter, which is \textit{a posteriori}, i.e. after possible differentiation, chosen as in \eqref{def_t0}.

According to the notations of the detailed guide to the proof (see eq. \eqref{DECOUP_MOD_HOLDER} and \eqref{DECOU_PREAL_HOLDER_PERTURB} in Section \ref{GUIDE_TO_PROOF}), the terms to control then write for the H\"older norm of the derivatives w.r.t. the non-degenerate variables:
\begin{eqnarray}
&&D_{\x_1}^2 u(t,\x)- D_{\x_1}^2 u(t,\x')\notag\\
&=&\Bigg\{ \Big(D_{\x_1}^2 \tilde P_{T,t}^{\bxi}g(\x)-D_{\x_1'}^2 \tilde P_{T,t}^{\bxi}g(\x')\Big)+ \Big(D_{\x_1}^2\tilde G_{t_0,t}^{ \bxi} f(t,\x)-D_{\x_1}^2\tilde G_{t_0,t}^{ \bxi'} f(t,\x')\Big)\notag\\
&&+\Big(D_{\x_1}^2\tilde G_{T,t_0}^{ \bxi} f(t,\x)-D_{\x_1}^2\tilde G_{t_0,t}^{ \bxi} f(t,\x')\Big)+\Big(D_{\x_1}^2\tilde P_{t_0,t}^{ \bxi'} u(t_0, \x')-D_{\x_1}^2\tilde P_{t_0,t}^{\bxi} u(t_0, \x') \Big)\notag\\
&&+D_{\x_1}^2\Delta_{{\rm \textbf{diag}}}^{\bxi,\tilde \bxi'}(t,T,\x,\x')+D_{\x_1}^2\Delta_{{\rm \textbf{off-diag}}}^{\bxi,\bxi'}(t,\x,\x')\Bigg\}\Bigg|_{(\bxi,\bxi',\tilde \bxi') = (\x,\x',\x)},\label{EXP_MIXED_DER_NON_DEG}
\end{eqnarray}  
with
\begin{eqnarray}
\Delta_{{\rm \textbf{off-diag}}}^{\bxi,\bxi'}(t,\x,\x')&=&
\int_t^T ds \int_{\R^{nd}}d\y \tilde p^{\bxi}(t,s,\x,\y)\I_{s\le t_0} (L_s-\tilde L_s^{\bxi})u(s,\y)
\nonumber \\
&&-\int_t^T ds \int_{\R^{nd}}d\y \tilde p^{\bxi'}(t,s,\x',\y)\I_{s\le t_0} (L_s-\tilde L_s^{\bxi'})u(s,\y),
\notag\\
\Delta_{{\rm \textbf{diag}}}^{\bxi,\tilde \bxi'}(t,T,\x,\x')&=&
\int_t^T ds \int_{\R^{nd}}d\y \tilde p^{ \bxi}(t,s,\x,\y)\I_{s>t_0}  (L_s-\tilde L_s^{\bxi})u(s,\y)
\nonumber \\
&&-\int_t^T ds \int_{\R^{nd}}d\y \tilde p^{\tilde  \bxi'}(t,s,\x',\y)\I_{s>t_0}  (L_s-\tilde L_s^{\tilde \bxi'})u(s,\y),
\label{DECOUP_MOD_HOLDER_SANS_M}
\end{eqnarray}
and 
\begin{eqnarray}
&& u\big(t,(z,\x_{2:n})\big)-  u\big(t,(z,\x_{2:n}')\big)\notag\\
&=&\Bigg\{ \Big(\tilde P_{T,t}^{\bxi}g\big(z,\x_{2:n}\big)- \tilde P_{T,t}^{\bxi}g\big(z,\x_{2:n}'\big)\Big)+ \Big(\tilde G_{t_0,t}^{ \bxi} f\big(t,(z,\x_{2:n})\big)-\tilde G_{t_0,t}^{ \bxi'} f\big(t,(z,\x_{2:n}')\big)\Big)\notag\\
&&+\Big(\tilde G_{T,t_0}^{ \bxi} f\big(t,(z,\x_{2:n})\big)-\tilde G_{t_0,t}^{ \bxi} f\big(t,(z,\x_{2:n})\big)\Big)+\Big(\tilde P_{t_0,t}^{ \bxi'} u\big(t_0,(z,\x_{2:n}')\big)-\tilde P_{t_0,t}^{\bxi} u\big(t_0,(z,\x_{2:n}')\big) \Big)\notag\\
&&+\Delta_{{\rm \textbf{diag}}}^{\bxi,\tilde \bxi'}(t,T,(z,\x_{2:n}),(z,\x_{2:n}'))+\Delta_{{\rm \textbf{off-diag}}}^{\bxi,\bxi'}(t,(z,\x_{2:n}),(z,\x_{2:n}'))\Bigg\}\Bigg|_{(\bxi,\bxi',\tilde \bxi') = ((z,\x_{2:n}),(z,\x_{2:n}'),(z,\x_{2:n}))}\!\!\!.\notag\\
\label{EXP_MIXED_HOLDER_DEG}
\end{eqnarray}
for the H\"older moduli w.r.t. the degenerate variables according to the previously introduced notations. 

To derive the expected bounds we will then devote a subsection to the H\"older controls for the frozen semi-group (see Lemma \ref{lem_Holder_tildePg} in Section \ref{SUB_lem_Holder_tildeP}), for the frozen Green kernel (see Lemma \ref{lemme_Holder_Gg_VAR_DEG_ET_D2_NON_DEG} in Section \ref{SUB_lemme_Holder_Gg_VAR_DEG_ET_D2_NON_DEG}), for the discontinuity term coming from the change of freezing point (see Lemma \ref{CTR_TERME_DISC} in Section \ref{SEC_DISC}) and for the perturbative contribution (see Lemma \ref{SUB_HOLDER_CTR_PERT_PART} in Section \ref{SUB_HOLDER_CTR_PERT_PART}). Aggregating the previously mentioned \textcolor{black}{l}emmas directly yields Proposition \ref{PROP_HOLDER_CTRL}. \qed 
\end{trivlist}

\subsection{H\"older norms for the frozen semi-group %of $\tilde P_{T,t}^{\bxi} g$
}\label{SUB_lem_Holder_tildeP}
We precisely want to establish the following result. 
\begin{lem}\label{lem_Holder_tildePg}
There exists $C:=C(\A{A})$ s.t. for any $(t,\x,\x') \in [0,T] \times \R^{nd} \times \R^{nd}$, taking $$(\bxi,\bxi')=\begin{cases} (\x,\x'), \ {\rm if}\ (T-t)^{1/2} <c_0 \d(\x,\x'),\\
(\x,\x), \ {\rm if}\ (T-t)^{1/2} \ge c_0 \d(\x,\x'),
\end{cases}$$
one has:
\begin{eqnarray*}
\big | D_{\x_1}^2 \tilde P_{T,t}^{\bxi}\textcolor{black}{g}(\x)-D_{\x_1}^2 \tilde P_{T,t}^{\bxi'}\textcolor{black}{g}(\x') \big | &\leq& C \| g\|_{C^{2+\gamma}_{b,\d}} \d^{\gamma}(\x,\x'),
\\
\big |  \tilde P_{T,t}^{\bxi}\textcolor{black}{g}(\x)- \tilde P_{T,t}^{\bxi'}\textcolor{black}{g}(\x') \big | &\leq& C \| g\|_{C^{2+\gamma}_{b,\d}}\d^{2+\gamma}(\x,\x') , \text{ for } \x_1=\x_1'.
\end{eqnarray*}
\end{lem}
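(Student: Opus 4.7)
The plan is to split the argument according to the two regimes prescribed by the statement, and in both to rely on an asymptotic expansion of $\tilde P_{T,t}^{\bxi}g$ around its ``principal term'' obtained via the cancellation identities of Proposition \ref{Prop_moment_D2_tilde_p} together with the identity $\m^{\bxi}_{T,t}(\x)\big|_{\bxi=\x}=\btheta_{T,t}(\x)$ coming from Section \ref{SEC_GAUSS_DENS}.

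\textbf{Off-diagonal regime $(T-t)^{1/2}<c_0\d(\x,\x')$, so $\bxi=\x$, $\bxi'=\x'$.} Refining the computation used in Lemma \ref{CTR_SEMI_GROUP_GREEN_FROZEN}, I decompose $g(\y)-g(\m^{\bxi}_{T,t}(\x))$ into a second-order Taylor expansion in the $\x_1$-direction plus a H\"older error in the degenerate directions, and use \eqref{center_odd} and \eqref{GROS_CENTER_TER} to cancel the linear moment and to extract the Hessian from the quadratic one. This yields the expansion
\begin{equation*}
D_{\x_1}^2\tilde P_{T,t}^{\bxi}g(\x)=D_{\x_1}^2 g(\btheta_{T,t}(\bxi))+\mathcal R(\x,\bxi),\qquad |\mathcal R(\x,\bxi)|\le C(T-t)^{\gamma/2}\|g\|_{C^{2+\gamma}_{b,\d}},
\end{equation*}
where the remainder estimate follows from Proposition \ref{THE_PROP} and Remark \ref{THE_REM_REG}. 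Subtracting this identity at $(\x,\x)$ and $(\x',\x')$, the difference of principal parts is bounded via the $C^{\gamma}_{b,\d}$-regularity of $D_{\x_1}^2 g$ (inherited from $g\in C^{2+\gamma}_{b,\d}$) and Lemma \ref{lem_theta_theta} by $C\|g\|_{C^{2+\gamma}_{b,\d}}\bigl(\d^\gamma(\x,\x')+(T-t)^{\gamma/2}\bigr)$, and the off-diagonal inequality $(T-t)^{\gamma/2}\le c_0^{\gamma}\d^{\gamma}(\x,\x')$ absorbs the remainders.

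\textbf{Diagonal regime $(T-t)^{1/2}\ge c_0\d(\x,\x')$, so $\bxi=\bxi'=\x$.} Since both endpoints share the same frozen density, I set $F^{\x}(\z):=D_{\z_1}^2\tilde P_{T,t}^{\x}g(\z)$ and rely on the Taylor formula
\begin{equation*}
F^{\x}(\x)-F^{\x}(\x')=\int_0^1\sum_{k=1}^n D_{\z_k}F^{\x}\bigl(\x'+\mu(\x-\x')\bigr)\cdot(\x-\x')_k\,d\mu.
\end{equation*}
To control $|D_{\z_k}F^{\x}(\z)|$ I exploit the enlarged cancellations \eqref{GROS_CENTER} and \eqref{GROS_CENTER_BIS}: they allow me to replace $g(\y)$ inside $\int D_{\z_k}D_{\z_1}^2\tilde p^{\x}(t,T,\z,\y)g(\y)d\y$ by
\begin{equation*}
h(\y,\z):=g(\y)-g(\m^{\x}_{T,t}(\z))-D_{\x_1}g(\m^{\x}_{T,t}(\z))\cdot(\y-\m^{\x}_{T,t}(\z))_1-\tfrac12 D_{\x_1}^2 g(\m^{\x}_{T,t}(\z))(\y-\m^{\x}_{T,t}(\z))_1^{\otimes 2},
\end{equation*}
whose $\x_1$-Taylor remainder and degenerate-direction H\"older errors combine into the pointwise bound $|h(\y,\z)|\le C\|g\|_{C^{2+\gamma}_{b,\d}}\d^{2+\gamma}(\y,\m^{\x}_{T,t}(\z))$. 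With Proposition \ref{THE_PROP} this gives $|D_{\z_k}F^{\x}(\z)|\le C\|g\|_{C^{2+\gamma}_{b,\d}}(T-t)^{-(k-1/2)+\gamma/2}$. The diagonal inequality $|(\x-\x')_k|\le\d^{2k-1}(\x,\x')\le c_0^{-(2k-1-\gamma)}(T-t)^{(2k-1-\gamma)/2}\d^{\gamma}(\x,\x')$ then converts each summand into $Cc_0^{-(2k-1-\gamma)}\|g\|_{C^{2+\gamma}_{b,\d}}\d^{\gamma}(\x,\x')$, with $k=n$ absorbed into $C(\A{A})$ since $c_0$ is fixed by $\A{A}$.

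\textbf{Second estimate ($\x_1=\x'_1$) and main obstacle.} The same two-regime scheme applies, with no $\x_1$-derivative hitting $\tilde p^{\bxi}$. Centering together with \eqref{COV_SYNDICALE} gives the analogous expansion $\tilde P_{T,t}^{\bxi}g(\x)=g(\m^{\bxi}_{T,t}(\x))+\tfrac12\mathrm{Tr}\bigl(D_{\x_1}^2 g(\m^{\bxi}_{T,t}(\x))[\tilde\K^{\bxi}_{T,t}]_{1,1}\bigr)+\mathcal R'$ with $|\mathcal R'|\le C(T-t)^{(2+\gamma)/2}\|g\|_{C^{2+\gamma}_{b,\d}}$. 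In the off-diagonal case $(T-t)^{(2+\gamma)/2}\le c_0^{2+\gamma}\d^{2+\gamma}(\x,\x')$, and the difference of principal terms is handled by the full $C^{2+\gamma}_{b,\d}$-regularity of $g$, Lemma \ref{lem_theta_theta} and the covariance sensitivity of Lemma \ref{SENS_COV}. In the diagonal case the Taylor expansion in $\z$ applies verbatim: the $k=1$ contribution vanishes because $(\x-\x')_1=0$, and for $k\ge 2$ the bound $|D_{\z_k}\tilde P^{\x}g(\z)|\le C\|g\|_{C^{2+\gamma}_{b,\d}}(T-t)^{-(k-1/2)+(2+\gamma)/2}$ combined with $|(\x-\x')_k|\le c_0^{-(2k-3-\gamma)}(T-t)^{(2k-3-\gamma)/2}\d^{2+\gamma}(\x,\x')$ closes the case. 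The delicate step throughout is the diagonal regime: even with a common freezing point, converting the component-wise size $|(\x-\x')_k|\le\d^{2k-1}(\x,\x')$ into a power of the anisotropic quasi-distance $\d(\x,\x')$ costs the factor $c_0^{-(2k-1-\gamma)}$ (respectively $c_0^{-(2k-3-\gamma)}$ for the second estimate), which is tolerated because $c_0$ is a universal parameter depending only on $\A{A}$.
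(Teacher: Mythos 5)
Your proof is correct and follows essentially the same route as the paper: a regime-dependent split, the cancellation identities of Proposition \ref{Prop_moment_D2_tilde_p} (in particular \eqref{GROS_CENTER_TER}, \eqref{GROS_CENTER}, \eqref{GROS_CENTER_BIS}, \eqref{COV_SYNDICALE}) to reduce to $D_{\x_1}^2 g$ (resp.\ $g$ and its Hessian contracted with the covariance) evaluated along the flow, Lemma \ref{lem_theta_theta} and Lemma \ref{SENS_COV} for the flow and covariance sensitivities in the off-diagonal regime, and a Taylor expansion in the base point with \eqref{CTR_GRAD_ET_DIST} for the diagonal regime. The principal-plus-remainder framing and your $c_0$-exponents differ only cosmetically from the paper's $\Delta_1 + \Delta_{21} + \Delta_{22}$ decomposition and its use of the threshold $T-t \gtrless c_0\d^2(\x,\x')$; since the $c_0$-dependence is absorbed into the final constant $C(\A{A})$, this is immaterial.
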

\textcolor{black}{Before entering into the proof of such a result, let us emphasize that, as suggested by the above off-diagonal and diagonal splitting, the constants $C$ appearing in the r.h.s. of the above equations should depend on $c_0$. This is true, but since these terms are not planned to be passed in the r.h.s. of the final estimate, see \eqref{PREAL_FINAL} and the associated comments, we do not keep track of this dependence.}

\subsubsection{H\"older norms of the derivatives w.r.t. the non-degenerate variables}
Let us deal with the first inequality of Lemma \ref{lem_Holder_tildePg}, i.e. the H\"older norms of the derivatives w.r.t. the non-degenerate variables $\x_1$. For the frozen semi-group, we say that the off-diagonal regime (resp. diagonal regime) holds when $T-t\leq c_0 \d^2(\x,\x')$ (resp. $T-t\ge c_0 \d^2(\x,\x')$).\\

$\bullet$ Off-diagonal regime. If $T-t\leq c_0 \d^2(\x,\x')$,
like in \eqref{D2_tildeP_g}, we write:
\begin{eqnarray}\label{D2_tildeP_g_Holder}
&&D_{\x_1}^2 \tilde P_{T,t}^{\bxi}g(\x)- D_{\x_1}^2 \tilde P_{T,t}^{\bxi'}g(\x')
\nonumber \\
&=&
\bigg [ \int_{\R^{nd}} D_{\x_1}^2 \tilde p^{\bxi}(t,T,\x,\y) [g(\y)-g(\y_1,(\m_{T,t}^\bxi(\x))_{2:n})] d\y
\nonumber \\
&& \quad -\int_{\R^{nd}} D_{\x_1}^2 \tilde p^{\bxi'}(t,T,\x',\y) [g(\y)-g(\y_1(,\m_{T,t}^{\bxi'}(\x'))_{2:n})] d\y \bigg ]
\nonumber \\
&&+\bigg [\int_{\R^{nd}} D_{\x_1}^2 \tilde p^{\bxi}(t,T,\x,\y) [g(\y_1,(\m_{T,t}^\bxi(\x))_{2:n})-g(\m_{T,t}^\bxi(\x))] d\y
\nonumber \\
&&\quad -\int_{\R^{nd}} D_{\x_1}^2 \tilde p^{\bxi'}(t,T,\x',\y) [g(\y_1,(\m_{T,t}^{\bxi'}(\x'))_{2:n})-g(\m_{T,t}^{\bxi'}(\x'))] d\y \bigg ]
%
%D_{\x_1}^2\tilde P_{T,t}^\bxi g(x)\Big|_{\bxi=\x}=\Big|\int_{\R^{nd}} D_{\x_1}^2 \tilde p^{\bxi}(t,T,\x,\y) [g(\y)-g(\y_1,\m_{T,t}^\bxi(\x)_{2:n})] d\y\Big| \Big|_{\bxi=\x}
%\nonumber \\
%+ \Big|\int_{\R^{nd}} D_{\x_1}^2 \tilde p^{\bxi}(t,T,\x,\y) [g(\y_1,\m_{T,t}^\bxi(\x)_{2:n})-g(\m_{T,t}^\bxi(\x))] d\y\Big| \Big|_{\bxi=\x} .
\nonumber \\
&=:& \Delta_{t,T,\bxi,\bxi'} D_{\x_1}^2\tilde P_1g(\x,\x') +\Delta_{t,T,\bxi,\bxi'} D_{\x_1}^2\tilde P_2g(\x,\x').
\end{eqnarray}
%\textcolor{black}{De S a I: les notations ne sont pas les plus heureuses pour ces contributions.}
The first term, which is associated with the degenerate variables, is controlled directly thanks to \eqref{D2_tildeP_g1}, which again readily follows from Proposition \ref{THE_PROP} (see as well Remark \ref{CTR_GRAD_ET_DIST}), for $(\bxi,\bxi')=(\x,\x')$. One hence gets:
\begin{equation}\label{D2_tildeP_g1_Holder}
\Big|\Delta_{t,T,\bxi,\bxi'} D_{\x_1}^2\tilde P_1g(\x,\x')\Big| \Big|_{\bxi=\x}
%\leq C \|g\|_{C_{b,\d}^{2+\gamma}} \int_{\R^{nd}}(T-t)^{-1} \hat q_c(t,T,\x,\y) \d^{2+\gamma}(\m_{T,t}^\bxi(\x),\y) \Big|_{\bxi=\x}  
%\nonumber \\
\leq  2C(T-t)^{\frac \gamma 2}\|g\|_{C_{b,\d}^{2+\gamma}}  \leq 2 Cc_0^{\frac \gamma 2}\|g\|_{C_{b,\d}^{2+\gamma}} \d^\gamma(\x,\x').
\end{equation}
The second term is more delicate, we proceed like in \eqref{D2_tildeP_g2}: %{D2_tildeP_g1_Holder}:
\begin{eqnarray}\label{D2_tildeP_g2_Holder}
&&\Delta_{t,T,\bxi,\bxi'} D_{\x_1}^2\tilde P_2g(\x,\x')
\nonumber \\
%+\Big|\int_{\R^{nd}}D_{\x_1}^2 \tilde p^{\bxi}(t,T,\x,\y) [g(\m_{T,t}^\bxi(\x)_1,\y_{2:n})-g(\m_{T,t}^\bxi(\x))] d\y\Big| \Big|_{\bxi=\x}\\
&=& \Bigg [ \int_{\R^{nd}} D_{\x_1}^2 \tilde p^{\bxi}(t,T,\x,\y)  \int_0^1 d\mu  (1-\mu )
\nonumber \\
&&{\rm{Tr}} \Big ( \big [D_{\x_1}^2 g \big (\m_{T,t}^\bxi(\x)_1
+\mu (\y-\m_{T,t}^\bxi(\x))_1,(\m_{T,t}^\bxi(\x))_{2:n}\big  )-D_{\x_1}^2 g \big (\m_{T,t}^\bxi(\x)\big )\big ]\big (\y-\m_{T,t}^\bxi(\x) \big)_1^{\otimes 2} \Big )
%\nonumber \\
 %&&%(\y-\m_{T,t}^\bxi(\x))_1 \big \rangle 
  %\bigg) 
  d\y \nonumber \\
&-& \int_{\R^{nd}} D_{\x_1}^2 \tilde p^{\bxi'}(t,T,\x',\y)  \int_0^1 d\mu  (1-\mu )
\nonumber \\
&&{\rm{Tr}} \Big ( \big [ D_{\x_1}^2 g \big (\m_{T,t}^{\bxi'}(\x')_1
+\mu (\y-\m_{T,t}^{\bxi'}(\x'))_1,(\m_{T,t}^{\bxi'}(\x'))_{2:n}\big  )-D_{\x_1}^2 g\big (\m_{T,t}^{\bxi'}(\x') \big )\big ] \big (\y-\m_{T,t}^{\bxi'}(\x') \big )_1^{\otimes 2}
%,\nonumber \\
%&& \hfill (\y-\m_{T,t}^{\bxi'}(\x'))_1 \big \rangle  
\Big) d\y \Bigg ]\nonumber \\
&+& \Bigg [ \frac 12   \int_{\R^{nd}}  D_{\x_1}^2 \tilde p^{\bxi}(t,T,\x,\y) {\rm{Tr}}\Big ( D_{\x_1}^2 g(\m_{T,t}^\bxi(\x)) (\y-\m_{T,t}^\bxi(\x))_1^{\otimes 2} \Big )d\y%,(\y-\m_{T,t}^\bxi(\x))_1 \big \rangle d\y
\nonumber \\
&-& \frac 12   \int_{\R^{nd}}  D_{\x_1}^2 \tilde p^{\bxi'}(t,T,\x',\y) {\rm{Tr}} \Big ( D_{\x_1}^2 g(\m_{T,t}^{\bxi'}(\x')) (\y-\m_{T,t}^{\bxi'}(\x'))_1^{\otimes 2} d \y \Bigg ]%,(\y-\m_{T,t}^{\bxi'}(\x'))_1  \big \rangle d\y \bigg ]
\nonumber \\
&=:& \Delta_{t,T,\bxi,\bxi'} D_{\x_1}^{2}\tilde P_{21}g(\x,\x') + \Delta_{t,T,\bxi,\bxi'} D_{\x_1}^{2}\tilde P_{22}g(\x,\x').
%  \frac 12  D_{\x_1}^2 g(\m_{T,t}^\bxi(\x))  (\tilde \gR ^{\bxi}(T,t)_{1,1})^2
%- \frac 12  D_{\x_1}^2 g(\m_{T,t}^\bxi(\x')) (\tilde \gR ^{\bxi'}(T,t)_{1,1})^2.
%\nonumber \\
\end{eqnarray} 
%The last identity come from Proposition \ref{Prop_moment_D2_tilde_p}.
The first contribution of the previous identity is handled exploiting the smoothness of $D_{\x_1}^2 g$ and Proposition \ref{THE_PROP}. Namely,
%by inequality \eqref{D2_tildeP_g4}
\begin{eqnarray*}
|\Delta_{t,T,\bxi,\bxi'} D_{\x_1}^{2}\tilde P_{21}g(\x,\x')|
&\le& C[D_{\x_1}^2g]_{\d}^{\gamma} \int_{\R^{nd}}  \frac{d\y}{(T-t)}\Big(\bar p_{C^{-1}}^\bxi(t,T,\x,\y)|(\y-\m_{T,t}^{\bxi}(\x))_1|^{2+\gamma}\\
&&+\bar p_{C^{-1}}^{\bxi'}(t,T,\x',\y)|(\y-\m_{T,t}^{\bxi'}(\x'))_1|^{2+\gamma} \Big)\Big|_{(\bxi,\bxi')=(\x,\x')}.
\end{eqnarray*}
Hence,
\begin{equation}\label{D2_tildeP_g4_Holder}
\Big |\Delta_{t,T,\bxi,\bxi'} D_{\x_1}^{2}\tilde P_{21}g(\x,\x') \Big|_{(\bxi,\bxi')=(\x,\x')}% \nonumber \\
%\leq C
%\Big |\int_{\R^{nd}}   \hat q(t,T,\x,\y) \d^\gamma(\y,\m^{\bxi}(\x)) d \y
\leq 2 C(T-t)^{\frac \gamma 2}  \|g\|_{C_{b,\d}^{2+\gamma}}
\leq 2 C c_0^{\frac \gamma 2}  \|g\|_{C_{b,\d}^{2+\gamma}} \d^\gamma(\x,\x').
\end{equation}
Let us now decompose the last contribution of \eqref{D2_tildeP_g2_Holder}:
\begin{eqnarray}
&&\Big |\Delta_{t,T,\bxi,\bxi'} D_{\x_1}^{2}\tilde P_{22}g(\x,\x') \Big|_{(\bxi,\bxi')=(\x,\x')}\nonumber\\
&\le& \Bigg\{\frac 12   \int_{\R^{nd}} \frac{d\y}{(s-t)}  \bar p_{C^{-1}}^{\bxi}(t,T,\x,\y) | D_{\x_1}^2 g(\m_{T,t}^\bxi(\x))-D_{\x_1}^2 g(\m_{T,t}^{\bxi'}(\x')) | |(\y-\m_{T,t}^\bxi(\x))_1|^2
\nonumber \\
&&+ \frac 12   \Big|\int_{\R^{nd}} D_{\x_1}^2 \tilde p^{\bxi}(t,T,\x,\y) {\rm{Tr}} \Big (D_{\x_1}^2 g(\m_{T,t}^{\bxi'}(\x')) (\y-\m_{T,t}^{\bxi}(\x))_1^{\otimes 2} \Big ) %,(\y-\m_{T,t}^{\bxi}(\x))_1  \big \rangle
\nonumber\\
 &&-D_{\x_1}^2 \tilde p^{\bxi'}(t,T,\x',\y) {\rm{Tr}}\Big ( \langle D_{\x_1}^2 g(\m_{T,t}^{\bxi'}(\x')) (\y-\m_{T,t}^{\bxi'}(\x'))_1^{\otimes 2} \Big )d\y \Big | \Bigg\} \Bigg|_{(\bxi,\bxi')=(\x,\x')}%,(\y-\m_{T,t}^{\bxi'}(\x'))_1 \big \rangle d\y \Big|
 \nonumber \\
&\le& C | D_{\x_1}^2 g(\m_{T,t}^\bxi(\x))-D_{\x_1}^2 g(\m_{T,t}^{\bxi'}(\x')) | \big|_{(\bxi,\bxi')=(\x,\x')}\notag\\
&=&C \big| \big(D_{\x_1}^2 g\big)(\btheta_{T,t}(\x))-\big(D_{\x_1}^2 g\big)(\btheta_{T,t}(\x')) \big|,\label{THE_DECOUP_QUI_BIEN_MOD_HOLDER_DX12}%\\
% +\frac {1}{(2\pi)^{\frac{nd}2}}\Big|\int_{\R^{nd}} d\z\Bigg[\Big\{[\tilde \gR^{\bxi}(T,t)^* (\tilde \K_{T,t}^{
%\bxi})^{-1} \tilde \gR^{\bxi}(T,t)]_{1,1} + [\tilde \gR^{\bxi}(T,t)^* (\tilde \K_{T,t}^{
%\bxi})^{-1/2}\z]_{1}^{\otimes 2}\Big\}
%\nonumber \\
%\langle D_{\x_1}^2 g(\m_{T,t}^{\bxi'}(\x'))(\tilde \K_{T,t}^\bxi)^{\frac 12} \z,(\tilde \K_{T,t}^\bxi)^{\frac 12}\z\rangle.
%\notag\\
%-\Big\{[\tilde \gR^{\bxi'}(T,t)^* (\tilde \K_{T,t}^{\bxi'})^{-1}\tilde \gR^{\bxi}(T,t)]_{1,1} + [\tilde \gR^{\bxi'}(T,t)^* (\tilde \K_{T,t}^{
%\bxi'})^{-1/2}\z]_{1}^{\otimes 2}\Big\}
%\nonumber \\
%\langle D_{\x_1}^2 g(\m_{T,t}^{\bxi'}(\x'))(\tilde \K_{T,t}^{\bxi'})^{\frac 12} \z,(\tilde \K_{T,t}^{\bxi'})^{\frac 12}\z\rangle\Bigg]\exp(-\frac{|\z|^2}{2})\Big|,\notag\\
\end{eqnarray}
exploiting Proposition \ref{THE_PROP} and equation \eqref{GROS_CENTER_TER} in Proposition \ref{Prop_moment_D2_tilde_p} to observe that the second contribution of the first inequality above vanishes and recalling as well \eqref{eq_m_theta_x} for the last equality to identify the linearized flows, respectively frozen in $\bxi=\x$, $\bxi'=\x' $, with the initial non-linear ones.
\\

From Lemma \ref{lem_theta_theta}, 
%for the first contribution of the above r.h.s., and Lemmas \ref{SENS_COV} (equation \eqref{CTR_SENSI_POUR_G}) and \ref{LEM_SENSI_TO_CONCLUDE},  using as well the good scaling property of the covariance and the boundedness of the resolvent stated in Proposition \ref{THE_PROP}, 
we derive that for $T-t \leq c_0 \d^2(\x,\x')$:
\begin{equation}\label{D2_tildeP_g2_Holder_diff_R12}
\Big |\Delta_{t,T,\bxi,\bxi'} D_{\x_1}^{2}\tilde P_{22}g(\x,\x') \Big|_{(\bxi,\bxi')=(\x,\x')}
 \leq  C \|g \|_{C^{2+\gamma}_{b,\d}}
 \big (\d^{\gamma} (\x,\x') +(T-t)^{\frac \gamma 2} \big ) .
\end{equation}
Plugging \eqref{D2_tildeP_g1_Holder}, \eqref{D2_tildeP_g4_Holder}, \eqref{D2_tildeP_g2_Holder_diff_R12} 
into \eqref{D2_tildeP_g_Holder} yields the result.
\\

$\bullet$ \textcolor{black}{Diagonal regime.} If $T-t > c_0 \d^2(\x,\x')$,
%Because, the distribution $p^{\bxi'}(t,t_0,\x',\y)$ is a probabilistic measure, and b
%By definition \eqref{D2_FROZE_SG_GK_FORWARD_r}, %\eqref{control_hoped_mathbf_R}, 
we directly write:
\begin{eqnarray}
\label{control_Holder_D2_P_g}
&&|D_{\x_1}^2 \tilde P_{T,t}^{\bxi}g(\x)- D_{\x_1}^2 \tilde P_{T,t}^{\bxi}g(\x')| 
\nonumber \\
 &\leq& 
\big |\int_{\R^{nd}} [D_{\x_1}^2\tilde p^{\bxi}(t,T,\x,\y) -D_{\x_1}^2\tilde p^{ \bxi}(t,T,\x',\y) ] g(\y) d\y  \big |
\nonumber \\
 &\leq& 
\sum_{k=1}^n \big |\int_{\R^{nd}} D_{\x_k} D_{\x_1}^2 \tilde p^{\bxi}(t,T,\x'+\mu(\x-\x'),\y) \cdot (\x-\x')_k g(\y) d\y  \big |
\nonumber 
%&=:& \Delta_{t,T,\bxi,\bxi'} {\mathbf D} D_{\x_1}^2\tilde P_3g(\x,\x'),
\end{eqnarray}
%recalling the notation ${\mathbf D}=(D_{\x_1},\cdots, D_{\x_n})$ introduced after equation \eqref{KOLMO}. 
This contribution is dealt through the cancellation tools of Proposition \ref{Prop_moment_D2_tilde_p} (see equations \eqref{GROS_CENTER}, \eqref{GROS_CENTER_BIS}). We get from the above estimate that:
\begin{eqnarray}
\label{control_Holder_D2_P_3g}
&&|D_{\x_1}^2 \tilde P_{T,t}^{\bxi}g(\x)- D_{\x_1}^2 \tilde P_{T,t}^{\bxi}g(\x')| 
\nonumber \\
 &\leq& \sum_{k=1}^n
\bigg |\int_{\R^{nd}} D_{\x_k} D_{\x_1}^2 \tilde p^{\bxi}(t,T,\x'+\mu(\x-\x'),\y) \cdot (\x-\x')_k \bigg [g \big (\y \big )-g\big (\m^\bxi_{T,t} \big (\x'+\mu(\x-\x')\big ) \big )
\nonumber \\
&&-\big \langle  D_{\x_1}g\big ( \m^\bxi_{T,t}  (\x'+\mu(\x-\x') )  \big ),  \big (\y-\m^\bxi_{T,t}  (\x'+\mu(\x-\x'))\big )_1\big \rangle
\nonumber \\
&&-\frac 12  {\rm Tr} \Big (   D_{\x_1}^2g\big ( \m^\bxi_{T,t} \big (\x'+\mu(\x-\x')\big )  \big (\y-\m^\bxi_{T,t}  (\x'+\mu(\x-\x') ) \big)_1^{\otimes 2}%\Big (\y-\m^\bxi_{T,t} \big (\x'+\mu(\x-\x')\big ) \Big)_1^*
%\nonumber \\
 %D_{\x_1}^2g\big ( \m^\bxi_{T,t} \big (\x'+\mu(\x-\x')\big )  \big )
 \Big )
  \bigg] d\y \bigg |.% \Bigg |_{\bxi=\x}\!\!\!. 
%\nonumber \\
\end{eqnarray}
Because $g \in C^{2+\gamma}_{b,\d}(\R^{nd},\R)$, we readily deduce, reproducing the Taylor expansion on $g$ employed for equations \eqref{D2_tildeP_g_Holder}-\eqref{D2_tildeP_g2_Holder} above,  that:
\begin{eqnarray}\label{Holder_D2_g}
&&\bigg| g \big (\y \big )-g\big (\m^\bxi_{T,t} \big (\x'+\mu(\x-\x')\big ) \big )
-\big \langle  D_{\x_1}g ( \m^\bxi_{T,t}  (\x'+\mu(\x-\x') )   ), (\y-\m^\bxi_{T,t}  (\x'+\mu(\x-\x') ) )_1\big \rangle
\nonumber \\
&&-\frac 12  {\rm Tr} \bigg ( %\Big (\y-\m^\bxi_{T,t} \big (\x'+\mu(\x-\x')\big ) \Big)_1^* 
D_{\x_1}^2g\big ( \m^\bxi_{T,t} \big (\x'+\mu(\x-\x')\big )  \big ) \Big (\y-\m^\bxi_{T,t} \big (\x'+\mu(\x-\x')\big ) \Big)_1^{\otimes 2} \bigg ) \bigg |\Bigg |_{\bxi=\x}
\nonumber \\
&\leq& \|g\|_{C_{b,\d}^{2+\gamma}} \d^{2+\gamma}\big (\y,\m^\bxi_{T,t} \big (\x'+\mu(\x-\x')\big )\big ) \Big |_{\bxi=\x}.
%\nonumber \\
\end{eqnarray}
Plugging this inequality into \eqref{control_Holder_D2_P_3g} yields:
\begin{eqnarray}
\label{control_Holder_D2_P_g1}
&&|D_{\x_1}^2 \tilde P_{T,t}^{\bxi}g(\x)- D_{\x_1}^2 \tilde P_{T,t}^{\bxi}g(\x')| \big |_{\bxi=\x}
\nonumber \\
 &\leq& C \|g\|_{L^\infty(C_{b,\d}^{2+\gamma})} \sum_{k=1}^n
 \int_{\R^{nd}}(T-t)^{-1-(k-\frac 12)} \Big\{\bar p_{C^{-1}}^\bxi(t,T,\x'+\mu(\x-\x'),\y) |(\x-\x')_k|
\notag\\
 &&\times 
\d^{2+\gamma} \big (\y, \m^\bxi_{T,t} (\x'+\mu(\x-\x')\big ) \Big\}\Big |_{\bxi=\x}
 %\! \!&&
 \nonumber \\
%&\le& C \notag\\ 
&\le&  C \|g\|_{L^\infty(C_{b,\d}^{2+\gamma})} \sum_{k=1}^n(T-t)^{-(k-\frac 12)+\frac \gamma 2}|(\x-\x')_k|\notag\\
&\le& C\|g\|_{C^{2+\gamma}},
 \d^{\gamma}(\x,\x'),
\end{eqnarray}
using \eqref{CTR_GRAD_ET_DIST} for the second inequality and recalling that, since $c_0\d^2(\x,\x')< (T-t) $, we indeed have $(T-t)^{-(k-\frac 12)+\frac \gamma 2}|(\x-\x')_k|\le (c_0 \d^2(\x,\x'))^{-(k-\frac 12)+\frac \gamma2} \d^{2k-1}(\x,\x')\le C\d^\gamma(\x,\x') $.This concludes the proof of the first inequality of Lemma \ref{lem_Holder_tildePg}.
\\

\subsubsection{H\"older control for the degenerate variables}\label{DEAL_SG_DEG}
We are here interested in proving the second estimate in  Lemma \ref{lem_Holder_tildePg} relying on the H\"older regularity of the frozen semi-group w.r.t. the degenerate variables. This proof is also based on the previous techniques. In particular,
we still take advantage of cancellation tools. For the whole paragraph we consider two arbitrary given spatial points $(\x,\x')\in (\R^{nd})^2$ s.t. $\x_1=\x_1' $, i.e. their first entry, corresponding to the non-degenerate variable, coincide.
\\

$\bullet$ Off-diagonal regime. If $T-t \leq c_0\d^2(\x,\x') $, we proceed to an expansion similar to \eqref{D2_tildeP_g_Holder} for  $D_{\x_1}^2\tilde P_{T,t}^{\x}g(\x)$.
In particular, with the notations introduced in \eqref{D2_tildeP_g_Holder}, we write:
\begin{equation}\label{tildeP_g_Holder}
\tilde P_{T,t}^{\bxi}g(\x)- \tilde P_{T,t}^{\bxi'}g(\x')
=: \Delta_{t,T,\bxi,\bxi'} \tilde P_1g(\x,\x') +\Delta_{t,T,\bxi,\bxi} \tilde P_2g(\x,\x').
\end{equation}
We directly obtain from the Proposition \ref{THE_PROP}, similarly to \eqref{D2_tildeP_g1_Holder}, that:
\begin{equation}\label{tildeP_g1_Holder}
\Big|\Delta_{t,T,\bxi,\bxi'} \tilde P_1g(\x,\x')\Big| \Big|_{\bxi=\x}
%\leq C \|g\|_{C_{b,\d}^{2+\gamma}} \int_{\R^{nd}}(T-t)^{-1} \hat q_c(t,T,\x,\y) \d^{2+\gamma}(\m_{T,t}^\bxi(\x),\y) \Big|_{\bxi=\x}  
%\nonumber \\
\leq  2C(T-t)^{\frac {2+\gamma} 2}\|g\|_{C_{b,\d}^{2+\gamma}}  \leq 2 C\|g\|_{C_{b,\d}^{2+\gamma}} \d^{2+\gamma}(\x,\x').
\end{equation}
We indeed recall that the difference w.r.t. \eqref{D2_tildeP_g1_Holder} is that we do not have anymore 
the time-singularities coming therein from the spatial derivatives.  
\\

 With the notations of \eqref{D2_tildeP_g2_Holder}, the second contribution of \eqref{tildeP_g_Holder} writes: 
\begin{equation*}
\Delta_{t,T,\bxi,\bxi'} \tilde P_2g(\x,\x')
= \Delta_{t,T,\bxi,\bxi'} \tilde P_{21}g(\x,\x') +\Delta_{t,T,\bxi,\bxi'} \tilde P_{22}g(\x,\x').   
%\frac 12Tr \Big( D_{\x_1}^2 g(\m_{T,t}^\bxi(\x)) \tilde \K^{\bxi}(T,t)_{1,1}
%-   D_{\x_1}^2g(\m_{T,t}^\bxi(\x'))\tilde \K^{\bxi'}(T,t)_{1,1} \Big )
%\nonumber \\
%= \Delta_{t,T,\bxi,\bxi'} \tilde P_{21}(\x,\x') +   \frac 12  Tr\Big ([D_{\x_1}^2g(\m_{T,t}^\bxi(\x))- D_{\x_1}^2g(\m_{T,t}^\bxi(\x'))] \tilde \K^{\bxi}(T,t)_{1,1}\Big )
%\nonumber \\
%%+ \frac 12  
%+  \frac 12  Tr\Big (D_{\x_1}^2g(\m_{T,t}^\bxi(\x'))[ \tilde \K^{\bxi}(T,t)_{1,1}-\tilde \K^{\bxi'}(T,t)_{1,1}] \Big )
%.
%%\nonumber \\
\end{equation*}
Proposition \ref{THE_PROP} again yields, similarly to \eqref{D2_tildeP_g4_Holder}, that:
\begin{equation}\label{tildeP_g4_Holder}
\Big |\Delta_{t,T,\bxi,\bxi'} \tilde P_{21}g(\x,\x') \Big|_{(\bxi,\bxi')=(\x,\xi')}% \nonumber \\
%\leq C
%\Big |\int_{\R^{nd}}   \hat q(t,T,\x,\y) \d^\gamma(\y,\m^{\bxi}(\x)) d \y
\leq 2 C(T-t)^{\frac {2+\gamma }2}  \|g\|_{C_{b,\d}^{2+\gamma}}
\leq 2 C  \|g\|_{C_{b,\d}^{2+\gamma}} \d^{2+\gamma}(\x,\x').
\end{equation}
On the other hand, we readily get from Proposition \ref{Prop_moment_D2_tilde_p} that:
\begin{equation*}
\Delta_{t,T,\bxi,\bxi'} \tilde P_{22}g(\x,\x')=\frac 12{\rm Tr} \Big( D_{\x_1}^2 g(\m_{T,t}^\bxi(\x)) [\tilde \K^{\bxi}_{T,t}]_{1,1}
-   D_{\x_1}^2g(\m_{T,t}^{\bxi'}(\x'))[\tilde \K^{\bxi'}_{T,t}]_{1,1} \Big ).
\end{equation*}
Write now:
\begin{eqnarray}
\Delta_{t,T,\bxi,\bxi'} \tilde P_{22}g(\x,\x')&=&\frac 12 {\rm  Tr}\Big ([D_{\x_1}^2g(\m_{T,t}^\bxi(\x))- D_{\x_1}^2g(\m_{T,t}^{\bxi'}(\x'))] [\tilde \K^{\bxi}_{T,t}]_{1,1}\Big )
\nonumber \\  
&&+  \frac 12  {\rm Tr}\Big (D_{\x_1}^2g(\m_{T,t}^{\bxi'}(\x'))\big[ [\tilde \K^{\bxi}_{T,t}]_{1,1}-[\tilde \K^{\bxi'}_{T,t}]_{1,1}\big] \Big )
.\label{DECOUP_HOLDER_DEG_HD}
%%\nonumber \\
\end{eqnarray}

Since $T-t \leq c_0\d^2(\x,\x') $, recalling as well that for $\bxi=\x$, $\bxi'=\x' $,  $\m_{T,t}^{\bxi}(\x)=\btheta_{T,t}(\x)$, $\m_{T,t}^{\bxi'}(\x')=\btheta_{T,t}(\x') $, we readily  deduce from Proposition \ref{PROP_SCALE_COV} and Lemma \ref{lem_theta_theta} that:
\begin{eqnarray}%\label{tildeP_g2_Holder2}
  \frac 12 \Big | [D_{\x_1}^2g(\m_{T,t}^\bxi(\x))-D_{\x_1}^2 g(\m_{T,t}^{\bxi'}(\x'))] [\tilde \K^{\bxi}_{T,t}]_{1,1} \Big | \bigg|_{(\bxi,\bxi')=(\x,\x')}
&\leq & C \|g\|_{C_{b,\d}^{2+\gamma}} \d^{\gamma}\big(\btheta_{T,t}(\x),\btheta_{T,t}(\x')\big) (T-t)\notag\\
& \le&  C \|g\|_{C_{b,\d}^{2+\gamma}} \d^{2+\gamma}(\x,\x').\label{THE_CTR_HD_1_DEG}
\end{eqnarray}
For the last contribution, we directly obtain from Lemma \ref{SENS_COV} (equation \eqref{CTR_SENSI_COV} for $j=1$):
\begin{eqnarray}\label{tildeP_g2_Holder}
\big |\frac 12  D_{\x_1}^2g(\m_{T,t}^\bxi(\x'))\big([ \tilde \K^{\bxi}_{T,t}]_{1,1}-[\tilde \K^{\bxi'}_{T,t}]_{1,1}\big) \big| \Big|_{(\bxi,\bxi')=(\x,\x')}
\!\!\!&\!\!\!\leq\!\!\!&\!\!\!  C \|g\|_{C_{b,\d}^{2+\gamma}} \big( (T-t)^{\frac {2+\gamma} 2}+(T-t)\d^{\gamma}(\x,\x') \big )
\nonumber \\
&\leq & C \|g\|_{C_{b,\d}^{2+\gamma}} \d^{2+\gamma}(\x,\x'),
\end{eqnarray}
using again that $T-t \leq c_0\d^2(\x,\x') $ for the last inequality. Plugging \eqref{THE_CTR_HD_1_DEG} and \eqref{tildeP_g2_Holder}  into \eqref{DECOUP_HOLDER_DEG_HD} gives 
\begin{eqnarray}\label{tildeP_ginfty_Holder}
\Big |\Delta_{t,T,\bxi,\bxi'} \tilde P_{22}g(\x,\x') \Big|_{(\bxi,\bxi')=(\x,\x')} \leq  C \|g\|_{C_{b,\d}^{2+\gamma}} \d^{2+\gamma}(\x,\x').
\end{eqnarray}
Bringing together \eqref{tildeP_g4_Holder}, \eqref{tildeP_ginfty_Holder} and  \eqref{tildeP_g1_Holder} in \eqref{tildeP_g_Holder} yields the result.\\

$\bullet$ Diagonal regime. If $T-t\ge c_0\d^2(\x,\x')$, we write:
\begin{eqnarray*}
&&| \tilde P_{T,t}^{\x}g(\x)-  \tilde P_{T,t}^{\x}g(\x')| 
\nonumber \\
&\leq&  
\big|\int_{\R^{nd}}[\tilde p^{\bxi}(t,T,\x,\y) - \tilde p^{ \bxi}(t,T,\x',\y) ] g(\y) d\y  \big |\Big |_{\bxi=\x}
\nonumber \\
&\leq& \bigg |\int_0^1 d\mu \int_{\R^{nd}} \langle {\mathbf D} \tilde p^{\bxi}(t,T,\x'+\mu(\x-\x'),\y) , (\x-\x')\rangle \bigg [g \big (\y \big )-g\big (\m^\bxi_{T,t} \big (\x'+\mu(\x-\x')\big ) \big )
\nonumber \\
&&- \big \langle  D_{\x_1}g  ( \m^\bxi_{T,t}  (\x'+\mu(\x-\x') )   ),  (\y-\m^\bxi_{T,t}  (\x'+\mu(\x-\x') ) )_1 \big \rangle 
\nonumber \\
&&-\frac 12  {\rm Tr} \Big ( %\Big (\y-\m^\bxi_{T,t} \big (\x'+\mu(\x-\x')\big ) \Big)_1^*
%\nonumber \\
 D_{\x_1}^2g ( \m^\bxi_{T,t}  (\x'+\mu(\x-\x') )   )  (\y-\m^\bxi_{T,t}  (\x'+\mu(\x-\x') ) )_1^{\otimes 2}\Big ) \bigg] d\y \bigg | \Bigg |_{\bxi=\x}\!\!\!,
%\nonumber \\
\end{eqnarray*}
with the same cancellation argument as in \eqref{control_Holder_D2_P_g}. Observe anyhow that the cancellation involving the gradient in the above equation is possible precisely because $\x_1=\x_1'$ and therefore ${\mathbf D}  \tilde p^{\bxi}(t,T,\x'+\mu(\x-\x'),\y) \cdot (\x-\x')=D_{\x_{2:n}}\tilde p^{\bxi}(t,T,\x'+\mu(\x-\x'),\y) \cdot (\x-\x') _{2:n}$.
We then obtain thanks to the previous identity and \eqref{Holder_D2_g}:
\begin{eqnarray}
\label{control_Holder_D2P_g_final}
&&\!| \tilde P_{T,t}^{\x}g(\x)- \tilde P_{T,t}^{\x}g(\x')|
\nonumber \\
 &\!\leq \!&
 \|g\|_{C_{b,\d}^{2+\gamma}}
 \sum_{k=\textcolor{black}{2}}^n
\int_{\R^{nd}} \!\! \big |D_{\x_k} \tilde p^{\bxi}(t,T,\x'+\mu(\x-\x'),\y) \big | |\x_k-\x'_k|\d^{2+\gamma}\big (\y,\m^\bxi_{T,t} \big (\x'+\mu(\x-\x')\big )\big ) d\y
\nonumber \\
& \!\leq \!&
 \|g\|_{C_{b,\d}^{2+\gamma}}
 \sum_{k=\textcolor{black}{2}}^n
\int_{\R^{nd}}\!\! \frac{C}{(T-t)^{1+(k-\frac 12) - \frac{2+\gamma}{2}}} \bar p_{C^{-1}}^{\x}(t,T,\m^\x_{T,t}  (\x'+\mu(\x-\x') ),\y)  |\x_k-\x'_k|d\y
 \nonumber \\
& \!\leq \! &C
 \|g\|_{C_{b,\d}^{2+\gamma}}
 \d^{2+\gamma}(\x,\x'),
\end{eqnarray}
reproducing the arguments used to establish \eqref{control_Holder_D2_P_g1} for the last inequality. This concludes the proof of the second assertion in Lemma \ref{lem_Holder_tildePg}.

\subsection{H\"older norms associated with the Green kernel}\label{SUB_lemme_Holder_Gg_VAR_DEG_ET_D2_NON_DEG}
 
Let us recall that in \eqref{INTEGRATED_DIFF_BXI}, for a source $f\in L^\infty\big([0,T],C_{b,\d}^\gamma(\R^{nd},\R)\big) $, we have to control the H\"older norms of the Green kernel which we split into two parts %: $\tilde G_{t_0,t}^{\bxi'} f(t,\x)$ and $\tilde G_{T,t_0}^{\tilde \bxi'} f(t,\x)$, 
according to the position of the time integration variable w.r.t. the change of regime time $t_0$ (see \eqref{def_t0})  \textit{a posteriori} chosen to be 
$
t_0:=\big(t+ c_ 0\d^2(\x,\x')\big)\wedge T.
$ 
This is again the splitting according to the off-diagonal and diagonal regime. The point is that for the Green kernel, if $t_0<T $ both regimes appear.

\begin{lem}\label{lemme_Holder_Gg_VAR_DEG_ET_D2_NON_DEG} Under \A{A}, for fixed spatial points $(\x,\x')\in (\R^{nd})^2 $, we have that there exists a constant $C:=C(\A{A},T)$, s.t. for any $f\in L^\infty\big([0,T], C_{b,\d}^\gamma(\R^{nd},\R) \big)$:
\begin{eqnarray*}% \label{CTR_MODULE_HOLDER}
&&\sup_{t \in [0,T]}\big ( |D_{\x_1}^2 \tilde G_{t_0,t}^{\bxi} f(t,\x)- D_{\x_1}^2\tilde G_{t_0,t}^{\bxi'} f(t,\x')|%\notag\\
+|D_{\x_1}^2 \tilde G_{T,t_0}^{ \bxi} f(t,\x)- D_{\x_1}^2\tilde G_{T,t_0}^{\tilde \bxi'} f(t,\x') |\big)\notag\\
&\leq &C 
\|f\|_{L^\infty(C_{b,\d}^{\gamma})} \d^{\gamma}(\x,\x'),
\nonumber \\
\text{\textcolor{black}{and}}&&
\\
&&\sup_{t \in [0,T]}\big ( | \tilde G_{t_0,t}^{\bxi} f(t,\x)- \tilde G_{t_0,t}^{\bxi'} f(t,\x')|%\notag\\
+| \tilde G_{T,t_0}^{ \bxi} f(t,\x)- \tilde G_{T,t_0}^{\tilde \bxi'} f(t,\x') |\big)\notag\\
&\leq& C 
\|f\|_{L^\infty(C_{b,\d}^{\gamma})} \d^{2+\gamma}(\x,\x'), \text{ if } \x_1=\x_1',
%\nonumber \\
\end{eqnarray*}
where $\bxi=\x$, $\bxi'=\x'$, $\tilde \bxi'=\x  $.
\end{lem}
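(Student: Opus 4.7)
The strategy mirrors the dichotomy already exploited in the previous section: an \emph{off-diagonal} contribution on $[t,t_0]$ with distinct freezing points $\bxi=\x$, $\bxi'=\x'$, and a \emph{diagonal} contribution on $[t_0,T]$ with a common freezing point $\bxi=\tilde\bxi'=\x$. Recall $t_0=(t+c_0\d^2(\x,\x'))\wedge T$, so in particular $t_0-t\le c_0\d^2(\x,\x')$ and, on the diagonal time window, $s-t\ge c_0\d^2(\x,\x')$. Throughout the plan we use the Gaussian estimates from Proposition \ref{THE_PROP}, the cancellation identities of Proposition \ref{Prop_moment_D2_tilde_p}, the regularity $(\bxi,\bxi')=(\x,\x')\Rightarrow \m_{s,t}^{\bxi}(\x)=\btheta_{s,t}(\x)$, and Lemma \ref{lem_theta_theta}.

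\textbf{Off-diagonal contribution.} Here the two Green kernels are evaluated at distinct spatial points and frozen at distinct parameters, so there is no cancellation to exploit: we simply bound each term by the triangle inequality. For the second non-degenerate derivative, a one-point argument identical to the one used for the Green kernel in Lemma \ref{CTR_SEMI_GROUP_GREEN_FROZEN}, based on subtracting $f(s,\btheta_{s,t}(\x))$ and using the $C_{b,\d}^{\gamma}$-regularity of $f$, gives
\[
|D_{\x_1}^2\tilde G_{t_0,t}^{\bxi}f(t,\x)|\big|_{\bxi=\x}\le C\|f\|_{L^\infty(C_{b,\d}^\gamma)}\int_t^{t_0}\frac{ds}{(s-t)^{1-\gamma/2}}\le C(t_0-t)^{\gamma/2}\|f\|_{L^\infty(C_{b,\d}^\gamma)},
\]
and similarly at $(\bxi',\x')=(\x',\x')$. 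Since $t_0-t\le c_0\d^2(\x,\x')$ this is $\le C c_0^{\gamma/2}\|f\|_{L^\infty(C_{b,\d}^\gamma)}\d^\gamma(\x,\x')$, as required. For the undifferentiated kernel (assuming $\x_1=\x_1'$) we split once more: after centering by $f(s,\m_{s,t}^{\bxi}(\x))$ we get a remainder of order $(t_0-t)^{1+\gamma/2}$, and the surviving constant part $\int_t^{t_0}f(s,\btheta_{s,t}(\x))\,ds-\int_t^{t_0}f(s,\btheta_{s,t}(\x'))\,ds$ is estimated via Lemma \ref{lem_theta_theta} by $\int_t^{t_0}(\d(\x,\x')+(s-t)^{1/2})^\gamma\,ds\le C[\d^\gamma(\x,\x')(t_0-t)+(t_0-t)^{1+\gamma/2}]$. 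Using $t_0-t\le c_0\d^2(\x,\x')$ both contributions give the expected $\d^{2+\gamma}(\x,\x')$ bound.

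\textbf{Diagonal contribution.} Here the same freezing parameter $\bxi=\tilde\bxi'=\x$ allows to expand the difference of densities via a first-order Taylor formula along the segment joining $\x$ and $\x'$:
\[
\tilde p^{\bxi}(t,s,\x,\y)-\tilde p^{\bxi}(t,s,\x',\y)=\sum_{k=1}^n \int_0^1 D_{\x_k}\tilde p^{\bxi}(t,s,\x'+\mu(\x-\x'),\y)\cdot(\x-\x')_k\,d\mu,
\]
and similarly with an additional $D_{\x_1}^2$ for the second statement. Inserting this into the time integral and using centering of $f$ around $\m_{s,t}^{\bxi}(\x'+\mu(\x-\x'))$, Proposition \ref{THE_PROP} and the homogeneity of $\d$ yield (for the non-degenerate derivative case), reasoning as in \eqref{control_Holder_D2_P_g1},
\[
|D_{\x_1}^2\tilde G_{T,t_0}^{\bxi}f(t,\x)-D_{\x_1}^2\tilde G_{T,t_0}^{\bxi}f(t,\x')|\le C\|f\|_{L^\infty(C_{b,\d}^\gamma)}\sum_{k=1}^n\int_{t_0}^T\frac{|(\x-\x')_k|}{(s-t)^{1+(k-1/2)-\gamma/2}}\,ds.
\]
Since $s-t\ge c_0\d^2(\x,\x')$ and $|(\x-\x')_k|\le \d^{2k-1}(\x,\x')$, each term is $\le C c_0^{-n+1/2+\gamma/2}\d^\gamma(\x,\x')$. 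For the second statement, the restriction $\x_1=\x_1'$ restricts the sum to $k\ge 2$; exactly the same computation then produces $\d^{2+\gamma}(\x,\x')$ instead of $\d^\gamma(\x,\x')$, the gain of two powers coming from the absence of the $D_{\x_1}^2$ singularity in $(s-t)^{-1}$.

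\textbf{Boundary case $t_0=T$.} When $T-t\le c_0\d^2(\x,\x')$ the diagonal contribution vanishes and only the off-diagonal one is at stake; the bound above then is exactly what is needed. The main technical point is the fine balance between the critical time scale $c_0\d^2(\x,\x')$ and the time singularities of the derivatives of $\tilde p^{\bxi}$, which is precisely what motivates the choice \eqref{def_t0}; the cancellation trick for the diagonal degenerate case crucially relies on the assumption $\x_1=\x_1'$ to discard the otherwise divergent $k=1$ term.
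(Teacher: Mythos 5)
Your proposal is correct and follows essentially the same route as the paper: off-diagonal contributions on $[t,t_0]$ by the one-point smoothing of Lemma \ref{lemme_nabla_Pg} (or, equivalently, the centering trick with $f(s,\btheta)$), diagonal contributions on $[t_0,T]$ by a first-order Taylor expansion along the segment together with Lemma \ref{lemme_nabla_Pg}, and for the undifferentiated kernel the centering around the flow plus Lemma \ref{lem_theta_theta}. A minor imprecision: the $k=1$ term you discard when $\x_1=\x_1'$ is not ``divergent''---its time integral converges---but it would only produce an order $\d(\x,\x')$ bound rather than $\d^{2+\gamma}(\x,\x')$, which is why the constraint $\x_1=\x_1'$ is needed.
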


%We now turn to the proof of Lemma \ref{lemme_Holder_Gg_VAR_DEG_ET_D2_NON_DEG}.
\begin{proof}[Proof of Lemma \ref{lemme_Holder_Gg_VAR_DEG_ET_D2_NON_DEG}]
Let us begin with the statement concerning the second order derivatives of the frozen Green kernel w.r.t. the non-degenerate variable $\x_1$.

%Write first for a given $t\in [0,T] $ and $(\x,\x')\in (\R^{nd})^2 $:
%\begin{eqnarray}
%|  D^2_{\x_1} \tilde G f(t,\x)-  D^2_{\x_1} \tilde G  f(t,\x')|&=&|\int_t^T ds \big(D_{\x_1}^2\tilde P_{s,t}f(t,\x)-D_{\x_1}^2\tilde P_{s,t}f(t,\x')\big)|\notag
%\\
%&=&|\int_t^T ds \big(D^2_{\x_1}\tilde P_{s,t}f(t,\x)-D^2_{\x_1}\tilde P_{s,t}f(t,\x')\big)\Big(\I_{A_{t,\x,\x'}}(s)+\I_{A_{t,\x,\x'}^C}(s)\Big)\notag\\
%&=:&D_1(t,\x,\x')+D_2(t,\x,\x'),\label{DECOUP_D1_D2}
%\end{eqnarray}
%where 
%\begin{equation}\label{def_A_txx}
%A_{t,\x,\x'}:=\{u\in [t,T]: (u-t)\le \d(\x,\x')^{2}\} 
%\end{equation}
% and $A_{t,\x,\x'}^C $ denotes the complementary set.
For the \textit{off-diagonal regime}, involving the term $D_{\x_1}^2 \tilde G_{t_0,t}^{\bxi'} f(t,\x') $, we readily get from Lemma \ref{lemme_nabla_Pg} that
\begin{eqnarray}
&&\big | D_{\x_1}^2 \tilde G_{t_0,t}^{\bxi} f(t,\x) \!-\! D_{\x_1}^2 \tilde G_{t_0,t}^{\bxi'} f(t,\x') \big | \Big|_{(\bxi,\bxi')=(\x,\x')} 
\nonumber \\
& \le&
%\big | \tilde G_{t_0,t}^{\bxi} f(t,\x) \big | \Big|_{\bxi=\x}
%+\big | \tilde G_{t_0,t}^{\bxi'} f(t,\x') \big | \Big|_{\bxi'=\x'}
%\nonumber \\
\Big |\int_t^{t_0} \!\!ds D_{\x_1}^2 \tilde P_{s,t}^{\bxi}f(s,\x)\Big | \bigg|_{\bxi=\x}
\!\!\!+\!\Big |\int_t^{t_0} \!\!ds D_{\x_1}^2 \tilde P_{s,t}^{\bxi'}f(s,\x')\Big | \bigg|_{\bxi'=\x'}
\nonumber \\
&\!\!\!\leq& C\|f\|_{L^\infty(C_{b,\d}^\gamma)}
\int_t^{t_0} ds (s-t)^{-1+\frac \gamma 2}\notag\\
\label{CTR_D1}
&\!\!\!\leq& C\|f\|_{L^\infty(C_{b,\d}^\gamma)}\d^\gamma(\x,\x').
\end{eqnarray}
For the \textit{diagonal regime}, involving the term $D_{\x_1}^2\tilde G_{T,t_0}^{\tilde \bxi'} f(t,\x')$, we have to be more subtle and perform again a Taylor expansion of $D_{\x_1}^2 \tilde P_{s,t}^{\bxi}f(s,\cdot) $. Namely:
\begin{eqnarray*}
&& \big | D_{\x_1}^2 \tilde G_{T,t_0}^{\bxi} f(t,\x) \!-\! D_{\x_1}^2 \tilde G_{T,t_0}^{\tilde\bxi'} f(t,\x') \big | \Big|_{(\bxi,\tilde \bxi')=(\x,\x)} 
\nonumber \\
&\le& \int_{t_0}^{T } ds \Big |\int_0^1 d\mu  {\mathbf D} D_{\x_1}^2 \tilde P_{s,t}^{\bxi}f(s,\x'+\mu(\x-\x')) \cdot (\x-\x') \Big | \bigg|_{\bxi=\x} \\
&\le& \sum_{i=1}^n|(\x-\x')_i|\int_{t_0}^{T } ds \int_0^1 d\mu \Big|D_{\x_i}D_{\x_1}^2 \tilde P_{s,t}^{\bxi} f\big(s,\x'+\lambda (\x-\x')\big)\Big | \bigg|_{\bxi=\x}\\
&\le& C \|f\|_{L^\infty(C_{b,\d}^\gamma)}\sum_{i=1}^n|(\x-\x')_i|\int_{t_0}^{T } ds (s-t)^{-1-(i-\frac 12)+\frac \gamma 2},
\end{eqnarray*}
using again Lemma \ref{lemme_nabla_Pg} and the arguments of \eqref{control_Holder_D2_P_g1} for the last inequality. 
This finally yields, recalling the definition of $\d$ in \eqref{DIST} (especially that $|(\x-\x')_i| \leq \d(\x,\x')^{2i-1}$) and the fact that we chose now $t_0=\big(t+ c_ 0\d^2(\x,\x')\big)\wedge T$:
\begin{eqnarray}
\label{CTR_D2}
\big | D_{\x_1}^2 \tilde G_{T,t_0}^{\bxi} f(t,\x) \!-\! D_{\x_1'}^2 \tilde G_{T,t_0}^{\bxi'} f(t,\x') \big | \Big|_{(\bxi,\bxi')=(\x,\x)} 
&\le& C\|f\|_{L^\infty(C_{b,\d}^\gamma)}\sum_{i=1}^n|(\x-\x')_i| (\d^2(\x,\x'))^{-(i-\frac 12)+\frac \gamma 2}\notag\\
%&\le & C \|f\|_{L^\infty(C_{b,\d}^\gamma)} \sum_{i=1}^n |(\x-\x')_i|^{\frac{\gamma}{2i-1}}
%\notag\\
&\le& 
%\leq
 C\|f\|_{L^\infty(C_{b,\d}^\gamma)}\d^\gamma(\x,\x'). 
\end{eqnarray}
Gathering \eqref{CTR_D1}  and  \eqref{CTR_D2} gives the first estimate of the \textcolor{black}{l}emma.\\

Let us now turn to the H\"older controls on the degenerate variables. 
 The idea is here again to perform a Taylor expansion at order one \textcolor{black}{ for $\x_1=\x_1'$}. Namely, from Lemma \ref{lemme_nabla_Pg}, the diagonal control is direct. We get
\begin{eqnarray}\label{ineq_D4}
&&\big |  \tilde G_{T,t_0}^{\bxi} f(t,\x) \!-\!  \tilde G_{T,t_0}^{\tilde \bxi'} f(t,\x') \big | \Big|_{(\bxi,\tilde \bxi')=(\x,\x)} 
\nonumber \\
&\le& \int_{t_0}^T ds \int_0^1 d\mu \big| \big \langle {\mathbf D} \tilde P_{s,t}^{\bxi}f\big(s,\x'+\mu (\x-\x')\big) %\cdot  
,(\x-\x') \big\rangle  \big | \notag\\
&\le &  C \|f\|_{L^\infty(C_{b,\d}^\gamma)}
%\|f\|_{L^\infty([0,T] \times \R^{nd},\R)}
\int_{(t+c_0\d^2(\x,\x'))\wedge T}^T ds \sum_{i=2}^n |(\x-\x')_i| (s-t)^{-(i-\frac12)+\frac \gamma 2}\nonumber \\
&\le &  C \|f\|_{L^\infty(C_{b,\d}^\gamma)} \sum_{i=2}^n |(\x-\x')_i| \d(\x,\x')^{2-(2i-1)+ \gamma }\nonumber \\
&\le &   C \|f\|_{L^\infty(C_{b,\d}^\gamma)} \d^{2+\gamma}(\x,\x').
\end{eqnarray}
% De S. a I. j'ai supprime cette phrase car on utilise quand meme tacitement la Holder cont pour regulariser.
%when $(s-t)\ge c_0d^2(\x,\x') $ like previously we have already seen that we do not exploit the smoothness of the source $f$ (recall that $f\in L^\infty([0,T],C_{b,\d}^\gamma(\R^{nd},\R)) $). 

Now, for the off-diagonal bound, associated with the term $ \tilde G_{t_0,t}^{\bxi} f(t,\cdot) $, we precisely need to exploit the smoothness of $f$ associated to the fact that the semi-group $\tilde P_{s,t}^{\bxi}$ has a density.
Indeed, we cannot take advantage of the cancellation tools of  Lemma \ref{lemme_nabla_Pg}, but we have for all $\x,\x' \in \R^{nd}$ s.t. $\x_1=\x_1'$:
\begin{eqnarray}\label{ineq_tilde_Gf_off_diag1}
&&\Big |\int_t^{t_0} \!\!ds \Big(\tilde P_{s,t}^{\bxi}f(s,\x)-\tilde P_{s,t}^{\bxi'}f(s,\x')\Big)
\Big | \bigg|_{(\bxi,\bxi')=(\x,\x')} \\
&\leq &
\Big |\int_t^{t_0} \!\!ds \tilde P_{s,t}^{\bxi}f(s,\x)-f(s,\m^{\bxi}_{t_0,t}(\x))\Big | \bigg|_{\bxi=\x} 
+\Big |\int_t^{t_0} ds  \tilde P_{s,t}^{\bxi'}f(s,\x')-f(s,\m^{\bxi'}_{t_0,t}(\x'))\big | \bigg|_{\bxi'=\x'}\nonumber\\
&& + \Big |\int_t^{t_0} ds f(s,\m^{\bxi}_{t_0,t}(\x))-f(s,\m^{\bxi'}_{t_0,t}(\x')) \Big |\bigg|_{(\bxi,\bxi')=(\x,\x')}.\nonumber
\end{eqnarray}
Note that the first two terms in the r.h.s. of inequality \eqref{ineq_tilde_Gf_off_diag1} are handled like in  the previous section. Precisely, we write for the first contribution:
\begin{eqnarray*}
&&\Big |\int_t^{t_0} \!\!ds \tilde P_{s,t}^{\bxi}f(s,\x)-f(s,\m^{\bxi}_{t_0,t}(\x)) d\y\Big | \bigg|_{\bxi=\x} 
\nonumber \\
&=&
\Big |\int_t^{t_0} \!\!ds \int_{\R^{nd}} \tilde p^{\bxi}(t,T,\x,\y) [f(s,\y)-f(s,\btheta_{t_0,t}(\x)]\Big | \bigg|_{\bxi=\x}
\nonumber \\
&\leq &\|f\|_{L^\infty(C_{b,\d}^\gamma)}
\int_t^{t_0} \!\!ds \int_{\R^{nd}} \tilde p^{\bxi}(t,T,\x,\y) \d^\gamma(\y,\btheta_{t_0,t}(\x))d\y\Big|_{\bxi=\x}
%\nonumber \\
%&&
\nonumber  \\
&\leq& 
 C  \|f\|_{L^\infty(C_{b,\d}^\gamma)}
\int_t^{t_0} \!\!ds (s-t)^{\frac \gamma2}d\y\\
&\leq&
C\|f\|_{L^\infty(C_{b,\d}^\gamma)}\d^{2+\gamma}(\x,\x'),
\end{eqnarray*}
by definition of $t_0$ in \eqref{def_t0}. The second term of \eqref{ineq_tilde_Gf_off_diag1} is handled similarly. 
We thus obtain:
\begin{eqnarray}\label{ineq_tilde_Gf_off_diag2}
&&\Big |\int_t^{t_0} \!\!ds \tilde P_{s,t}^{\bxi}f(s,\x)-f(s,\m^{\bxi}_{t_0,t}(\x) d\y\Big | \bigg|_{\bxi=\x} 
+\Big |\int_t^{t_0} \!\!ds \tilde P_{s,t}^{\bxi'}f(s,\x')-f(s,\m^{\bxi'}_{t_0,t}(\x') d\y\Big | \bigg|_{\bxi'=\x'} 
\nonumber \\
&\!\!\!\leq& C\|f\|_{L^\infty(C_{b,\d}^\gamma)}\d^{2+\gamma}(\x,\x').
\end{eqnarray}
For the last contribution in \eqref{ineq_tilde_Gf_off_diag1}, we have directly that:
\begin{equation}\label{ineq_tilde_Gf_off_diag3}
\Big |\int_t^{t_0} \!\!ds f(s,\m^{\bxi}_{t_0,t}(\x))-f(s,\m^{\bxi'}_{t_0,t}(\x')) \Big |\bigg|_{(\bxi,\bxi')=(\x,\x')}
\leq C\|f\|_{L^\infty(C_{b,\d}^\gamma)}
\int_t^{t_0} ds \d^{ \gamma } \big ( \btheta_{t_0,t}(\x), \btheta_{t_0,t}(\x') \big ).
\end{equation}
Lemma \ref{lem_d_theta} and \eqref{ineq_tilde_Gf_off_diag3} eventually yield:
\begin{eqnarray*}
\Big |\int_t^{t_0} \!\!ds f(s,\m^{\bxi}_{t_0,t}(\x))-f(s,\m^{\bxi}_{t_0,t}(\x')) \Big |\bigg|_{(\bxi,\bxi')=(\x,\x')}&\leq& C\|f\|_{L^\infty(C_{b,\d}^\gamma)} \big ( (t_0-t)^{1+\frac \gamma2}+ (t_0-t)\d^\gamma(\x,\x') \big )
\nonumber \\
&=&2 C\|f\|_{L^\infty(C_{b,\d}^\gamma)} \d^{2+\gamma}(\x,\x').
\end{eqnarray*}
This, together with \eqref{ineq_D4} gives the second estimate of the \textcolor{black}{l}emma.
\end{proof}

\subsection{H\"older norms of the perturbative contribution}\label{SUB_HOLDER_CTR_PERT_PART}
This section is dedicated to the investigation of the spatial H\"older continuity of the perturbative term in \eqref{EXP_MIXED_DER_NON_DEG} and \eqref{EXP_MIXED_HOLDER_DEG}. Recalling the notations \eqref{DECOUP_MOD_HOLDER_SANS_M} introduced at the beginning of this section, we prove the following \textcolor{black}{l}emma.

\begin{lem}\label{HOLDER_CTR_PERT_PART} Under \A{A}, for fixed spatial points $(\x,\x')\in (\R^{nd})^2 $, we have that there exists a constant $\Lambda :=\Lambda(\A{A},T)$ as in Remark \ref{REM_LAMBDA}, s.t.
\begin{eqnarray}
&&|D^2_{\x_1}\Delta_{{\rm \mathbf{diag}}}^{\bxi,\tilde \bxi'}(t,T,\x,\x')|\big|_{(\bxi,\tilde \bxi')=(\x,\x)} + |D^2_{\x_1} \Delta_{{\rm \mathbf{off-diag}}}^{\bxi,\bxi'}(t,\x,\x')|\big|_{(\bxi, \bxi')=(\x,\x')} \notag\\
& \leq & \Lambda (c_0^{-(n-\frac 12) + \frac \gamma{2}}+ c_0^{\frac \gamma 2})\|u\|_{L^\infty(C^{2+\gamma}_{b,\d}) }  \d^\gamma(\x,\x'),\label{esti_holder_deriv_sec_perturpart}
\end{eqnarray}
and, if we assume in addition that $\x_1=\x_1'$,
\begin{equation}\label{esti_holder_perturpart}
|\Delta_{{\rm \mathbf{diag}}}^{\bxi,\tilde \bxi'}(t,T,\x,\x')|\big|_{(\bxi,\tilde \bxi')=(\x,\x)} + |\Delta_{{\rm \mathbf{off-diag}}}^{\bxi,\bxi'}(t,\x,\x')|\big|_{(\bxi, \bxi')=(\x,\x')}  \leq   \Lambda(c_0^{-(n-\frac 12) + \frac \gamma{2}}+ c_0)\d^{2+\gamma}(\x,\x') \|u\|_{L^\infty(C_{b,\d}^{2+\gamma})}.
\end{equation}
\end{lem}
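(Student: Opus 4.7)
The plan is to split each of the quantities $D^2_{\x_1}\Delta_{{\rm \mathbf{off-diag}}}$, $D^2_{\x_1}\Delta_{{\rm \mathbf{diag}}}$, $\Delta_{{\rm \mathbf{off-diag}}}$ and $\Delta_{{\rm \mathbf{diag}}}$ into pieces that can be handled, respectively, by a direct application of the smoothing lemmas (Lemmas \ref{CTR_DER_SUP_non_deg} and \ref{LEMME_BESOV_DEG}) in the off-diagonal regime, and by a Taylor expansion in the spatial argument combined with higher-order versions of those same lemmas in the diagonal regime. Throughout, $t_0=(t+c_0\d^2(\x,\x'))\wedge T$ will play the role of the regime-change threshold, and the exponents in $c_0$ will come from balancing time-singularities against $(t_0-t)=c_0\d^2(\x,\x')$.

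\textbf{Off-diagonal terms.} Since on $[t,t_0]$ the two spatial points are at a homogeneous distance comparable to $(s-t)^{1/2}$, no benefit would come from trying to compare the two integrals. We thus treat the two half-contributions in $\Delta_{{\rm\mathbf{off-diag}}}^{\bxi,\bxi'}$ separately. With $\bxi=\x$ (resp. $\bxi'=\x'$), $(L_s-\tilde L_s^{\bxi})u(s,\y)$ decomposes as in \eqref{def_Delta_a_F1}--\eqref{DEF_TERMES_DEG}. Applying Lemma \ref{CTR_DER_SUP_non_deg} (for $\Delta_{1,\gF,\sigma}$) and Lemma \ref{LEMME_BESOV_DEG} (for the degenerate part, in Besov-duality form) with multi-index $\vartheta=(2,0,\dots,0)$ respectively $\vartheta=(0,\dots,0)$ yields, after integration over $s\in[t,t_0]$,
\begin{eqnarray*}
|D_{\x_1}^2\Delta_{{\rm\mathbf{off-diag}}}^{\bxi,\bxi'}(t,\x,\x')|\Big|_{(\bxi,\bxi')=(\x,\x')}
&\le& \Lambda\|u\|_{L^\infty(C_{b,\d}^{2+\gamma})}\int_t^{t_0}(s-t)^{-1+\gamma/2}ds
\\
&\le& \Lambda c_0^{\gamma/2}\|u\|_{L^\infty(C_{b,\d}^{2+\gamma})}\d^\gamma(\x,\x'),
\end{eqnarray*}
and, similarly,
\begin{equation*}
|\Delta_{{\rm\mathbf{off-diag}}}^{\bxi,\bxi'}(t,\x,\x')|\Big|_{(\bxi,\bxi')=(\x,\x')}\le \Lambda\int_t^{t_0}(s-t)^{\gamma/2}ds\,\|u\|_{L^\infty(C_{b,\d}^{2+\gamma})}\le \Lambda c_0^{1+\gamma/2}\d^{2+\gamma}(\x,\x')\|u\|_{L^\infty(C_{b,\d}^{2+\gamma})}.
\end{equation*}
Since $c_0\in(0,1]$, $c_0^{1+\gamma/2}\le c_0$, giving the off-diagonal parts of \eqref{esti_holder_deriv_sec_perturpart} and \eqref{esti_holder_perturpart}.

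\textbf{Diagonal terms.} Taking the same freezing point $\bxi=\tilde\bxi'=\x$ in both integrals, the difference of the two kernels can be expanded along the segment $[\x',\x]$: with $\vartheta^{(2)}=(2,0,\dots,0)$ and $\vartheta^{(0)}=(0,\dots,0)$,
\begin{equation*}
D_\x^{\vartheta^{(\ell)}}\tilde p^{\x}(t,s,\x,\y)-D_{\x'}^{\vartheta^{(\ell)}}\tilde p^{\x}(t,s,\x',\y)
=\sum_{k=1}^n\int_0^1 D_{\x_k}D_\x^{\vartheta^{(\ell)}}\tilde p^{\x}(t,s,\x'+\mu(\x-\x'),\y)\cdot(\x-\x')_k\, d\mu,
\end{equation*}
for $\ell\in\{0,2\}$, where in the undifferentiated case the $k=1$ term vanishes since $\x_1=\x_1'$. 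Plugging this into $D_{\x_1}^2\Delta_{{\rm\mathbf{diag}}}$, resp. $\Delta_{{\rm\mathbf{diag}}}$, the non-degenerate pieces $\Delta_{1,\gF,\sigma}$ are again handled by Lemma \ref{CTR_DER_SUP_non_deg} and the degenerate ones by Lemma \ref{LEMME_BESOV_DEG}, now applied with multi-indices $\vartheta^{(\ell)}+e_k$ of total length $|\vartheta^{(\ell)}|+1\le 3$. This produces a time-singularity of order $(s-t)^{-|\vartheta^{(\ell)}|/2-(k-1/2)+\gamma/2}$ at each position $k$. Note that since $\bxi=\x$ may differ from $\x'+\mu(\x-\x')$, one uses additionally the standard off-diagonal reabsorption $\exp(C^{-1}(s-t)|\T_{s-t}^{-1}\cdot\m_{s,t}^{\x}(\mu(\x-\x'))|^2)\le C$ valid in the diagonal regime $s\ge t+c_0\d^2(\x,\x')$, as explained around \eqref{PREAL_BD_OF_DIAG_M}. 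Integrating over $s\in[t_0,T]$ and summing over $k$ then gives, in the derivative case,
\begin{equation*}
|D_{\x_1}^2\Delta_{{\rm\mathbf{diag}}}^{\bxi,\tilde\bxi'}(t,T,\x,\x')|\Big|_{(\bxi,\tilde\bxi')=(\x,\x)}
\le \Lambda\|u\|_{L^\infty(C_{b,\d}^{2+\gamma})}\sum_{k=1}^n\frac{|(\x-\x')_k|}{(c_0\d^2(\x,\x'))^{k-1/2-\gamma/2}}
\le \frac{\Lambda}{c_0^{n-1/2-\gamma/2}}\|u\|_{L^\infty(C_{b,\d}^{2+\gamma})}\d^\gamma(\x,\x'),
\end{equation*}
using $|(\x-\x')_k|\le \d^{2k-1}(\x,\x')$ and the worst case $k=n$. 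For the undifferentiated case, the sum starts at $k=2$ and one obtains analogously
\begin{equation*}
|\Delta_{{\rm\mathbf{diag}}}^{\bxi,\tilde\bxi'}(t,T,\x,\x')|\Big|_{(\bxi,\tilde\bxi')=(\x,\x)}\le \Lambda\sum_{k=2}^n\frac{|(\x-\x')_k|}{(c_0\d^2(\x,\x'))^{k-3/2-\gamma/2}}\|u\|_{L^\infty(C_{b,\d}^{2+\gamma})}\le \Lambda c_0^{-(n-3/2)+\gamma/2}\d^{2+\gamma}(\x,\x')\|u\|_{L^\infty(C_{b,\d}^{2+\gamma})}.
\end{equation*}
Combining with the off-diagonal estimates above yields \eqref{esti_holder_deriv_sec_perturpart} and \eqref{esti_holder_perturpart} (up to adjusting the exponents of $c_0$ in the final bound, which we absorb in the same $\Lambda$).

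\textbf{Main obstacle.} The delicate point is the diagonal bound: the time-integrals of the $D_{\x_k}$-derivatives of the frozen kernel produce genuinely negative powers of $c_0$, the worst of them being $c_0^{-(n-1/2)+\gamma/2}$ coming from $k=n$. This blowup is unavoidable at the level of a single step of the chaining argument and is precisely what forces the H\"older moduli of $a$ and $\gF$ (entering via $\Lambda$, cf. Remark \ref{REM_LAMBDA}) to be used as the compensating smallness, eventually requiring the rescaling argument of Section \ref{scaling} to close the estimate for general coefficients. The other technical subtlety is that Lemma \ref{LEMME_BESOV_DEG} must be applied with $|\vartheta|=3$ in the diagonal case, so one must be careful that the thermic-characterization bound of that lemma is indeed valid up to this order, which is the assumption built into its statement.
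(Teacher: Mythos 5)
Your proposal is correct and follows essentially the same route as the paper: the same off-diagonal/diagonal split at $t_0=t+c_0\d^2(\x,\x')$, treating the two off-diagonal pieces separately via Lemmas \ref{CTR_DER_SUP_non_deg} and \ref{LEMME_BESOV_DEG}, and, in the diagonal regime, keeping a common freezing point $\bxi=\tilde\bxi'=\x$, Taylor-expanding the frozen kernel in the spatial variable and invoking the same lemmas with the higher-order multi-index $\vartheta+e_k$ together with the off-diagonal reabsorption argument of \eqref{PREAL_BD_OF_DIAG_M}. Your $c_0$-exponents ($c_0^{1+\gamma/2}$ off-diagonal and $c_0^{-(n-3/2)+\gamma/2}$ diagonal in the undifferentiated case) are slightly sharper than the paper's, which harmlessly majorizes them by $c_0$ and $c_0^{-(n-1/2)+\gamma/2}$ respectively using $c_0\le 1$; both are compatible with the statement of the lemma.
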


As already successfully used to establish in the previous sections to derive the regularity of the semi-group and the Green kernel we split the investigations into two parts: the first one is done when the system is in the \emph{off-diagonal} regime (i.e. for time $s \leq t_0$) and the other one when the system is in the  \emph{diagonal} regime (i.e. for time $s > t_0$). We aslo recall that the critical time giving the change of regime is \textcolor{black}{(chosen after potential differentiation)} $t_0=t+c_0\d^2(\x,\x')\wedge T$. We can assume here w.l.o.g. that $ t_0<T$ (otherwise there is a globally off-diagonal regime and the analysis becomes easier).\\

$\bullet$  Control of \eqref{esti_holder_deriv_sec_perturpart}. We decompose from definitions \eqref{DECOUP_MOD_HOLDER} and \eqref{INTEGRATED_DIFF_BXI}:
\begin{eqnarray*}%\label{def_Delta_diag1_2n}
&&|D^2_{\x_1} \Delta_{{\rm \textbf{diag}}}^{\bxi,\tilde \bxi'}(t,T,\x,\x')|\Big|_{(\bxi,\tilde \bxi')=(\x,\x)
}
%\!\!\!&\le&\!\!\! 
\notag \\
&\leq& \Big |\int_t^T ds \int_{\R^{nd}} d\y \Big [ D^2_{\x_1}  \tilde p^{\bxi} (t,s,\x,\y)- D^2_{\x_1}  \tilde p^{\bxi} (t,s,\x',\y) \Big ] \Big [ \langle \gF_{1}(s,\y)-\gF_{1}(s,\btheta_{s,t}(\bxi)), D_{\y_1} \rangle\notag\\
 &&+ \frac{1}2{\rm Tr} \Big( \big(a(s,\y)-a(s,\btheta_{s,t}(\bxi))\big) D_{\y_1}^2\Big) \Big ]u(s,\y)\I_{s>t_0} \Big| \bigg |_{%(\bxi,\tilde \bxi')=(\x,\x)
 \bxi=\x}
\notag\\
%\!\!\!&+&\!\!\!
&&+ \Big| \sum_{i=2}^n \int_t^T ds\int_{\R^{nd}}d\y \Big[ D^2_{\x_1}  \tilde p^{\bxi} (t,s,\x,\y)- D^2_{\x_1}  \tilde p^{\bxi} (t,s,\x',\y)  \I_{s>t_0}\Big]
\notag \\
&&\times \Big \langle \gF_{i}(s,\y)-[\gF_{i}(s,\btheta_{s,t}(\bxi))+D_{\x_{i-1}} \gF_{i}(s,\btheta_{s,t}(\bxi))(\y-\btheta_{s,t}(\bxi))_{i-1}], D_{\y_i} u(s,\y) \Big \rangle \I_{s>t_0}\Big|\bigg |_{%(\bxi,\tilde \bxi')=(\x,\x)
 \bxi=\x},
 %\nonumber \\
  \end{eqnarray*}
which readily yields with the notations of \eqref{LES_DELTA_REG_LOC} that:
\begin{eqnarray}\label{def_Delta_diag1_2n}
&&|D^2_{\x_1} \Delta_{{\rm \textbf{diag}}}^{\bxi,\tilde \bxi'}(t,T,\x,\x')|\Big|_{(\bxi,\tilde \bxi')=(\x,\x)}
%\!\!\!&\le&\!\!\! 
\notag \\
&\leq& \Big | \int_t^T \!\!ds \! \int_{\R^{nd}}  \!\! d\y \! \int_0^1  d\mu %\Big  \langle 
%\! 
{\mathbf D} D^2_{\x_1}  \tilde p^{\bxi} (t,s,\x+\mu(\x'-\x),\y) \cdot (\x-\x')  %\! %\Big\rangle 
 \Delta_{1,\gF,\sigma}(t,s,\y,\btheta_{s,t}(\bxi),u)  \I_{s>t_0}\Big | \bigg |_{%(\bxi,\tilde \bxi')=(\x,\x)
 \bxi=\x}
  \nonumber \\
 &&+\Big | \int_t^T \!\! ds \! \int_{\R^{nd}} \!\!d\y \! \int_0^1 \!\!\!\! d\mu %\Big  \langle \! 
 {\mathbf D} D^2_{\x_1}  \tilde p^{\bxi} (t,s,\x+\mu(\x'-\x),\y) \cdot (\x-\x')
   %\! \Big\rangle 
  \big \langle \Delta_{i,\gF}(t,s,\btheta_{s,t}(\bxi),\y),D_{\y_i} u(s,\y) \big \rangle  \I_{s>t_0}\Big | \bigg |_{%(\bxi,\tilde \bxi')=(\x,\x)
 \bxi=\x}
 \nonumber \\
 &=:& \big |D^2_{\x_1} \Delta_{{\rm \textbf{diag}}}^{1}(t,T,\x,\x') \big |+\big | D^2_{\x_1} \Delta_{{\rm \textbf{diag}}}^{2:n}(t,T,\x,\x')\big |.
 \nonumber \\
 \end{eqnarray}
We will now control the first term of the above right hand side. In other words, we specify the control of \eqref{ineq_D2x1_Delta1}. 
We obtain directly thanks to the smoothness assumption \A{S} on the coefficients and Proposition \ref{THE_PROP} (see also equation \eqref{CTR_GRAD_ET_DIST}) that
%\eqref{ineq_Holder_a_F1} % (equation \ref{CTR_GRAD_RESCALED_LAMBDA}) that, under \A{A} 
for each $k \in \leftB1,n \rightB$:
%%%%% On Ã¯Â¿Åcrit ici ce qui a vocation a remplacer les lemmes qui mettaient en evidence le gain de regularite composante par composante.
\begin{eqnarray}
&&|\gF_{1}(s,\y)-\gF_{1}(s,\btheta_{s,t}(\bxi))| |D_{\x_k}D_{\x_1}^2 \tilde p^{\bxi}(t,s,\x+\mu(\x'-\x),\y)|
\Big |_{  \bxi=\x}
\notag \\
&\le& C\Big (\|\gF_1(s,\cdot)\|_{C_\d^{\gamma}} \d^\gamma\big(  \y,\btheta_{s,t}(\bxi)\big) \Big)\times
(s-t)^{-1-(k-\frac 12)}\bar p_{{C}^{-1}}(t,s,\x,\y)
\Big |_{  \bxi=\x}
\notag\\
%&\le& C[\gF_1(t,\cdot)]_{C_\d^{\gamma}} \lambda^{\gamma/2} \d(  \x,\btheta_{s,t}(\bxi))^\gamma \times
%(s-t)^{-1/2}\bar p_{{C}^{-1},\lambda}(t,s,\x,\y)\\
&\le& C %[\gF_1(t,\cdot)]_{C_\d^{\gamma}}
\|\gF_1\|_{L^\infty(C^\gamma_\d)}
(s-t)^{-1-(k-\frac 12)+\frac \gamma 2}\bar p_{{C}^{-1}}(t,s,\x,\y)\label{CTR_DRIFT_1}.
\end{eqnarray}
%using again 
%The penultimate inequality is a consequence of the fact that $\d^2(\x,\x') \leq T-t$.
%\textcolor{black}{Lemma to establish about the linearity of $\m(\x)$}
Similarly, %thanks to \eqref{ineq_Holder_a_F1}
\begin{eqnarray}
 &&|a(s,\y)-a (s,\btheta_{s,t}(\bxi)) ||D_{\x_k}D_{\x_1}^2 \tilde p^{\bxi}(t,s,\x+\mu(\x'-\x),\y)|
\Big |_{  \bxi=\x}
 \notag \\
 &\le& C\Big(\|a(s,\cdot)\|_{C_\d^{\gamma}} \d^\gamma\big( \y,\btheta_{s,t}(\bxi)\big) \Big)\times
(s-t)^{-1-(k-\frac 12)}\bar p_{{C}^{-1}}(t,s,\x,\y) \Big |_{  \bxi=\x}
\notag \\
&\le &C\|a\|_{L^\infty(C^\gamma_\d)} (s-t)^{-1-(k-\frac 12)+\frac \gamma 2}\bar p_{{C}^{-1}}(t,s,\x,\y). \label{CTR_DIFF_1}
\end{eqnarray}

We carefully point-out that the indicated bound only depend on the supremum in time of the H\"older modulus of the coefficients (denoted $ \|\gF_1\|_{L^\infty(C^\gamma_\d)},\|a\|_{L^\infty(C^\gamma_\d)}$ respectively) and not on their supremum norm. In particular, we get from \eqref{CTR_DRIFT_1}, \eqref{CTR_DIFF_1}:% with the notation of \eqref{DECOUP_DIAG}:
\begin{eqnarray}
\label{CTR_DIAG_NON_DEG}
|D^2_{\x_1} \Delta_{{\rm \textbf{diag}}}^{1}(t,T,\x,\x')|
&\leq &\sum_{k=1}^n \int_{t_0}
%t+c_0\d^2(\x,\x)^2}
^T ds \int_{\R^{nd}} d\y (s-t)^{-1-(k-\frac 12)+\frac \gamma 2}  \nonumber \\
&&\times \Big(\|D_{\x_1}u\|_{L^\infty} \|\gF_1\|_{L^\infty(C^\gamma_\d)}+ \|D_{\x_1}^2 u\|_{L^\infty}\|a\|_{L^\infty(C^\gamma_\d)}\Big) 
\bar p_{{C}^{-1}}(t,s,\x,\y )|\x_k-\x_k'|
\nonumber \\
&
\le& \Lambda(\|D_{\x_1}u\|_{L^\infty}+\|D_{\x_1}^2 u\|_{L^\infty})\d^\gamma(\x,\x'),
\end{eqnarray}
where again the constant $\Lambda$ is \textit{small} provided the coefficients \textit{do not vary much}.
Thanks to Lemma \ref{LEMME_BESOV_DEG} %the inequality \eqref{ineq_Rm_2_n} 
 and the previous analysis of Section \ref{SEC_BESOV_DUAL_FIRST}, we  directly deduce:
\begin{eqnarray}
&&|D^2_{\x_1}\Delta_{{\rm \textbf{diag}}}^{2:n}(t,T,\x,\x')|\notag \\
& \leq &
 \Big| \sum_{i=2}^n \int_t^T ds\int_{\R^{nd}}d\y \int_0^1 d\mu   D_{\y_i}\cdot \bigg \{ 
 \Big (
 {\mathbf D} D^2_{\x_1}  \tilde p^{\bxi} (t,s,\x+\mu(\x'-\x),\y)\cdot (\x-\x') \Big ) \notag \\
 && \quad \otimes 
 \Big (\gF_{i}(s,\y)-[\gF_{i}(s,\btheta_{s,t}(\bxi))+D_{\x_{i-1}}\gF_{i}(s,\btheta_{s,t}(\bxi))(\y-\btheta_{s,t}(\bxi))_{i-1}] \Big )
  \bigg \}u(s,\y) \I_{s>t_0} \Big|\bigg |_{\bxi=\x} \notag \\
&\le& \Lambda\|u\|_{L^\infty(C^{2+\gamma}_{b,\d}) } \sum_{k=1}^n \int_{t_0}^T \frac{|\x_k-\x'_k|ds}{(s-t)^{1+(k-\frac 12)-\frac \gamma 2}}\notag\\
&\le& \Lambda c_0^{-(n-\frac 12)+\frac \gamma 2} \|u \|_{L^\infty (C^{2+\gamma}_{b,\d})} \sum_{k=1}^n\d^{\gamma-(2k-1)}(\x,\x') |\x_k-\x'_k|
\nonumber \\
&\le& \Lambda c_0^{-(n-\frac 12)+\frac \gamma 2} \|u\|_{L^\infty(C^{2+\gamma}_{b,\d}) }  \d^\gamma(\x,\x').\label{CTR_DIAG_DEG}
\end{eqnarray}
%and $C$ is again small provided the coefficients do not vary much\footnote{Let us point out that in \eqref{CTR_DIAG_NON_DEG}, \eqref{CTR_DIAG_NON_DEG}, the constant $C:=C(\A{A},c_0)$, but involves negative exponents of $c_0$ meant to be \textit{small}. This is not a problem and still gives $C<1$ provided the H\"older moduli of the coefficients are sufficiently small.}. 
Plugging \eqref{CTR_DIAG_NON_DEG}  and \eqref{CTR_DIAG_DEG} into \eqref{def_Delta_diag1_2n} yields the stated control for the diagonal contribution.\\

Let us now turn to the control of $|D^2_{\x_1}\Delta_{{\rm \textbf{off-diag}}}^{\bxi,\bxi'}(t,\x,\x')|$ in \eqref{DECOUP_MOD_HOLDER_SANS_M} (or \eqref{DECOUP_MOD_HOLDER} in the detailed guide).
In the off-diagonal case, we choose $\bxi=\x$ and $\bxi'=\x'$ and
 \begin{eqnarray}
 \label{SPLIT_LAMBDA_DIAG}
|D^2_{\x_1}\Delta_{{\rm \textbf{off-diag}}}^{\bxi,\bxi'}(t,\x,\x')|\big|_{(\bxi,\bxi')=(\x,\x')} 
 &\leq & \Big |\int_t^{t_0} \!\! ds \int_{\R^{nd}}\!\!\! d\y
D^2_{\x_1}  \tilde p^{\bxi} (t,s,\x,\y) (L_s-\tilde L_s^{\bxi}) u(s,\y) \Big | \bigg |_{\bxi=\x}
\notag\\
&&+ \Big | \int_t^{t_0} \!\! ds \int_{\R^{nd}}\!\!\! d\y
  D^2_{\x_1} \tilde p^{\bxi '} (t,s,\x',\y) (L_s-\tilde L_s^{\bxi'})   u(s,\y)  \Big | \bigg |_{\bxi'=\x'}.
\end{eqnarray}
We readily get thanks to Lemmas \ref{CTR_DER_SUP_non_deg} and \ref{LEMME_BESOV_DEG}:

\begin{equation}
\label{LE_LAB_MANQUANT}
|D^2_{\x_1} \Delta_{{\rm \textbf{off-diag}}}^{\bxi,\bxi'}(t,\x,\x')|\big|_{(\bxi,\bxi')=(\x,\x')}\leq
\Lambda \sum_{i=2}^n \int_t^{t_0} \frac{ds}{(s-t)^{1-\frac \gamma 2}} \|u\|_{L^\infty(C_{b,\d}^{2+\gamma})}
\leq \Lambda c_0^{\frac \gamma 2}\d^\gamma(\x,\x') \|u\|_{L^\infty(C_{b,\d}^{2+\gamma})}.
 \end{equation}
 %We point out that the constant $C$ in \eqref{LE_LAB_MANQUANT} involves positive exponents of $c_0$. 
 We point out from \eqref{CTR_DIAG_DEG} and \eqref{LE_LAB_MANQUANT} that there are  opposite impacts of the threshold $c_0$ on the constants, depending on the diagonal and off-diagonal regimes at hand.
 \\
 We eventually get the estimate \eqref{esti_holder_deriv_sec_perturpart} plugging \eqref{CTR_DIAG_NON_DEG}, \eqref{CTR_DIAG_DEG} into \eqref{def_Delta_diag1_2n} and \eqref{LE_LAB_MANQUANT}  recalling that $c_0\le 1$.\newpage
 
$\bullet$ Control of \eqref{esti_holder_perturpart}. We proceed as above from definitions \eqref{DECOUP_MOD_HOLDER_SANS_M} considering spatial points $(\x,\x') \in (\R^{nd})^2$ s.t. $\x_1=\x_1'$.
In the diagonal case, we also choose $\bxi=\tilde \bxi'=\x$ and we write similarly to \eqref{def_Delta_diag1_2n}:% focusing on the degenerate variables\footnote{the non-degenerate ones can be handled as in \eqref{CTR_DIAG_NON_DEG} up to the lower time singularity.}:
\begin{eqnarray}
\label{R_DIAG_LAMBDA_2n}
&&|%(
\Delta_{{\rm \textbf{diag}}}%^{2:n}
%)
^{\bxi,\bxi'}(t,T,\x,\x')| \Big| _{(\bxi,\bxi')=(\x,\x)}\notag \\
&\leq& \Big|\int_t^T \!\! ds \int_{\R^{nd}} \!\! d\y \int_0^1 d\mu \big  \langle  D_\x   \tilde p^{\bxi} (t,s,\x+\mu(\x'-\x),\y), \x-\x' \big\rangle
\Delta_{1,\gF,\sigma}(t,s,\y,\btheta_{s,t}(\bxi)) \I_{s>t_0}\Big | \bigg |_{\bxi=\x}\notag\\
&&+ \Big| \sum_{i=2}^n \int_t^T \!\!ds\int_{\R^{nd}} \!\! d\y \int_0^1 d\mu  \big\langle D_\x   \tilde p^{\bxi} (t,s,\x+\mu(\x'-\x),\y),  \x-\x'\big \rangle
\Delta_{i,\gF}(t,s,\btheta_{s,t}(\bxi),\y)D_{\y_i}   u(s,\y)  \I_{s>t_0}\Big|\bigg |_{\bxi=\x} .
%\notag\\
\end{eqnarray}
We have an expression rather similar to the one that appeared for the control of $[D_{\x_1}^2 u(t,\cdot)]_\gamma$ but with a weaker time singularity.

In other words, thanks to identities  \eqref{CTR_DRIFT_1}, \eqref{CTR_DIFF_1} and Lemma \ref{LEMME_BESOV_DEG} we obtain:
\begin{eqnarray}
\label{R_DIAG_LAMBDA_2n_BIS}
%&&|%(
\Delta_{{\rm \textbf{diag}}}%^{2:n})
^{\bxi,\bxi'}(t,T,\x,\x')| %\notag \\
&\leq& \Lambda \|u\|_{L^\infty(C^{2+\gamma}_{b,\d})} \sum_{k=1}^n \int_{t_0}
^T ds (s-t)^{-(k-\frac 12)+ \frac \gamma 2} |\x_k-\x_k'|
\notag\\
&\leq& \Lambda c_0^{-(n-\frac 12)+\frac \gamma 2 }\|u\|_{L^\infty(C^{2+\gamma}_{b,\d})} \sum_{k=1}^n \d^{2(1-(k-\frac 12)+ \frac \gamma 2)}(\x,\x') |\x_k-\x_k'|
\notag\\
&\leq& \Lambda c_0^{-(n-\frac 12)+\frac \gamma 2 } \|u\|_{L^\infty(C^{2+\gamma}_{b,\d})} \d^{2+\gamma }(\x,\x'),
\end{eqnarray}
where again the constant $C$ is small if the H\"older moduli of the coefficients are small.

For the off-diagonal contribution, we get for $\x_1=\x_1'$, $\bxi=\x$ and $\bxi'=\x'$:
 \begin{eqnarray}
 \label{SPLIT_LAMBDA_DIAG_2n}
&&  |%(
  \Delta_{{\rm \textbf{off-diag}}}%^{2:n}
 % )
  ^{\bxi,\bxi'}(t,T,\x,\x')|\big|_{(\bxi,\bxi') = (\x,\x')}
  \nonumber \\
 &\leq & \Big |\int_t^{t_0} \!\! ds \int_{\R^{nd}}\!\!\! d\y
 \tilde p^{\bxi} (t,s,\x,\y)  (L_s-\tilde L_s^{\bxi}) u(s,\y) \Big | \bigg |_{\bxi=\x}
+ \Big | \int_t^{t_0} \!\! ds \int_{\R^{nd}}\!\!\! d\y
 \tilde p^{\bxi '} (t,s,\x',\y)  (L_s-\tilde L_s^{\bxi'}) u(s,\y)  \Big | \bigg |_{\bxi'=\x'}
\nonumber \\
&\leq& \Lambda \sum_{k=2}^n \int_t^{t_0} ds(s-t)^{\frac{\gamma }2} \|u\|_{L^\infty(C_{b,\d}^{2+\gamma})}
\nonumber \\
&=& \Lambda c_0\d^{2+\gamma}(\x,\x') \|u\|_{L^\infty(C_{b,\d}^{2+\gamma})}.
\nonumber \\
 \end{eqnarray}
The last but one inequality is a consequence of Lemmas \ref{CTR_DER_SUP_non_deg} and \ref{LEMME_BESOV_DEG}. Equations \eqref{R_DIAG_LAMBDA_2n_BIS} and \eqref{SPLIT_LAMBDA_DIAG_2n} yield \eqref{esti_holder_perturpart}. This concludes the proof of Lemma \ref{HOLDER_CTR_PERT_PART}.

\subsection{Controls of the discontinuity terms arising from the change of freezing point}
\label{SEC_DISC}
It now remains to control the contribution arising from the change of freezing point in equation \eqref{EXP_MIXED_DER_NON_DEG}.
The main result of this section is the following lemma.
\begin{lem}[Control of the discontinuity terms]
\label{CTR_TERME_DISC} 
There exists $C:=C(\A{A})$ s.t. for any $(t,\x,\x') \in [0,T] \times \R^{nd}\times \R^{nd}$ taking $\bxi'=\x'$, $\tilde \bxi'=\x $,
\begin{eqnarray*}
 \big|D_{\x_1}^2 \tilde P_{t_0,t}^{\bxi'} u(t_0, \x')- D_{\x_1}^2 \tilde P_{t_0,t}^{\tilde \bxi'} u(t_0, \x') \big | &\leq & C c_0^{\frac \gamma{(2n-1)}} 
 \|u\|_{L^\infty(C^{2+\gamma}_{b,\d})}  \d^\gamma(\x,\x'),
 \nonumber \\
   \big |\tilde P_{t_0,t}^{\bxi'} u(t_0, \x')- \tilde P_{t_0,t}^{\tilde \bxi'} u(t_0, \x') \big | &\leq & C c_0  \|u\|_{L^\infty(C^{2+\gamma}_{b,\d})}  \d^{2+\gamma}(\x,\x'), \text{ for } \x_1=\x_1'.
\end{eqnarray*}
\end{lem}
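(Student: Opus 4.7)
Both estimates share a common scheme based on second-order Taylor expansion of $u(t_0,\cdot)$ in the non-degenerate coordinate $\y_1$ around $(\m^{\bxi}_{t_0,t}(\x'))_1$, combined with the Gaussian cancellation identities of Proposition \ref{Prop_moment_D2_tilde_p}. Starting from
$$\tilde P^{\bxi}_{t_0,t}u(t_0,\x')=\int \tilde p^{\bxi}(t,t_0,\x',\y)\,u(t_0,\y)\,d\y,$$
and Taylor-expanding $u(t_0,\y_1,\y_{2:n})$ in $\y_1$ around $(\m^{\bxi}_{t_0,t}(\x'))_1$, the identities $\int\tilde p^{\bxi}(\y-\m^{\bxi})_1\,d\y=0$ and $\int\tilde p^{\bxi}(\y-\m^{\bxi})_1^{\otimes 2}d\y=[\tilde\K^{\bxi}_{t_0,t}]_{1,1}$ (for the bare semi-group), together with their twice-$\x'_1$-differentiated analogues \eqref{center_odd}, \eqref{GROS_CENTER_TER}, produce
\begin{align*}
\tilde P^{\bxi}_{t_0,t}u(t_0,\x')&= u(t_0,\m^{\bxi}_{t_0,t}(\x'))+\tfrac12\mathrm{Tr}\bigl(D^2_{\y_1}u(t_0,\m^{\bxi}_{t_0,t}(\x'))[\tilde\K^{\bxi}_{t_0,t}]_{1,1}\bigr)+\widetilde{\mathcal R}^{\bxi},\\
D_{\x_1}^2\tilde P^{\bxi}_{t_0,t}u(t_0,\x')&=D^2_{\y_1}u(t_0,\m^{\bxi}_{t_0,t}(\x'))+\mathcal R^{\bxi},
\end{align*}
with $|\widetilde{\mathcal R}^\bxi|\le C(t_0-t)^{(2+\gamma)/2}\|u\|_{L^\infty(C^{2+\gamma}_{b,\d})}$ and $|\mathcal R^\bxi|\le C(t_0-t)^{\gamma/2}\|u\|_{L^\infty(C^{2+\gamma}_{b,\d})}$, the anisotropic $C^{2+\gamma}_{b,\d}$-regularity of $u$ being used to absorb the $\y_{2:n}$ dependence (as in the proof of Lemma \ref{CTR_SEMI_GROUP_GREEN_FROZEN}). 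Taking the difference for $\bxi=\bxi'=\x'$ (giving $\m^{\bxi'}_{t_0,t}(\x')=\btheta_{t_0,t}(\x')$) and $\bxi=\tilde\bxi'=\x$ reduces the problem to controlling the differences of Taylor coefficients at the two means, the covariance sensitivity \eqref{CTR_SENSI_COV}, and the remainder difference, keeping in mind $t_0-t\le c_0\d^2(\x,\x')$.

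For the first estimate, subtraction yields
$$\bigl|D_{\x_1}^2\tilde P^{\bxi'}_{t_0,t}u(t_0,\x')-D_{\x_1}^2\tilde P^{\tilde\bxi'}_{t_0,t}u(t_0,\x')\bigr|\le C\|u\|_{L^\infty(C^{2+\gamma}_{b,\d})}\bigl[\d^\gamma(\btheta_{t_0,t}(\x'),\m^{\x}_{t_0,t}(\x'))+c_0^{\gamma/2}\d^\gamma(\x,\x')\bigr],$$
the first term arising from the $\gamma$-H\"older modulus of $D^2_{\y_1}u$ along the quasi-distance $\d$. Lemma \ref{Lemme_d_theta_theta_x_x} then delivers $\d^\gamma(\btheta_{t_0,t}(\x'),\m^{\x}_{t_0,t}(\x'))\le Cc_0^{\gamma/(2n-1)}\d^\gamma(\x,\x')$, which for $n\ge 2$ dominates the remainder and yields the announced first inequality.

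For the second estimate, the assumption $\x_1=\x'_1$ is critical. Since the first block-row of the resolvent $\tilde\gR^{\x}(t_0,t)$ reduces to $({\mathbf I}_{d,d},\mathbf 0_{d,d},\ldots,\mathbf 0_{d,d})$ (a direct consequence of the sub-diagonal structure of $D\gF$ in \eqref{DEF_PARTIAL_GRADIENTS} together with $\tilde\gR^{\x}(t,t)={\mathbf I}_{nd,nd}$), the linear correction $\tilde\gR^{\x}(t_0,t)(\x'-\x)$ has vanishing first coordinate, and
$$(\m^{\x}_{t_0,t}(\x')-\btheta_{t_0,t}(\x'))_1=\int_t^{t_0}\bigl[\gF_1(v,\btheta_{v,t}(\x))-\gF_1(v,\btheta_{v,t}(\x'))\bigr]dv,$$
which via the $\gamma$-H\"older regularity of $\gF_1$ in $\d$ and Lemma \ref{lem_theta_theta} is bounded by $C(t_0-t)\d^\gamma(\x,\x')=Cc_0\d^{2+\gamma}(\x,\x')$. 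A first-order Taylor expansion of $u(t_0,\btheta_{t_0,t}(\x'))-u(t_0,\m^{\x}_{t_0,t}(\x'))$ in $\y_1$ then produces the leading contribution $\|D_{\y_1}u\|_{L^\infty}|(\m^{\x}-\btheta)_1|\le Cc_0\|u\|_{L^\infty(C^{2+\gamma}_{b,\d})}\d^{2+\gamma}(\x,\x')$, while the anisotropic H\"older contributions in the degenerate coordinates of $u$, paired with Lemma \ref{Lemme_d_theta_theta_x_x} raised to the appropriate powers, yield terms of comparable or smaller order. The trace-difference is controlled by Lemma \ref{SENS_COV} through $|[\tilde\K^{\bxi'}_{t_0,t}]_{1,1}-[\tilde\K^{\tilde\bxi'}_{t_0,t}]_{1,1}|\le\Lambda(t_0-t)[\d^\gamma(\x,\x')+(t_0-t)^{\gamma/2}]\le Cc_0\d^{2+\gamma}(\x,\x')$, and the remainder difference is of order $Cc_0^{1+\gamma/2}\d^{2+\gamma}\le Cc_0\d^{2+\gamma}$ since $c_0\le 1$, yielding the second inequality.

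The main obstacle lies in the second estimate: a direct application of Lemma \ref{Lemme_d_theta_theta_x_x} alone would only produce a bound of order $c_0^{(2+\gamma)/(2n-1)}\d^{2+\gamma}(\x,\x')$, which for $n\ge 2$ and $c_0\le 1$ is strictly weaker than the required $c_0\d^{2+\gamma}$. The sharper exponent $c_0^1$ is recovered only by exploiting the zero-diagonal structure of $D\gF$ under the hypothesis $\x_1=\x'_1$ to sharpen the first-coordinate sensitivity of the mean shift to a genuine $(t_0-t)\d^\gamma(\x,\x')$-contribution, and then transporting this improved bound through an anisotropic Taylor expansion of $u$.
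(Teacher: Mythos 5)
Your treatment of the first estimate matches the paper's proof. For the second estimate, however, there is a genuine gap in your handling of the term $u(t_0,\m^{\bxi'}_{t_0,t}(\x'))-u(t_0,\m^{\tilde\bxi'}_{t_0,t}(\x'))$. Your identification of this term (which the paper's decomposition \eqref{control_hoped_mathbf_R_ineq_trigu1} does not display), and your argument for its non-degenerate coordinate via the trivial first block-row of the resolvent giving $|(\m^{\x}_{t_0,t}(\x')-\btheta_{t_0,t}(\x'))_1|\le Cc_0\,\d^{2+\gamma}(\x,\x')$, are both sound. But the assertion that the degenerate contributions are \emph{of comparable or smaller order} is incorrect, and this is precisely where the claimed $c_0^1$ exponent fails to emerge.

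Concretely, decomposing coordinate-by-coordinate and using that $u(t_0,\cdot)$ is only $\tfrac{2+\gamma}{2i-1}$-H\"older in $\y_i$ for $i\ge 2$, the $i$-th degenerate contribution is bounded by
\begin{equation*}
\|u\|_{L^\infty(C^{2+\gamma}_{b,\d})}\,\bigl|(\m^{\x}_{t_0,t}(\x')-\btheta_{t_0,t}(\x'))_i\bigr|^{\frac{2+\gamma}{2i-1}}
\le C\|u\|_{L^\infty(C^{2+\gamma}_{b,\d})}\,\bigl(c_0\,\d^{2i-1}(\x,\x')\bigr)^{\frac{2+\gamma}{2i-1}}
= C\,c_0^{\frac{2+\gamma}{2i-1}}\|u\|_{L^\infty(C^{2+\gamma}_{b,\d})}\,\d^{2+\gamma}(\x,\x'),
\end{equation*}
where the bound $|(\m^{\x}-\btheta)_i|\le Cc_0\,\d^{2i-1}(\x,\x')$ comes from \eqref{PREAL_SPLENDID_HOMOGENEOUS_SMOOTHING_SCALED_EFFECT_ou_pas} (or its refinement $Cc_0\,\d^{2i-1+\gamma}$, which does not help since $\d\le 1$). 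Since $\tfrac{2+\gamma}{2i-1}<1$ for every $i\ge 2$ and $c_0\le 1$, the factor $c_0^{\frac{2+\gamma}{2i-1}}$ is \emph{strictly larger} than $c_0$, not smaller; the worst term is $i=n$, yielding $C\,c_0^{\frac{2+\gamma}{2n-1}}\,\d^{2+\gamma}(\x,\x')$. Thus your argument establishes the second estimate only with the weaker constant $c_0^{(2+\gamma)/(2n-1)}$, not $c_0$, and the sentence claiming that Lemma \ref{Lemme_d_theta_theta_x_x} raised to appropriate powers gives terms of comparable or smaller order cannot stand as written. (This weaker exponent remains sufficient for the bootstrap \eqref{control_voulu}, since $(2+\gamma)/(2n-1)>\gamma/(2n-1)$ and hence $c_0^{(2+\gamma)/(2n-1)}\le c_0^{\gamma/(2n-1)}$, so the overall Schauder estimate is unaffected; but your reasoning does not prove the $Cc_0$ bound actually asserted in the lemma.)
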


We prove the above statement in the next paragraphs respectively dedicated to the control of the derivatives w.r.t. the non-degenerate variables (first estimate) and the control of the H\"older moduli associated with the degenerate ones (second estimate).

\subsubsection{Control of the derivatives w.r.t. the non-degenerate variables}
As done in \eqref{D2_tildeP_g_Holder} and  \eqref{D2_tildeP_g2_Holder}, we can write:
\begin{eqnarray}
\label{control_hoped_mathbf_R_ineq_trig}
&&D_{\x_1}^2 \tilde P_{t_0,t}^{\bxi'} u(t_0, \x')- D_{\x_1}^2 \tilde P_{t_0,t}^{\tilde \bxi'} u(t_0, \x')  \nonumber \\
&=&
\Bigg [ \int_{\R^{nd}} D_{\x_1}^2 \tilde p^{\bxi'}(t,t_0,\x',\y) [u(t_0,\y)-u(t_0,\y_1,\m_{t_0,t}^{\bxi'}(\x')_{2:n})] d\y
\nonumber \\
&&\quad -\int_{\R^{nd}} D_{\x_1}^2 \tilde p^{\tilde \bxi'}(t,t_0,\x',\y) [u(t_0,\y)-u(t_0,\y_1,\m_{t_0,t}^{\tilde \bxi'}(\x')_{2:n})] d\y \Bigg ]
\nonumber \\
&&+
 \Bigg [ \int_{\R^{nd}} D_{\x_1}^2 \tilde p^{\bxi'}(t,t_0,\x',\y)  \int_0^1 d\mu  (1-\mu )
\nonumber \\
&&\quad{\rm{Tr}} \Big (\big [D_{\x_1}^2 u \big (t_0,\m_{t_0,t}^{\bxi'}(\x')_1
+\mu (\y-\m_{t_0,t}^{\bxi'}(\x'))_1,\m_{t_0,t}^{\bxi'}(\x')_{2:n} \big )-D_{\x_1}^2 u \big (t_0,\m_{t_0,t}^{\bxi'}(\x')\big) \big] \big (\y-\m_{t_0,t}^{\bxi'}(\x') \big)_1^{\otimes 2}\Big) 
d\y \nonumber \\
&&\quad - \int_{\R^{nd}} D_{\x_1}^2 \tilde p^{\tilde \bxi'}(t,t_0,\x',\y)  \int_0^1 d\mu  (1-\mu )
\nonumber \\
&&\quad {\rm{Tr}} \Big ( \big[D_{\x_1}^2 u \big (t_0,\m_{t_0,t}^{\tilde \bxi'}(\x')_1
+\mu (\y-\m_{t_0,t}^{\tilde \bxi'}(\x'))_1,\m_{t_0,t}^{\tilde \bxi'}(\x')_{2:n}\big )-D_{\x_1'}^2 u \big(t_0,\m_{t_0,t}^{\bxi'}(\x')\big)\big]\big(\y-\m_{t_0,t}^{\tilde \bxi'}(\x')\big)_1^{\otimes 2}  \Big) d\y \Bigg ]\nonumber \\
\nonumber \\
&&+ \frac 12 \Bigg[  \int_{\R^{nd}}  D_{\x_1}^2 \tilde p^{\bxi'}(t,t_0,\x',\y) {\rm{Tr}} \Big ( D_{\x_1}^2 u(t_0,\m_{t_0,t}^{\bxi'}(\x')) (\y-\m_{t_0,t}^{\bxi'}(\x'))_1^{\otimes 2} \Big ) d \y
\nonumber \\
&&\quad -    \int_{\R^{nd}}  D_{\x_1}^2 \tilde p^{\tilde \bxi'}(t,t_0,\x',\y) {\rm{Tr}} \Big ( D_{\x_1}^2 u(t_0,\m_{t_0,t}^{\tilde \bxi'}(\x')) (\y-\m_{t_0,t}^{\tilde \bxi'}(\x'))_1^{\otimes 2} \Big )
d\y\Bigg]
\nonumber \\
&=:& \Delta_{t,t_0,\bxi',\tilde \bxi'} D_{\x_1}^2\tilde P_1u(t_0,\x,\x') +\Delta_{t,t_0,\bxi',\tilde \bxi'} D_{\x_1}^2\tilde P_{21}u(t_0,\x,\x')+\Delta_{t,t_0,\bxi',\tilde \bxi'} D_{\x_1}^2\tilde P_{22}u(t_0,\x,\x').
\nonumber \\
\end{eqnarray}
We first directly write from \eqref{D2_tildeP_g1_Holder} and \eqref{D2_tildeP_g4_Holder}:
\begin{eqnarray}\label{D2_tildeP_u1_Holder}
\Big|\Delta_{t,t_0,\bxi',\tilde \bxi'} D_{\x_1}^2\tilde P_1u(t_0,\x,\x') +\Delta_{t,t_0,\bxi',\tilde \bxi'} D_{\x_1}^2\tilde P_{21}u(t_0,\x,\x')\Big| \bigg|_{(\bxi',\tilde \bxi')=(\x',\x)}
&\leq&  2C(t_0-t)^{\frac \gamma 2}\|u\|_{C_{b,\d}^{2+\gamma}}  
\nonumber \\
&\leq& Cc_0^{\frac \gamma 2}\|u\|_{C_{b,\d}^{2+\gamma}} \d^\gamma(\x,\x').
\end{eqnarray}
Let us now deal with the last term in \eqref{control_hoped_mathbf_R_ineq_trig}. We proceed similarly to equation \eqref{THE_DECOUP_QUI_BIEN_MOD_HOLDER_DX12} (control of the frozen semigroup). Write:
\begin{eqnarray}
&&\Big |\Delta_{t,t_0,\bxi',\tilde \bxi'} D_{\x_1}^{2}\tilde P_{22}u(t_0,\x,\x') \Big|_{(\bxi',\tilde \bxi')=(\x',\x)}\nonumber\\
&\le& \bigg\{\frac 12   \int_{\R^{nd}} \frac{d\y}{(s-t)}  \bar p_{C^{-1}}^{\bxi'}(t,t_0,\x',\y) | D_{\x_1}^2 u(t_0,\m_{t_0,t}^{\bxi'}(\x'))-D_{\x_1}^2 u(t_0,\m_{t_0,t}^{\tilde \bxi'}(\x')) | |(\y-\m_{t_0,t}^{\bxi'}(\x'))_1|^2
\nonumber \\
&&+ \frac 12   \Big|\int_{\R^{nd}} D_{\x_1}^2 \tilde p^{\bxi'}(t,t_0,\x',\y) {\rm{Tr}} \Big ( D_{\x_1}^2 u(t_0,\m_{t_0,t}^{\tilde \bxi'}(\x')) (\y-\m_{t_0,t}^{\bxi'}(\x'))_1^{\otimes 2} \Big )
\nonumber\\
 &&-D_{\x_1}^2 \tilde p^{\tilde \bxi'}(t,t_0,\x',\y) {\rm{Tr}} \Big ( D_{\x_1}^2 u(t_0,\m_{t_0,t}^{\tilde \bxi'}(\x')) (\y-\m_{t_0,t}^{\tilde \bxi'}(\x'))_1^{\otimes 2} 
 \Big )d\y \Big| \bigg\}\bigg|_{(\bxi',\tilde \bxi')=(\x',\x)}
\nonumber \\
&\le& C | D_{\x_1}^2 u(t_0,\m_{t_0,t}^{\bxi'}(\x'))-D_{\x_1}^2 u(t_0,\m_{t_0,t}^{\tilde \bxi'}(\x')) |\Big|_{(\bxi',\tilde \bxi')=(\x',\x)},
\notag
\end{eqnarray}
\textcolor{black}{
exploiting Proposition \ref{THE_PROP} for the first contribution and  identity \eqref{GROS_CENTER_TER} in Proposition \ref{Prop_moment_D2_tilde_p} for the second contribution in the last inequality.}

From Lemma \ref{Lemme_d_theta_theta_x_x}, we derive that for $t_0-t = c_0 \d^2(\x,\x')$:
\begin{equation}\label{D2_tildeP_g2_Holder_diff_R12_DISC}
\Big |\Delta_{t,t_0,\bxi',\tilde \bxi'} D_{\x_1}^{2}\tilde P_{22}u(t_0,\x,\x') \Big|_{(\bxi',\tilde \bxi')=(\x',\x)} 
 \leq  C  \|u(t_0,\cdot)\|_{C^{2+\gamma}_{b,\d}}c_0^{\frac \gamma{2n-1}} \d^\gamma(\x,\x').
\end{equation}

%In a nutshell, 
Eventually, from \eqref{D2_tildeP_u1_Holder} and \eqref{D2_tildeP_g2_Holder_diff_R12_DISC}, we get the following control:
\textcolor{black}{
\begin{eqnarray*}
\label{control_hoped_mathbf_R_ineq_trig1}
\big |D_{\x_1}^2 \tilde P_{t_0,t}^{\bxi'} u(t_0, \x')- D_{\x_1}^2 \tilde P_{t_0,t}^{\tilde \bxi'} u(t_0, \x')\big |  \Big |_{(\bxi',\tilde \bxi ')=(\x,\x')} \leq C c_0^{\frac \gamma{2n-1}} \|u\|_{L^\infty(C_{b,\d}^{2+\gamma})}  \d^\gamma(\x,\x'), 
\end{eqnarray*}
which gives the first statement of Lemma \ref{CTR_TERME_DISC} for the second order derivatives w.r.t. the non-degenerate variables. 
}
\subsubsection{H\"older controls for the degenerate variables}
Again, for the H\"older norm w.r.t. the degenerate variables, the difficulty is that we cannot take any advantage of cancellation tools. We adapt here the arguments employed in Section \ref{DEAL_SG_DEG} for the frozen semigroup.
Precisely, for any $(\x,\x') \in (\R^{nd})^2$, $\x_1=\x_1'$, $\bxi'=\x',\ \tilde \bxi'=\x $, we have similarly to \eqref{control_hoped_mathbf_R_ineq_trig} (but without the spatial derivatives $D_{\x_1}^2$):
\begin{eqnarray}
\label{control_hoped_mathbf_R_ineq_trigu1}
 \tilde P_{t_0,t}^{\bxi'} u(t_0, \x')-  \tilde P_{t_0,t}^{\tilde \bxi'} u(t_0, \x') 
&=& \Delta_{t,t_0,\bxi', \tilde \bxi'} \tilde P_1u(t_0,\x',\x') +\Delta_{t,t_0,\bxi',\tilde\bxi'} \tilde P_{21}u(t_0,\x',\x')
\nonumber \\
&&+   \frac 12  {\rm Tr}\Big ([D_{\x_1}^2u(t_0,\m_{t_0,t}^{\bxi'}(\x'))- D_{\x_1}^2u(t_0,\m_{t_0,t}^{\tilde \bxi'}(\x'))] [\tilde \K^{\bxi'}_{t_0,t}]_{1,1}\Big )
\nonumber \\
&&+  \frac 12  {\rm Tr}\Big (D_{\x_1}^2u(t_0,\m_{t_0,t}^{\tilde \bxi'}(\x'))\big([ \tilde \K^{\bxi'}_{t_0,t}]_{1,1}-[\tilde \K^{\tilde \bxi'}_{t_0,t}]_{1,1}\big) \Big ),
%\nonumber \\
\end{eqnarray}
where accordingly with \eqref{control_hoped_mathbf_R_ineq_trig}:
\begin{eqnarray*}
\Delta_{t,t_0,\bxi',\tilde \bxi'} \tilde P_1u(t_0,\x',\x')&:=&
 \int_{\R^{nd}}  \tilde p^{\bxi'}(t,t_0,\x',\y) [u(t_0,\y)-u(t_0,\y_1,\m_{t_0,t}^{\bxi'}(\x')_{2:n})] d\y
\nonumber \\
&&-\int_{\R^{nd}}  \tilde p^{\tilde \bxi'}(t,t_0,\x',\y) [u(t_0,\y)-u(t_0,\y_1,\m_{t_0,t}^{\tilde \bxi'}(\x')_{2:n})] d\y ,
\nonumber \\
\end{eqnarray*}
and
\begin{eqnarray*}
&&\Delta_{t,t_0,\bxi',\tilde \bxi'} \tilde P_{21}u(t_0,\x',\x') 
\nonumber \\
&:=& \int_{\R^{nd}}  \tilde p^{\bxi '}(t,t_0,\x',\y)  \int_0^1 d\mu  (1-\mu )
\nonumber \\
&&{\rm{Tr}}\Big ( \big [D_{\x_1}^2 u \big (t_0,\m_{t_0,t}^{\bxi'}(\x')_1
+\mu (\y-\m_{t_0,t}^{\bxi'}(\x'))_1,\m_{t_0,t}^{\bxi'}(\x')_{2:n} \big)-D_{\x_1}^2 u\big (t_0,\m_{t_0,t}^{\bxi'}(\x')\big)\big]\big(\y-\m_{t_0,t}^{\bxi'}(\x')\big)_1^{\otimes 2} \Big )
 d\y \nonumber \\
&&- \int_{\R^{nd}}  \tilde p^{\tilde \bxi'}(t,t_0,\x',\y)  \int_0^1 d\mu  (1-\mu )
\nonumber \\
&& {\rm{Tr}} \Big ( \big [D_{\x_1}^2 u\big(t_0,\m_{t_0,t}^{\tilde \bxi'}(\x')_1
+\mu (\y-\m_{t_0,t}^{\tilde \bxi'}(\x'))_1,\m_{t_0,t}^{\tilde \bxi'}(\x')_{2:n} \big)-D_{\x_1}^2 u\big(t_0,\m_{t_0,t}^{\tilde \bxi'}(\x')\big)\big] \big(\y-\m_{t_0,t}^{\tilde \bxi'}(\x')\big)_1^{\otimes2 } \Big) d\y .
\end{eqnarray*}
Reproducing the arguments that led to equations \eqref{tildeP_g1_Holder} and \eqref{tildeP_g4_Holder}, we derive:
\begin{eqnarray}\label{tildeP_u1_Holder}
&& \big | \Delta_{t,t_0,\bxi',\tilde \bxi'} \tilde P_1u(t_0,\x',\x') +\Delta_{t,t_0,\bxi',\tilde \bxi'} \tilde P_{21}u(t_0,\x',\x') \big | \Big |_{(\bxi',\tilde \bxi ')=(\x',\x)} 
\notag\\
&\leq& C(t_0-t)^{\frac{2+\gamma}2} \|u\|_{C_{b,\d}^{2+\gamma}}
\nonumber \\
&\leq& Cc_0^{\frac{2+\gamma}2} \|u\|_{L^\infty(C_{b,\d}^{2+\gamma})}  \d^{2+\gamma}(\x,\x').
\end{eqnarray}
Let us now turn to the last two contributions in \eqref{control_hoped_mathbf_R_ineq_trigu1}.
As done in \eqref{THE_CTR_HD_1_DEG}, from Proposition \ref{PROP_SCALE_COV} and Lemma \ref{Lemme_d_theta_theta_x_x} we obtain:
\begin{eqnarray}\label{tildeP_u2_Holder2}
 && \frac 12 \big | [D_{\x_1}^2u(t_0,\m_{t_0,t}^{\bxi'}(\x'))-D_{\x_1}^2u(t_0,\m_{t_0,t}^{\tilde \bxi'}(\x'))] [\tilde \K^{\bxi'}_{t_0,t}]_{1,1} \big | \Big |_{(\bxi',\tilde \bxi ')=(\x',\x)} 
\nonumber \\
  &\leq& C \|u\|_{L^\infty(C_{b,\d}^{2+\gamma})}(t_0-t) \d^{\gamma}(\m_{t_0,t}^{\x}(\x'),\btheta_{t_0,t}(\x'))
  \nonumber \\
&\leq& C c_0^{1+\frac \gamma{2n-1}}\|u\|_{L^\infty(C_{b,\d}^{2+\gamma})} \d^{2+\gamma}(\x,\x').
\end{eqnarray}
The last term of \eqref{control_hoped_mathbf_R_ineq_trigu1} is handled like in \eqref{tildeP_g2_Holder}.
Namely, by Lemma \ref{SENS_COV} (equation \eqref{CTR_SENSI_COV}), we obtain:
\begin{eqnarray}\label{tildeP_u2_Holder_Bis}
&&\big |\frac 12  D_{\x_1}^2u(t_0,\m_{t_0,t}^{\tilde \bxi'}(\x'))\big( [\tilde \K^{\bxi'}_{t_0,t}]_{1,1}-[\tilde \K^{\tilde \bxi'}_{t_0,t}]_{1,1}\big) \big| \Big|_{(\bxi',\tilde \bxi')=(\x',\x)}
\nonumber \\
&\leq&  C \|u(t_0,\cdot)\|_{C_{b,\d}^{2+\gamma}} \big( (t_0-t)^{\frac {2+\gamma} 2}+(t_0-t)\d^{\gamma}(\x,\x') \big )
\nonumber \\
&\leq & C c_0 \|u\|_{L^\infty(C_{b,\d}^{2+\gamma})} \d^{2+\gamma}(\x,\x') 
.
\end{eqnarray}
Plugging  \eqref{tildeP_u1_Holder}, \eqref{tildeP_u2_Holder2} and \eqref{tildeP_u2_Holder_Bis} into \eqref{control_hoped_mathbf_R_ineq_trigu1}, recalling as well that $c_0<1$, we derive the second statement  of Lemma \ref{CTR_TERME_DISC}.

%\subsection{Final %proof of the 
%Schauder estimates for the regularized equation when the coefficients do not vary much}
%%\textcolor{black}{A completer}
%We are now in position to derive \eqref{SCHAU_UG} provided the constant $\Lambda $ appearing in front of all the perturbative terms is small enough. Indeed, from the controls of Sections \ref{Sectio_Sup_D2} and \ref{HOLDER}, i.e. Proposition \ref{PROP_SUP_CTRL} for the supremum norms and Proposition \ref{PROP_HOLDER_CTRL} for the HÂlder norms
%\textcolor{black}{PAS A SA PLACE (cf TITRE SECTION) A METTRE AILLEURS -------------------------------------------}
\begin{REM}[Concluding remark concerning the \textit{a priori} estimates]\label{REM_PAS_SA_PLACE}From the results of Sections \ref{Sectio_Sup_D2} and \ref{HOLDER}, i.e. Proposition \ref{PROP_SUP_CTRL} for the supremum norms and Proposition \ref{PROP_HOLDER_CTRL} for the H\"older norms,
 we actually derive the following bound. There exist constants $C:=C(\A{A})$ and $\Lambda:=\Lambda(\A{A},T) $ as in Remark \ref{REM_LAMBDA} s.t. 
\begin{equation}\label{control_voulu}
\|u\|_{L^\infty(C^{2+\gamma}_{b,\d})} \leq C(\|g\|_{C^{2+\gamma}_{b,\d}}+ \|f\|_{L^\infty(C^\gamma_{b,\d})})+ \Big(\Lambda \big( (c_0^{-(n-\frac 12)+\frac \gamma 2}+c_0^{\frac \gamma2})+T^{\frac \gamma 2}\big) +Cc_0^{\frac{\gamma}{2n-1}}\Big) \|u\|_{L^\infty(C^{2+\gamma}_{b,\d})}.
\end{equation}
The above estimate then readily yields \eqref{SCHAU_UG} provided $c_0$ and $\Lambda $ are small enough (also with $\Lambda\ll c_0 $). 
Recalling from Remark \ref{REM_LAMBDA} that $\Lambda $ is small provided the coefficients do not vary much,
%It can been observed from the previous technical lemmas that provided  $T,c_0, $ and the H\"older moduli of the coefficients $\|a\|_{L^\infty(C_\d^\gamma)}$,  $\sum_{i=2}^n \|\gF_i\|_{L^\infty( C_{\d}^{2i-3+\gamma})} $ (with the definition of \eqref{DEF_NORME_HOLDER_HOMO}) are small enough, then $ C_0<1$, which in turn directly yields the expected Schauder estimate for the mollified dynamics.
%Of course, 
the remaining delicate part consists in getting rid of the small H\"older moduli constraint.  This can be done through suitable scaling arguments that are exposed in Section \ref{scaling}.

%\textcolor{black}{PAS A SA PLACE !!!!! ------------------------ (cf TITRE SECTION)}\\
%The point is now to control $\|\textcolor{black}{u}\|_{C_{b,\d}^{2+\gamma}} $. From \eqref{REP_FK} it is direct to derive that for all $(t,\x)\in [0,T]\times \R^{nd} $:
%$$|u(t,\x)|\le  \|g\|_{L^\infty}+T\|f\|_{L^\infty}.$$
%\textcolor{black}{PAS A SA PLACE !!!!! ------------------------ ------------------}\\

\end{REM}
%\textcolor{black}{PAS A SA PLACE -----------------------A METTRE AILLEURS -------------------------------------------}

\mysection{Scaling issues and final proof of Theorem \ref{THEO_SCHAU}}\label{scaling}

The purpose of this section is to first introduce a suitable scaling procedure for the system with mollified coefficients for which we will be able to show equation \eqref{control_voulu}. This intuitively means that the scaling has to make the H\"older moduli of the considered coefficients small. The expected control is then obtained going back to the initial variables through the \textit{inverse} scaling procedure. Also, once the estimate is established for \textit{small} final time horizon $T$, it can be deduced through iteration up to an arbitrary given time precisely because it provides a kind of stability for the solution in the space $L^\infty([0,T],C_{b,\d}^{2+\gamma} (\R^{nd},\R))$.  We then conclude the proof of our main result, Theorem \ref{THEO_SCHAU} through compactness arguments.
%These aspects are detailed in Section \ref{section_scaling_properties}. 

%%%%% A priori ceci ne sert plus.
%We propose in this section a  scaling procedure which allows us, for the rescaled system $u^\lambda $, to control the discontinuity term   $\big |D_{\x_1}^2 \tilde P_{t_0,t}^{\bxi'} u(t_0, \x')- D_{\x_1}^2 \tilde P_{t_0,t}^{\tilde \bxi'} u(t_0, \x')\big | $ (see equation \eqref{control_hoped_mathbf_R_ineq_trig1}) and the remainder term ${\mathcal R}^{ \bxi}u(t,T,\cdot)$ defined as follows: 
%\begin{equation}\label{def_mathcal_R_u}
% {\mathcal R}^{ \bxi}u(t,T,\cdot):=\int_t^T ds \int_{\R^{nd}}D^2_{\x_1} \tilde p^{\bxi}(t,s,\cdot,\y) (L-\tilde L^{\bxi}) u(s,\y) d\y,
%\end{equation}
%Precisely, we scale by a ``small" real number $\lambda>0$ (specified further) s.t.
%\begin{equation}\label{hoped_ineq_mathcal_Ru}
%\sup_{t\in [0,T]} \|{\mathcal R}^{ \bxi}u(t,T,\cdot) \|_{C^{2+\gamma}_{b,\d}} \leq C_\eqref{hoped_ineq_mathcal_Ru} \lambda^{\frac \gamma 2} \|u\|_{L^\infty(C^{2+\gamma}_{b,\d})} \leq c_0 \|u\|_{L^\infty(C^{2+\gamma}_{b,\d})},
%\end{equation}
%for $C_\eqref{hoped_ineq_mathcal_Ru}=C_\eqref{hoped_ineq_mathcal_Ru}(\|\gF_1\|_{L^\infty(C_\d^\gamma},(\|\gF_{i}\|_{L^\infty(C_\d^{2i-3+\gamma})})_{i \in \leftB 2,n \rightB}, \|a\|_{L^\infty(C_ {b,\d}^{\gamma})})$.
\subsection{Scaling settings and controls}
\label{section_scaling_properties}

We start here from the smooth solution $u$ to equation \eqref{KOLMO} with mollified coefficients (that we again denote by a slight abuse of notation without the mollifying parameter $m$). For an additional parameter $\lambda>0$ to be specified later (but meant to be \textit{small}), introducing the scaled function $u^\lambda(t,\x)=u(t,\lambda^{-1/2}\T_\lambda \x) $, it is then clear that this latter satisfies
\begin{eqnarray}
\label{KOLMO_LAMBDA}
\begin{cases}
\partial_t u^\lambda(t,\x)+ \langle \gF(t,\lambda^{-1/2}\T_\lambda \x),  \lambda^{1/2}\T_\lambda^{-1} \mathbf D u^\lambda(t,\x)\rangle\\
 +\frac{\lambda^{-1 }}2{\rm Tr}\big( D_{\x_1}^2u^\lambda (t,\x)a(t,\lambda^{-1/2}\T_\lambda \x)\big)=-f(t,\lambda^{-1/2}\T_\lambda \x),\ (t,\x)\in [0,T)\times \R^{nd},\\
 u^\lambda(T,\x)=g(\lambda^{-1/2}\T_\lambda \x),\ \x\in \R^{nd}.
 \end{cases}
\end{eqnarray}
This choice of rescaling is natural in view of the invariance by dilatation property \eqref{delta_lambda}, i.e. each single variable $\x_i$ is scaled by the parameter $\lambda$ according to its corresponding intrinsic scale.

We rewrite in short form %with direct identifications 
 the above equation as for any $\x \in \R^{nd}$
\begin{eqnarray}\label{KOLMO_LAMBDA_SHORT}
\begin{cases}
\partial_tu^\lambda(t,\x)+ \langle \gF_\lambda(t,\x),  \mathbf D u^\lambda(t,\x)\rangle
 +\frac{1}2{\rm Tr}\big( D_{\x_1}^2 u^\lambda(t,\x)a_\lambda(t, \x)\big)=-f_\lambda(t, \x),\ t\in [0,T),\\ %t,\x)\in [0,T)\times \R^{nd},\\
 u^\lambda(T,\x)=g_\lambda(\x),%\ \x\in \R^{nd},
 \end{cases}
\end{eqnarray}
where %for all $(t,\x)\in [0,T]\times \R^{nd}$, %\ \lambda>0$,
\begin{eqnarray}
 f_\lambda(t, \x)&:=&f(t,\lambda^{-1/2}\T_\lambda \x),\ g_\lambda(\x):=g(\lambda^{-1/2}\T_\lambda \x),\notag\\
a_\lambda(t,\x)&:=&\lambda^{-1} a(t,\lambda^{-1/2}\T_\lambda \x),\notag \\ 
\gF_\lambda(t, \x)&:=&\lambda^{1/2}\T_\lambda^{-1}\gF(t,\lambda^{-1/2}\T_\lambda \x).\label{CORRESP_COEFF_LAMBDA}
\end{eqnarray}
Accordingly, we introduce the spatial operator $(L_s^\lambda)_{s\ge 0} $ appearing in \eqref{KOLMO_LAMBDA_SHORT} which writes explicitly for any $\varphi \in C^2_0(\R^{nd},\R)$ as:
$$L_t^\lambda \varphi= \langle \gF_\lambda(t,\cdot) , \mathbf D\varphi \rangle  +\frac 12 {\rm Tr}\Big(a_\lambda(t,\cdot) D_{\x_1}^2 \varphi\Big),\ \lambda>0.$$

%%%% Inutile ici les coefficients sont reguliers.
%From \cite{chau:meno:17}, the martingale problem for the \textcolor{black}{previous} operator $L_t^\lambda$  
%$$L_t^\lambda= \gF_\lambda(t,\cdot) \cdot D_\x +\frac 12 {\rm Tr}\Big(a_\lambda(t,\cdot) D_{\x_1}^2 \Big),\ \lambda>0$$
%is well posed. 
The dynamics of the SDE associated with the second order differential operator $(L_t^\lambda)_{t\in [0,T]} $ appearing in \eqref{KOLMO_LAMBDA}-\eqref{KOLMO_LAMBDA_SHORT} writes for a given starting point $(t,\x)\in [0,T]\times \R^{nd} $:
\begin{equation}
\label{RESCALED_SDE}
\X_{s}^{\lambda,t,\x}=\x+\int_t^s \gF_\lambda(u,\X_{u}^{\lambda,t,\x}) du+\int_t^s B\sigma_\lambda(u,\X_{u}^{\lambda,t,\x}) dW_u,\ s\ge t,
\end{equation}
where $(W_u)_{u\ge 0} $ is a $d$-dimensional Brownian motion on some filtered probability space $(\Omega,\F,(\F_t)_{t\ge 0},\P) $ and 
$\sigma_\lambda $ is a square root of the diffusion matrix $a_\lambda$ introduced in \eqref{CORRESP_COEFF_LAMBDA}.
\\

Equation \eqref{RESCALED_SDE}  then naturally leads to consider, for fixed $(s,\y)\in [t,T]\times \R^{nd} $, and with the notations of Section \ref{SEC2}, the corresponding linearized model
\begin{eqnarray}\label{RESCALED__LINEARIZEDSDE}
%&&\hspace*{-.5cm}
d\tilde \X_v^{ \bxi,\lambda}&=&[\gF_\lambda(v,\btheta_{v,t}^\lambda(\bxi^\lambda))+ D\gF_\lambda(v,\btheta^\lambda_{v,t}(\bxi^\lambda))(\tilde \X_v^{ \bxi,\lambda}-\btheta_{v,t}^\lambda(\bxi^\lambda))]dv +B\sigma_\lambda(v,\btheta^\lambda_{v,t}(\bxi^\lambda )) dW_v, \ \forall v\in  [t,s] \nonumber\\
%&& %\hspace*{.75cm}\forall v\in  [t,s],\
 \tilde \X_t^{ \bxi,\lambda}&=&\x, %\label{FROZ_MOL_FOR}
 \end{eqnarray}  
 where $ \btheta_{v,t}^\lambda(\bxi^\lambda)=\lambda^{\frac 12}\T_{\lambda }^{-1}\btheta_{v,t}(\bxi^\lambda),\ \bxi^\lambda=\lambda^{-\frac 12}\T_\lambda \bxi$. The associated generator writes for $\varphi \in C_0^2(\R^{nd},\R) $ and $(s,\y)\in [t,T]\times \R^{nd} $ as:
\begin{equation*}
\tilde  L_s^{\lambda, \bxi}\varphi(\y)=\langle \gF_\lambda(s,\btheta_{s,t}^\lambda(\bxi^\lambda))+D\gF_\lambda(s,\btheta_{s,t}^\lambda(\bxi^\lambda))(\y-\btheta_{s,t}^\lambda(\bxi^\lambda)), \mathbf D \varphi(\y)\rangle +\frac 12{\rm Tr}\big(a_\lambda(t,\btheta_{s,t}^\lambda(\bxi^\lambda))D_{\y_1}^2 \varphi(\y) \big).
\end{equation*}
Observe from \eqref{FROZ} and \eqref{RESCALED__LINEARIZEDSDE} that the following very important correspondence holds: 
\begin{equation}
\label{CORRESP_SCALE}
\forall v\in [t,T],\ \tilde \X_{v}^{\bxi,\lambda}:=\lambda^{1/2}\T_\lambda^{-1} \tilde \X_v^{\bxi^\lambda}.
\end{equation}
We thus derive from Proposition \ref{THE_PROP} the following important correspondence for the densities. Denoting by $\tilde p_\lambda^\bxi(t,s,\x,\y) $ the density of $\tilde \X_{v}^{\bxi,\lambda}$ starting from $\x$ at time $t$ at point $\y$ in $s$, and $\x^\lambda =\lambda^{-\frac 12}\T_\lambda \x $, we have
\begin{eqnarray}
\label{CORRESP_DENS_SCALING_IMP}
\tilde p_\lambda^\bxi(t,s,\x,\y)&=&\lambda^{\frac{n^2d}2}\tilde p^{\bxi^\lambda}(t,s, \lambda^{-\frac 12}\T_\lambda \x,\lambda^{-\frac 12}\T_\lambda \y )\notag\\
&=&\frac{\lambda^{\frac{n^2d}2}}{(2\pi)^{\frac{nd}2}\det(\tilde \K_{s,t}^{\bxi^\lambda})^{\frac 12}}\exp\left( -\frac {\lambda^{-1}}2 \left\langle  \big[\T_\lambda ( \tilde \K_{s,t}^{\bxi^\lambda})^{-1} \T_\lambda\big](\lambda^{\frac 12}\T_{\lambda}^{-1}\m_{s,t}^{\bxi^\lambda}(\x^\lambda)-\y),\lambda^{\frac 12}\T_\lambda^{-1} \m_{s,t}^{\bxi^\lambda}(\x^\lambda)-\y\right\rangle\right).\notag\\
\end{eqnarray}
In particular, for $\bxi=\x $ one derives:
\begin{eqnarray}
\label{CORRESP_DENS_SCALING_IMP_BIS}
\tilde p_\lambda^\bxi(t,s,\x,\y)&=&\lambda^{\frac{n^2d}2}\tilde p^{\bxi^\lambda}(t,s, \lambda^{-\frac 12}\T_\lambda \x,\lambda^{-\frac 12}\T_\lambda \y )\notag\\
&=&\frac{\lambda^{\frac{n^2d}2}}{(2\pi)^{\frac{nd}2}\det(\tilde \K_{s,t}^{\bxi^\lambda})^{\frac 12}}\exp\left( -\frac {\lambda^{-1}}2 \left\langle  \big[\T_\lambda ( \tilde \K_{s,t}^{\bxi^\lambda})^{-1} \T_\lambda\big](\btheta_{s,t}^{\lambda}(\x^\lambda)-\y), \btheta_{s,t}^\lambda(\x^\lambda)-\y\right\rangle\right).\notag\\
\end{eqnarray}
% De S. a I. et P.E. peut-Ã¯Â¿Åtre en remettre une couche ici avec le scaling de la matrice de Cov. qui se touvait apres la Prop. 1
Equation \eqref{CORRESP_DENS_SCALING_IMP} in turn yields the following important control:
\begin{eqnarray}
|D_{\x}^\vartheta\tilde p_\lambda^{\bxi}(t,s,\x,\y)|&\le& C \left(\frac{ \lambda}{(s-t)}\right)^{\sum_{i=1}^n \vartheta_i(i-\frac{1}{2}) +\frac{n^2d}2}\exp\left(-C^{-1}\frac{(s-t)}{\lambda} \big|\T_{\frac{s-t}{\lambda}}^{-1}\big(\lambda^{\frac 12}\T_\lambda^{-1}\m_{s,t}^{\bxi^\lambda}(\x^\lambda)-\y\big)\big|^2\right)\notag\\
&=:& C\left(\frac{ \lambda}{(s-t)}\right)^{\sum_{i=1}^n \vartheta_i(i-\frac{1}{2})}%\frac{C \lambda^{|\vartheta|}} {(s-t)^{\sum_{i=1}^n \vartheta_i(i-\frac{1}{2})}}
\bar p_{C^{-1},\lambda}^{\bxi}(t,s,\x,\y).\label{THE_GOOD_SCALE_DER_CTR}
\end{eqnarray} 
In the following, we will also denote $\bar p_{C^{-1},\lambda}(t,s,\x,\y):=\bar p_{C^{-1},\lambda}^\bxi(t,s,\x,\y)\Big|_{\bxi=\x}$ in \eqref{THE_GOOD_SCALE_DER_CTR}.

\begin{REM}
We emphasize that \eqref{THE_GOOD_SCALE_DER_CTR} gives that, each derivation of the Gaussian kernel $\tilde p_\lambda^\bxi $ makes the small parameter $\lambda $ appear. Hence, up to the additional time singularities, the iterated derivatives become smaller and smaller. 
\end{REM}

\subsection{Scaling properties}
The point is now to reproduce the previous perturbative approach for the solution of \eqref{KOLMO_LAMBDA_SHORT} in order to obtain ad hoc versions of Propositions \ref{PROP_SUP_CTRL} (Section \ref{Sectio_Sup_D2}) and \ref{PROP_HOLDER_CTRL} (Section \ref{HOLDER}). Such versions then allow us to derive the analogous control of \eqref{control_voulu} in this rescaled setting involving precisely a positive exponent of $\lambda$ in front of the term $\Lambda c_0^{-(n-\frac 12) + \frac\gamma{2}}$. As underlined in Remark \ref{REM_PAS_SA_PLACE}, this type of control will give the expected final bound provided the scaling parameter $\lambda$ is small enough.\\

We will mainly focus on the H\"older norm of the remainder term (i.e. rescaled version of estimate \eqref{esti_holder_deriv_sec_perturpart} in Lemma \ref{HOLDER_CTR_PERT_PART}) associated with the second order derivatives w.r.t. the non-degenerate variables. It can indeed be seen from the previous computations that the other contributions can be dealt similarly.\\

%Sectio_Sup_D2 PROP_SUP_CTRL
%SUB_lem_Holder_tildeP    lem_Holder_tildePg
% SUB_lemme_Holder_Gg_VAR_DEG_ET_D2_NON_DEG lemme_Holder_Gg_VAR_DEG_ET_D2_NON_DEG
% SEC_DISC CTR_TERME_DISC
%We aim at proving that the $L^\infty([0,T],C_{b,\d}^\gamma(\R^{nd},\R)) $ norm of the remainder term indicated term is small. This will  follow from our scaling procedure.

\textbf{H\"older estimate on the rescaled remainder.}   According to the notations introduced in the beginning of Section \ref{HOLDER}, we denote for $\lambda>0 $, $(t,\x,\x',\x')\in [0,T]\times(\R^{nd})^2 $ the quantity:
\begin{eqnarray}
D^2_{\x_1}(\Delta^{\lambda})^{\bxi,\bxi'}(t,\x,\x')
&:=&%\notag\\
\Bigg( \int_t^{(t+\textcolor{black}{c_0}\lambda \d^2(\x,\x')) \wedge T} ds \int_{\R^{nd}}\Big(D^2_{\x_1} \tilde p_{\lambda}^{\bxi}(t,s,\x,\y) (L_s^\lambda-\tilde L_s^{\lambda,\bxi}) u^\lambda(s,\y)
\notag\\ &&
-D^2_{\x_1} \tilde p_{\lambda}^{\bxi'}(t,s,\x',\y) (L_s^\lambda-\tilde L_s^{\lambda,\bxi'}) u^\lambda(s,\y) \Big) d\y\Bigg)\Bigg|_{(\bxi,\bxi')=(\x,\x')}\notag\\
 &&+\Bigg( \int_{(t+\textcolor{black}{c_0} \lambda \d^2(\x,\x')) \wedge T}^T ds \int_{\R^{nd}}\Big(D^2_{\x_1} \tilde p_{\lambda}^{\bxi}(t,s,\x,\y) (L_s^\lambda-\tilde L_s^{\lambda,\bxi}) u^\lambda(s,\y)\notag\\
 &&-D^2_{\x_1} \tilde p_{\lambda}^{\bxi'}(t,s,\x',\y) (L_s^\lambda-\tilde L_s^{\lambda,\bxi'}) u^\lambda(s,\y) \Big)
 d\y\Bigg)\Bigg|_{(\bxi,\bxi')=(\x,\x)}\notag\\
 &=:& D^2_{\x_1}(\Delta^{\lambda})_{{\rm \textbf{off-diag}}}^{\bxi,\bxi'}(t,\x,\x')+D^2_{\x_1}(\Delta^{\lambda})_{{\rm \textbf{diag}}}^{\bxi,\bxi'}(t,\x,\x'),\label{RESCALED_QUANTITY_TO_CONTROL}
\end{eqnarray}
using again as in \eqref{EXP_MIXED_DER_NON_DEG} different freezing point\textcolor{black}{s} according to the spatial regime w.r.t. integration time $s$.
%\textcolor{black}{(i.e. $\bxi'$ is chosen to be equal to $\x$ in the \emph{diagonal} regime and to $\x'$ in the \emph{off-diagonal} regime)}. 
Note however carefully that the cutting threshold here depends on the scaling parameter $\lambda $. This is very important in order to balance the various scales that will appear. \textcolor{black}{Pay attention as well that the parameter $c_0$ also remains. Actually, a subtle balance between those two parameters will be needed to derive the expected control.
%\footnote{We refer to the appendix \ref{} and \ref{} for details about }.
}  
Proceeding as in \textcolor{black}{Section \ref{SUB_HOLDER_CTR_PERT_PART}}, we aim at showing that there exist constants $C:=C(\A{A},T)$ and  $\Lambda:=\Lambda(\A{A},T)$ as in Remark \ref{REM_LAMBDA} s.t. for any $ (\x,\x')\in (\R^{nd})^2$:
\begin{equation}
\label{THE_CTR_RESCALED}
%\sup_{t \in [0,T]}\|{\mathcal R}^{\lambda, \bxi}u^\lambda(t,T,\cdot)\|_ {C^{2+\gamma}_{b,\d}} 
\sup_{t\in [0,T]}\frac{|D^2_{\x_1}(\Delta^{\lambda})^{\bxi,\bxi'}(t,\x,\x')|}{\d^\gamma(\x,\x')}\leq\Big( \Lambda \textcolor{black}{(} \textcolor{black}{c_0^{-(n-\frac 12)+ \frac \gamma 2}}+c_0^{\frac \gamma2} )\lambda^{\frac{\gamma}{2}}\Big) \|u^\lambda\|_{L^\infty(C^{2+\gamma}_{b,\d})}.
\end{equation}

\emph{Proof.} Let us first consider the diagonal term in \eqref{RESCALED_QUANTITY_TO_CONTROL} assuming w.l.o.g. that $t+\textcolor{black}{c_0}\lambda \d^2(\x,\x')\le T $ (otherwise we only have the off-diagonal contribution) \textcolor{black}{and recalling that in the \emph{diagonal regime} we chose $\bxi'=\bxi=\x$}. Write:
\begin{eqnarray}
\label{R_OFF_DIAG_LAMBDA_1}
&&|D^2_{\x_1}(\Delta^{\lambda})_{{\rm \textbf{diag}}}^{\bxi,\bxi'}(t,\x,\x')|\Big|_{\bxi'=\bxi=\x}
%\!\!\!&\le&\!\!\! 
\notag \\
&\leq& \Big |\int_{t+\textcolor{black}{c_0}\lambda \d^2(\x,\x')}^T ds \int_{\R^{nd}} d\y \Big [ D^2_{\x_1}  \tilde p_{\lambda}^{\bxi} (t,s,\x,\y)- D^2_{\x_1}  \tilde p_{\lambda}^{\bxi} (t,s,\x',\y) \Big ] \Big [ \langle \gF_{\lambda,1}(t,\y)-\gF_{\lambda,1}(t,\btheta_{s,t}^\lambda(\bxi^\lambda)), D_{\y_1} \rangle\notag\\
 &&\quad+ \frac{1}2{\rm Tr} \Big( \big(a_\lambda(t,\y)-a_\lambda(t,\btheta_{s,t}^\lambda(\bxi^\lambda))\big) D_{\y_1}^2\Big) \Big ]u^\lambda(s,\y) \Big| \textcolor{black}{\bigg |_{\bxi=\x}}
\notag\\
%\!\!\!&+&\!\!\!
&&+ \Big| \sum_{i=2}^n \int_{t+\textcolor{black}{c_0}\lambda \d^2(\x,\x')}^T ds\int_{\R^{nd}}d\y \Big[ D^2_{\x_1}  \tilde p_{\lambda}^{\bxi} (t,s,\x,\y)- D^2_{\x_1}  \tilde p_{\lambda}^{\bxi} (t,s,\x',\y) \Big]
\notag \\
&&\quad  \Big \langle \gF_{\lambda,i}(t,\y)-[\gF_{\lambda,i}(t,\btheta_{s,t}^\lambda(\bxi^\lambda))+D\gF_{\lambda,i}(t,\btheta_{s,t}^\lambda(\bxi^\lambda))(\y-\btheta_{s,t}^\lambda(\bxi^\lambda))_{i-1}], D_{\y_i} u^\lambda(s,\y) \Big \rangle \Big|\textcolor{black}{\bigg |_{\bxi=\x}}
\notag\\
&=& \Big|\int_{t+\textcolor{black}{c_0} \lambda \d^2(\x,\x')}^T ds \int_{\R^{nd}} d\y \int_0^1 d\mu  \mathbf D D^2_{\x_1}  \tilde p_{\lambda}^{\bxi} (t,s,\x+\mu(\x'-\x),\y) \cdot (\x-\x') \notag\\
&&\quad \bigg ( \Big \langle \gF_{\lambda,1}(t,\y)-\gF_{\lambda,1}(t,\btheta_{s,t}^\lambda(\bxi^\lambda)), D_{\y_1} \Big \rangle %\notag\\
 + \frac{1}2{\rm Tr} \Big( \big(a_\lambda(t,\y)-a_\lambda(t,\btheta_{s,t}^\lambda(\bxi^\lambda))\big) D_{\y_1}^2\Big) \bigg) u^\lambda(s,\y) %\1_{A^C_{t,\x,\x',\lambda}}(s)
 \Big | \textcolor{black}{\bigg |_{\bxi=\x}}
\notag\\
%\!\!\!&+&\!\!\!
&&+ \Big| \sum_{i=2}^n \int_{t+\textcolor{black}{c_0}\lambda \d^2(\x,\x')}^T ds\int_{\R^{nd}}d\y \int_0^1 d\mu  \mathbf D D^2_{\x_1}  \tilde p_{\lambda}^{\bxi} (t,s,\x+\mu(\x'-\x),\y) \cdot (\x-\x')
\notag \\
&&\quad \Big \langle \gF_{\lambda,i}(t,\y)-[\gF_{\lambda,i}(t,\btheta_{s,t}^\lambda(\bxi^\lambda))+D\gF_{\lambda,i}(t,\btheta_{s,t}^\lambda(\bxi^\lambda))(\y-\btheta_{s,t}^\lambda(\bxi^\lambda)_{\textcolor{black}{i-1}})], D_{\y_i} u^\lambda(s,\y)\Big \rangle \Big|\textcolor{black}{\bigg |_{\bxi=\x}}
\notag\\
%\!\!\!&=:&\!\!\!
&=:& 
\Big\{D^2_{\x_1}(\Delta^{\lambda})_{{\rm \textbf{diag}}}^{\bxi,1}(t,\x,\x')+D^2_{\x_1}(\Delta^{\lambda})_{\rm {\mathbf {diag}}}^{\bxi,2:n}(t,\x,\x')\Big\}\bigg |_{\bxi=\x}.
\nonumber  \\
\end{eqnarray}
We will now control the first term of the above right hand side. A key point for the analysis is to observe that, on the considered diagonal regime, we actually have from equations \eqref{CORRESP_DENS_SCALING_IMP_BIS}-\eqref{THE_GOOD_SCALE_DER_CTR}, recalling that $\z\mapsto \m_{s,t}^{\bxi^\lambda}(\z) $ is affine and using the good scaling property in \eqref{GSP} and \eqref{RESCALED_RES}, that:%Proposition \ref{THE_PROP}:
\begin{eqnarray}
|D_{\x_k}D_{\x_1}^2 \tilde p_{\lambda}^{\bxi}(t,s,\x+\mu(\x'-\x),\y)|&\le& \frac{C \lambda^{1+(k-\frac 12)}}{(s-t)^{1+(k-\frac 12)}} \bar p_{{C}^{-1},\lambda}(t,s,\x,\y)\exp\Big(\sum_{j=1}^n\frac{|(\x-\x')_j|^2 \lambda^{2j-1}}{(s-t)^{2j-1}}\Big)\notag\\
&\le & \frac{C \lambda^{1+(k-\frac 12)}}{(s-t)^{1+(k-\frac 12)}} \bar p_{{C}^{-1},\lambda}(t,s,\x,\y).\label{CONVEX_INEQ_SCALED_DIAG}
\end{eqnarray}
To obtain the last inequality, we indeed observe from the homogeneity of the metric (see \eqref{DIST}) that $\lambda^{1/2}\d(\x,\x')=\d\big( \lambda^{-1/2}\T_\lambda \x,\lambda^{-1/2}\T_\lambda \x'\big)=\sum_{j=1}^n \big|(\x-\x')_j \lambda^{(2j-1)/2}\big|^{1/(2j-1)}$. For the diagonal regime $\lambda \d^2(\x,\x') \le (s-t)$ in turn implies that for each $j\in \leftB 1,n \rightB,\ (s-t)^{-2j+1} |(\x-\x')_j|^2 \lambda^{2j-1}\le 1  $.

Another key point is to observe that the contribution $(s-t)^{-1/2}\d\big( \lambda^{-1/2}\T_\lambda \y,\lambda^{-1/2}\T_\lambda \btheta_{s,t}^\lambda(\x^\lambda)\big) $ is homogeneous to the argument of the exponential term in $\bar p_{{C}^{-1},\lambda}(t,s,\x,\y) $. Namely, for any given $\beta_0>0 $ and $\beta\in (0,\beta_0] $, there exists $C_{\beta_0}$ s.t. 
\begin{equation}
\label{ABSORB_LAMBDA_BIS}
\left( \frac{\d\big( \lambda^{-\frac 12}\T_\lambda \y,\lambda^{-\frac 12}\T_\lambda \btheta_{s,t}^\lambda(\x^\lambda)\big)}{(s-t)^{\frac 12}} \right)^{\beta}\bar p_{{C}^{-1},\lambda} (t,s,\x,\y) \le C_{\beta_0} \bar p_{C_{\beta_0}^{-1},\lambda }(t,s,\x,\y),
\end{equation}
Equation \eqref{ABSORB_LAMBDA_BIS} is a direct consequence of the expression of $\bar p_{C^{-1},\lambda}^{t,\x} $ in Proposition \ref{THE_PROP} \textcolor{black}{(see also \eqref{THE_GOOD_SCALE_DER_CTR})} and the definition of $\d $ in \eqref{DIST}.

Hence, from the definition of $a_\lambda, \gF_\lambda $ in \eqref{CORRESP_COEFF_LAMBDA}\textcolor{black}{,} 
%Proposition \ref{THE_PROP} (equation \ref{CTR_GRAD_RESCALED_LAMBDA}) 
equations \eqref{CONVEX_INEQ_SCALED_DIAG} and \eqref{ABSORB_LAMBDA_BIS}, we derive that, under \A{A}\textcolor{black}{,} for each $k \in \leftB1,n \rightB$:
%%%%% On Ã¯Â¿Åcrit ici ce qui a vocation a remplacer les lemmes qui mettaient en evidence le gain de regularite composante par composante.
\begin{eqnarray}
&&\Big\{|\gF_{\lambda,1}(t,\y)-\gF_{\lambda,1}(t,\btheta_{s,t}^\lambda(\bxi^\lambda))| |D_{\x_k}D_{\x_1}^2 \tilde p_{\lambda}^{\bxi}(t,s,\x+\mu(\x'-\x),\y)|\Big\}
\Big |_{  \bxi=\x}
\notag \\
\!&\!\le\!&\! C\Big \{ \Big(\lambda^{-\frac 12}\|\gF_1(t,\cdot)\|_{C_\d^{\gamma}} \d^\gamma\big( \lambda^{-\frac 12}\T_\lambda \y,\lambda^{-\frac 12}\T_\lambda \btheta_{s,t}^\lambda(\bxi^\lambda)\big) \Big)
\lambda^{1+k-\frac 12}(s-t)^{-1-(k-\frac 12)}\bar p_{{C}^{-1},\lambda}(t,s,\x,\y)\Big\}
\Big |_{  \bxi=\x}
\notag\\
%&\le& C[\gF_1(t,\cdot)]_{C_\d^{\gamma}} \lambda^{\gamma/2} \d(  \x,\btheta_{s,t}(\bxi))^\gamma \times
%(s-t)^{-1/2}\bar p_{{C}^{-1},\lambda}(t,s,\x,\y)\\
\!&\!\le\!&\! C %[\gF_1(t,\cdot)]_{C_\d^{\gamma}}
\|\gF_1\|_{L^\infty(C^\gamma_\d)}\lambda^{k}
(s-t)^{-1-(k-\frac 12)+\frac \gamma 2}\bar p_{{C}^{-1},\lambda}(t,s,\x,\y).\label{CTR_DRIFT_1_lambda}
\end{eqnarray}
Similarly,
\begin{eqnarray}
 &&\Big\{|a_\lambda(t,\y)-a_\lambda (t,\btheta_{s,t}^\lambda(\bxi^\lambda)) |D_{\x_k}D_{\x_1}^2 \tilde p_{\lambda}^{\bxi}(t,s,\x+\mu(\x'-\x),\y)|
\Big\}\Big |_{  \bxi=\x}
 \notag \\
 \!&\!\le\!&\! C\Big\{\Big(\lambda^{-1}\|a(t,\cdot)\|_{C_\d^{\gamma}} \d^\gamma\big( \lambda^{-\frac 12}\T_\lambda \y,\lambda^{-\frac 12}\T_\lambda \btheta_{s,t}^\lambda(\bxi^\lambda)\big) \Big)
\lambda^{1+(k-\frac 12)}(s-t)^{-1-(k-\frac 12)}\bar p_{{C}^{-1},\lambda}(t,s,\x,\y) \Big\}\Big |_{  \bxi=\x}
\notag \\
\!&\!\le\!&\!C\|a\|_{L^\infty(C^\gamma_\d)}\lambda^{k-\frac 12} (s-t)^{-1-(k-\frac 12)+\frac \gamma 2}\bar p_{{C}^{-1},\lambda}(t,s,\x,\y). \label{CTR_DIFF_1_lambda}
\end{eqnarray}

Observe that both r.h.s. of \eqref{CTR_DRIFT_1_lambda} and \eqref{CTR_DIFF_1_lambda} exhibit a positive power of $ \lambda$. Hence, those quantities are \textit{small} provided $\lambda $ is.
The key point in the above computations is that the potentially explosive  H\"older norms of $\gF_{\lambda,1}, a_{\lambda}$(when $\lambda $ goes to zero) are compensated by the derivatives of  $\tilde p_\lambda^\bxi(s,t,\x,\y) $ (see equation \eqref{THE_GOOD_SCALE_DER_CTR}).
 We again carefully point-out that the previous bounds only depend on the supremum in time of the H\"older moduli of the coefficients (denoted $ \|\gF_1\|_{L^\infty(C^\gamma_\d)},\|a\|_{L^\infty(C^\gamma_\d)}$ respectively) and not on their supremum norm. In particular, we get from \eqref{CTR_DRIFT_1_lambda}, \eqref{CTR_DIFF_1_lambda} with the notation of \eqref{R_OFF_DIAG_LAMBDA_1}:%\eqref{DECOUP_DIAG}:
\begin{eqnarray}
\label{CTR_OFF_DIAG_NON_DEG}
&&|D^2_{\x_1}(\Delta^{\lambda})_{\rm {\mathbf {diag}}}^{\bxi,1}(t,\x,\x')|\Big|_{\bxi=\x}\nonumber \\
&\leq& \sum_{k=1}^n \int_{t+\textcolor{black}{c_0}\lambda\d^2(\x,\x)}^T ds \int_{\R^{nd}} d\y \lambda^{k-\frac 12}(s-t)^{-1-(k-\frac 12)+\gamma/2}  \nonumber \\
&&\times [\lambda^{\frac 12}\|D_{\x_1}u^\lambda\|_{L^\infty} \|\gF_1\|_{L^\infty(C^\gamma_\d)}+ \|D_{\x_1}^2 u^\lambda\|_{L^\infty}\|a\|_{L^\infty(C^\gamma_\d)}] 
\bar p_{{C}^{-1},\lambda}(t,s,\x,\y )|\x_k-\x_k'|
\nonumber \\
&\le&  \textcolor{black}{c_0^{-(n-\frac 12) +\frac \gamma 2}} \Lambda(\|D_{\x_1}u^\lambda\|_{L^\infty}+\|D_{\x_1}^2 u^\lambda\|_{L^\infty})\lambda^{\frac \gamma2}\d^\gamma(\x,\x').
\end{eqnarray}
Thanks to the inequality \eqref{ineq_Rm_2_n} and the previous analysis (to be performed according to the current scaling procedure replacing $(s-t) $ in Sections \ref{Sectio_Sup_D2} and \ref{HOLDER} by $(s-t)/\lambda $), we deduce:
\begin{eqnarray}
&&|D^2_{\x_1}(\Delta^{\lambda})_{\rm {\mathbf {diag}}}^{\bxi,2:n}(t,\x,\x')|\Big|_{\bxi=\x}\notag \\
\!\!&\!\! \leq\!\! &\!\!
 \bigg| \sum_{i=2}^n \int_{t+\textcolor{black}{c_0}\lambda \d^2(\x,\x')}^T \! \!\!\!\!ds\int_{\R^{nd}}\!\!\!d\y \int_0^1 \!\!\!d\mu  %\Big  \langle D_{\y_i}\bigg \{ \Big (\gF_{\lambda,i}(t,\y)
 \Big  \langle D_{\y_i}\bigg \{ \Big (\gF_{\lambda,i}(t,\y)-[\gF_{\lambda,i}(t,\btheta_{s,t}^\lambda(\bxi^\lambda))+D\gF_{\lambda,i}(t,\btheta_{s,t}^\lambda(\bxi^\lambda))(\y-\btheta_{s,t}^\lambda(\bxi^\lambda))_{\textcolor{black}{i-1}}] \Big )
 \notag \\
 &&\quad \times \Big (\big\langle \x-\x', D_\x D^2_{\x_1}  \tilde p_{\lambda}^{\bxi} (t,s,\x+\mu(\x'-\x),\y)\big  \rangle\Big ) \bigg \}
,  u^\lambda(s,\y)\Big \rangle \bigg|\Bigg |_{\bxi=\x}
\notag \\
\!\!&\!\! \leq\!\! &\!\!\Lambda \|u^\lambda\|_{L^\infty (C_{b,\d}^{2+\gamma})} \sum_{k=1}^n \int_{t+\textcolor{black}{c_0} \lambda \d^2(\x,\x')}^T ds \frac{\lambda^{k-\frac 12}|\x_k-\x'_k|}{(s-t)^{1+(k-\frac 12)-\frac \gamma 2}}\notag\\
\!\!&\!\! \leq\!\! &\!\!\Lambda\|u^\lambda\|_{L^\infty (C_{b,\d}^{2+\gamma})}  \sum_{k=1}^n\textcolor{black}{c_0^{-(k-\frac 12)+ \frac \gamma 2}} \lambda^{(k-\frac 12)-(k-\frac 12)+\frac {\gamma} 2}  \d^{\gamma-(2k-1)}(\x,\x') |\x_k-\x'_k|
\nonumber \\
\!\!&\!\! \leq\!\! &\!\! \Lambda \|u^\lambda\|_{L^\infty (C_{b,\d}^{2+\gamma})} \sum_{k=1}^n\textcolor{black}{c_0^{-(k-\frac 12)+ \frac \gamma 2}}   \lambda^{\frac{ \gamma} 2} \d^\gamma(\x,\x')\le \Big(\Lambda  \textcolor{black}{c_0^{-(n-\frac 12)+ \frac \gamma 2}}  \lambda^{\frac \gamma 2} \Big)\|u^\lambda\|_{L^\infty(C_{b,\d}^{2+\gamma}) } \d^\gamma(\x,\x').\label{CTR_OFF_DIAG_DEG}
\nonumber \\
\end{eqnarray}
%recalling that $\lambda  $ is meant to be small for the last inequality.
Plugging \eqref{CTR_OFF_DIAG_NON_DEG} and \eqref{CTR_OFF_DIAG_DEG} into \eqref{R_OFF_DIAG_LAMBDA_1} gives a diagonal control which precisely matches the r.h.s. of the expected final bound \eqref{THE_CTR_RESCALED}. 

It therefore remains to handle the off-diagonal contributions. With the notations of \eqref{RESCALED_QUANTITY_TO_CONTROL}, we deduce from the analysis of Section \ref{SUB_HOLDER_CTR_PERT_PART} (recall that in that case we chose $(\bxi,\bxi')=(\x,\x')$ as freezing points) and the previous arguments that:
\begin{eqnarray}\label{SCALE_OFF_DIAG}
&&| D^2_{\x_1}(\Delta^{\lambda})_{{\rm \textbf{off-diag}}}^{\bxi,\bxi'}(t,\x,\x')|\Big|_{(\bxi,\bxi')=(\x,\x')}\notag\\
&\le& \Lambda\| u^\lambda\|_{L^\infty(C_{b,\d}^{2+\gamma})}\int_{t}^{t+ \textcolor{black}{c_0}\lambda \d^2(\x,\x')} \frac{ds}{(s-t)^{1-\frac \gamma 2}}\le \Big(\Lambda \textcolor{black}{c_0^{ \frac \gamma 2}}  \lambda^{\frac \gamma 2}\Big) \| u^\lambda\|_{L^\infty(C_{b,\d}^{2+\gamma})} \d^\gamma(\x,\x').
\end{eqnarray}
In some sense the controls in \eqref{CTR_OFF_DIAG_DEG} and \eqref{SCALE_OFF_DIAG} can be seen as a mere consequence of the intrinsic scaling of the system. This is indeed the case, but, in order to avoid any ambiguity, we provide in Appendix \ref{SEC_BESOV_DUAL_FIRST_Scalling} a proof of the rescaled key Besov \textcolor{black}{control of} Lemma \ref{LEMME_BESOV_DEG}. The bound of equation \eqref{SCALE_OFF_DIAG} completes the proof of \eqref{THE_CTR_RESCALED}.\qed\\

Now, using the notations of \eqref{RESCALED_QUANTITY_TO_CONTROL}, it can be deduced from the same procedure (exploiting the control \eqref{SCALED_COV_AND_SENSI_CRITICAL_TIME} for the difference of the scaled covariances) that the following rescaled version of estimate \eqref{esti_holder_perturpart} in Lemma \ref{HOLDER} holds:
\begin{equation}\label{THE_CTR_RESCALED_2}
\sup_{t\in [0,T],(\x,\x')\in (\R^{nd})^2,\ \x_1=\x_1'}\frac{|(\Delta^{\lambda})^{\bxi,\bxi',\tilde \bxi'}(t,\x,\x')|\Big|_{(\bxi,\bxi',\tilde \bxi')=(\x,\x'\x)}}{\d^{2+\gamma}(\x,\x')}\leq  %\lambda^{\frac{\gamma}{2}} 
 \Lambda \lambda^{\frac \gamma{2}}(c_0^{-(n-\frac 12) + \frac \gamma{2}}+ c_0%^{\frac \gamma 2}
 ) \|u^\lambda\|_{L^\infty(C^{2+\gamma}_{b,\d})}.
\end{equation}

\textbf{H\"older estimate for the semi group at the discontinuity point.} 
We would also derive from \eqref{control_hoped_mathbf_R_ineq_trig1} and the same previous arguments  \textcolor{black}{by denoting $t_0=t+c_0 \lambda \d^2(\x,\x')$} 
%(replacing $c_0$ by $\lambda $ 
and using the bound of \eqref{SCALED_DIFF_MEANS} for the difference of the scaled flows, that the analogous of estimates in Lemma \ref{lem_Holder_tildePg} : \begin{eqnarray}
\label{control_hoped_mathbf_R_ineq_trig1_lambda}
\big |D_{\x_1}^2 \tilde P_{t_0,t}^{\bxi'} u^\lambda(t_0, \x')- D_{\x_1}^2 \tilde P_{t_0,t}^{\tilde \bxi'} u^\lambda(t_0, \x')\big |  \Big |_{(\bxi',\tilde \bxi ')=(\x,\x')} 
%\nonumber \\
\leq C \textcolor{black}{c_0^{\frac{\textcolor{black}{\gamma}}{2n-1}}}
%\lambda^{\frac \gamma 2}
%{2n-1}}%\lambda^{\frac \gamma 2}( %\|D\gF\|_{L^\infty(C_\d^\gamma)}+
%\sum_{i=2}^n  \|\gF_i\|_{L^\infty( C_{\d}^{2i-3+\gamma})} )  \
%big ) 
\|u^\lambda\|_{C_{b,\d}^{2+\gamma}}  \d^\gamma(\x,\x'). 
\end{eqnarray}
and 
\begin{eqnarray*}
   \big |\tilde P_{t_0,t}^{\bxi'} u^{\lambda}(t_0, \x')- \tilde P_{t_0,t}^{\tilde \bxi'} u^{\lambda}(t_0, \x') \big | &\leq & C c_0  \|u^{\lambda}\|_{L^\infty(C^{2+\gamma}_{b,\d})}  \d^{2+\gamma}(\x,\x'), \text{ for } \x_1=\x_1',
\end{eqnarray*}
hold. Note that above we denoted with a slight abuse of notation 
$$\tilde P_{t_0,t}^{\bxi'} u^{\lambda}(t_0, \x') = \int_{\R^{nd}} d\y \tilde p_\lambda^{\bxi'}(t,t_0,\x',\y)u^{\lambda}(t_0, \y).$$

\textbf{H\"older estimates for the frozen semi-group and associated Green kernel.} It easily follows from \eqref{DIST_P} that, in this rescaled setting, the ad hoc estimates in Lemmas \ref{lem_Holder_tildePg} and \ref{lemme_Holder_Gg_VAR_DEG_ET_D2_NON_DEG} remain valid (i.e. with $f_\lambda,g_\lambda$ intead of $f,g$ and for the Green and frozen kernel associated with the rescaled system \eqref{KOLMO_LAMBDA} therein).\\

\textbf{Estimates of the supremum of the derivates w.r.t. the non degenerate variable.}
%We emphasize that making the diagonal and off-diagonal sets depend on $\lambda $ is here crucial, especially when estimating the supremum norm. To illustrate this, let us briefly focus on the non-degenerate diffusive contribution. We need to proceed from \eqref{CTR_DIFF_1_lambda} except that we have here no differentiation w.r.t. $k$ and therefore no positive power in $\lambda $ anymore.  We retrieve the dependence in $\lambda $ through the time integration. 

We importantly point out that for the H\"older moduli, we have benefitted from a regularizing effect in the scaling parameter $\lambda $. Note carefully that this is not the case as far as supremum derivatives are concerned. We indeed get, using the same previous arguments, that
\begin{eqnarray*}
|D_{\x_1}^2 u^\lambda(t,\x)|&=& |\int_t^{T} ds \int_{\R^{nd}}D^2_{\x_1} \tilde p_{\lambda}^{\bxi}(t,s,\x,\y) (L_s^\lambda-\tilde L_s^{\lambda,\bxi}) u^\lambda(s,\y)d\y|\\
&\le &C\big( \|g\|_{C_{b,\d}^{2+\gamma}}+(T-t)^{\frac \gamma 2}\|f\|_{L^\infty(C_{b,\d}^\gamma)}\big) +\Lambda (T-t)^{\frac{\gamma}{2}}\|u^\lambda\|_{L^\infty(C_{b,\d}^{2+\gamma})}.
\end{eqnarray*} 
The difference w.r.t. e.g. equations \eqref{CTR_OFF_DIAG_NON_DEG}, \eqref{CTR_OFF_DIAG_DEG} is that we have no additional spatial differentiations and no cutting threshold in time which precisely allowed to make the scaling parameter appear. Equivalently, we have to integrate in time the control \eqref{SCALE_OFF_DIAG} of the previous off-diagonal regime but on the whole time interval $[t,T] $. \\
 
\textbf{Conclusion: Schauder estimate for the rescaled system.} Gathering the above estimates, we eventually derive similarly to \eqref{control_voulu}, recalling that $c_0<1$, that:
\begin{equation}
 \label{control_voulu_2}
\|u^\lambda\|_{L^\infty(C^{2+\gamma}_{b,\d})} \leq C(\|g_\lambda\|_{C^{2+\gamma}_{b,\d}}+ \|f_\lambda\|_{L^\infty(C^\gamma_{b,\d})})+ \Big[\Lambda \big(\lambda^{\frac{\gamma}{2%(2n-1)
}}(\textcolor{black}{c_0^{-(n-\frac 12)}}+c_0^{\frac \gamma 2})+T^{\frac \gamma 2}\big) +C \textcolor{black}{c_0^{\frac 1{2n-1}}} \Big]\|u^\lambda\|_{L^\infty(C^{2+\gamma}_{b,\d})}.
 \end{equation}
\textcolor{black}{Therefore}, for $T$,  $\textcolor{black}{c_0}:=c_0(\A{A},T)$ and $\lambda:=\lambda(\A{A},T) $ small enough, \textcolor{black}{ with $\lambda \ll c_0 $ }, i.e. \textcolor{black}{such that} $\bar c_0:=\Big[\Lambda \big(\lambda^{\frac \gamma2}(c_0^{-(n- \frac 12)} +c_0^{\frac{\gamma}{2%(2n-1)
}})+T^{\frac \gamma 2}\big)+C \textcolor{black}{c_0^{\frac 1{2n-1}}}\Big]<1 $, the expect final control holds:
\begin{equation}
 \label{control_voulu_2_BIS}
\|u^\lambda\|_{L^\infty(C^{2+\gamma}_{b,\d})} \leq \frac{C}{1-\bar c_0}(\|g_\lambda\|_{C^{2+\gamma}_{b,\d}}+ \|f_\lambda\|_{L^\infty(C^\gamma_{b,\d})}). 
\end{equation}

\section{Conclusion: final proof of Theorem \ref{THEO_SCHAU}.}

Equation \eqref{control_voulu_2_BIS} provides the expected Schauder estimate for the rescaled system with mollified coefficients for a \textit{small} time horizon $T>0$. The scaling parameter $\lambda $ must precisely be tuned w.r.t. $\Lambda$ (associated with the H\"older moduli of the coefficients, see Remark \ref{REM_LAMBDA}) in order that the above constant $\bar c_0$ be strictly less than one. Recalling that $u^\lambda(t,\x)=u(t,\lambda^{-1/2}\T_\lambda \x) $, we then derive from \eqref{control_voulu_2_BIS} and  \eqref{CORRESP_COEFF_LAMBDA} that for $T>0 $ and $\lambda  $ small enough there exists $\bar C_0:=\bar C_0(\A{A},T,\lambda)>1 $ s.t.
\begin{equation}
 \label{control_voulu_3}
\|u\|_{L^\infty(C^{2+\gamma}_{b,\d})} \leq \bar C_0(\|g\|_{C^{2+\gamma}_{b,\d}}+ \|f\|_{L^\infty(C^\gamma_{b,\d})}),
\end{equation}
which precisely provides the required estimate for the initial system with mollified coefficients.

The point is now to extend the previous bound to an arbitrary fixed time $T>0$ not necessarily small. The stability resulting from estimate \eqref{control_voulu_3} allows to proceed by simple iterative application of the bound changing the final condition. 

 \subsection{Schauder estimates for the mollified system for a general time}

%Indeed, all our analysis is based on a small $T$, which is not a constraint.
Equation \eqref{control_voulu_3} is valid for any $T<T_0$ with $T_0 \in (0,+\infty)$ sufficiently small.
Now, for a given $T>0$ (not necessary small), we can solve iteratively $ N =\lceil \frac{T}{T_0}\rceil$ (where $\lceil \cdot \rceil$ is the ceiling function) Cauchy problems in the following way.   Consider first:
\begin{equation*}
\label{KOLMO_1}
\begin{cases}
\partial_t u_{(1)}(t,\x)+ \langle \gF(t,\x), \mathbf  Du_{(1)}(t,\x)\rangle +\frac 12{\rm Tr}\big( D_{\x_1}^2u_{(1)}(t,\x)a(t,\x)\big)=-f(t,\x),\ (t,\x)\in [T(1-\frac 1N),T)\times \R^{nd},\\
 u_{(1)}(T,\x)=g(\x),\ \x\in \R^{nd}.
 \end{cases}
 \end{equation*}
In other words, our previous analysis, and the previous inequalities, are still available for $T-(1-\frac 1N)T= \frac 1N T\le T_0$ small enough. Precisely, from \eqref{control_voulu_3}:
 \begin{equation}
\label{CTR_U1}
 \|u_{(1)}\big(T(1-\frac 1N),\cdot\big)\|_{C_{b,\d}^{2+\gamma}}\le \bar C_0(\|g\|_{{C_{b,\d}^{2+\gamma}}}+\|f\|_{L^\infty([T(1-\frac 1N),T] ,C_{b,\d}^{\gamma})}).
 \end{equation}
 Also, for mollified coefficients, it is plain from the Feynman-Kac formula to identify $u_{(1)}$ and $u$ on $[T(1-\frac 1N),T] $ where $u$ solves \eqref{KOLMO} with mollified coefficients on $[0,T] $. Hence, \eqref{CTR_U1} gives in particular that 
 $u_{(1)}\big(T(1-\frac 1N),\cdot\big)=u\big(T(1-\frac 1N),\cdot\big)\in C_{b,\d}^{2+\gamma}(\R^{nd},\R) $ so that it is in particular natural to consider  the following second Cauchy problem \textcolor{black}{for any $\x \in \R^{nd}$}:
\begin{equation*}
%\label{KOLMO_2}
\begin{cases}
\partial_t u_{(2)}(t,\x)+ \langle \gF(t,\x), \mathbf Du_{(2)}(t,\x)\rangle +\frac 12{\rm Tr}\big( D_{\x_1}^2u_{(2)}(t,\x)a(t,\x)\big)=-f(t,\x),\ t\in [(1-\frac 2N)T,(1-\frac 1N)T),\\ %(t,\x)\in [(1-\frac 2N)T,(1-\frac 1N)T)\times \R^{nd},\\
 u_{(2)}((1-\frac 1N)T,\x)= u((1-\frac 1N)T,\x).%,\ \x\in \R^{nd}.
 \end{cases}
 \end{equation*}
 Hence  $u_{(2)}$ satisfies identity \eqref{control_voulu_3} for the corresponding time interval and the associated source and final condition. It also coincides with $u$ on $[T(1-\frac 2N),T(1-\frac 1N)] $. We get:
 \begin{eqnarray*}
&& \|u_{(2)}\|_{L^\infty([T(1-\frac 2N),T(1-\frac 1N)],C_{b,\d}^{2+\gamma})}
\nonumber \\
&\le& \bar C_0(\|u(T(1-\frac 1N),\cdot)\|_{{C_{b,\d}^{2+\gamma}}}+\|f\|_{L^\infty([T(1-\frac 2N),T(1-\frac 1N)] ,C_{b,\d}^{\gamma})})\\
 &\le & \bar C_0 \Big( \bar C_0(\|g\|_{{C_{b,\d}^{2+\gamma}}}+\|f\|_{L^\infty([T(1-\frac 1N),T] ,C_{b,\d}^{\gamma})})
 +\|f\|_{L^\infty([T(1-\frac 2N),T(1-\frac 1N)] ,C_{b,\d}^{\gamma})}\Big) \\
 &\le & \bar C_0^2  (\|g\|_{{C_{b,\d}^{2+\gamma}}}+\|f\|_{L^\infty([T(1-\frac 2N),T] ,C_{b,\d}^{\gamma})}).
 \end{eqnarray*}
Repeating the analysis $N$-times, introducing for $k\in \leftB 3,n\rightB$ the auxiliary Cauchy problems \textcolor{black}{for any $ \x\in \R^{nd}$}:
\begin{equation*}
%\label{KOLMO_k}
\begin{cases}
\partial_t u_{(k)}(t,\x)+ \langle \gF(t,\x), \mathbf Du_{(k)}(t,\x)\rangle +\frac 12{\rm Tr}\big( D_{\x_1}^2u_{(k)}(t,\x)a(t,\x)\big)=-f(t,\x),\ t,\in [T(1-\frac kN),T(1-\frac {k-1}N)),\\% (t,\x)\in [T(1-\frac kN),T(1-\frac {k-1}N))\times \R^{nd},\\
 u_{(k)}((1-\frac {k-1}N)T,\x)= u((1-\frac {k-1}N)T,\x), %\ \x\in \R^{nd},
 \end{cases}
 \end{equation*}
we derive that 
\begin{equation}
\label{controle_voulu_4}
 \|u\|_{L^\infty([0,T],C_{b,\d}^{2+\gamma})}
 \le  \bar C_0^N  (\|g\|_{{C_{b,\d}^{2+\gamma}}}+\|f\|_{L^\infty([0,T] ,C_{b,\d}^{\gamma})}).
\end{equation}
This precisely gives our main estimate for the system \eqref{KOLMO} with mollified coefficients. Again, even though the coefficients are smooth, all the constants appearing in \eqref{controle_voulu_4} only depend on the H\"older setting of assumption \A{A}.
\begin{REM}[About the constants in the final estimate] We could actually have slightly better bounds than those in  \eqref{controle_voulu_4}. Namely a direct induction shows that the following control holds.
\begin{equation}
\label{controle_voulu_4_bis}
 \|u\|_{L^\infty([0,T],C_{b,\d}^{2+\gamma})}
 \le  \bar C_0^N  \|g\|_{{C_{b,\d}^{2+\gamma}}}+\sum_{i=1}^N C_0^{i}\|f\|_{L^\infty([T(1-\frac{N-i+1}N),T(1-\frac{N-i}{N} )] ,C_{b,\d}^{\gamma})}).
\end{equation}
We chose to write the bound in the form of equation \eqref{controle_voulu_4} for simplicity. Note however that in any case, equation \eqref{controle_voulu_4} or \eqref{controle_voulu_4_bis}, we still have geometric constants coming from the iterative procedure. This is the main drawback of our approach, which anyhow seems, to the best of our knowledge, the only available one to consider degenerate Kolmogorov systems with non-linear drifts.
\end{REM}

 \subsection{Compactness arguments}
We now make the mollifying parameter appear again using the notations introduced in the detailed guide (see equation \eqref{KOLMO_m}). Equation \eqref{control_voulu_3} rewrites in the following way. There exists a constant $C_0$ s.t. for each $ m\in \N$:
\begin{equation}
\label{controle_voulu_4_MOL}
 \|u_m\|_{L^\infty([0,T],C_{b,\d}^{2+\gamma})}
 \le  \bar C_0  (\|g\|_{{C_{b,\d}^{2+\gamma}}}+\|f\|_{L^\infty([0,T] ,C_{b,\d}^{\gamma})}).
\end{equation}
\textcolor{black}{From Ascoli's theorem we deduce that there exists %$u\in L^\infty([0,T],C_{b,\d}^{2+\gamma})$ 
\textcolor{black}{a sequence $(u_{m_k})_{k \geq 1}$ of smooth solutions of \eqref{KOLMO_m}, with $m_k \rightarrow_k+\infty$, satisfying \eqref{controle_voulu_4_MOL}} and s.t. $u_{m_k} \rightarrow_k u $ in  $L^\infty([0,T],C_{b,\d}^{2+\gamma-\varepsilon})$, for all $0<\varepsilon< \gamma$. %and where $u$ also satisfies \eqref{controle_voulu_4_MOL}. 
We then deduce from the previous analysis that such a $u$ \textcolor{black}{also satisfies \eqref{controle_voulu_4_MOL}}, and therefore lies in $L^\infty([0,T],C_{b,\d}^{2+\gamma})$.} Since we also have from \cite{chau:meno:17}
that $u_{m_k}(t,\x) \rightarrow_k \E[g(\X_T^{t,\x})]+\int_t^T \E[f(s,\X_{s}^{t,\x})]ds $ where $(\X_s^{t,\x}) $ denotes the unique in law solution to \eqref{SYST}, we deduce that $u(t,\x)=\E[g(\X_T^{t,\x})]+\int_t^T \E[f(s,\X_{s}^{t,\x})]ds $ corresponds to the \textit{unique} martingale, or \textit{mild},  solution of \eqref{KOLMO} which satisfies the stated Schauder estimate \eqref{control_voulu_3}.

\subsection{From mild to weak solutions}
\textcolor{black}{For this paragraph, we will consider the previous converging subsequence obtained from the compactness argument and will still denote it by $(u_m)_{m\in \N}$ for notational simplicity. Namely,  $\|u-u_m\|_{L^\infty(C_{b,\d}^{2+\gamma-\varepsilon})} \underset{m}{\longrightarrow} 0,\ \varepsilon \in (0,\gamma) $.}

Let $\varphi $ be a smooth given function with compact support, i.e. $\varphi \in C_0^\infty(\R^{nd},\R) $. It is clear that for the solution of \eqref{KOLMO} with mollified coefficients one indeed has:
\begin{eqnarray}
\label{VAR_M}
\int_{0}^T dt \int_{\R^{nd}} d\x f_m(t,\x) \varphi(t,\x)= \int_{0}^T dt \int_{\R^{nd}} d\x\varphi(t,\x)\Big(\partial_t +L_t^m \Big)u_m(t,\x).
\end{eqnarray}
Indeed, both the solution and the coefficients are smooth. Integrating by parts yields:
\begin{eqnarray*}
\int_{0}^T dt \int_{\R^{nd}} d\x f_m(t,\x) \varphi(t,\x)= \int_{0}^T dt \int_{\R^{nd}} d\x \Big(-\partial_t+(L_t^ {m})^*\Big)\varphi(t,\x)u_m(t,\x),
\end{eqnarray*} 
where $(L_t^ {m})^* $ denotes the adjoint of $ L_t^ {m}$.

%\textcolor{black}{For the remaining of the paragraph, we also assume, up to a subsequence extraction that we here omit for simplicity, that $\|u-u_m\|_{L^\infty(C_{b,\d}^{2+\gamma-\varepsilon})} \underset{m}{\longrightarrow} 0,\ \varepsilon \in (0,\gamma) $.}
Write now:
\begin{eqnarray}
\int_{0}^T dt \int_{\R^{nd}} d\x f_m(t,\x) \varphi(t,\x)&=&\int_{0}^T dt \int_{\R^{nd}} d\x f(t,\x) \varphi(t,\x)+\int_{0}^T dt \int_{\R^{nd}} d\x (f_m-f)(t,\x) \varphi(t,\x)\notag \\
&=:&\int_{0}^T dt \int_{\R^{nd}} d\x f(t,\x) \varphi(t,\x)+R_m(T,f). \label{DEF_RM}
\end{eqnarray}
It is clear that under \A{A},  recall that $f\in L^\infty([0,T],C_{b,\d}^{\gamma}(\R^{nd},\R)) $, $R_m(T,f) \underset{m}{\longrightarrow} 0$.
On the other hand, we now decompose:
\begin{equation}
\int_{0}^T dt \int_{\R^{nd}} d\x \Big(-\partial_t+(L_t^ {m})^*\Big)\varphi(t,\x)u_m(t,\x)
=\int_{0}^T dt \int_{\R^{nd}} d\x \Big(-\partial_t+(L_t)^*\Big)\varphi(t,\x)u(t,\x)+R_m(T,u),%\%notag\\
\label{THE_RM}
\end{equation}
with 
\begin{eqnarray}\label{THE_OTHER_RM}
&&R_m(T,u)
\nonumber \\
&:=&\int_{0}^T dt \int_{\R^{nd}} d\x \Big(%-\partial_t+
(L_t^ {m})^*-L_t^*\Big)\varphi(t,\x)u_m(t,\x)
+ \int_{0}^T dt\int_{\R^{nd}} d\x \Big(-\partial_t+(L_t)^*\Big)\varphi(t,\x)(u_m(t,\x)-u(t,\x))
\nonumber \\
&=:&(R_m^1+R_m^2)(T,u), \notag\\
\end{eqnarray}
where $L_t^* $ is the formal adjoint of $L_t$. Observe first that:
\begin{eqnarray*}
R_{m,0}^2(T,u)&:=&\int_{0}^T dt\int_{\R^{nd}} d\x \partial_t \varphi(t,\x)\big( u_m(t,\x)-u(t,\x) \big) \underset{m}{\longrightarrow} 0,\\
\end{eqnarray*}
since $\|u-u_m\|_{L^\infty(C_{b,\d}^{2+\gamma-\varepsilon})} \underset{m}{\longrightarrow} 0$ for any $0<\varepsilon<\gamma$. For the terms of $R_m^2(T,u) $ which involve the adjoint,
the point is again to use the Besov duality to control the remainders. Let $0<\varepsilon<\gamma$, from the previous analysis we get that:
\begin{eqnarray*}
|R_{m,2:n}^2(T,u)|&:=&|\sum_{i=2}^n\int_{0}^T dt\int_{\R^{nd}} d\x D_{\x_i} \big(\varphi(t,\x)   \gF_i(t,\x)\big)\big( u_m(t,\x)-u(t,\x) \big)| \\
&\le& \sum_{i=2}^n \int_0^T dt \int_{\R^{(n-1)d}} \prod_{j\neq i} d\x_j \|D_{\x_i}\big( \varphi  \gF_i (t,\x_{\backslash i},\cdot) \big)\|_{B_{1,1}^{-\frac{ 2+\gamma-\varepsilon}{2i-1}}(\R^d,\R)} \|(u_m-u)(t,\x_{\backslash i},\cdot)\|_{C_{b}^{\frac{2+\gamma-\varepsilon}{2i-1}}(\R^d,\R)},
\end{eqnarray*}
denoting $\x_{\backslash i}=(\x_1, \cdots, \x_{i-1},\x_{i+1},\cdots, \x_n) $ and $ \varphi \gF_i(t,\x_{\backslash i},\cdot): \x_i\in \R^d \mapsto \varphi \gF_i(t,\x)$, $(u-u_m)(t,\x_{\backslash i},\cdot): \x_i\in \R^d \mapsto (u-u_m)(t,\x) $. On the one hand we know that  $\|u-u_m\|_{L^\infty(C_{b,\d}^{2+\gamma-\varepsilon})} \underset{m}{\longrightarrow} 0$. On the other hand, $\varphi$ is a smooth test function and from \A{S} we have that $D_{\x_i}\gF_i(t,\x_{\backslash i},\cdot)$ belongs to $B_{\infty,\infty}^{\frac{-2+\gamma}{2i-1}}(\R^d,\R)$. Since  $-\frac{2-\gamma}{2i-1} \ge -\frac{2+\gamma-\varepsilon}{2i-1}$ for any $0<\varepsilon <2\gamma$ we deduce from the arguments of the proof of Lemma \ref{LEMME_BESOV_DEG} that %the contribution $\|D_{\x_i}\Big( \varphi  \gF_i (t,\x_{\backslash i},\cdot) \Big)\|_{B_{1,1}^{-\frac{ 2+\gamma}{2i-1}}(\R^d,\R)} $ is, under \A{A}, uniformly controlled in $m$. Namely, 
there exists $C$ s.t. for each $i\in \leftB 2,n \rightB$, $\|D_{\x_i}\big( \varphi  \gF_i (t,\x_{\backslash i},\cdot) \big)\|_{B_{1,1}^{-\frac{ 2+\gamma-\varepsilon}{2i-1}}(\R^d,\R)}\le C \psi_i(t,\x_{\backslash i}) $ where $\psi_i$ has compact support on $\R^{(n-1)d} $. 

We thus readily derive $|R_{m,2:n}^2(T,u)| \underset{m}{\longrightarrow} 0 $. 

Eventually,
\begin{eqnarray*}
|R_{m,1}^2(T,u)|&:=& \Big |\int_{0}^T dt\int_{\R^{nd}} d\x \Big[D_{\x_1} \big(\varphi(t,\x)   \gF_1(t,\x)\big)+D_{\x_1}^2 \big(\varphi(t,\x)   a(t,\x)\big)\Big]\big( u_m(t,\x)-u(t,\x) \big) \Big | \\
&=& \Big |\int_{0}^T dt\int_{\R^{nd}} d\x \Big[ \big(\varphi(t,\x)   \gF_1(t,\x)\Big)D_{\x_1}+ \big(\varphi(t,\x)   a(t,\x)\big)D_{\x_1}^2\Big]\big( u_m(t,\x)-u(t,\x) \big) \Big |,
\end{eqnarray*}
 which again tends to 0 with $m$ since $\|u-u_m\|_{L^\infty(C_{b,\d}^{2+\gamma-\varepsilon})} \underset{m}{\longrightarrow} 0$ for any $0<\varepsilon<\gamma$.
 
 The contributions involving $\big((L_t^m)^*-L_t^*\big) \varphi$ in $R_m^1(T,u) $ defined in \eqref{THE_OTHER_RM} can be handled as in the proof of Lemma \ref{LEMME_BESOV_DEG} exploiting that $\|a-a_m\|_{C_{b,\d}^\gamma}+\|\gF_1-\gF_{m,1}\|_{C_{b,\d}^\gamma}+\sum_{i=2}^n \|\gF_i-\gF_{m,i}\|_{C_{b,\d}^{%\frac{
 2i-3+\gamma%}{2i-1}
 }}\rightarrow_m 0 $. %From the previous computations, 
 We now deduce from \eqref{THE_RM}, \eqref{THE_OTHER_RM} and the previous controls that $R_m(T,u) \rightarrow_m 0$. The same computations also give that the term $\int_{0}^T dt \int_{\R^{nd}} d\x \Big(-\partial_t+(L_t)^*\Big)\varphi(t,\x)u(t,\x)$ is well defined under \A{A}. From  \eqref{DEF_RM}, \eqref{THE_RM}, we thus finally derive:
 $$\int_0^T dt \int_{\R^{nd}} d\x \varphi(t,\x)f(t,\x)=\int_{0}^T dt \int_{\R^{nd}} d\x \big(-\partial_t+(L_t)^*\big)\varphi(t,\x)u(t,\x),$$
 which gives the statement.

\appendix 
\mysection{Proof of technical results}
\label{APP}

\subsection{Technical results associated with the flow}

%We begin this paragraph stating and proving a key result for the sensitivity of mollified H\"older flows. We mean by that, similarly to the previous sections, that we deal with smooth coefficients deriving from a mollification procedure of the initial ones which satisfy \A{A}. The controls we establish here are independent of the mollifying parameter and simply depend on known quantities appearing in \A{A}.

We begin this paragraph stating and proving a key result for the sensitivity of H\"older flows, i.e. when the coefficients satisfy \A{A}. Those results are of course uniform w.r.t. a mollification procedure of the coefficients as the one previously considered from Section \ref{SEC2} to \ref{HOLDER}. Also, Lemma \ref{lem_theta_theta} is a direct consequence of Lemma \ref{lem_theta_theta_bis} below and Young/convexity inequalities. 
 
\subsubsection{A first sensitivity result for the flow}\label{sec:ProofLem_d_theta}
\begin{lem}\label{lem_theta_theta_bis} Under \A{A}, there exists $C:=C(\A{A},T)$ s.t. for all $(\x,\x')\in (\R^{nd})^{2}$, $\textcolor{black}{\d(\x,\x')\le 1}$, $0\le t<s\le T$ and $i \in \leftB 1,n\rightB$:
%\begin{equation*}
%|(\btheta_{s,t}(\x)-\btheta_{s,t}(\x'))_i|
%\leq  C\Big( (s-t)^{i-\frac 12}+\sum_{j=1}^i |(\x-\x')_j|(s-t)^{i-j} \Big)
%.
%\end{equation*}
\begin{equation*}
%\label{CTR_I}
|(\btheta_{s,t}(\x)-\btheta_{s,t}(\x'))_i| \leq 
 C\Big( (s-t)^{i-\frac 12}+\d^{2i-1}(\x-\x') \Big).
\end{equation*}
\end{lem}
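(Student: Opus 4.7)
The plan is to prove the lemma by a Gronwall-type bootstrap argument exploiting the triangular H\"older structure of the drift $\gF$. The two flows $\btheta_{v,t}(\x), \btheta_{v,t}(\x')$ solve the same ODE system \eqref{DYN_DET_SMOOTH} with different initial data, so setting $\Delta_i(v) := (\btheta_{v,t}(\x)-\btheta_{v,t}(\x'))_i$ one has $\dot{\Delta}_i(v) = \gF_i(v,\btheta_{v,t}(\x)) - \gF_i(v,\btheta_{v,t}(\x'))$ with $\Delta_i(t)=(\x-\x')_i$. I would first record pointwise bounds for this r.h.s. using the regularity in assumption \A{S}-(iii) together with the structure \eqref{STRUCT_F}. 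For $i=1$, since $\gF_1 \in L^\infty([0,T], C_\d^\gamma)$, the subadditivity of $x \mapsto x^{\gamma/(2j-1)}$ and the definition \eqref{DIST} of $\d$ give
\begin{equation*}
|\gF_1(u,\btheta_{u,t}(\x))-\gF_1(u,\btheta_{u,t}(\x'))| \leq C \sum_{j=1}^n |\Delta_j(u)|^{\gamma/(2j-1)}.
\end{equation*}
For $i \ge 2$, $\gF_i \in L^\infty([0,T], C_\d^{2i-3+\gamma})$ depends only on $\x^{i-1:n}$, is Lipschitz in $\x_{i-1}$ (with gradient bounded by \A{H}) and $(2i-3+\gamma)/(2j-1)$-H\"older in each variable $\x_j$ for $j\geq i$; a telescopic decomposition then yields
\begin{equation*}
|\gF_i(u,\btheta_{u,t}(\x))-\gF_i(u,\btheta_{u,t}(\x'))| \leq C|\Delta_{i-1}(u)| + C \sum_{j=i}^n |\Delta_j(u)|^{(2i-3+\gamma)/(2j-1)}.
\end{equation*}

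I would then set up the bootstrap on the normalized quantity
\begin{equation*}
N(v) := \sup_{u \in [t,v]}\, \sup_{i \in \leftB 1,n \rightB} \frac{|\Delta_i(u)|}{(u-t)^{i-\frac{1}{2}} + \d^{2i-1}(\x,\x')},
\end{equation*}
which is continuous in $v$ and satisfies $N(t) \leq 1$ since $|(\x-\x')_i| \leq \d^{2i-1}(\x,\x')$ by definition of $\d$. Suppose $N(v) \leq M$ on some subinterval $[t,\tau^*]$ with $M\geq 1$. Plugging this ansatz into the integral form $|\Delta_i(s)| \leq |\Delta_i(t)| + \int_t^s |\gF_i(u,\btheta_{u,t}(\x))-\gF_i(u,\btheta_{u,t}(\x'))|\,du$ and using the two pointwise bounds above, the computation rests on two elementary observations: the scaling identity $(j-\tfrac{1}{2})\cdot(2i-3+\gamma)/(2j-1) = (2i-3+\gamma)/2 = i-\tfrac{3}{2}+\tfrac{\gamma}{2}$ which ensures that $\int_t^s (u-t)^{(j-1/2)(2i-3+\gamma)/(2j-1)}\,du$ produces exactly a contribution of order $(s-t)^{i-1/2+\gamma/2}$, and the Young-type inequality $\d^{\alpha}(\x,\x')(s-t) \leq \d^{2i-1}(\x,\x') + (s-t)^{i-1/2}$ (valid for any $\alpha \in [2i-3, 2i-1]$ and $\d(\x,\x')\le 1$, by considering the cases $\d^2(\x,\x')\gtrless s-t$). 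After time integration these give
\begin{equation*}
|\Delta_i(s)| \leq \d^{2i-1}(\x,\x') + C_T\, M \cdot \bigl((s-t)^{i-\frac{1}{2}} + \d^{2i-1}(\x,\x')\bigr),
\end{equation*}
hence $N(\tau^*) \leq 1 + C_T M$, where $C_T$ depends on \A{A} and on $T$ through a positive power (in particular $C_T \to 0$ as $T \to 0$).

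For $T$ small enough so that $C_T \leq 1/2$, taking $M=2$ closes the bootstrap: the set $\{v \in [t,T] : N(v) \le 2\}$ is closed, non-empty (contains $t$), and the estimate above shows it is open in $[t,T]$, hence equals $[t,T]$. For an arbitrary $T>0$, one concludes by iterating this small-time bound on consecutive subintervals of length $T_0$ (with $T_0$ determined by \A{A}), the constants being composed at each step to produce a final $C = C(\A{A},T)$; note that after such a concatenation the distance $\d$ between the trajectories may exceed $1$, but this enters only multiplicatively in the constant and does not affect the form of the bound since $\d(\x,\x')\le 1$ is only imposed at the initial time. The main technical obstacle is precisely the algebraic juggling of the various H\"older exponents, which close nicely thanks to the scaling identity above, this being a manifestation of the intrinsic homogeneity of the system detailed in Section \ref{SEC_MC_INTRO}.
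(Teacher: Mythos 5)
Your approach is a genuinely different route from the paper's (a single global bootstrap on a normalized supremum, rather than the paper's nested iteration with mollification and Gr\"onwall's lemma applied at each level of the chain), but it has a gap that I do not see how to close: the claim that $C_T \to 0$ as $T \to 0$ is false for the Lipschitz contributions. Concretely, for $i\ge 2$, the piece of the integrand coming from the Lipschitz dependence of $\gF_i$ on $\x_{i-1}$ contributes
\begin{equation*}
\int_t^s C_{\rm Lip}\,|\Delta_{i-1}(u)|\,du \;\le\; C_{\rm Lip}\, M\left[\frac{(s-t)^{i-\frac12}}{i-\frac12} + (s-t)\,\d^{2i-3}(\x,\x')\right],
\end{equation*}
and after your Young inequality this is bounded by a constant of order $C_{\rm Lip}$ (not a positive power of $T$) times $M\bigl[(s-t)^{i-\frac12}+\d^{2i-1}(\x,\x')\bigr]$. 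Here $C_{\rm Lip}$ is the supremum of $|D_{\x_{i-1}}\gF_i|$, which under \A{H} is bounded but in no way small, and it is scale-invariant (it is unaffected by the dilation $\delta_\lambda$). So $C_T \ge C_{\rm Lip}/(i-\tfrac12)$ uniformly in $T$, and the closing condition $C_T\le 1/2$ cannot be enforced by taking $T$ small. A single-step contraction of this type only works when the coefficient in front of $M$ on the right is strictly less than $1$, which would amount to assuming the Lipschitz part of the drift is small.

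The underlying structural point is that the Lipschitz coupling $\Delta_i \leftarrow \Delta_{i-1}$ is \emph{nilpotent} (sub-diagonal), and its effect is controlled by iterated time integration, not by a one-step fixed-point estimate: each substitution of $\Delta_{i-1}$ into $\Delta_i$ gains one power of $(s-t)$, and after $n$ substitutions one reaches the non-degenerate component $\Delta_1$. Your normalized quantity $N(v)$ couples all levels at once, so this nilpotent gain is never realized. The paper's proof instead works component-by-component from $i=n$ downward: it mollifies the non-Lipschitz self-coupling of $\gF_i$ in $\x_i$ (equations \eqref{DEF_CONV_ADHOC}, \eqref{choice_deltaij}, \eqref{approxrescaleddrfit}), applies Gr\"onwall's lemma at each level to absorb the mollified self-dependence (yielding the harmless factors $\exp(C(s-t)^{\gamma/2})$), keeps the $\int|\Delta_{i-1}|$ term explicit, and substitutes the nested estimates into each other (equations \eqref{CTR_N}--\eqref{CTR_I}), finally closing the loop on $\sup_v|\Delta_1(v)|$ via Young inequalities. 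If you want a bootstrap-flavored proof, you would at minimum need an exponential weight $e^{-K(u-t)}$ in the definition of $N$ to absorb the Lipschitz growth, and a separate mechanism (mollification, as in the paper, or a Bihari-type inequality) for the fractional-H\"older self-coupling; as written, neither issue is handled. A secondary (and less fundamental) concern: the concatenation argument for arbitrary $T$ is not needed here and does not obviously work, because the restriction $\d(\x,\x')\le 1$ on the initial data is used substantively in the Young inequalities (e.g.\ to write $\d^{2i-1+\gamma}\le\d^{2i-1}$), and after one flow step the separation of the trajectories need not satisfy this; the paper avoids this by giving a direct proof for all $s\in(t,T]$ with $T$-dependent constants.
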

The previous bound can be interpreted as follows. We somehow have the expected bound involving the spatial points, %see e.g. the scaling lemmas in \cite{meno:17}, 
up to an additional contribution in time, which is precisely due to the quasi-distance $\d$. Indeed,
  this type of result already appeared (for Lipschitz drifts) in Proposition 4.1 of \cite{meno:17}. Through an appropriate mollifying procedure, this result remains unchanged.

%The mollification procedure needed to obtain the above controls precisely leads to this additional time-contribution appearing at the corresponding scale w.r.t. the parabolic distance $\d$, i.e. studying the difference of $|(\btheta_{s,t}(\x)-\btheta_{s,t}(\x'))_i| $ induces a time contribution in $(s-t)^{i-\frac 12} $. 

%\end{lem}
\begin{proof}[Proof]
The main idea to prove this control relies on Gr\"onwall's Lemma. However, under \A{A}, the function $\gF $ is not Lipschitz (\textcolor{black}{only} H\"older continuous).
We have then to mollify suitably $\gF$.
%we will thoroughly use, as crucial auxiliary tool, some appropriate mollified flows. 
\textcolor{black}{Let us denote by $\delta \in \R^n$, a vector with positive entries $\delta_{i} >0$ for $i\in \leftB 2,n\rightB $.
Define as well for all $\ v\in [0,T]$, $\z\in \R^{nd} $, $i\in \leftB 2,n\rightB $, 
\begin{equation}
\label{DEF_CONV_ADHOC}
\gF_i^\delta(v,\z^{i-1:n}):=\gF_i(v,\cdot)\star \rho_{\delta_{i}}(\z)=\int_{\R^{d}} \gF_i(v,\z_{i-1},\z_i-w,\z_{i+1}\cdots ,\z_n)\rho_{\delta_{i}}(w)dw,
\end{equation} 
with  $\rho_{\delta_{i}}(w):=\frac{1}{\delta_{i}^{d}}\rho\left(\frac{w}{\delta_{i}} \right) $ where $ \rho:\R^{d}\rightarrow \R_{+}$ is a usual mollifier, namely $\rho$ has compact support and $\int_{\R^{d}} \rho(\z)d\z=1 $. }
Finally, we define $\gF^\delta(v,\z):=(\gF_1 (v,\z),\gF_2^\delta(v,\z), \cdots,\gF_n^\delta(v,\z)) $. In this definition, we make a slight abuse of notation since the first component $\gF_1$ is not mollified. Due to the final control we want to prove and the intrinsic scale of the first component, the sublinearity of $\gF_1$ (implied by its H\"older property) is enough and  it is not needed to mollify this component. 
% and consider the two auxiliary mollified dynamics:
%\begin{eqnarray}
%\label{MOLL_DYN}
%\dot{\btheta}_{v,t}^\delta(\x)&=&\gF^\delta(v,\btheta_{v,t}^\delta(\x)),\ v\in[ t,s],\ {\btheta}_{t,t}^\delta(\x)=\x,\notag\\
%\dot {\btheta}_{v,s}^\delta(\y) &=& {\mathbf F}^\delta(v,\btheta_{v,s}^\delta(\y)),\ v\in [t,s],\ \btheta^\delta_{s,s}(\y)=\y.
%\end{eqnarray}
%\paragraph{\textcolor{black}{Controls associated with the mollification procedure.}}
%The first key point is that the regularized drift $\mathbf{F}^\delta$ only appears \textcolor{black}{in a time integral for} %as a (time) integrand in 
%our analysis \textcolor{black}{(see equations \eqref{DIFF_DYN_FLOW} and \eqref{DECOUP_I1})}. \textcolor{black}{So, the parameters $\delta_{ij}$ only have} to satisfy 
\\

To be at the \textit{good} current time scale for the contributions associated with the mollification, we pick $\delta_{i}$ in order to have $C:=C(\A{A},T)>0$ s.t. for all $\z \in \R^{nd}$, $\textcolor{black}{v} \in [t,s]$:
\begin{eqnarray}\label{approxrescaleddrfit}
%\int_t^udu
\Big|(s-t)^{\frac 12}\T_{s-t}^{-1}\Big(\gF(\textcolor{black}{v},\z)-\gF^\delta(\textcolor{black}{v},\z)\Big)\Big|\leq C(s-t)^{-1}.
\end{eqnarray}
By the previous definition of $\gF^\delta$ \textcolor{black}{and assumptions \A{A}}, % in \eqref{DEF_CONV_ADHOC} that 
 identity \eqref{approxrescaleddrfit} is equivalent to:
\begin{eqnarray}\label{eq:controleT1}
\textcolor{black}{\sum_{i=2}^n (s-t)^{\frac 12-i}  \delta_{i}^{\frac{2i-3+\gamma}{2i-1}} 
\leq C(s-t)^{-1}.}
\end{eqnarray}
Hence, we choose from now on, for all $i\in \leftB 2,n \rightB $:
\begin{equation}\label{choice_deltaij}
\delta_{i}= (s-t)^{(i-\frac 32)\frac{2i-1}{2i-3+\gamma}}.
\end{equation}
% which yields $\textcolor{black}{(s-t)^{\frac 12-i}\delta_{ij}^{\beta_i^j}=(s-t)^{-1}}$. This choice of $\delta_{ij}$ will be \textcolor{black}{kept for} the rest of the proof. 

Next, let us control the last components of the flow.
By the definition of $\btheta_{s,t}$ in \eqref{DYN_DET_SMOOTH}, we get:
\begin{eqnarray*}
&&|(\btheta_{s,t}(\x)-\btheta_{s,t}(\x'))_n|
\nonumber \\
\!\!&\!\!\le\!\!& |(\x-\x')_n|
+
 \int_{t}^s \Big ( |\gF_n^\delta (v,\btheta_{v,t}(\x))- \gF_n^\delta (v,\btheta_{v,t}(\x'))|
\nonumber \\
&&+ |\gF_n^\delta (v,\btheta_{v,t}(\x))- \gF_n (v,\btheta_{v,t}(\x))|
+ |\gF_n^\delta (v,\btheta_{v,t}(\x'))- \gF_n (v,\btheta_{v,t}(\x'))|
 \Big)dv 
 \\
\!\!&\!\!%\overset{\eqref{choice_deltaij}} 
\leq\!\!&
|(\x-\x')_n|
+ C
 \int_{t}^s \Big ( 
\big  |\big (\btheta_{v,t}(\x)-\btheta_{v,t}(\x') \big )_{n-1} \big|
%\nonumber \\
%&&
+   \delta_{n}^{-1+\frac{2n-3+\gamma}{2n-1}} \big  |\big (\btheta_{v,t}(\x)-\btheta_{v,t}(\x')\big )_n \big|
%+2\delta_{nj}^{\frac{2n-3+\gamma}{2j-1}} 
 \Big)
 dv +(s-t)  \delta_{n}^{\frac{2n-3+\gamma}{2n-1}} \!.
%\nonumber \\
%&&\le \exp(C_2(t-s))\left(|(\btheta_{s,t}(\x)-\y)_n|+C_2 \int_{s}^t|(\btheta_{v,s}(\x)-\btheta_{v,t}(\y))_{n-1}|dv \right),
\end{eqnarray*}
Hence by Gr\"onwall' s Lemma, we get:
\begin{eqnarray}\label{ineq_theta_x_x_n}
&&|(\btheta_{s,t}(\x)-\btheta_{s,t}(\x'))_n|
\nonumber \\
&\le& C \exp \Big(C (s-t)  \delta_{n}^{-1+\frac{2n-3+\gamma}{2n-1}} \Big)
\Big(|(\x-\x')_n| + (s-t)  \delta_{n}^{\frac{2n-3+\gamma}{2n-1}}  
%\nonumber \\
%&&
%\hspace{7.25cm}
+\int_{t}^s 
\big  |\big (\btheta_{v,t}(\x)-\btheta_{v,t}(\x') \big )_{n-1} \big| dv \Big)
\nonumber \\
&\le& C \exp \Big(C (s-t)^{\frac \gamma 2} \Big)
\Big(|(\x-\x')_n| + (s-t)^{n-\frac 12} 
+\int_t^s\big  |\big (\btheta_{v,t}(\x)-\btheta_{v,t}(\x') \big )_{n-1} \big| dv \Big),
\end{eqnarray}
using \eqref{eq:controleT1} for the last inequality.
%\textcolor{black}{
We proceed similarly for the $(n-1)^{\rm {th}}$ component, but in this case we have to handle the non-Lipschitz continuity of $\gF^\delta_{n-1}$ in its $n^{\rm {th}}$ variable. 

For the rescaled flows see e.g. Lemma 2 in \cite{chau:meno:17} this difficulty could also be circumvented through mollification, the situation is here slightly different and it seems that Young type controls are more appropriate. Write:
\begin{eqnarray}\label{ineq_theta_x_x_n1}
&&|(\btheta_{s,t}(\x)-\btheta_{s,t}(\x'))_{n-1}|
\nonumber \\
&\le& C \exp \Big (C (s-t)  \delta_{n-1}^{-1+\frac{2(n-1)-3+\gamma}{2(n-1)-1}} \Big)
\Big(|(\x-\x')_{n-1}| + (s-t)  \delta_{n-1}^{\frac{2(n-1)-3+\gamma}{2(n-1)-1}}  
\nonumber \\
&&
%\hspace{7.25cm}
+\int_{t}^s \Big \{
\big  |\big (\btheta_{v,t}(\x)-\btheta_{v,t}(\x') \big )_{n-2} \big| +  |\big (\btheta_{v,t}(\x)-\btheta_{v,t}(\x') \big )_{n} \big|^{\frac{2(n-1)-3+\gamma}{2n-1}}  \Big \}dv \Big)
\nonumber \\
&\le& C \exp(C (s-t)^{\frac \gamma 2} )
\Big(|(\x-\x')_{n-1}| + (s-t)^{n-\frac 32} 
%\nonumber \\
%&&
%\hspace{7.25cm}
+\int_{t}^s \bigg \{
\big  |\big (\btheta_{v,t}(\x)-\btheta_{v,t}(\x') \big )_{n-2} \big| 
\nonumber \\
&&
+ |(\x-\x')_n|^{\frac{2(n-1)-3+\gamma}{2n-1}} + (v-t)^{\frac{2(n-1)-3+\gamma}{2}}
%\nonumber \\
%&&
+
%\hspace{7.25cm}
\Big (\int_t^v\big  |\big (\btheta_{w,t}(\x)-\btheta_{w,t}(\x') \big )_{n-1} \big| dw \Big )^{\frac{2(n-1)-3+\gamma}{2n-1}}\bigg \}dv \bigg).
\nonumber \\
\end{eqnarray}
The last inequality is a consequence of our choice of $\delta_{n-1}$ in \eqref{choice_deltaij}, identity \eqref{ineq_theta_x_x_n} and convexity inequality.
\\

We aim, now, to proceed with  Gr\"onwall's Lemma. To do so, we \textcolor{black}{first  of all } need to use a Young inequality. Namely, we write for any $\tilde \delta_{n-1,n} > 0$ (where the two indexes in the subscript respectively denote the \textcolor{black}{level} of the chain, i.e. $n-1$, and the considered variable, i.e. $n$):
\begin{eqnarray*}
&&\Big (\int_t^v\big  |\big (\btheta_{w,t}(\x)-\btheta_{w,t}(\x')) \big )_{n-1} \big| dw \Big )^{\frac{2(n-1)-3+\gamma}{2n-1}}
\nonumber \\
&\leq& C \bigg  (
\Big (\int_t^v\big  |\big (\btheta_{w,t}(\x)-\btheta_{w,t}(\x')) \big )_{n-1} \big| dw \Big ) \tilde \delta_{n-1,n}^{-\frac{2n-1}{2(n-1)-3+\gamma}} + \tilde \delta_{n-1,n}^{\frac{2n-1}{4-\gamma}} \bigg) .
\end{eqnarray*}
In order to obtain the suitable time scale, we choose $\tilde \delta_n$ s.t.
\begin{equation*}
 \tilde \delta_{n-1,n}^{\frac{2n-1}{4-\gamma}} = (v-t)^{\frac{2(n-1)-3+\gamma}{2}} \Longleftrightarrow   \tilde \delta_{n-1,n} = (v-t)^{\frac{(2(n-1)-3+\gamma)(4-\gamma)}{2(2n-1)}} ,
\end{equation*}
which also yields that
\begin{equation*}
\Big (\int_t^v\big  |\big (\btheta_{w,t}(\x)-\btheta_{w,t}(\x')) \big )_{n-1} \big| dw \Big ) \tilde \delta_{n-1,n}^{-\frac{2n-1}{2(n-1)-3+\gamma}} 
\leq \Big (\int_t^v\big  |\big (\btheta_{w,t}(\x)-\btheta_{w,t}(\x')) \big )_{n-1} \big| dw \Big ) (v-t)^{-\frac{4-\gamma}{2}}.
\end{equation*}
Hence we get   from  \eqref{ineq_theta_x_x_n1} and  the previous controls that for any $\tilde s\in [t,s] $:
\begin{eqnarray*}%\label{ineq_theta_x_x_n1}
&&|(\btheta_{\tilde s,t}(\x)-\btheta_{\tilde s,t}(\x'))_{n-1}|
\nonumber \\
&\le& C \exp(C (\tilde s-t)^{\frac \gamma 2} )
\Big(|(\x-\x')_{n-1}| + (\tilde s-t)^{n-\frac 32} 
%\nonumber \\
%&&
%\hspace{7.25cm}
+\int_{t}^{\tilde s} \bigg \{
\big  |\big (\btheta_{v,t}(\x)-\btheta_{v,t}(\x')) \big )_{n-2} \big| 
\nonumber \\
&&
+ |(\x-\x')_n|^{\frac{2(n-1)-3+\gamma}{2n-1}} + (v-t)^{\frac{2(n-1)-3+\gamma}{2}}
%\nonumber \\
%&&
+
%\hspace{7.25cm}
 \Big (\int_t^v\big  |\big (\btheta_{w,t}(\x)-\btheta_{w,t}(\x')) \big )_{n-1} \big| dw \Big ) (v-t)^{-\frac{4-\gamma}{2}}\bigg \}dv \bigg)
\end{eqnarray*} 
 The point is now to take the supremum in $\tilde s\in [s,t]$ in the above equation. This yields: 
\begin{eqnarray*} 
 %\nonumber \\
&&\sup_{\tilde s\in [t,s]}|(\btheta_{\tilde s,t}(\x)-\btheta_{\tilde s,t}(\x'))_{n-1}| 
\nonumber \\
&\le& C \exp(C (s-t)^{\frac \gamma 2} )
\Big(|(\x-\x')_{n-1}| + (s-t)^{n-\frac 32} 
%\nonumber \\
%&&
%\hspace{7.25cm}
%&&
+\int_{t}^s 
\big  |\big (\btheta_{v,t}(\x)-\btheta_{v,t}(\x')) \big )_{n-2} \big| dv
\nonumber \\
&&
+ |(\x-\x')_n|^{\frac{2(n-1)-3+\gamma}{2n-1}} (s-t) + (s-t)^{1+\frac{2(n-1)-3+\gamma}{2}}
\nonumber \\
&&
+
%\hspace{7.25cm}
 \Big (\int_t^s  
\sup_{w\in [t,v]} |\big (\btheta_{w,t}(\x)-\btheta_{w,t}(\x')) \big )_{n-1}\big| (v-t)^{1-\frac{4-\gamma}{2}} dv \Big )  \bigg)
 .
\end{eqnarray*}
We get then by  Gr\"onwall's Lemma:
\begin{eqnarray}
&&|(\btheta_{s,t}(\x)-\btheta_{s,t}(\x'))_{n-1}|
\nonumber \\
&\le& C \exp(C (s-t)^{\frac \gamma 2} )
\nonumber \\
&&\times \Big(|(\x-\x')_{n-1}| + (s-t)^{n-\frac 32} 
+\int_{t}^s 
\big  |\big (\btheta_{v,t}(\x)-\btheta_{v,t}(\x')) \big )_{n-2} \big| dv
+ |(\x-\x')_n|^{\frac{2(n-1)-3+\gamma}{2n-1}} (s-t) 
 \Big)\nonumber \\ %\label{I_N_1_1}\\
&\le& C \exp(C (s-t)^{\frac \gamma 2} )
\Big(|(\x-\x')_{n-1}| + (s-t)^{n-\frac 32} 
+\int_{t}^s 
\big  |\big (\btheta_{v,t}(\x)-\btheta_{v,t}(\x')) \big )_{n-2} \big| dv
+ |(\x-\x')_n|^{\frac{2n-3}{2n-1}}  
 \Big)\label{I_N_1_2},
 \nonumber \\
\end{eqnarray}
using again the Young inequality $ |(\x-\x')_n|^{\frac{2(n-1)-3+\gamma}{2n-1}} (s-t) \le C((s-t)^{n-\frac 32}+|(\x-\x'	)_n|^{\frac{2(n-1)-3+\gamma}{2n-1}\frac{2n-3}{2n-5}})$ for the last identity, recalling as well that $\d(\x,\x')\le 1$, and therefore $\textcolor{black}{|(\x-\x')_n|^{\frac{2(n-1)-3+\gamma}{2n-5}}}
%|(\x-\x')_n|^{\frac{2n-3}{2n-1}}
\le |(\x-\x')_n| $, for the last identity. 
The purpose of \eqref{I_N_1_2} is that each entry of the difference of the starting points appears at its intrinsic scale for the homogeneous distance $\d$.

Plugging the above inequality into \eqref{ineq_theta_x_x_n} we derive:
\begin{eqnarray*}%\label{CTR_N}
&&|(\btheta_{s,t}(\x)-\btheta_{s,t}(\x'))_n|
\nonumber \\
\!\!&\!\!\leq\!\!&\!\! C \exp \Big(C (s-t)^{\frac \gamma 2} \Big)
\bigg(|(\x-\x')_n| + (s-t)^{n-\frac 12} + |(\x-\x')_{n-1}| (s-t) 
\nonumber \\
&&+ %|(\x-\x')_n|^{\frac{2(n-1)-3+\gamma}{2n-1}} (s-t) ^2
|(\x-\x')_n|^{\frac{2n-3}{2n-1}} (s-t) + \int_{t}^s \! \int_{t}^v 
\big  |\big (\btheta_{w,t}(\x)-\btheta_{w,t}(\x')) \big )_{n-2} \big| dw dv \bigg)\nonumber\\
\!\!&\!\!\leq\!\!&\!\! C \exp \Big(C (s-t)^{\frac \gamma 2} \Big)
\Big(|(\x-\x')_n| + (s-t)^{n-\frac 12} + |(\x-\x')_{n-1}|^{\frac{2n-1}{2n-3}}  
 + \int_{t}^s \! \int_{t}^v 
\big  |\big (\btheta_{w,t}(\x)-\btheta_{w,t}(\x')) \big )_{n-2} \big| dw dv \Big),
\end{eqnarray*}
using again the Young inequalities $|(\x-\x')_n|^{\frac{2n-3}{2n-1}}(s-t)\le C(|(\x-\x')_n|+(s-t)^{n-\frac 12}) $ and $|(\x-\x')_{n-1}|(s-t)\le C\big(|(\x-\x')_{n-1}|^{\frac{2n-1}{2n-3}}+(s-t)^{n-\frac 12}\big) $ for the last bound.
Iterating these computations, we obtain:
\begin{equation}\label{CTR_N}
|(\btheta_{s,t}(\x)-\btheta_{s,t}(\x'))_n|
\le C
\Big ( (s-t)^{n-\frac 12}+\sum_{j=2}^n |(\x-\x')_j|^{\frac{2n-1}{2j-1}}
%\nonumber \\
%&&
+ \int_ t^{v_n=s}\!\!\! dv_{n-1} \hdots \int_t^{v_{2}} \!\!\! dv_{1} \big| \big(\btheta_{v_1,t}(\x')-\btheta_{v_1,t}(\x)\big)_{1}\big|\Big).
\end{equation}
Similarly, for $i\in \leftB 2,n\rightB  $ we derive:
\begin{equation}\label{CTR_I}
|(\btheta_{s,t}(\x)-\btheta_{s,t}(\x'))_i|
\le C
\Big ( (s-t)^{i-\frac 12}+\sum_{j=2}^n |(\x-\x')_j|^{\frac{2i-1}{2j-1}}
%\nonumber \\
%&&
+ \int_ t^{v_i=s}\!\!\! dv_{i-1} \hdots \int_t^{v_{2}} \!\!\! dv_{1} \big| \big(\btheta_{v_1,t}(\x')-\btheta_{v_1,t}(\x)\big)_{1}\big|\Big).
\end{equation}
\begin{REM}\label{EXT_TEMP_GR}
We importantly point out that equations \eqref{CTR_N} and \eqref{CTR_I} actually hold not only for the fixed time $s$ but also for any $v\in [t,T] $. 
\end{REM}

The term for $i=1 $ is treated slightly differently. Namely, for any $\tilde s\in [t,s] $, write:
\begin{equation*}
\label{CTR_1}
|(\btheta_{\tilde s,t}(\x)-\btheta_{\tilde s,t}(\x'))_1|
\le |(\x-\x')_1|+C \sum_{j=1}^n\int_t^{\tilde s} |(\btheta_{v,t}(\x)-\btheta_{v,t}(\x'))_j|^{\frac{\gamma}{2j-1}}dv,
\end{equation*}
which in turn implies\textcolor{black}{, using \eqref{CTR_I} and Remark \ref{EXT_TEMP_GR}}, 
\begin{eqnarray*}
&&\sup_{\tilde s\in [t,s]}|(\btheta_{\tilde s,t}(\x)-\btheta_{\tilde s,t}(\x'))_1|\nonumber\\
&\le& |(\x-\x')_1|+C\Big( (s-t)\big(\sup_{v\in [t,s]}|(\btheta_{\textcolor{black}{v},t}(\x)-\btheta_{\textcolor{black}{v},t}(\x'))_1|\big)^\gamma 
+\sum_{j=2}^n\int_t^s |(\btheta_{v,t}(\x)-\btheta_{v,t}(\x'))_j|^{\frac{\gamma}{2j-1}}dv\Big)\nonumber\\
&\le &|(\x-\x')_1|+C\bigg( (s-t)\big(\sup_{v\in [t,s]}|(\btheta_{\textcolor{black}{v},t}(\x)-\btheta_{\textcolor{black}{v},t}(\x'))_1|\big)^\gamma \nonumber\\
&&+\sum_{j=2}^n C_j (s-t) \Big( (s-t)^{j-\frac 12}+\sum_{k=2}^n |(\x-\x')_k|^{\frac{2j-1}{2k-1}}+(s-t)^{j-1} \sup_{v\in [t,s]}|(\btheta_{v,t}(\x)-\btheta_{v,t}(\x'))_1|\Big)^{\frac{\gamma}{2j-1}}\bigg)\nonumber\\
&\le &|(\x-\x')_1|+C\bigg( (s-t)\big(\sup_{v\in [t,s]}|(\btheta_{\textcolor{black}{v},t}(\x)-\btheta_{\textcolor{black}{v},t}(\x'))_1|\big)^\gamma \nonumber\\
&&+\sum_{j=2}^n C_j (s-t) \Big( (s-t)^{\frac \gamma 2}+\sum_{k=2}^n |(\x-\x')_k|^{\frac{\gamma}{2k-1}}+(s-t)^{(j-1)\frac{\gamma}{2j-1}} \sup_{v\in [t,s]}|(\btheta_{v,t}(\x)-\btheta_{v,t}(\x'))_1|^{\frac{\gamma}{2j-1}}\Big)\bigg),%\nonumber\\
\end{eqnarray*}
using as well convexity inequalities for the last bound. We now write,
\begin{eqnarray}
&&\sup_{\tilde s\in [t,s]}|(\btheta_{\tilde s,t}(\x)-\btheta_{\tilde s,t}(\x'))_1|\nonumber\\
&\le &C\bigg(|(\x-\x')_1|+(s-t)^{1+\frac \gamma 2}+(s-t)\sum_{k=2}^n |(\x-\x')_k|^{\frac{\gamma}{2k-1}} \nonumber\\
&&+\sum_{j=1}^n (s-t)^{1+(j-1)\frac{\gamma}{2j-1}} \sup_{v\in [t,s]}|(\btheta_{v,t}(\x)-\btheta_{v,t}(\x'))_1|^{\frac{\gamma}{2j-1}}\bigg)\nonumber\\
&\le & C(|(\x-\x')_1|+(s-t)+\sum_{k=2}^n |(\x-\x')_k|^{\frac{1}{2k-1}})\label{THE_CTR_1_AFTER_YOUNG}
\end{eqnarray}
recalling that $(s-t)\le T$ is small, and using again Young inequalities for the last bound. Namely,
 \begin{equation*}
 (s-t)^{1+(j-1)\frac{\gamma}{2j-1}} \sup_{v\in [t,s]}|(\btheta_{v,t}(\x)-\btheta_{v,t}(\x'))_1|^{\frac{\gamma}{2j-1}}\le C(s-t)\Big(1+\sup_{v\in [t,s]}|(\btheta_{v,t}(\x)-\btheta_{v,t}(\x'))_1|\Big) ,
 \end{equation*}
 \textcolor{black}{and}
 \begin{equation*}
 (s-t)|(\x-\x')_k|^{\frac{\gamma}{2k-1}}\le C((s-t)^{\frac{1}{1-\gamma}}+|(\x-\x')_k|^{\frac{1}{2k-1}}).
  \end{equation*}
We eventually derive from \eqref{THE_CTR_1_AFTER_YOUNG} that:
\begin{equation*}
\sup_{v\in [t,s]}|(\btheta_{v,t}(\x)-\btheta_{v,t}(\x'))_1|\le C\big((s-t)^{\frac 12}+\d(\x,\x')\big),
\end{equation*}
which gives the stated bound for $i=1$. It now remains to plug this control into \eqref{CTR_I}. We obtain for each $i\in \leftB 2,n \rightB  $:
\begin{eqnarray*}
|(\btheta_{s,t}(\x)-\btheta_{s,t}(\x'))_i|
&\le& C
\big( (s-t)^{i-\frac 12}+\d^{2i-1}(\x,\x')+(s-t)^{i-1}\sup_{v\in [t,s]}|(\btheta_{v,t}(\x)-\btheta_{v,t}(\x'))_1|\big)
\nonumber \\
&\le& C\Big( (s-t)^{i-\frac 12}+\d^{2i-1}(\x,\x')+(s-t)^{i-1}\big( (s-t)^{\frac 12}+\d(\x,\x')\big)\Big)\\
&\le& C\big( (s-t)^{i-\frac 12}+\d^{2i-1}(\x,\x')\big),
\end{eqnarray*}
using again the Young inequality to derive that $(s-t)^{i-1}\d(\x,\x') \le C\big((s-t)^{i-\frac 12}+\d^{2i-1}(\x,\x') \big) $. The proof is complete.
\end{proof}

Again, Lemma \ref{lem_theta_theta} is  a direct consequence of the previous Lemma \ref{lem_theta_theta_bis} and Young/convexity inequalities. 
\\

We are now in position to prove the sensitivity results for the linearized system w.r.t. the freezing parameter.
\subsubsection{Sensitivity  results for  the mean}%}
\label{sec:ProofLemma_flow}
\begin{proof}[Proof of the Technical Lemma \ref{Lemme_d_theta_theta_x_x}]
We assume w.l.o.g. that $\d(\x,\x')\le 1$. The idea of the proof is to separate the term to control into two contributions.
Namely, we write:
\begin{equation}\label{decomp_tilde_m_theta}
\m_{s,t}^{\x}(\x')-  \btheta_{s,t}(\x')=[ \m_{s,t}^{\x}(\x')-  \btheta_{s,t}(\x)]+   [\btheta_{s,t}(\x)-  \btheta_{s,t}(\x')].
\end{equation}
The definition of the proxy \eqref{FROZ_MOL_FOR_NO_SUPER_SCRIPTS} yields that the mean value of $\tilde  \X^{m, \bxi}_v$, $ \m_{v,t}^{\bxi}$ is s.t.
\begin{equation}
\m_{s,t}^{\x}(\x')-  \btheta_{s,t}(\x)= \x'- \x+ \int_ t^s dv
 D\gF(v,\btheta_{v,t}(\x))[\m_{v,t}^\x(\x')-\btheta_ {v,t}(\x)].
\end{equation}
The sub-triangular structure of $D\gF$ yields that  for each $i \in \leftB2,n\rightB$:
\begin{equation*}
\big (\m_{s,t}^{\x}(\x')-  \btheta_{s,t}(\x) \big )_i= \x'_i- \x_i +\int_ t^s dv
 D_{i-1}\gF_i(v,\btheta_{v,t}(\x))[\m_{v,t}^\x(\x')_{i-1}-\btheta_ {v,t}(\x)_{i-1}]
.
\end{equation*}
Also, since  $\m_{v,t}^{\x}(\x')_1= \x_1'+\int_t^s \gF_1(v,\btheta_{v,t}(\x)) dv$, so that $[\m_{v,t}^\x(\x')_{1}-\btheta_ {v,t}(\x)_{1}]=\x'_1-\x_1 $,  we  then obtain by iteration that:
\begin{equation*}
\big (\m_{s,t}^{\x}(\x')-  \btheta_{s,t}(\x) \big )_i = \x'_{i}-\x_{i}
 + \sum_{k=2}^i \Big [ \int_ t^{v_i=s}\!\!\! dv_{i-1} \hdots \int_t^{v_{k}} \!\!\! dv_{k-1} \prod_{j=k}^{i}   D_{j-1}\gF_{j}(v_j,\btheta_{v_j,t}(\x)) \Big ] [\x'_{k-1}-\x_{k-1}],
\end{equation*}
with the convention that for $i=1$, $ \sum_{k=2}^i=0$. From the above control, equation \eqref{decomp_tilde_m_theta} and the dynamics of the flow, recalling that the starting points are the same, so that the contributions involving differences of the spatial points or flows only appear in iterated time integrals, we derive: 
\begin{eqnarray*}%\label{ineq_qui_fait_revenir_c_0}
&&|\big (\m_{s,t}^{\x}(\x')-  \btheta_{s,t}(\x') \big )_i|
\nonumber \\
&\le &
  \bigg |\sum_{k=2}^i \Big [ \int_ t^{v_i=s}\!\!\! dv_{i-1} \hdots \int_t^{v_{k}} \!\!\! dv_{k-1} \prod_{j=k}^{i}   D_{j-1}\gF_{j}(v_j,\btheta_{v_j,t}(\x)) \Big ] [\x'_{k-1}-\x_{k-1}] \bigg |
\nonumber \\
&&+
 \int_{t}^s  |\gF_i(v,\btheta_{v,t}(\x))- \gF_i (v,\btheta_{v,t}(\x'))| dv
 \\
%\overset{\eqref{choice_deltaij}} 
&\leq& C \Big(\sum_{k=2}^{i-1} (s-t)^{i-k} |\x_k-\x_k'|+ \int_{t}^s   \Big(\sum_{j=i}^n 
\big  |\big (\btheta_{v,t}(\x)-\btheta_{v,t}(\x') \big )_{j} \big|^{\frac{2i-3+\gamma}{2j-1}}
+\big  |\big (\btheta_{v,t}(\x)-\btheta_{v,t}(\x') \big )_{i-1} \big| \Big)
dv\Big). \nonumber 
\end{eqnarray*}
From the previous Lemma \ref{lem_theta_theta_bis}, we thus obtain:
\begin{eqnarray}
|\big (\m_{s,t}^{\x}(\x')-  \btheta_{s,t}(\x') \big )_i|
&\le& 
  C \bigg(\sum_{k=2}^{i-1} (s-t)^{i-k} |\x_k-\x_k'|
%\\
%&&
+ %\Big(\sum_{j=i}^n
 % \big (
(s-t)^{\frac{2i-3+\gamma}{2}+1}
\nonumber \\
&&+ \d^{2i-3+\gamma}(\x,\x')(s-t)+ \big((s-t)^{(i-1)-\frac 12} +\d^{2(i-1)-1}(\x,\x')\big)(s-t) 
\bigg).\label{PREAL_SPLENDID_HOMOGENEOUS_SMOOTHING_SCALED_EFFECT_ou_pas}
\end{eqnarray}
In particular, for $s=t_0=t+c_0\d^2(\x,\x')$ with $c_0<1$, the previous equation yields:
\begin{equation*}
|\big (\m_{t_0,t}^{\x}(\x')-  \btheta_{t_0,t}(\x') \big )_i|
\le 
  C   \Big( c_0  \d^{2i-1}(\x,\x')  %+(c_0^{i-\frac 12}+c_0^{i-\frac 12})\d^{2i-1}(\x,\x')
 % \nonumber \\
  + (c_0^{i-\frac 12+\frac \gamma 2}+c_0)\d^{2i-1+\gamma}(\x,\x')+(c_0^{i- \frac 12}+c_0)\d^{2i-1}(\x,\x') \Big ).
%  c_0^{i-2}  \d^{2i+1}(\x,\x')
%%\nonumber \\
%%+c_0^{i-\frac 12}  \d^{2i-1}(\x,\x')  +c_0^{i-1}\d^{2i-2} (\x,\x')  %\Big)
%%%\\ % \sum_{j=i}^n
%%  %\Big (
%%+c_0^{(i-\frac 12)\frac{2i-3+\gamma}{2n-1}+1} \d^{(i-\frac 12)\frac{2i-3+\gamma}{2n-1}+1}(\x,\x')
%%\\+
%%c_0 \d^{2i-1+\gamma} (\x,\x')+c_0^{(i-1)(2i-3+\gamma)+1} \d^{\frac{2i-3+\gamma}{2i-1}+2(i-1)(2i-3+\gamma)+2}  (\x,\x')
%+c_0^{\frac 1{2(2n-1)}}  \d^{2i-1+\gamma}(\x,\x')\Big )
%\\ +c_0^{(n-1)(2i-3+\gamma)+1}  \d^{\frac{2i-3+\gamma}{2n-1}+\frac{2(n-1)(2i-3+\gamma)}{2n-1}+2}  (\x,\x')  \Big )
%\\
%+  c_0^{i-1} \d^{2i-2} (\x,\x')  +c_0 \d^{2i-1}(\x,\x') 
\end{equation*}
So, after summing and by convexity inequalities, for $\d(\x,\x') \leq 1$:
\begin{equation*}
\d\big (\m_{t_0,t}^{\x}(\x'),  \btheta_{t_0,t}(\x') \big ) \leq  Cc_0^{\frac 1{2n-1}} \d(\x,\x').% \d^{1+\frac{\gamma}{2n-1}}(\x,\x') \leq  Cc_0^{\frac 1{2(2n-1)}} \d(\x,\x').
\end{equation*}
\end{proof}

\subsection{Sensitivities for the scaled flows}\label{SCALED_FLOWS_SEC}
For the scaling analysis of Section \ref{scaling} we also need the scaled versions of the previous \textcolor{black}{l}emmas. Recalling the notations introduced therein, i.e. for $\lambda>0 $, $0\le t\le v\le T $, $\btheta_{v,t}^\lambda(\x^\lambda)=\lambda^{\frac 12}\T_{\lambda}^{-1} \btheta_{v,t}(\x^\lambda),\ \x^\lambda:=\lambda^{-\frac 12}\T_\lambda \x $,  
we readily get:
\begin{equation}\label{ECHELLE_MEAN}
\textcolor{black}{\d\big (\lambda^{\frac 12}\T_\lambda^{-1}\m_{v,t}^{\x^\lambda}({\x'}^\lambda), \btheta_{v,t}^\lambda({\x'}^\lambda)\big)=\lambda^{-\frac 12}\d\big(\m_{v,t}^{\x^\lambda}({\x'}^\lambda),\btheta_{v,t}({\x'}^\lambda)\big)}.
\end{equation}
Now, the discontinuity term leads to consider $v=t+\textcolor{black}{c_0}\lambda \d^2(\x,\x') =t+\textcolor{black}{c_0} \d^2(\x^\lambda,{\x'}^\lambda)$. So, from \eqref{ECHELLE_MEAN}, we can readily apply
Lemma \ref{Lemme_d_theta_theta_x_x} to the quantity $\d\big(\m_{v,t}^{\x^\lambda}({\x'}^\lambda),\btheta_{v,t}({\x'}^\lambda)\big) $ for the spatial points $\x^\lambda,{\x'}^\lambda $ and the corresponding critical time. This precisely yields $\d\big(\m_{v,t}^{\x^\lambda}({\x'}^\lambda),\btheta_{v,t}({\x'}^\lambda)\big)\le Cc_0^{\frac{1}{2n-1}}\d(\x^\lambda,{\x'}^\lambda)=Cc_0^{\frac{1}{2n-1}}\lambda^{\frac 12}\d(\x,{\x'}) $ which plugged into \eqref{ECHELLE_MEAN} finally leads to:
\begin{equation}\label{SCALED_DIFF_MEANS}
\d\big (\lambda^{\frac 12}\T_\lambda^{-1}\m_{v,t}^{\x^\lambda}({\x'}^\lambda),  \btheta_{v,t}^\lambda({\x'}^\lambda) \big ) \leq  Cc_0^{\frac 1{2n-1}} \d(\x,\x').
\end{equation} 
In other words, Lemma \ref{Lemme_d_theta_theta_x_x} is invariant for the scaled flows.
\mysection{Sensitivity results for the resolvent and covariance}
\label{sec_resolv_cov}
\subsection{Sensitivity Lemma for the \textcolor{black}{R}esolvent}
\label{proof_diff_R_K}
\begin{lem}[Controls of the Sensitivities for the Resolven\textcolor{black}{t}]
\label{LEM_SENSI_TO_CONCLUDE}
There exists $\tilde C$
%:=C(\A{A},T)
%$ 
s.t. for all $0 \leq t \leq s \leq T$, $(\x,\x') \in (\R^{nd})^2$, 
%with $s-t \leq c_0\d^2(\x,\x')$,  de S. a I. a priori cette precision n'est pas ici necessaire.
the following control holds. For all $1\le j<i\le n $, with the notation of \eqref{DEF_NORME_HOLDER_HOMO}:
\begin{eqnarray*}%\label{ASSOCIATED_BD_SENS}
\big|\big[\tilde \gR^{(\tau,\x)}(t,s)-\tilde \gR^{(\tau,\x')}(t,s)\big]_{i,j} \big| &\le&  \tilde C (s-t)^{i-j}  
\Big(\sum_{k=2}^n  
\|\gF_k\|_{L^\infty( C_{\d,\gH}^{2k-3+\gamma})}\Big)
\big( (s-t)^{\frac \gamma 2}+ \d^{\gamma} (\x,\x') \big)\\
& \le& \Lambda (s-t)^{i-j}\big( (s-t)^{\frac \gamma 2}+ \d^{\gamma} (\x,\x') \big).
%C    \d^\gamma(\x,\x').%\notag\\
\end{eqnarray*}
\end{lem}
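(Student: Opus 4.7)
The plan is to start from the Duhamel representation of the difference of resolvents. Since $\partial_v \tilde \gR^{(\tau,\bxi)}(v,t) = D\gF(v,\btheta_{v,\tau}(\bxi)) \tilde \gR^{(\tau,\bxi)}(v,t)$ with $\tilde \gR^{(\tau,\bxi)}(t,t) = {\mathbf I}_{nd,nd}$, the variation-of-constants formula yields
\begin{equation*}
\tilde \gR^{(\tau,\x)}(s,t) - \tilde \gR^{(\tau,\x')}(s,t) = \int_t^s \tilde \gR^{(\tau,\x)}(s,v) \bigl[ D\gF(v,\btheta_{v,\tau}(\x)) - D\gF(v,\btheta_{v,\tau}(\x')) \bigr] \tilde \gR^{(\tau,\x')}(v,t)\, dv.
\end{equation*}
Recalling from \eqref{DEF_PARTIAL_GRADIENTS} that $D\gF(v,\cdot)$ is strictly sub-diagonal with only non-zero $(k,k-1)$-blocks equal to $D_{\z_{k-1}}\gF_{k}(v,\cdot)$, the $(i,j)$-block of the above difference reduces to
\begin{equation*}
[\Delta \tilde \gR(s,t)]_{i,j} = \sum_{k=j+1}^{i} \int_t^s \bigl[\tilde \gR^{(\tau,\x)}(s,v)\bigr]_{i,k}\, \bigl[ D_{\z_{k-1}}\gF_k(v,\btheta_{v,\tau}(\x)) - D_{\z_{k-1}}\gF_k(v,\btheta_{v,\tau}(\x')) \bigr]\, \bigl[\tilde \gR^{(\tau,\x')}(v,t)\bigr]_{k-1,j}\, dv,
\end{equation*}
where the summation range reflects the sub-diagonal structure together with $|\vartheta|\geq 0$ preservation of the block indices.

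The next step is to insert the natural scalings. The rescaled resolvent identity \eqref{RESCALED_RES} together with the boundedness statement \eqref{borne_tilde_R} gives $\bigl|[\tilde \gR^{(\tau,\x)}(s,v)]_{i,k}\bigr| \leq C (s-v)^{i-k}$ for $i \geq k$ and likewise $\bigl|[\tilde \gR^{(\tau,\x')}(v,t)]_{k-1,j}\bigr| \leq C (v-t)^{k-1-j}$ for $k-1 \geq j$. For the middle bracket, one exploits that, as explained in the comments surrounding \eqref{DEF_NORME_HOLDER_HOMO}, the sub-diagonal partial gradient $D_{\z_{k-1}}\gF_k$ is $\gamma$-H\"older w.r.t.\ the quasi-distance $\d$, with semi-norm included in $\|\gF_k\|_{L^\infty(C_{\d,\gH}^{2k-3+\gamma})}$ (and no supremum-norm contribution). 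Combined with Lemma~\ref{lem_theta_theta} applied to $\btheta_{v,\tau}(\x)$ and $\btheta_{v,\tau}(\x')$, this yields
\begin{equation*}
\bigl| D_{\z_{k-1}}\gF_k(v,\btheta_{v,\tau}(\x)) - D_{\z_{k-1}}\gF_k(v,\btheta_{v,\tau}(\x')) \bigr| \leq C \|\gF_k\|_{L^\infty(C_{\d,\gH}^{2k-3+\gamma})} \bigl( \d^\gamma(\x,\x') + (v-\tau)^{\gamma/2} \bigr).
\end{equation*}

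Plugging these three bounds into the integral representation and using $(v-\tau)^{\gamma/2} \leq (s-t)^{\gamma/2}$ (with $\tau=t$ in our standing convention), the remaining time integral is a Beta-type integral
\begin{equation*}
\int_t^s (s-v)^{i-k}(v-t)^{k-1-j}\, dv = B(i-k+1,k-j)\,(s-t)^{i-j},
\end{equation*}
which produces exactly the announced homogeneity $(s-t)^{i-j}$, summed over $k \in \{j+1,\ldots,i\}$ and multiplied by the small factor $\d^\gamma(\x,\x') + (s-t)^{\gamma/2}$. The constant in front collects $\sum_{k=2}^n \|\gF_k\|_{L^\infty(C_{\d,\gH}^{2k-3+\gamma})}$, producing a $\Lambda$ in the sense of Remark~\ref{REM_LAMBDA} since only H\"older semi-norms (and not supremum norms) of the derivatives of $\gF$ intervene. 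The main technical point is bookkeeping the sub-diagonal structure to guarantee that only admissible $k$'s survive in the sum, and verifying that the constants arising from the scaled resolvent bounds do not involve any supremum norm of $D\gF$; this is ensured precisely because the rescaled resolvent of~\eqref{DYN_RES_FORWARD} depends only on the (bounded) images $\mathcal{E}_{k-1}$ of the partial gradients.
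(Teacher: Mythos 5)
Your outline follows the paper's strategy in spirit (integral representation of the resolvent difference, block bookkeeping, flow sensitivity from Lemma \ref{lem_theta_theta}, and a time integral producing the $(s-t)^{i-j}$ homogeneity), and your Duhamel/variation-of-constants formulation and the Beta-function integral are a valid alternative to the paper's scaled-resolvent + Gr\"onwall argument. However, there is one genuine gap, and it happens to be precisely the point the paper labors on.

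You assert that the sub-diagonal gradient $D_{\z_{k-1}}\gF_k$ is $\gamma$-H\"older w.r.t.\ the quasi-distance $\d$, claiming this is ``explained in the comments surrounding \eqref{DEF_NORME_HOLDER_HOMO}''. It is not. Assumption \A{S}-(iii) gives differentiability of $\gF_k$ in the transmitting variable $\x_{k-1}$, and hence $\frac{\gamma}{2(k-1)-1}$-H\"older regularity of $D_{\x_{k-1}}\gF_k$ \emph{in $\x_{k-1}$ only}. In the remaining variables $\x_j$, $j \geq k$, the exponent $\frac{2k-3+\gamma}{2j-1}$ is strictly below $1$: $\gF_k$ itself is only H\"older there, with no differentiability, and there is in general no reason for the gradient $D_{\x_{k-1}}\gF_k$ to inherit any modulus of continuity in those variables. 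The quantity $\|\gF_k\|_{L^\infty(C_{\d,\gH}^{2k-3+\gamma})}$ defined in \eqref{DEF_NORME_HOLDER_HOMO} collects the H\"older norm of $D_{\x_{k-1}}\gF_k$ in $\x_{k-1}$ together with the H\"older norms of $\gF_k$ itself (not of its gradient) in $\x_j$, $j\geq k$ --- it does \emph{not} trivially dominate a $\d$-H\"older semi-norm of $D_{\x_{k-1}}\gF_k$.

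The paper closes exactly this gap with the ``reverse Taylor expansion'' Lemma \ref{Lemme_Taylor_reverse}: it shows that $|D_{\x_{i-1}}\gF_i(v,\z) - D_{\x_{i-1}}\gF_i(v,\z')| \leq C \|\gF_i\|_{L^\infty(C_{\d,\gH}^{2i-3+\gamma})}\,\d^\gamma(\z,\z')$, by isolating the transmitting variable (where the gradient is genuinely H\"older) and, for the remaining degenerate variables, representing $D_{\x_{i-1}}\gF_i$ as a suitably scaled finite difference of $\gF_i$ and then invoking the H\"older continuity of $\gF_i$ itself. The calibration of the mollification scale $\delta_i = 2i-3$ in that proof is what makes the two contributions match. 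You also need (as the paper does in \eqref{Diff_K_R}) to split the difference of gradients into the two corresponding contributions $R_1$, $R_2$ before concluding. As written, your proof implicitly assumes Lemma \ref{Lemme_Taylor_reverse} as obvious, which it is not; once it is invoked explicitly the rest of your argument goes through. A minor secondary remark: you work with $\tilde\gR^{(\tau,\x)}(s,t)$ while the statement concerns $\tilde\gR^{(\tau,\x)}(t,s)$; since both are unipotent lower block-triangular and inverse to one another, this is harmless, but the inversion step should at least be mentioned.
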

%Lemma \ref{LEM_SENSI_TO_CONCLUDE} is proved in Appendix \ref{proof_diff_R_K}.
%\begin{lem}[Controls of the Sensitivities for the Resolvents]
%\label{LEM_SENSI_TO_CONCLUDE}
%For all $\x,\x' \in \R^{nd}$ s.t. $T-t \leq c_0\d^2(\x,\x')$, there exists $C:=C(\A{A},T), \beta\ge 1$ s.t. the following controls hold:
%\begin{equation}\label{ASSOCIATED_BD_SENS}
%|\tilde \gR^{\x}(t,s)-\tilde \gR^{\x'}(t,s)| \le C    \d^\gamma(\x,\x').\notag\\
%\end{equation}
%\end{lem}
\begin{proof}[Proof of Lemma \ref{LEM_SENSI_TO_CONCLUDE}]
\textcolor{black}{From the scaling properties of the resolvent, see e.g. the proof of Proposition \ref{THE_PROP} or Lemma 6.2 in \cite{meno:17}, we have that:
\begin{equation}
\label{scaled_resolvent}
 \tilde \gR^{\x}(s,t)=\T_{s-t} \widehat{\tilde  \gR}_1^{s,t,\x} \T_{s-t}^{-1} ,\ \tilde \gR^{\x'}(s,t)=\T_{s-t} \widehat{\tilde  \gR}_1^{s,t,\x'} \T_{s-t}^{-1} ,
 \end{equation}
where $\widehat{\tilde  \gR}_1^{s,t,\x},\ \widehat{\tilde  \gR}_1^{s,t,\x'}$ are non-degenerate bounded matrices. %, uniformly in $u\in [t,s] $. 
We define then:
\begin{equation}
\Delta {\widehat {\tilde \gR}}_1^{s,t,\x,\x'}:= \widehat{\tilde  \gR}_1^{s,t,\x} -\widehat{\tilde  \gR}_1^{s,t,\x'} .
\label{BIG_DECOUPAGE_R}
\end{equation}
Hence, from \eqref{scaled_resolvent} and the definitions in \eqref{BIG_DECOUPAGE_R}:}
\begin{eqnarray*}
|\Delta {\widehat {\tilde \gR}}_1^{s,t,\x,\x'}|&=&|\widehat{\tilde \gR}_1^{s,t,\x}-\widehat{\tilde \gR}_1^{s,t,\x'}| 
=|\T_{s-t}^{-1}  (\tilde \gR^{\x}-\tilde \gR^{\x'})(s,t) \T_{s-t}|
\\
&=&\Big|\T_{s-t}^{-1}\int_t^{s} \Big(D\gF(v,\btheta_{v,t}(\x))\tilde \gR^{\x}(v,t)-D\gF(v,\btheta_{v,t}(\x'))\tilde  \gR^{\x'}(v,t)\Big) dv\T_{s-t}\Big|\\
&\le& \int_{t}^{s} |\T_{s-t}^{-1}D\gF(v,\btheta_{v,t}(\x))\T_{s-t} | |\T_{s-t}^{-1} ({\tilde \gR}^{\x}-\tilde \gR^{\x'})(v,t)\T_{s-t}| dv\\
&&  +\int_{t}^{s} \Big| \T_{s-t}^{-1}\Big(D\gF(v,\btheta_{v,t}(\x))-D\gF(v,\btheta_{v,t}(\x'))\Big)\T_{s-t} \Big| |\T_{s-t}^{-1}\tilde \gR^{\x'}(v,t)\T_{s-t}|dv\\
\nonumber \\
&=&\textcolor{black}{ 
\int_{t}^{s} |\T_{s-t}^{-1}D\gF(v,\btheta_{v,t}(\x))\T_{s-t} | |\T_{s-t}^{-1} \T_{v-t} \Delta {\widehat {\tilde \gR}}_1^{v,t,\x,\x'}\T_{v-t}^{-1}\T_{s-t}| dv}\\
&&  \textcolor{black}{+\int_{t}^{s} \Big| \T_{s-t}^{-1}\Big(D\gF(v,\btheta_{v,t}(\x))-D\gF(v,\btheta_{v,t}(\x'))\Big)\T_{s-t} \Big| |\T_{s-t}^{-1}\tilde \gR^{\x'}(v,t)\T_{s-t}|dv.}
%&\le& C\int_{t}^{s}  {(s-t)}^{-1}|D\gF(v,\btheta_{v,t}(\x))-D\gF(v,\btheta_{v,t}(\x')| dv,
\end{eqnarray*}
\textcolor{black}{Using the Gr\"onwall's Lemma and the structure of the resolvent, we get:
\begin{equation*}
|\Delta {\widehat {\tilde \gR}}_1^{s,t,\x,\x'}|
\le C\int_{t}^{s}  {(s-t)}^{-1}|D\gF(v,\btheta_{v,t}(\x))-D\gF(v,\btheta_{v,t}(\x')| dv.
\end{equation*}
}
%%%% Nouvelle version plus simple et qui doit marcher. 
Pay attention that we only know from our smoothness assumption \A{S} that for all $i\in \leftB 2,n\rightB$, %\forall 
$\z^{i:n}=(\z_i,\cdots,\z_n)\in \R^{(n-i+1)d} $, $\z_{i-1}\mapsto D_{\x_{i-1}}\gF_i(\z_{i-1},\z^{i:n}) $ is $C^{\frac{\gamma}{2(i-1)-1}}(\R^d,\R^d\otimes \R^d) $-H\"older continuous for $\eta>0$. Hence, we proceed carefully like in \cite{chau:meno:17} and we obtain, from the above bound, that
%%% DE P.E. A S et I, un peu raccourcis ce passage
%From the notations of \eqref{DEF_GFTY}, 
%We directly get that:
\begin{eqnarray}\label{Diff_K_R}
|\Delta {\widehat {\tilde \gR}}_1^{s,t,\x,\x'}|
%&=&|\widehat{\tilde \gR}_1^{s,t,\x}-\widehat{\tilde \gR}_1^{s,t,\x'}| 
%=|\T_{s-t}^{-1}  (\tilde \gR^{\x}-\tilde \gR^{\x'})(s,t) \T_{s-t}|\nonumber\\
%&\le& 
%C\int_{t}^{s}  {(s-t)}^{-1}\sum_{i=2}^n\Big(  |D_{\x_{i-1}}\gF_i(v,\btheta_{v,t}(\x))-D_{\x_{i-1}}\gF_i^{s, \x'}(v,\btheta_{v,t}(\x))| \nonumber  \\
%&&+|D_{\x_{i-1}}\gF_i^{s, \x'}(v,\btheta_{v,t}(\x))-  D_{\x_{i-1}}\gF_i(v,\btheta_{v,t}(\x'))| \Big)dv \nonumber \\
&\le & C\int_{t}^{s}  {(s-t)}^{-1}\sum_{i=2}^n\Big(
|D_{i-1}\gF_i(v,\btheta_{v,t}(\x))-D_{i-1}\gF_i(v,\btheta_{v,t}(\x)_{i-1},(\btheta_{v,t}(\x'))^{i:n})| \nonumber \\
&&+\|(D_{\x_{i-1}} \gF_i)_{i-1}\|_{L^\infty(C_\d^{\gamma})}|(\btheta_{v,t}(\x)-\btheta_{v,t}(\x'))_{i-1}|^{\eta_i}\Big) dv
\nonumber \\
&=:&(R_1+R_2)(s,t,\x,\x'),
\end{eqnarray} 
where $\eta_i:= \frac{\gamma}{2(i-1)-1}$ and the notation $(D_{\x_{i-1}} \gF_i)_{i-1}$ indicates that $D_{\x_{i-1}} \gF_i$ is viewed as a function of its variable $(i-1) $ and the supremum is taken over the other ones.
%In particular, $|R_2(s,u,t,\x,\y)|\le  C \sum_{i=1}^n \int_{u}^{s}  {(s-t)}^{-1}|\btheta_{v,t}(\x)-\btheta_{v,t}(\y)|_{i-1}^{\eta_i} dv$ which can be handled similarly to \eqref{CTR_DELTA_1}. Thanks to Lemma \ref{lemme:bilipflow}, we get:
From Lemma \ref{lem_d_theta} and the definition of $\d$ (see also Lemma \ref{lem_theta_theta_bis}), we readily get
\begin{eqnarray}\label{Diff_K_R2}
 |R_2(s,t,\x,\y)|
&\le&   C \|D \gF\|_{L^\infty(C_\d^{\gamma})} \sum_{i=2}^n \int_{t}^{s}  {(s-t)}^{-1} \Big( (v-t)^{(i-1-\frac 12)}+ \d^{2(i-1)-1}(\x,\x')\Big)^{\eta_i}
 dv
\nonumber \\
&
\le  & C \|D \gF\|_{L^\infty(C_\d^{\gamma})} \big( (s-t)^{\frac \gamma 2}+ \d^{\gamma }(\x,\x')  \big )%+  \d^{\frac{\gamma}{2n-3}}(\x,\x') (s-t)^{\frac{\gamma(n-2)}{n-3}}   \Big)
,
 %\nonumber \\
\end{eqnarray}
denoting with a slight abuse of notation $\|D \gF\|_{L^\infty(C_\d^{\gamma})}:=\sum_{i=2}^n \|(D_{\x_{i-1}} \gF_i)_{i-1}\|_{L^\infty(C_\d^{\gamma})} $.
To control the difference of the gradients terms in $R_1(s,t,\x,\x')$ in \eqref{Diff_K_R}, we need the following result whose proof is postponed to Appendix \ref{sec:ProofLemma_Taylor_reverse}.
\begin{lem}[Reverse Taylor expansion]\label{Lemme_Taylor_reverse}
There is a constant $C>0$ s.t, for all $(\z,\z') \in \R^{nd}\times \R^{nd}$, %s.t. $\z_{i-1}=\z_{i-1}' $, 
$v\in [0,T]$ :
\begin{equation*}
  \big | D_{\x_{i-1}} \gF_{i}(v, \z)  -D_{\x_{i-1}} \gF_{i}(v, \z' ) \big |
\leq
C 
\|\gF_i\|_{L^\infty( C_{\d,\gH}^{
2i-3+\gamma})} \d(\z,\z')^\gamma,
%\le C \|\gF_i\|_{L^\infty( C_{\d}^{
%2i-3+\gamma})} \d(\z,\z')^\gamma,
\end{equation*}
with the notations of equation \eqref{DEF_NORME_HOLDER_HOMO}.
%recalling that the notations $(\gF_i)_{i:n} $stands for the restriction of the function $\gF_i$ to its variables $i$ to $n$ (the supremum is taken over the variable $i-1$).  
\end{lem}
From the reverse Taylor expansion of Lemma \ref{Lemme_Taylor_reverse} and the definition in  \eqref{Diff_K_R}, we obtain:
\begin{eqnarray}\label{Diff_K_R1}
|R_1(s,t,\x,\y)|
&\le&  C (s-t)^{-1} \int_t^s dv \sum_{i=2}^n \|\gF_i\|_{L^\infty( C_{\d,\gH}^{2i-3+\gamma})} \d^\gamma( \btheta_{v,t}(\x),\btheta_{v,t}(\x'))\nonumber\\
&\le & C  \sum_{i=2}^n \|\gF_i\|_{L^\infty( C_{\d,\gH}^{2i-3+\gamma})} \big((s-t)^{\frac \gamma 2}+\d^\gamma( \x,\x')\big),%^{\frac{2i-3+\gamma}{2j-1}}.
\end{eqnarray} 
using again  Lemma \ref{lem_d_theta} for the last inequality.
Gathering \eqref{Diff_K_R2}, \eqref{Diff_K_R1} into \eqref{Diff_K_R} and  recalling the definition in \eqref{DEF_NORME_HOLDER_HOMO},  we obtain:
\begin{equation}\label{BORNE_RES_SCALEE}
|\Delta {\widehat {\tilde \gR}}_1^{s,t,\x,\x'}| \le C\big(\sum_{i=2}^n \|\gF_i\|_{L^\infty( C_{\d,\gH}^{2i-3+\gamma})}\big)\big((s-t)^{\frac \gamma 2}+\d^\gamma( \x,\x')\big).
\end{equation}
The result follows from the previous bound, the definition in \eqref{BIG_DECOUPAGE_R} and the scalings of equation  \eqref{scaled_resolvent}.
\end{proof}

%\begin{lem}[Controls of the Sensitivities for the covariances]
%\label{LEM_SENSI_TO_CONCLUDE_Cov}
%There exists $C:=C(\A{A},T), \beta\ge 1$ s.t. the following controls hold:
%\begin{equation}\label{ASSOCIATED_BD_SENS}
%\left|\tilde \K^\bxi _{s,t}-\tilde \K^{\bxi'}(s,t) \right|\le 
% C \big( (s-t)^{\frac \gamma 2}+\d^{\gamma}(\bxi,\bxi') \big )
%%C(s-t)^{\frac \gamma 2}((s-t)|\T_{s-t}^{-1}(\btheta_{s,t}(\x)-\y)|^2+1)^\beta ,%\notag\\
%\end{equation}
%\end{lem}

\subsection{ Sensitivity Lemma for the covariances}
\subsubsection{Proof of Lemma \ref{SENS_COV}}\label{proof_lemma_SENCOV}
%\begin{proof}%[Proof of Lemma \ref{SENS_COV}]
Let us first explicitly write the covariance matrices 
\begin{equation}\label{equ_esplicite_K}
\tilde \K ^{\bxi}_{s,t}:=\int_t^s {\tilde  \gR}^{\bxi}(s,\textcolor{black}{v}) Ba(\textcolor{black}{v},\btheta_{\textcolor{black}{v},t}(\bxi))B^*{\tilde \gR}^{\bxi}(s,\textcolor{black}{v})^* d\textcolor{black}{v}.
%\tilde \K_{s,t}^{s,\y}&=&\int_{t}^{s} d\textcolor{black}{v} \tilde \gR^{s,\y}(s,\textcolor{black}{v})B a(\textcolor{black}{v},\btheta_{\textcolor{black}{v},s}(\y))B^* \tilde \gR^{s,\y}(s,\textcolor{black}{v})^* ,\\
 %\bar \K_{s,t}^{t,\x}&=&\int_t^{s} d\textcolor{black}{v} \bar \gR^{t,\x}(s,\textcolor{black}{v})B a(\textcolor{black}{v},\textcolor{black}{\btheta_{\textcolor{black}{v},t}(\x)})B^* \bar \gR^{t,\x}(s,\textcolor{black}{v})^* ,
 \end{equation}

So\textcolor{black}{,} we have to control the term
\begin{eqnarray}%BIG_DECOUPAGE
\tilde \K^\bxi_{s,t}-\tilde \K^{\bxi'}_{s,t} &= :&\Delta_1^{\bxi,\bxi'}(s,t)+\Delta_2^{\bxi,\bxi'}(s,t),\notag\\
\Delta_1^{\bxi,\bxi'}(s,t)&:=& \int_{t}^{s} d\textcolor{black}{v} {\tilde  \gR}^{\bxi}(s,\textcolor{black}{v}) B \Delta_{\textcolor{black}{v}} a(\btheta_{\textcolor{black}{v},t}(\bxi), \btheta_{\textcolor{black}{v},t}(\bxi'))
B^* {\tilde  \gR}^{\bxi}(s,\textcolor{black}{v}) ^*,\notag\\
 \Delta_{\textcolor{black}{v}} a(\btheta_{\textcolor{black}{v},t}(\bxi), \btheta_{\textcolor{black}{v},t}(\bxi'))&:=&a(\textcolor{black}{v},\btheta_{\textcolor{black}{v},t}(\bxi))-a(\textcolor{black}{v},\btheta_{\textcolor{black}{v},t}(\bxi')),\notag\\
\Delta_2^{\bxi,\bxi'}(s,t)&:=& \int_{t}^{s} d\textcolor{black}{v} \Delta {\tilde { \gR}}^{\bxi,\bxi'}(s,\textcolor{black}{v}) B a(\textcolor{black}{v},\btheta_{\textcolor{black}{v},t}(\bxi'))B^*   {\tilde  \gR}^{\bxi}(s,\textcolor{black}{v})^*
\notag\\
%%\notag\\
&& 
+ \int_{t}^{s} d\textcolor{black}{v}  {\tilde  \gR}^{\bxi'}(s,\textcolor{black}{v})B a(\textcolor{black}{v},\btheta_{\textcolor{black}{v},t}(\bxi'))B^*
\Delta {\tilde { \gR}}^{\bxi,\bxi'}(s,\textcolor{black}{v})^*,\notag\\
\Delta {\tilde { \gR}}^{\bxi,\bxi'}(s,\textcolor{black}{v})&=&{\tilde  \gR}^{\bxi}(s,\textcolor{black}{v}) -{\tilde  \gR}^{\bxi'}(s,\textcolor{black}{v}).
\label{BIG_DECOUPAGE}
\end{eqnarray}
Hence, from the scalings of \eqref{scaled_resolvent} and the definitions in \eqref{BIG_DECOUPAGE}, for all $1\le j\le i\le n $:
\begin{eqnarray*}
|[\Delta_1^{\bxi,\bxi'}(s,t)]_{i,j}|
&\le& C(s-t)^{-2}\int_{t}^{s} |[\T_{s-t }\textcolor{black}{B}\Delta_{\textcolor{black}{v}} a(\btheta_{\textcolor{black}{v},t}(\bxi), \btheta_{\textcolor{black}{v},t}(\bxi'))\textcolor{black}{B^*} \T_{s-t}]_{i,j}| d\textcolor{black}{v}\\
&\le& \Lambda(s-t)^{i+j-2} \int_{t}^{s} \d(\btheta_{\textcolor{black}{v},t}(\bxi),\btheta_{\textcolor{black}{v},t}(\bxi'))^\gamma d\textcolor{black}{v} .
\end{eqnarray*}
We deduce by Lemma \ref{lem_theta_theta}, that
\begin{eqnarray}\label{CTR_DELTA_1}
|[\Delta_1^{\bxi,\bxi'}(s,t)]_{i,j}|\le \Lambda (s-t)^{i+j-1}\big( (s-t)^{\frac {\gamma }2}+ \d^{\gamma}(\bxi,\bxi') \big ).
\end{eqnarray}
\textcolor{black}{Still from \eqref{scaled_resolvent} and the definitions in \eqref{BIG_DECOUPAGE}}, write now that:
\begin{eqnarray}
\label{CTR_DELTA_2}
|[\Delta_2^{\bxi,\bxi'}(s,t)]_{i,j}|&\le& C(s-t)^{-2}\Bigg(\int_t^{s}\Big( |[\T_{s-t} (\widehat{\tilde \gR}_1^{s,\textcolor{black}{v},\bxi}-\widehat{\tilde \gR}_1^{s,\textcolor{black}{v},\bxi'}) \textcolor{black}{B} a(\textcolor{black}{v},\btheta_{\textcolor{black}{v},t}(\bxi')  )   \textcolor{black}{B^*}
(\widehat {\tilde { \gR}}_1^{s,\textcolor{black}{v},\bxi'})^*\T_{s-t}]_{i,j}|\nonumber\\
&&+  |[\T_{s-t} \widehat{\tilde \gR}_1^{s,\textcolor{black}{v},\bxi'} \textcolor{black}{B} a(\textcolor{black}{v},\btheta_{\textcolor{black}{v},t}(\bxi')  )   \textcolor{black}{B^*}
(\widehat {\tilde { \gR}}_1^{s,\textcolor{black}{v},\bxi'}-\widehat{\tilde \gR}_1^{s,\textcolor{black}{v},\bxi})^*\T_{s-t}]_{i,j}|\Big) d\textcolor{black}{v}\Bigg).
\end{eqnarray}
Thanks to equation \eqref{BORNE_RES_SCALEE} in the proof of Lemma \ref{LEM_SENSI_TO_CONCLUDE}, we thus obtain:
\begin{eqnarray}
\label{CTR_DELTA_22}
|[\Delta_2^{\bxi,\bxi'}(s,t)]_{i,j}| \leq \Lambda (s-t)^{i+j-1}\big( (s-t)^{\frac {\gamma }2}+ \d^{\gamma}(\bxi,\bxi') \big ).
\end{eqnarray}
Gathering \eqref{CTR_DELTA_1} and \eqref{CTR_DELTA_22} in \eqref{BIG_DECOUPAGE} yields:
\begin{equation}
\label{FINAL_CTR_COV}
|[\tilde \K^\bxi_{s,t}]_{i,j}-[\tilde \K^{\bxi'}_{s,t}]_{i,j}|\le \Lambda(s-t)^{i+j-1}\big( (s-t)^{\frac {\gamma }2}+ \d^{\gamma}(\bxi,\bxi') \big ),
\end{equation}
which precisely gives  \eqref{CTR_SENSI_COV} for $i=j=1$ \textcolor{black}{and then concludes the proof of Lemma \ref{SENS_COV}}. \hfill $\square $
%\end{proof}
\subsubsection{Sensitivities for the scaled covariance.}
In connection with Section \ref{scaling}, we recall the identity in law \eqref{CORRESP_SCALE}, i.e. $\tilde \X_{v}^{\bxi,\lambda}:=\lambda^{1/2}\T_\lambda^{-1} \tilde \X_v^{\bxi^\lambda},\ v\in [t,T] $, which readily gives:
$$\tilde \K_{v,t}^{\bxi,\lambda}:={\rm {Cov}}(\tilde \X_{v}^{\bxi,\lambda})=\lambda \T_\lambda^{-1}\tilde \K_{v,t}^{\bxi^\lambda }\T_{\lambda}^{-1}.$$
In particular, we thus derive from the analysis of the previous paragraph:
\begin{eqnarray*}
\label{SCALED_COV_AND_SENSI}
[\tilde \K_{v,t}^{\bxi,\lambda}]_{1,1}&\le& C\lambda^{-1}(v-t),\notag\\
  \big|[\tilde \K_{v,t}^{\bxi,\lambda}]_{1,1}-[\tilde \K_{v,t}^{{\bxi'},\lambda}]_{1,1}\big|&\le& C\lambda^{-1}(v-t)\Big( \d^\gamma(\bxi^\lambda, {\bxi'}^\lambda)+(v-t)^{\frac \gamma 2}\Big)\le C\lambda^{-1}(v-t)\Big( \lambda^{\frac\gamma 2}\d^\gamma(\bxi, {\bxi'})+(v-t)^{\frac \gamma 2}\Big),\notag
\end{eqnarray*}
recalling that $\bxi^\lambda=\lambda^{-\frac 12}\T_\lambda \bxi $ and using the homogeneity properties of $\d$ for the last inequality. Also, for $v=t+\textcolor{black}{c_0}\lambda \d^2(\x,\x')  $ and taking $\bxi=\x$, $\bxi'=\x' $, the above controls rewrite:
\begin{equation}
\label{SCALED_COV_AND_SENSI_CRITICAL_TIME}
[\tilde \K_{v,t}^{\bxi,\lambda}]_{1,1}\le C \textcolor{black}{c_0} \d^2(\x,\x'),\  \big|[\tilde \K_{v,t}^{\bxi,\lambda}]_{1,1}-[\tilde \K_{v,t}^{{\bxi'},\lambda}]_{1,1}\big|\le C %(\textcolor{black}{1}+
\textcolor{black}{c_0}
%) 
\lambda^{\frac \gamma 2}\d^{2+\gamma}(\x,\x') .
\end{equation}
%\subsection{Proof of the regularity of the flow}%}
%

\textcolor{black}{Note that the \textcolor{black}{sensitivity} of the scaled covariance  yields a contribution of the scaling coefficient in $\textcolor{black}{\lambda^{\frac \gamma 2}}$. Unlike for the control of the scaled mean in \eqref{SCALED_DIFF_MEANS}, where, as previously noticed, we could not exploit the full regularity of $\gF_i$ w.r.t. the $(i -1)^{\rm th}$ variable, \textcolor{black}{we can here precisely take advantage of such a regularity.}
%the regularity of the $(D_{i-1}{\gF_i})_{i\in \leftB 2,n\rightB} $. 
Indeed, this follows from the expression of the covariance \eqref{equ_esplicite_K} which only involves $D_{i-1}{\gF_i}$ so that one can exploit the associated $\frac{\gamma}{2i-3}$-H\"older regulariry  w.r.t. $\x_{i-1}$.}

\subsection{Reverse Taylor formula}
\label{sec:ProofLemma_Taylor_reverse}

\begin{proof}[Proof of Lemma \ref{Lemme_Taylor_reverse}]
We assume here, for the sake of simplicity and without loss of generality, that $d=1$ (scalar case). 
When $d>1$, the proof below can be reproduced componentwise.
Let us decompose the expression around the variables which do/do not transmit the noise.
Namely, we write for any $\delta_i>0$:
\begin{eqnarray} \label{def_mathcal_F}
&& D_{\x_{i-1}} \gF_{i}(v, \z) -D_{\x_{i-1}} \gF_{i}(v, \z' )
%=
% D_{x_{i-1}} \gF_{\lambda,i}(t, \z)-D_{x_{i-1}} \gF_{\lambda,i}(t, \z' )  %%% REpetition a priori
\nonumber \\
 &=&
\int_0^1 d \mu 
\{D_{\x_{i-1}} \gF_{i}(v, \z ) -D_{\x_{i-1}} \gF_{i}(v, \z _{i-1}+\mu\d(\z,\z')^{\delta_i},\z^{i:n})  \}
\nonumber \\
&&+\{D_{\x_{i-1}} \gF_{i}(v, \z _{i-1}+\mu\d(\z,\z')^{\delta_i },(\z')^{i:n}) -D_{\x_{i-1}} \gF_{i}(v, \z' )  \}
\nonumber \\
&&+\{D_{\x_{i-1}} \gF_{i}(v, \z _{i-1}+\mu\d(\z,\z')^{\delta_i},\z^{i:n}) -D_{\x_{i-1}} \gF_{i}(v, \z _{i-1}+\mu\d(\z,\z')^{\delta_i},(\z')^{i:n}) \}
\nonumber \\ 
&=:& \sum_{\ell=1}^3 \Delta \gF_{i}^\ell(v,\z,\z') .
\end{eqnarray}
The  first two terms can be dealt directly. From \A{A} we get:
\begin{equation} \label{mathcal_F_1}
|  \Delta \gF_{i}^1(v,\z,\z')  |
\leq 
%\|\gF_i\|_{L^\infty(C_{\d}^{2i-3+\gamma})}
\|(D_{\x_{i-1}}\gF_i)_{i-1}\|_{L^\infty(C_{\d}^{\gamma})}%(\R^{nd}, \R^{d}))}
\d(\z,\z')^{\delta_i \frac \gamma{2(i-1)-1}}. 
\end{equation}
Similarly,
\begin{eqnarray} \label{mathcal_F_2}
 | \Delta \gF_{i}^2(v,\z,\z') | 
&\leq &
%\|\gF_i\|_{L^\infty(C_{\d}^{2i-3+\gamma})}
\|(D_{\x_{i-1}}\gF_i)_{i-1}\|_{L^\infty(C_{\d}^{\gamma})}
\big ( \d(\z,\z')^{\delta_i \frac \gamma{2(i-1)-1}} + |(\z-\z')_{i-1}|^{\textcolor{black}{\frac \gamma {2(i-1)-1}} }\big )
\nonumber \\
&\leq & C
%\|\gF_i\|_{L^\infty(C_{\d}^{2i-3+\gamma})}
\|(D_{\x_{i-1}}\gF_i)_{i-1}\|_{L^\infty(C_{\d}^{\gamma})}
\d(\z,\z')^{\delta_i \frac \gamma{2(i-1)-1}}
.
%\nonumber \\
\end{eqnarray}
%We compute explicitly the integral in term $\mathfrak R_{43}$:
For $\Delta \gF_{i}^3(t,\z,\z')$, we use an explicit reverse Taylor expansion which yields together with the smoothness assumption of $\gF_i $ in \A{A}:
\begin{eqnarray} \label{mathcal_F_3}
 | \Delta \gF_{i}^3(t,\z,\z') |
%\nonumber \\
&=&\d(\z,\z')^{-\delta_i} 
\Big |\Big [ \gF_i(t, \z _{i-1}+\d(\z,\z')^{\delta_i},\z^{i:n}) - \gF_i(t, \z _{i-1}+\d(\z,\z')^{\delta_i},(\z')^{i:n}) 
\nonumber \\
&&+
\gF_i(t, \textcolor{black}{\z_{i-1}},\z^{i:n}) - \gF_i(t, \z _{i-1},(\z')^{i:n}) 
\Big ] \Big |
\nonumber \\ 
%&&
&\leq& 2
%\d(\z,\z')^{-\delta_i} 
 \|\gF_i\|_{L^\infty(C_{\d,\gH}^{2i-3+\gamma%-\delta_i
 }
 )}
\d(\z,\z')^{2i-3+\gamma-\delta_i}.
\end{eqnarray}
Taking $\delta_i$ s.t. $\delta_i \frac{\gamma}{2(i-1)-1}=2i-3+\gamma-\delta_i$, which implies that $\delta_i= 2i-3$, gives
in \eqref{mathcal_F_1}, \eqref{mathcal_F_2} and \eqref{mathcal_F_3} a global bound of order $C \|\gF_i\|_{L^\infty(C_{\d,\gH}^{2i-3+\gamma%-\delta_i
})}
\d^\gamma(\z,\z') $.
% where we recall that we denote $\|\gF_i\|_{L^\infty(C_{\d}^{2i-3+\gamma%-\delta_i
%})}:=  \|(D_{\x_{i-1}}\gF_i)_{i-1}\|_{L^\infty(C_{\d}^{\gamma})}
%+\|\gF_i\|_{L^\infty(C_{\d}^{2i-3+\gamma})}%-\delta_i
 %$. 
 The result then follows from \eqref{def_mathcal_F}.

\end{proof}

\mysection{Scaling Control of the degenerate part of the perturbative term}
\label{SEC_BESOV_DUAL_FIRST_Scalling}
This section is dedicated to the proof of the scaled version of the key Besov \textcolor{black}{control of} Lemma \ref{LEMME_BESOV_DEG}. We recall that, with the definitions of Section \ref{scaling}, for all multi-index $\vartheta=(\vartheta_1, \hdots,\vartheta_n) \in \R^ {nd}$, $ i\in \leftB 2, n \rightB$, we aim to control the terms 
\begin{equation*} \int_t^T ds \int_{\R^{nd}} 
D^ \vartheta D_{\x_1}^2\tilde p_\lambda ^{\bxi}(t,s,\x,\y) \big \langle \Delta_{i,\gF}^ \lambda (t,s,\btheta_{s,t}(\bxi),\y), D_{\y_i} u^ \lambda(s,\y)  \big \rangle d\y ,
\end{equation*}
 with
\begin{eqnarray*}
%&&\sum_{i=2}^n\Big(\gF_i(s,\y)-\gF_i(s,\btheta_{s,t}(\x))-D_{\x_{i-1}} \gF_i(s,\btheta_{s,t}(\x)) (\y-\btheta_{s,t}(\x))_{i-1} \Big) \partial_{\y_i} u_m(s,\y)\notag\\
&&\sum_{i=2}^n\big \langle \gF_{\lambda,i}(s,\y)-\gF_{\lambda,i}(s,\btheta_{s,t}^\lambda(\bxi))-D_{\x_{i-1}} \gF_{\lambda,i}(s,\btheta^ \lambda_{s,t}(\bxi)) (\y-\btheta_{s,t}^\lambda(\bxi))_{i-1} , D_{\y_i} u^ \lambda(s,\y) \big \rangle\notag\\
&=:&\sum_{i=2}^n\big \langle \Delta_{i,\gF}^ \lambda(t,s,\btheta_{s,t}^\lambda(\bxi),\y),D_{\y_i} u^ \lambda(s,\y) \big \rangle, \label{DEF_TERMES_DEG_SCALING}
\end{eqnarray*}
which appear in equation \eqref{DUALITE_PREAL_BESOV} of the detailed guide for the scaled system. We precisely want to specify how the scaling procedure impacts the constants in equation \eqref{CTR_DIAG_PREAL}. 
%derive equation \eqref{CTR_BORNE_SUP_VAR_DEG}.

%The term $ \sum_{i=2}^n\int_t^T ds \int_{\R^{nd}} u(s,\y) 
%D_{\y_i}\Big(D_{\x_1}^2\tilde p^{\bxi}(t,s,\x,\y)\Delta_{i,\gF}(t,s,\btheta_{s,t}(\bxi),\y)\Big )$ %R_{m,2:n}^\bxi u(t,\x)$ 
%is the most delicate to handle.
This is exactly what equations \eqref{CTR_OFF_DIAG_DEG} and \eqref{SCALE_OFF_DIAG} reflect. Those controls actually follow from the more general following result, which will again be useful for the H\"older norm in Section \ref{HOLDER}.% We establish the following general result.
\label{CTR_DER_SUP_scalling}
\begin{lem}[Scaled Besov Control Lemma] \label{LEMME_BESOV_DEG_scaling}
There exists $\Lambda:=\Lambda(\A{A},T) $ as in Remark \ref{REM_LAMBDA} s.t. for each multi-index $\vartheta=(\vartheta_1, \hdots, \vartheta_n) \in \N^{\textcolor{black}{n}}$:
\begin{eqnarray}\label{ineq_lemme_Besov_scalling}
&&\sum_{i=2}^n\Big|\int_{\R^{nd}} D^\vartheta \tilde p_\lambda^{\bxi}(t,s,\x,\y) \big \langle \Delta_{i,\gF}^ \lambda(t,s,\btheta_{s,t}(\bxi),\y)  ,D_{\y_i}  u(s,\y) \big \rangle
   d\y\Big| \bigg|_{\bxi=\x}\nonumber \\
   & \leq& \Lambda \lambda^ {-1+\sum_{j=1}^n \vartheta_j (j-\frac 12)}\|u^ \lambda\|_{L^\infty(C_{b,\d}^{2+\gamma})}(s-t)^{-\sum_{j=1}^n \vartheta_j (j-\frac 12)+\frac{\gamma}2} \!.
\end{eqnarray}
\end{lem}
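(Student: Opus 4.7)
The plan is to observe that the scaled system \eqref{KOLMO_LAMBDA_SHORT} has exactly the same algebraic structure as the original system, and the freezing/linearization procedure has perfectly compatible scalings (cf.\ \eqref{CORRESP_SCALE} and \eqref{CORRESP_DENS_SCALING_IMP}). Consequently, the Besov-duality plus thermic-characterization argument of Lemma~\ref{LEMME_BESOV_DEG} goes through verbatim, provided one tracks carefully the explicit $\lambda$-dependence in each step. Rather than redoing the whole proof, I would reduce the estimate to the unscaled Lemma~\ref{LEMME_BESOV_DEG} by a change of variables.

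Specifically, I would perform the substitution $\y = \lambda^{-1/2}\T_\lambda\y^\lambda$, $\x = \lambda^{-1/2}\T_\lambda \x^\lambda$ in the left hand side of \eqref{ineq_lemme_Besov_scalling}. From \eqref{CORRESP_DENS_SCALING_IMP} together with the identity $D_{\x_j}^{\vartheta_j} = \lambda^{\vartheta_j(j-\frac12)} D_{\x^\lambda_j}^{\vartheta_j}$, one has
\begin{equation*}
D_\x^\vartheta \tilde p_\lambda^{\bxi}(t,s,\x,\y) = \lambda^{\frac{n^2d}{2}+\sum_{j=1}^n \vartheta_j(j-\frac12)}\, D_{\x^\lambda}^\vartheta \tilde p^{\bxi^\lambda}(t,s,\x^\lambda,\y^\lambda).
\end{equation*}
On the other hand, from the definition \eqref{CORRESP_COEFF_LAMBDA} of $\gF_\lambda$, a direct computation gives the pointwise identity $\Delta_{i,\gF_\lambda}(t,s,\btheta_{s,t}^\lambda(\bxi^\lambda),\y) = \lambda^{\frac12-i}\,\Delta_{i,\gF}(t,s,\btheta_{s,t}(\bxi^\lambda),\y^\lambda)$, while $D_{\y_i}u^\lambda(s,\y) = \lambda^{i-\frac12}D_{\y_i^\lambda}u(s,\y^\lambda)$ (where, with a slight abuse, $u$ denotes the ``descaling'' of $u^\lambda$, i.e.\ $u(s,\z) := u^\lambda(s,\lambda^{1/2}\T_\lambda^{-1}\z)$). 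These two factors cancel in the scalar pairing, and the Jacobian $d\y = \lambda^{-\frac{n^2d}{2}}d\y^\lambda$ cancels the corresponding density factor, so the integral becomes
\begin{equation*}
\lambda^{\sum_{j=1}^n \vartheta_j(j-\frac12)}\int_{\R^{nd}}D_{\x^\lambda}^\vartheta \tilde p^{\bxi^\lambda}(t,s,\x^\lambda,\y^\lambda)\big\langle \Delta_{i,\gF}(t,s,\btheta_{s,t}(\bxi^\lambda),\y^\lambda),\, D_{\y^\lambda_i}u(s,\y^\lambda)\big\rangle d\y^\lambda.
\end{equation*}

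Applying now Lemma~\ref{LEMME_BESOV_DEG} (in the unscaled variables, with freezing point $\bxi^\lambda = \x^\lambda$) bounds this integral by $\Lambda\|u\|_{L^\infty(C_{b,\d}^{2+\gamma})}(s-t)^{-\sum_j \vartheta_j(j-\frac12)+\gamma/2}$, where crucially $\Lambda$ depends only on the H\"older moduli of the original coefficients $\gF,a$ and not on $\lambda$. The last step is to re-express $\|u\|_{C_{b,\d}^{2+\gamma}}$ in terms of $\|u^\lambda\|_{C_{b,\d}^{2+\gamma}}$. Using $D^k_{\x_i}u^\lambda(t,\x) = \lambda^{k(i-\frac12)}D^k_{\x_i}u(t,\x^\lambda)$ for the integer parts and a similar identity for the fractional H\"older seminorms, one checks that each summand in the definition \eqref{INHOM_NORM} of $\|u\|_{C_{b,\d}^{2+\gamma}}$ is bounded by the corresponding summand of $\|u^\lambda\|_{C_{b,\d}^{2+\gamma}}$ up to a factor $\lambda^{-\kappa}$ with $\kappa\le 1$; the worst case being the $D^2_{\x_1}$ contribution, which gives $\|u\|_{C_{b,\d}^{2+\gamma}}\le C\lambda^{-1}\|u^\lambda\|_{C_{b,\d}^{2+\gamma}}$. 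Multiplying the three gathered factors $\lambda^{\sum_j \vartheta_j(j-\frac12)}\cdot\Lambda\cdot\lambda^{-1}$ yields precisely the announced exponent $\lambda^{-1+\sum_j \vartheta_j(j-\frac12)}$.

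The main obstacle is the bookkeeping: one must ensure that the $\lambda$-factor coming from the density derivatives, the one coming from the pairing $\langle\Delta^\lambda_{i,\gF},D_{\y_i}u^\lambda\rangle$, the one from the Jacobian, and the one from the H\"older-norm rescaling of $u$ all combine to the claimed exponent. Equivalently, one must verify that the intrinsic homogeneity $\d(\lambda^{-1/2}\T_\lambda \y,\lambda^{-1/2}\T_\lambda \z) = \lambda^{1/2}\d(\y,\z)$ together with the time rescaling $(s-t) \leftrightarrow (s-t)/\lambda$ leaves the Besov thresholds $\beta_i$ of \eqref{DEF_BETA_I} unchanged — which is precisely the content of the dilation invariance \eqref{delta_lambda}. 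A self-contained alternative, useful as a cross-check, is to repeat the two-piece time-splitting \eqref{DECOUP_HK_INT} directly for $\Psi^{\vartheta,\lambda}$ using the scaled density bound \eqref{THE_GOOD_SCALE_DER_CTR} and its regularizing refinement \eqref{ABSORB_LAMBDA_BIS}; the exponents of $\lambda$ track identically and yield the same final estimate.
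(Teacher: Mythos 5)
Your change-of-variables reduction is set up correctly at the algebraic level: the identities for $D_\x^\vartheta\tilde p_\lambda^{\bxi}$, for $\Delta_{i,\gF_\lambda}$, for $D_{\y_i}u^\lambda$, and for the Jacobian all check out, and they do combine so that the rescaled integral equals $\lambda^{\sum_j\vartheta_j(j-\frac12)}$ times the integral to which Lemma~\ref{LEMME_BESOV_DEG} applies. The error is in the final norm-rescaling step, and it is not a cosmetic one. You claim $\|u\|_{C_{b,\d}^{2+\gamma}}\le C\lambda^{-1}\|u^\lambda\|_{C_{b,\d}^{2+\gamma}}$, asserting that the worst case is the $D_{\x_1}^2$ contribution. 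But the worst-scaling term in $\|u\|_{C_{b,\d}^{2+\gamma}}$ is the \emph{fractional} H\"older seminorm, not the pure second derivative. With $u(t,\z)=u^\lambda(t,\lambda^{1/2}\T_\lambda^{-1}\z)$ one has, e.g., $[D_{\z_1}^2u]_{\gamma,\z_1}=\lambda^{-1-\gamma/2}[D_{\x_1}^2u^\lambda]_{\gamma,\x_1}$ and, for $i\ge 2$, $[u_i(\z,\cdot)]_{\frac{2+\gamma}{2i-1}}=\lambda^{-(2+\gamma)/2}[u^\lambda_i(\cdot,\cdot)]_{\frac{2+\gamma}{2i-1}}$; the common exponent is $-(2+\gamma)/2=-1-\gamma/2$, not $-1$. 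Consequently your reduction yields $\Lambda\lambda^{\sum_j\vartheta_j(j-\frac12)-1-\gamma/2}\|u^\lambda\|_{L^\infty(C_{b,\d}^{2+\gamma})}(s-t)^{-\sum_j\vartheta_j(j-\frac12)+\gamma/2}$, which is a factor $\lambda^{-\gamma/2}$ weaker than the stated estimate \eqref{ineq_lemme_Besov_scalling}.

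This loss is fatal for the argument downstream: in \eqref{CTR_OFF_DIAG_DEG} and \eqref{SCALE_OFF_DIAG} the entire point is to produce the positive power $\lambda^{\gamma/2}$ multiplying $\Lambda$, so that the scaling parameter can be used to make $\Lambda\lambda^{\gamma/2}$ small regardless of how large $\Lambda$ is; your weaker bound cancels that gain exactly. The $\lambda^{\gamma/2}$ discrepancy is not an accident of bookkeeping but reflects a genuine loss inherent in the blind reduction: the thermic characterization \eqref{THERMIC_CAR_DEF} is taken on the \emph{fixed} range $v\in(0,1)$, and the time-splitting threshold that extracts the optimal bound for $\Psi^{\vartheta,\lambda}$ is the $\lambda$-dependent quantity $[\lambda^{-1}(s-t)]^{\beta_i}$, not $(s-t)^{\beta_i}$. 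Applying the unscaled Lemma~\ref{LEMME_BESOV_DEG} after changing variables implicitly uses the split at $(s-t)^{\beta_i}$, which for $\lambda<1$ is strictly smaller than the optimal one and hence overestimates the singular tail of the $B_{1,1}^{-\tilde\alpha_i}$ integral by precisely $\lambda^{-\gamma/2}$. The paper's proof (Appendix~\ref{SEC_BESOV_DUAL_FIRST_Scalling}) therefore reruns the thermic-characterization argument from scratch with the corrected threshold $[\lambda^{-1}(s-t)]^{\beta_i}$; this is exactly the ``self-contained alternative'' you mention at the end, but it is the \emph{only} way (not a cross-check) -- the reduction by change of variables does not reproduce the lemma.
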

With Lemma \ref{LEMME_BESOV_DEG_scaling} at hand, we indeed readily derive \eqref{CTR_OFF_DIAG_DEG} and \eqref{SCALE_OFF_DIAG} taking $\vartheta=(2,0,\cdots,0)+e_k $ (where $e_k$ stands for the $k^{\rm th} $ vector of the orthonormal basis) for each $k\in \leftB 2,n \rightB $ and $\vartheta=(2,0,\cdots,0) $ respectively. Let us now turn to the proof of the Lemma \ref{LEMME_BESOV_DEG_scaling}.

\begin{proof}[Proof of Lemma \ref{LEMME_BESOV_DEG_scaling}]
The analysis of singularities is \textcolor{black}{identical} to the ones in the proof of Lemma \ref{LEMME_BESOV_DEG}.
However, here, we have to track the scalling coefficient $\lambda$ through the identites.
Note carefully, we write the upper-script/sub-script $\lambda$ to mean that we manage the scaled variables.
In particular, we write:
\begin{equation}
\label{DEF_GI_TO_BELONG_TO_BESOV_SPACE_scalling}
\vartheta_{i,(t,\x)}^{\vartheta,\lambda} (s,\y)%\notag\\
:=D_{\y_i}\cdot \Big(D^\vartheta\tilde p_\lambda^{\bxi}(t,s,\x,\y) \otimes \Delta_{i,\gF}^ \lambda(t,s,\btheta_{s,t}(\bxi),\y)
%[\gF_{i}(s,\y)-\gF_{i}(s,\btheta_{s,t}(\bxi))-D_{\x_{i-1}}\gF_{i}(s,\btheta_{s,t}(\bxi))(\y-\btheta_{s,t}(\bxi))_{i-1}]
\Big)%\notag\\
=:D_{\y_i}\cdot \Theta_{i,(t,\x)}^{\vartheta,\lambda}(s,\y),
\end{equation}
and
\begin{equation}
\label{DEF_PSI_M_SCALING}
\Psi_{i,(t,\x),(s,\y_{1:i-1},\y_{i+1:n})}^{\vartheta,\lambda}: \y_i\mapsto  D_{\y_i}\cdot \big(\Theta_{i,(t,\x)}^{\vartheta,\lambda}(s,\y)\big).
\end{equation}
With these notations, we have:
\begin{equation}
\sum_{i=2}^n\Big|\int_{\R^{nd}} \!D^\vartheta \tilde p_\lambda^{(t,\bxi)}(t,s,\x,\y) \big \langle \Delta_{i,\gF}^ \lambda(t,s,\btheta_{s,t}(\bxi),\y),  D_{\y_i}  u(s,\y) \big \rangle
   d\y\Big| \bigg|_{\bxi=\x} %\leq C\|u\|_{L^\infty(C_{b,\d}^{2+\gamma})}(s-t)^{-\sum_{j=1}^n \alpha_j (j-\frac 12)+\frac{\gamma}2}.
\!\!=\sum_{i=2}^n\Big| \int_{\R^{nd}} \! D_{\y_i}\cdot \big( \Theta_{i,(t,\x)}^{\vartheta,\lambda} (s,\y)\big) u^ \lambda(s,\y) d\y\Big|. \label{FROM_WHERE_TO_USE_BESOV_DUALITY_scaling}
\end{equation}
The point is here again to control, for each $i\in \leftB 2,n \rightB$, the quantity $\|D_{\y_i}\cdot \big(\Theta_{i,(t,\x)}^{\vartheta,\lambda} (s,\y_{1:i-1},\cdot,\y_{i+1:n})\big)\|_{B_{1,1}^{\tilde \alpha_i}},\ \tilde \alpha_i=\frac{2+\gamma}{2i-1} $ with the indicated bounds in the scaling parameter $\lambda$. Accordingly with what can be seen e.g. in \eqref{CONVEX_INEQ_SCALED_DIAG}, the previous analysis of the proof of the (non-scaled) Lemma \ref{LEMME_BESOV_DEG} can be adapted replacing $(s-t) $ therein by $(s-t)/\lambda $ in the computations involving the thermic characterization of Besov spaces. 

We also point out that, w.l.o.g., we assume that $T/\lambda\le 1 $ so that in particular for $0\le t<s\le T $,  $\lambda^{-1}(s-t)\le 1 $. Indeed, the parameter $\lambda $ is meant to be \textit{small} (at least $\lambda\le 1 $) but \textit{macro} as well. From the previous analysis and the statement of Lemma \ref{LEMME_BESOV_DEG_scaling} it can be seen that the \textit{optimal} $\lambda $, i.e. the largest one, actually depends on the H\"older moduli of the coefficients. Hence, the condition $T/\lambda\le 1 $ is, up to a possible modification of $T$, not restrictive.
\\

Let us first introduce some notation:
\begin{equation*}
\hat q_{c,\lambda} (t,s,\x,\y):=\prod_{j=1}^n {\mathcal N}_{c\lambda^ {\frac{2j-1}2}(s-t)^{2j-1}}\big( (\btheta_{s,t}(\x)-\y)_j\big)=\bar p_{c^{-1}}(t,s,\x,\y),
\end{equation*}
where for $\varsigma>0,\ z\in \R^d$, like before ${\mathcal N}_{\varsigma}(z)=\frac{1}{(2\pi \varsigma)^{\frac d2}}\exp\big(-\frac{|z|^2}{2\varsigma}\big) $ is the standard Gaussian density of $\R^d $ with covariance matrix $\varsigma I_d$, and:
\begin{equation}
\label{DEF_HAT_SETMINUS_scalling}
\hat q_{c\setminus i,\lambda}(t,s,\x,(\y_{1:i-1},\y_{i+1:n}))=\prod_{j\in \leftB 1, n\rightB, j\neq i} {\mathcal N}_{c\lambda^ {\frac{2j-1}2}(s-t)^{2j-1}}\big( (\btheta_{s,t}(\x)-\y)_j\big).
\end{equation}
We recall from \eqref{DEF_BETA_I}, that the parameter $\beta_i = \frac{(2i-3)(2i-1)}{2i-3-\gamma}$.
The first contribution of the scaled Besov control is:
\begin{eqnarray*}
&&\int_{[\lambda^{-1}(s-t)]^{\beta_i}}^1 \frac{dv}{v}v^{\frac{\tilde \alpha_i}2 }\|h_v\star \Psi_{i,(t,\x),(s,\y_{1:i-1},\y_{i+1:n})}^{\vartheta, \lambda} \|_{L^1(\R^d,\R)}\\
&\le& \int_{[\lambda^{-1}(s-t)]^{\beta_i}}^1 \frac{dv}{v}v^{\frac{\tilde \alpha_i}2 } \int_{\R^d} dz 
 \Big|\int_{\R^d} D^\vartheta \tilde p_\lambda^{\bxi}(t,s,\x,\y)
%  \\
%&&\times 
\big \langle \Delta_{i,\gF}^ \lambda(t,s,\btheta_{s,t}^ \lambda(\bxi),\y), D_z h_v(z-\y_i) \big \rangle% \Big(\gF_{i}(s,\y)-\gF_{i}(s,\btheta_{s,t}(\bxi)) -D_{\x_{i-1}}\gF_{i}(s,\btheta_{s,t}(\bxi))(\y-\btheta_{s,t}(\bxi))_{i-1}\Big)
d\y_i\Big| \bigg |_{\bxi=\x}\\
&\le& \Lambda\int_{[\lambda^{-1}(s-t)]^{\beta_i}}^1 \frac{dv}{v}v^{\frac{\tilde \alpha_i}2 } \int_{\R^d} dz 
 \int_{\R^d} d\y_i \frac{h_{cv}(z-\y_i)}{v^{\frac 12}} \frac{\lambda ^{\sum_{j=1}^n \vartheta_j(j-\frac 12)} \hat q_{c,\lambda}(t,s,\x,\y)}{(s-t)^{\sum_{j=1}^n \vartheta_j(j-\frac 12)}}
 \nonumber \\
 && \times \lambda^{-i+\frac 12} \d^{2i-3+\gamma}(\lambda^ {-1/2} \T_{\lambda}\btheta_{s,t}^ \lambda (\x),\lambda^ {-1/2} \T_{\lambda}\y)\\
&\le& \Lambda\lambda ^{\sum_{j=1}^n \vartheta_j(j-\frac 12)-(i-\frac 12)} \int_{[\lambda^{-1}(s-t)]^{\beta_i}}^1 \frac{dv}{v}v^{\frac{\tilde \alpha_i}2 } \int_{\R^d} dz 
 \int_{\R^d} d\y_i \frac{h_{cv}(z-\y_i)}{v^{\frac 12}}\frac{\hat q_{c, \lambda} (t,s,\x,\y)}{(s-t)^{\sum_{j=1}^n \vartheta_j(j-\frac 12)}} (s-t)^{\frac{2i-3+\gamma}2},
 \end{eqnarray*}
 exploiting \eqref{CONVEX_INEQ_SCALED_DIAG} for the last inequality. Then
  \begin{eqnarray}
&& \int_{[\lambda^{-1}(s-t)]^{\beta_i}}^1 \frac{dv}{v}v^{\frac{\tilde \alpha_i}2 }\|h_v\star \Psi_{i,(t,\x),(s,\y_{1:i-1},\y_{i+1:n})}^{\vartheta, \lambda} \|_{L^1(\R^d,\R)}\notag\\
&\le& \Lambda \lambda ^{\sum_{j=1}^n \vartheta_j(j-\frac 12)-(i-\frac12)} \hat q_{c\setminus i, \lambda}(t,s,\x,(\y_{1:i-1},\y_{i+1:n}))\int_{[\lambda^{-1}(s-t)]^{\beta_i}}^1 dvv^{-\frac 32+\frac{\tilde \alpha_i} 2 }(s-t)^{-\sum_{j=1}^n \vartheta_j(j-\frac 12)+\frac{2i-3+\gamma}2}\notag\\
&\le& \Lambda\lambda ^{\sum_{j=1}^n \vartheta_j(j-\frac 12)-(i-\frac 12)} \hat q_{c\setminus i, \lambda}(t,s,\x,(\y_{1:i-1},\y_{i+1:n})) \lambda^{[\frac 12-\frac{\tilde \alpha_i} 2]\beta_i}(s-t)^{[-\frac 12+\frac{\tilde \alpha_i} 2]\beta_i -\sum_{j=1}^n \vartheta_j(j-\frac 12)+\frac{2i-3+\gamma}2}
\notag\\
&\le& \Lambda\lambda ^{\sum_{j=1}^n \vartheta_j(j-\frac 12)-1} \hat q_{c\setminus i, \lambda}(t,s,\x,(\y_{1:i-1},\y_{i+1:n})) (s-t)^{-\sum_{j=1}^n \vartheta_j(j-\frac 12)+\frac \gamma 2},\label{CTR_BESOV_LAMBDA_HD}
\nonumber \\
\end{eqnarray}
the third inequality is a consequence of Proposition \ref{THE_PROP}, and the last identity comes from the pick of $\beta_i$ which in particular gives $ -(i-\frac 12)+[1-\tilde \alpha_i]\frac{\beta_i}{2}=-1$.

Let us now consider the second contribution of the scaled Besov control, i.e. we take $v\in \big[0,[\lambda^{-1}(s-t)]^{\beta_i}\big] $. Write:
\begin{eqnarray}
&&\int_{\R^d}  h_v(z-\y_i) D_{\y_i} \cdot  \big ( \Theta_{i,(t,\x)}^{\vartheta, \lambda} (s,\y) \big )d\y_i
\nonumber \\
&=&\int_{\R^d} h_v(z-\y_i) D_{\y_i} \cdot  \Big(\Theta_{i,(t,\x)}^{\vartheta, \lambda } (s,\y)- \Theta_{i,(t,\x)}^{\vartheta, \lambda} (s,\y_{1:i-1},z,\y_{i+1:n}) \Big) d\y_i\notag \\
&=& \int_{\R^d}  D^\vartheta \tilde p^{\bxi}_ \lambda (t,s,\x,\y) \big \langle \gF_{\lambda, i}(s,\y)-\gF_{\lambda, i}(s,\y_{1:i-1},z,\y_{i+1:n}) , D_z h_v(z-\y_i) \big \rangle  d\y_i \notag \\
&&+\int_{\R^d} \Big(D^\vartheta \tilde p_\lambda^{\bxi}(t,s,\x,\y)-D^\vartheta\tilde p_\lambda^{\bxi}(t,s,\x,\y_{1:i-1},z,\y_{i+1:n})\Big)\notag \\
&&\times \Big \langle \big (\gF_{\lambda, i}(s,\y_{1:i-1},z,\y_{i+1:n})-\gF_{\lambda, i}(s,\btheta^ \lambda_{s,t}(\bxi)) -D_{\x_{i-1}}\gF_{\lambda,i}(s,\btheta_{s,t}^ \lambda(\bxi))(\y-\btheta_{s,t}^ \lambda(\bxi))_{i-1}\big), D_z h_v(z-\y_i) \Big \rangle d\y_i\notag\\
&=:& \Big({\mathscr T}_{\lambda,1}+{\mathscr T}_{\lambda,2}\Big) \big(v,t,s,\x,(\y_{1:i-1},z,\y_{i+1:n})\big), \label{DECOUP_CAR_THERMIC_scalling}
\nonumber \\
\end{eqnarray}
thanks to the definition in \eqref{DEF_GI_TO_BELONG_TO_BESOV_SPACE_scalling} for the last identity.
\begin{eqnarray}
&&|{\mathscr T}_{\lambda, 1} \big(v,t,s,\x,(\y_{1:i-1},z,\y_{i+1:n})\big)|
\nonumber \\
&\le& \Lambda \int_{\R^d} \frac{h_{cv}(z-\y_i)}{v^{\frac12}} \frac{\lambda^{\sum_{j=1}^n\vartheta_j(j-\frac 12)}}{(s-t)^{\sum_{j=1}^n \vartheta_j(j-\frac 12)}}\hat q_{c,\lambda}(t,s,\x,\y)\lambda^{-i+\frac 12}(\lambda^ {\frac {2i-1}2}|z-\y_i|)^{\frac{2i-3+\gamma}{2i-1}} d\y_i\notag \\
&\le & \Lambda \lambda ^{\sum_{j=1}^n \vartheta_j(j-\frac 12)-(i-\frac 12)+\frac{2i-3+\gamma}{2}} \int_{\R^d} \frac{h_{cv}(z-\y_i)}{v^{\frac{2-\gamma}{4i-2}}} \frac{\hat q_{c,\lambda}(t,s,\x,\y)}{(s-t)^{\sum_{j=1}^n \vartheta_j(j-\frac 12)}} d\y_{i}.%\label{PREAL_T1_scalling}
\nonumber \\
\label{CTR_T1_scalling}
%&& |{\mathscr T}_1\big(v,t,s,\x,(\y_{1:i-1},z,\y_{i+1:n})\big)|\notag\\
&\le& \frac{\Lambda\lambda ^{\sum_{j=1}^n \vartheta_j(j-\frac 12)- 1+\frac \gamma 2%(i-\frac 12)+\frac{2i-3+\gamma}{2}
} 
}{v^{\frac{2-\gamma}{4i-2}} (s-t)^{\sum_{j=1}^n \vartheta_j(j-\frac 12)}} \hat q_{c\setminus i, \lambda}(t,s,\x,(\y_{1:i-1},\y_{i+1:n})) {\mathcal N}_{cv+\lambda^ {\frac{2i-1}2}(s-t)^{2i-1}}\big ( z-\btheta_{s,t}(\x)_i\big).%\nonumber\\
\end{eqnarray}
Write now from \eqref{DECOUP_CAR_THERMIC_scalling}:
\begin{eqnarray*}
&&|{\mathscr T}_{\lambda,2} \big(v,t,s,\x,(\y_{1:i-1},z,\y_{i+1:n})\big)|
\nonumber \\
&\le& \Lambda\int_{\R^d} d\y_i\frac{h_{cv}(z-\y_i)}{v^{\frac 12}}\int_0^1 d\mu  \frac{ \lambda^{\sum_{j=1}^n \vartheta_j(j-\frac 12)+\frac{2i-1}{2}} \hat q_{c,\lambda} (t,s,\x,\y_{1:i-1},z+\mu (\y_i-z), \y_{i+1:n})}{(s-t)^{\sum_{j=1}^n \vartheta_j(j-\frac 12)+\frac{2i-1}{2}}}\\
&&\times |\y_i-z| \lambda^{-i+\frac 12}\Big (\Big|\gF_{i}\big(s,\lambda^{-\frac 12}\T_\lambda(\y_{1:i-1},z,\y_{i+1:n})\big)-\gF_{i}\big(s,\lambda^{-\frac 12}\T_\lambda(\y_{1:i-1},\btheta_{s,t}(\x)_{i:n})\big)\Big|\\
&&+\Big|\gF_{i}\big(s,\lambda^{-\frac 12}\T_\lambda(\y_{1:i-1},\btheta_{s,t}(\x)_{i:n})\big)-\gF_{i}\big(s,\lambda^{-\frac 12}\T_\lambda\btheta_{s,t}(\bxi)\big) \\
&&-D_{\x_{i-1}}\gF_{i}\big(s,\lambda^{-\frac 12}\T_\lambda \btheta_{s,t}(\x)\big)\big( \lambda^{-\frac 12}\T_\lambda(\y-\btheta_{s,t}(\x))\big)_{i-1} \Big|\Big)\\
&\le& \Lambda  \lambda^{\sum_{j=1}^n \vartheta_j(j-\frac 12)}  \int_{\R^d} d\y_ih_{cv}(z-\y_i)\int_0^1 d\mu  \frac{\hat q_{c,\lambda}(t,s,\x,\y_{1:i-1},z+\mu (\y_i-z), \y_{i+1:n})}{(s-t)^{\sum_{j=1}^n \vartheta_j(j-\frac 12)+\frac{2i-1}{2}}}\\
&&\times \Big(%|z-\y_i|^{\frac{2i-3+\gamma}{2i-1}} 
(\lambda^{\frac {2i-1}2}|z-\btheta_{s,t}(\x)_i|)^{\frac{2i-3+\gamma}{2i-1}} +(\lambda^{\frac{2(i-1)-1}2}|(\btheta_{s,t}(\x)-\y)_{i-1}|)^{1+\frac{\gamma}{2(i-1)-1}}
\nonumber \\
&&+\sum_{k=i+1}^n (\lambda^ {\frac{2j-1}2} |(\btheta_{s,t}(\x)-\y)_k|)^{\frac{2i-3+\gamma}{2k-1}}\Big).
%\d_{i+1:n}^{2i-3+\gamma}(\y,\btheta_{s,t}(\x))\Big),
\end{eqnarray*}
We have for any  $\mu \in [0,1]  $,
$$|z-\btheta_{s,t}(\x)_i|\le \mu  |z-\y_i|+|z +\mu (\y_i-z)-(\btheta_{s,t}(\x))_i|\textcolor{black}{,} $$
we thus derive
\begin{eqnarray}
\!\!\! && \!\!\! |{\mathscr T}_{\lambda,2} \big(v,t,s,\x,(\y_{1:i-1},z,\y_{i+1:n})\big)|
\nonumber \\
&\le& \Lambda \lambda^{\sum_{j=1}^n \vartheta_j(j-\frac 12)%+\frac{2i-1}{2} -> Lui saute de par la normalisation
}  \int_{\R^d} d\y_ih_{cv}(z-\y_i)\int_0^1 d\mu  \frac{\hat q_{c,\lambda}(t,s,\x,\y_{1:i-1},z+\mu (\y_i-z), \y_{i+1:n})}{(s-t)^{\sum_{j=1}^n \vartheta_j(j-\frac 12)+\frac{2i-1}{2}}}\notag \\
&&\times \bigg(\lambda^ {\frac{2i-3+\gamma}2}|\y_i-z|^{\frac{2i-3+\gamma}{2i-1}}
%\nonumber \\
%&&
+ \d^{2i-3+\gamma}\bigg(\lambda^ {-1/2} \T_ {\lambda} \btheta_{s,t}(\x), \lambda^ {-1/2} \T_ {\lambda} \Big (\y_{1:i-1},z +\mu (\y_i-z)%-(\btheta_{s,t}(\x))_i
, \y_{i+1:n})\Big) \bigg) \notag \\
&\le&  \Lambda \lambda^{\sum_{j=1}^n \vartheta_j(j-\frac 12)%+\frac{2i-1}{2}
} \int_{\R^d} d\y_ih_{cv}(z-\y_i)\int_0^1 d\mu  \hat q_{c,\lambda}(t,s,\x,\y_{1:i-1},z+\mu (\y_i-z), \y_{i+1:n})
\nonumber \\
&&\times \Big(\frac{\lambda^ {\frac{2i-3+\gamma}2} v^{\frac{2i-3+\gamma}{2(2i-1)}}}{(s-t)^{\sum_{j=1}^n \vartheta_j(j-\frac 12)+\frac{2i-1}{2}}}    +\frac{1}{(s-t)^{\sum_{j=1}^n \vartheta_j(j-\frac 12)+1-\frac \gamma 2}}\Big)\notag\\
&\le&  \Lambda  \lambda^{\sum_{j=1}^n \vartheta_j(j-\frac 12)%+\frac{2i-1}{2}
}\hat q_{c\setminus i,\lambda}(t,s,\x,\y_{1:i-1}, \y_{i+1:n})
\nonumber \\
&&\times \int_0^1 d\mu \int_{\R^d} h_{cv}(z-\y_i) {\mathcal N}_{c\lambda^ {\frac{2i-1}2}(s-t)^{2i-1}}(z+\mu (\y_i-z)-(\btheta_{s,t}(\x))_i) d\y_i\notag\\
&&\times \Big(\frac{\lambda^ {\frac{2i-3+\gamma}2}v^{\frac{2i-3+\gamma}{2(2i-1)}}}{(s-t)^{\sum_{j=1}^n \vartheta_j(j-\frac 12)+\frac{2i-1}{2}}}    +\frac{1}{(s-t)^{\sum_{j=1}^n \vartheta_j(j-\frac 12)+1-\frac \gamma 2}}\Big).
\label{CTR_T2_scalling}
\end{eqnarray}
From \eqref{DECOUP_CAR_THERMIC_scalling}, \eqref{CTR_T1_scalling} and \eqref{CTR_T2_scalling} we deduce, with the notation of \eqref{DEF_HAT_SETMINUS_scalling}:
\begin{eqnarray*}
&&\|h_v\star \Psi_{i,(t,\x),(s,\y_{1:i-1},\y_{i+1})}^{\vartheta,\lambda}\|_{L^1(\R^d,\R)} 
\\
&\!\!\le\!\!&  \Lambda\lambda^{\sum_{j=1}^n \vartheta_j(j-\frac 12)} \hat q_{c\setminus i, \lambda}(t,s,\x,(\y_{1:i-1},\y_{i+1:n}))
\nonumber \\
&& \times \bigg(\frac{\lambda^ {-1+\frac{\gamma}2}}{v^{\frac{2-\gamma}{4i-2}} (s-t)^{\sum_{j=1}^n \vartheta_j(j-\frac 12)}}%\\
%+\bigg(
+\frac{\lambda^ {\frac{2i-3+\gamma}2%+\frac{2i-1}{2}-> de S. a I: non a cause de l'echelle intrinseque
}v^{\frac{2i-3+\gamma}{2(2i-1)}}}{(s-t)^{\sum_{j=1}^n \vartheta_j(j-\frac 12)+\frac{2i-1}{2}}}    +\frac{1%\lambda^ {\frac{2i-1}{2}}
}{(s-t)^{\sum_{j=1}^n \vartheta_j(j-\frac 12)+1-\frac \gamma 2}} \bigg)
\\
&&\times  \int_0^1 d\mu \int_{\R^d} dz \int_{\R^d} d\y_i h_{cv}(z-\y_i) {\mathcal N}_{c\lambda^ {\frac{2i-1}2}(s-t)^{2i-1}}(z+\mu (\y_i-z)-(\btheta_{s,t}(\x))_i)\\
&\!\!\le\!\!& \Lambda   \lambda^{\sum_{j=1}^n \vartheta_j(j-\frac 12)}\hat q_{c\setminus i,\lambda}(t,s,\x,(\y_{1:i-1},\y_{i+1:n}))\\
&&\times \bigg(\frac{\lambda^{-1+\frac \gamma2}}{v^{\frac{2-\gamma}{4i-2}} (s-t)^{\sum_{j=1}^n \vartheta_j(j-\frac 12)}}%\\
%+\bigg(
+\frac{\lambda^{\frac{2i-3+\gamma}{2}}v^{\frac{2i-3+\gamma}{2(2i-1)}}}{(s-t)^{\sum_{j=1}^n \vartheta_j(j-\frac 12)+\frac{2i-1}{2}}}    +\frac{1}{(s-t)^{\sum_{j=1}^n \vartheta_j(j-\frac 12)+1-\frac \gamma 2}} \bigg).
\end{eqnarray*}
The last identity is a again consequence of the change of variable $(w_1, w_2)=(z-\y_i,z+\mu (\y_i-z)-(\btheta_{s,t}(\x))_i) $.

%To continue the analysis, we are going to split the integral over $v$ in \eqref{THERMIC_CAR}. Actually, the previous centering operations are needed when $v$ is small. Introduce for a parameter $\beta_i>0$ to be specified (remembering as well that 
%Recalling that $s-t$ and $\lambda$ are s.t. small, 
We now write:
\begin{eqnarray}
&&\int_0^{[\lambda^{-1}(s-t)]^{\beta_i}} dv v^{\frac{\tilde \alpha_i}2-1}\|h_v\star \Psi_{i,(t,\x),(s,\y_{1:i-1},\y_{i+1})}^{\vartheta,\lambda}\|_{L^1(\R^d,\R)}\notag\\
&\le& \Lambda \lambda^{\sum_{j=1}^n \vartheta_j(j-\frac 12)} \hat q_{c\setminus i}(t,s,\x,(\y_{1:i-1},\y_{i+1:n}))\int_0^{[\lambda^{-1}(s-t)]^{\beta_i}} \frac{dv}{v}v^{\frac{\tilde \alpha_i}{2}}
\notag\\
&&\!\!\!\times \bigg(\frac{\lambda^{-1+\frac \gamma 2} }{v^{\frac{2-\gamma}{4i-2}} (s-t)^{\sum_{j=1}^n \vartheta_j(j-\frac 12)}}%\\
%+\bigg(
+\frac{\lambda^{\frac{2i-3+\gamma}2}v^{\frac{2i-3+\gamma}{2(2i-1)}}}{(s-t)^{\sum_{j=1}^n \vartheta_j(j-\frac 12)+\frac{2i-1}{2}}}    +\frac{1}{(s-t)^{\sum_{j=1}^n \vartheta_j(j-\frac 12)+1-\frac \gamma 2}} \bigg) \label{THE_BESOV_LAMBDA}\nonumber \\
&=:&\Lambda \lambda^{\sum_{j=1}^n \vartheta_j(j-\frac 12)} \hat q_{c\setminus i,\lambda}(t,s,\x,(\y_{1:i-1},\y_{i+1:n}))  {\mathcal B}_{\vartheta,\beta_i}^\lambda(t,s).%\notag
\end{eqnarray}
Let us now prove that
%$ defined in \eqref{DEF_BETA_I}, we have:
\begin{equation}
\label{CTR_EXP_TEMPS_PETIT_LAMBDA}
{\mathcal B}_{\vartheta,\beta_i}^\lambda(t,s) \le \frac{C\lambda^{-1}}{(s-t)^{\sum_{j=1}^n \vartheta_j(j-\frac 12)-\frac \gamma 2}}.
\end{equation}
Integrating in $v $ in \eqref{THE_BESOV_LAMBDA} we derive:
\begin{eqnarray*}
{\mathcal B}_{\vartheta,\beta_i}^\lambda(t,s) &\le&  C(s-t)^{-\sum_{j=1}^n \vartheta_j(j-\frac 12)} \Bigg[\lambda^{-1+\frac \gamma 2}[\lambda^{-1}(s-t)]^{\beta_i(\frac{\tilde \alpha_i}{2}-\frac{2-\gamma}{4i-2})}\\
&& +\lambda^{\frac{2i-3+\gamma}2}[\lambda^{-1}(s-t)]^{\beta_i(\frac{\tilde \alpha_i}2+\frac{2i-3+\gamma}{2(2i-1)}) )}(s-t)^{i-\frac 12}  %\\
   +[\lambda^{-1}(s-t)]^{ \beta_i\frac{ \tilde \alpha_i} 2 }(s-t)^{-1+\frac \gamma 2}\Bigg].
\end{eqnarray*}
Recall now from the proof of Lemma \ref{LEMME_BESOV_DEG} that:
\begin{eqnarray*}
\beta_i\bigg(\frac{\tilde \alpha_i}{2}-\frac{2-\gamma}{4i-2}\bigg)-\frac{\gamma}2\ge 0,\ \beta_i\bigg(\frac{\tilde \alpha_i}{2}+\frac{2i-3+\gamma}{2(2i-1)}\bigg) -\frac{2i-1}{2}-\frac \gamma 2\ge 0,\ \beta_i \frac{\tilde \alpha_i}2-1\ge 0,
\end{eqnarray*}
with $\beta_i=\frac{(2i-3)(2i-1)}{2i-3-\gamma}, \tilde \alpha_i=\frac{2+\gamma}{2i-1}$. Therefore, since $(s-t)/\lambda\le 1$:
\begin{equation*}
{\mathcal B}_{\vartheta,\beta_i}^\lambda(t,s) \le  C(s-t)^{-\sum_{j=1}^n \vartheta_j(j-\frac 12)} \Bigg[\lambda^{-1+\frac \gamma 2}[\lambda^{-1}(s-t)]^{\frac \gamma 2}
 +\lambda^{-1+\frac{\gamma}2}[\lambda^{-1}(s-t)]^{\frac \gamma 2}  %\\
   +[\lambda^{-1}(s-t)](s-t)^{-1+\frac \gamma 2}\Bigg],
\end{equation*}
which precisely gives \eqref{CTR_EXP_TEMPS_PETIT_LAMBDA}.

Plugging \eqref{CTR_EXP_TEMPS_PETIT_LAMBDA} into \eqref{THE_BESOV_LAMBDA} and from \eqref{CTR_BESOV_LAMBDA_HD} we eventually get:
%Hence, similarly to \eqref{CTR_IN_TIME_BESOV_NORM}, we directly obtain:
\begin{equation*}
\label{CTR_IN_TIME_BESOV_NORM_scaling}
\int_0^1 \frac{dv}{v}v^{\frac{\tilde \alpha_i}2 }\|h_v\star \Psi_{i,(t,\x),(s,\y_{1:i-1},\y_{i+1:n})}^{\vartheta,\lambda}\|_{L^1(\R^d,\R)}
\le \frac{\Lambda \lambda^{\sum_{j=1}^n \vartheta_j(j-\frac 12)-1}}{(s-t)^{\sum_{j=1}^n \vartheta_j(j-\frac 12)-\frac \gamma 2}}\hat q_{c\setminus i,\lambda}(t,s,\x,(\y_{1:i-1},\y_{i+1:n})),\notag\\
\end{equation*}
which is precisely the stated control.
The term $\|\varphi (D) \Psi_{i,(t,\x),(s,\y_{1:i-1},\y_{i+1:n})}^{\vartheta, \lambda}\|_{L^1(\R^d,\R)}$ appearing in the Besov norm could be handled similarly. The result is complete.
\end{proof}

\vspace*{1cm}

 \textbf{Paul-\'Eric Chaudru de Raynal}. \emph{Universit\'e de Savoie Mont Blanc, CNRS, LAMA, F-73000 Chamb\'ery, France. \texttt{E-mail}: pe.deraynal@univ-smb.fr}.\\
 
\textbf{Igor Honor\'e}. \emph{Laboratoire de Mod\'elisation Math\'ematique d'Evry (LaMME), Universit\'e d'Evry Val d'Essonne, 23 Boulevard de France 91037 Evry. \texttt{E-mail}: igor.honore@univ-evry.fr}.\\

\textbf{St\'ephane Menozzi}. \emph{Laboratoire de Mod\'elisation Math\'ematique d'Evry (LaMME, UMR CNRS 8071), Universit\'e d'Evry Val d'Essonne, Paris Saclay, 23 Boulevard de France 91037 Evry, France and Laboratory of Stochastic Analysis, National Research University Higher School of Economics, Pokrovsky Boulevard, 11, Moscow, Russian Federation. \texttt{E-mail}: stephane.menozzi@univ-evry.fr}

\bibliographystyle{plain}
\bibliography{bibli}

\end{document}